\documentclass[12pt,reqno]{amsart}
\usepackage[skip=5pt plus1pt, indent=20pt]{parskip}

\usepackage{palatino}
\usepackage{mathpazo}
\usepackage{stmaryrd}
\usepackage{tikz-cd}
\usepackage{textcomp}
\usepackage{amsmath,amssymb,amsxtra,color,calligra,mathrsfs,tcolorbox,stmaryrd,amscd}

\usepackage{xcolor} 
\colorlet{mdtRed}{red!50!black}
\colorlet{dblue}{blue!50!black}
\usepackage[colorlinks,pagebackref=true]{hyperref}
\hypersetup{linkcolor=dblue,citecolor=dblue,filecolor=dullmagenta,urlcolor=mdtRed}
\renewcommand*{\backref}[1]{}
\renewcommand*{\backrefalt}[4]{[{%
		\ifcase #1 Not cited.%
		\or $\uparrow$~#2.%
		\else $\uparrow$~#2.%
		\fi%
	}]}
\usepackage[all]{xy}
\usepackage{tikz,tikz-cd,tkz-graph,enumerate}
\usetikzlibrary{matrix,arrows,decorations.pathmorphing}

\usepackage[us,12hr]{datetime}
\usepackage{comment}

\DeclareMathOperator{\Id}{{\rm Id}}

\DeclareMathOperator{\End}{{\rm End}}

\DeclareMathOperator{\Vect}{{\bf Vect}}

\DeclareMathOperator{\Lie}{{\rm Lie}}
\DeclareMathOperator{\At}{{\rm At}}

\DeclareMathOperator{\ad}{{\rm ad}}

\DeclareMathOperator{\GL}{{\rm GL}}

\newcommand{\mf}[1]{\mathfrak{#1}}
\newcommand{\mc}[1]{\mathcal{#1}}

\newcommand{\bb}[1]{\mathbb{#1}}

\renewcommand{\ker}{\mathrm{Ker}}

\newcommand{\doi}[1]{\href{https://doi.org/#1}{doi:#1}}
\numberwithin{equation}{subsection}

\newtheorem{thm}[equation]{Theorem}
\newtheorem{corollary}[equation]{Corollary}
\newtheorem{lem}[equation]{Lemma}
\newtheorem{proposition}[equation]{Proposition}

\theoremstyle{definition}
\newtheorem{defn}[equation]{Definition}
\newtheorem{rem}[equation]{Remark}
\newtheorem{example}[equation]{Example}

\theoremstyle{theorem}

\makeatletter
\newcommand\fnsymb[1]{\textsuperscript{\@fnsymbol{#1}}}
\newcommand\fnletter[1]{\lowercase{\textsuperscript{\@alph{#1}}}}

\makeatother

\usepackage{marvosym} 
\renewcommand{\email}[2][1]{\thanks{\textit{Email address}#1: \href{mailto:#2}{#2}}}

\renewcommand{\address}[2][1]{\thanks{\textit{Address}#1: #2}} 

\begin{document}
	
	\baselineskip=15.5pt 
	
	\title[On Semiprojective Moduli of $\mathcal{L}$-Twisted Principal Objects and Motives]{Lie algebroid Connections, Moduli of $\mathcal{L}$--twisted Principal Objects and motives}
	
	\author[S. Ghosh]{Samit Ghosh\fnsymb{1}}
	
	\author[A. Paul]{Arjun Paul\fnsymb{2}}
	
	\email[\fnsymb{1}]{sg23rs005@iiserkol.ac.in}
	
	\email[\fnsymb{2}]{arjun.paul@iiserkol.ac.in}
	
	\address[]{Department of Mathematics and Statistics, 
		Indian Institute of Science Education and Research Kolkata, 
		Mohanpur - 741 246, Nadia, West Bengal, India.
	}

	\thanks{Corresponding author: Arjun Paul}
	
	\subjclass[2020]{14H60, 53B15, 70G45, 17B45}
	
	\keywords{Lie algebroids, Tannakian categories, Moduli spaces, Higgs bundles, Integrable connections, Semiprojective varieties, Motives, $E$--polynomial.} 
	
	\begin{abstract}
    Let \(X\) be an irreducible smooth complex projective variety, and let \(G\) be a connected reductive linear algebraic group over \(\mathbb{C}\). In this paper, we first classify integrable transitive algebraic Lie algebroids on $X$. We then introduce Higgs bundles associated to a Lie algebroid and study their moduli spaces. In particular, we show that the category of vector bundles equipped with integrable \(\mathcal{L}\)-connections and the category of \(\mathcal{L}\)-twisted Higgs bundles of semiharmonic type on \(X\) are neutral Tannakian categories, provided that \(\mathcal{L}\) is a transitive Lie algebroid. Using this Tannakian framework, we obtain a characterization of principal \(G\)-bundles with integrable \(\mathcal{L}\)-connections and \(\mathcal{L}\)-twisted principal \(G\)-Higgs bundles of semiharmonic type on \(X\), and construct their moduli spaces via Mumford's geometric invariant theory. We further introduce the notion of the \(\mathcal{L}\)-Hodge moduli space for principal \(G\)-bundles and prove that the moduli spaces of principal \(G\)-bundles with integrable \(\mathcal{L}\)-connections, \(\mathcal{L}\)-twisted principal \(G\)-Higgs bundles of harmonic type, and the associated \(\mathcal{L}\)-Hodge moduli spaces are semiprojective varieties. Finally, using the semiprojectivity of the \(\mathcal{L}\)-Hodge moduli spaces for principal \(G\)-bundles, we obtain a description of smooth locus of these moduli spaces in the Grothendieck ring of varieties and establish a motivic non-abelian Hodge correspondence type theorem.
\end{abstract}
	
	\baselineskip=15.5pt 
	
	\date{Last updated on \today\,at \currenttime\,(IST)}
	
	\maketitle 
	
    \begin{center}
		\small
		\tableofcontents
	\end{center}


\section{Introduction}\label{sec:introduction}

In \cite{CT.Simpson-1994}, C.~T.~Simpson introduced the notion of $\Lambda$--modules and constructed their moduli spaces using geometric invariant theory. 
The category of $\Lambda$--modules provides a unifying framework that simultaneously generalizes several important geometric structures appearing in differential geometry, algebraic geometry, and mathematical physics (see, for example, \cite{Cortes-Martinez-2004}, \cite{Lazzarini-Masson-2012}). 
Notable examples include Higgs bundles \cite{N.J.Hitchin-1987,CT. Simpson-1992,CT.Simpson-1995}, twisted Higgs bundles \cite{N.Nitsure-1991}, integrable connections \cite{CT. Simpson-1992}, and logarithmic or meromorphic connections \cite{P.Deligne-1970,N.Nitsure-1993,P.Boalch-2002,Bremer-Sage-2012}.

Beilinson and Bernstein \cite{Beilinson-Bernstein-1993} introduced the notion of a \textit{\(D\)-algebra} over a smooth projective variety \(X\). A \(D\)-algebra is an \(\mathcal{O}_{X}\)-algebra over \(X\) satisfying properties analogous to those of \(\mathcal{D}_{X}\), the sheaf of differential operators on \(X\). In \cite{P.Tortella-2011,P.Tortella-2011T}, P.~Tortella established a correspondence between holomorphic (or algebraic) Lie algebroids on $X$ and \(D\)-algebras \(\Lambda\) over \(X\). This correspondence provides a bridge between these two classes of objects. Using this framework, Tortella constructed moduli spaces of holomorphic integrable Lie algebroid connections on vector bundles via Simpson’s formalism of \(\Lambda\)-modules.
In \cite{K.Libor-2009,K.Libor-2010}, moduli spaces of Lie algebroid connections and flat Lie algebroid connections associated to a fixed Lie algebroid were constructed using analytic methods.

In \cite{Ghosh-Paul-2025}, Ghosh and Paul generalized the notion of holomorphic (or algebraic ) Lie algebroid connections to the setting of holomorphic (or algebraic) principal $G$--bundles over smooth projective varieties. Earlier, in \cite{CT.Simpson-1995}, Simpson constructed moduli spaces of principal \(G\)-bundles equipped with integrable \(\mathcal{T}_{X}\)-connections, as well as moduli spaces of principal \(G\)-Higgs bundles of harmonic type, by interpreting principal objects through the Tannakian formalism in terms of associated vector bundles and then applying Mumford's geometric invariant theory. More recently, in \cite{Alfaya-Oliveire-2024}, the semiprojectivity of the moduli space of algebraic Lie algebroid connections on vector bundles, as well as of the corresponding $\mathcal{L}$--Hodge moduli space, was established. 
The semiprojectivity of moduli spaces of principal $G$--bundles with $\lambda$--connections was studied in \cite{Roy-Singh-2024}.

A holomorphic (or algebraic) Lie algebroid connection generalizes the classical notion of a holomorphic (or algebraic) connection \cite{Atiyah-1957}, a structure that naturally arises on complex manifolds, Poisson varieties, and spaces endowed with singular foliations. 
Moduli spaces of vector bundles with integrable $\mathcal{L}$--connections, $\mathcal{L}$--twisted Higgs bundles, and $\mathcal{L}$--Hodge moduli spaces for vector bundles—are typically quasi-projective and non-proper. 
Semiprojectivity \cite{T.Hausel-F. Villegas} provides a natural framework in which this non-properness is controlled via a $\mathbb{C}^{\ast}=\mathbf{G}_{m}$--action with projective fixed-point locus and well-defined limits. 
This notion plays a central role in the study of topology, Hodge theory, and degeneration phenomena for moduli spaces. 
For a smooth semiprojective variety, the Grothendieck motivic class admits an explicit description in terms of the projective fixed-point locus via the Bia{\l}ynicki--Birula decomposition.

The aim of this paper is to develop a systematic moduli-theoretic framework for principal $G$--bundles with integrable $\mathcal{L}$--connections and $\mathcal{L}$--twisted principal $G$--Higgs bundles of semiharmonic type over $X$, as well as the associated $\mathcal{L}$--Hodge moduli spaces for principal $G$--bundles. We study the global geometry of these moduli spaces, with particular emphasis on establishing their semiprojectivity, and motivic classes of smooth loci of resulting semiprojective moduli spaces.


\subsection{Main Contribution}

Let \(X\) be an irreducible smooth complex projective variety, let \(\mathcal{L}\) be a transitive algebraic Lie algebroid on \(X\), and let \(G\) be a connected complex reductive algebraic group. The ideas developed in this paper are motivated by and build upon the works of \cite{Alfaya-Oliveire-2024}, \cite{CT.Simpson-1994}, \cite{CT.Simpson-1995}, \cite{Beilinson-Bernstein-1993}, and the PhD thesis of Pietro Tortella \cite{P.Tortella-2011T}.

\medskip

\noindent
\textbf{(1) Characterization of integrable transitive algebraic Lie algebroid over $X$.} The theory of classical Higgs bundles is by now very well developed. Twisted Higgs bundles, particularly those twisted by line bundles and low rank vector bundles (for instance rank one and rank two), have also been extensively studied (cf. \cite{N.Nitsure-1991},\cite{Gallego-Oscar-Narasimhan-2024}). However, Higgs bundles associated to an arbitrary Lie algebroid appear to be largely unexplored and provide a genuinely new class of geometric objects. This highlights the importance of understanding and classifying Lie algebroids over smooth projective varieties, especially over curves.

For smooth projective curves of genus \(g \geqslant 2\), Lie algebroids of rank one are completely classified (see \cite[Proposition 2.1]{GLP-2018} and \cite[Theorem 1.2]{Alfaya-Oliveire-2024}). In contrast, no general classification is known for higher-rank Lie algebroids, and even the case of rank two remains poorly understood. 

In this paper, we characterize an important class of Lie algebroids, namely integrable transitive algebraic Lie algebroids. Although the corresponding characterization is known for smooth transitive Lie algebroids over connected smooth manifolds (cf. \cite{Moerdijk-2003}).

\begin{thm}\eqref{cor:classifcation of integrable transitive-Lie algebroid}.
Let $X$ be a smooth irreducible projective algebraic variety over $\mathbb{C}$ and let $\mathcal{A}$ be a transitive algebraic Lie algebroid on $X$. Then $\mathcal{A}$ is algebraically integrable if and only if $\mathcal{A}$ is isomorphic to the Atiyah algebroid of a principal
$G$--bundle on $X$, for some algebraic group $G$.
\end{thm}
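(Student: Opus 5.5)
The plan is to prove the two implications separately. Throughout, ``algebraically integrable'' means that $\mathcal{A}$ is isomorphic to the Lie algebroid of an algebraic groupoid $\mathcal{G} \rightrightarrows X$, and that this groupoid is transitive because $\mathcal{A}$ is.

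\emph{The Atiyah algebroid is integrable.} Let $P \to X$ be an algebraic principal $G$--bundle. We form the gauge (Ehresmann) groupoid
\[
\mathcal{G}(P) = (P \times P)/G,
\]
where $G$ acts diagonally. Since $P \to X$ is locally trivial in the étale topology, and in fact Zariski locally trivial when $G$ is reductive and $X$ is smooth (or we simply work with the étale quotient), this quotient exists as a smooth algebraic variety. The source and target maps are induced by the two projections to $X$, and composition is $[p,q]\cdot[q,r]=[p,r]$. Next we compute the Lie algebroid: the tangent bundle to the source fibres along the identity section is $(TP)/G = \At(P)$, with the anchor given by $d\pi$. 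The bracket is the one induced from $G$--invariant vector fields on $P$. This identifies $\Lie(\mathcal{G}(P)) \cong \At(P)$, so any $\mathcal{A}\cong\At(P)$ is integrable.

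\emph{An integrable transitive algebroid is an Atiyah algebroid.} Let $\mathcal{G}\rightrightarrows X$ be an algebraic groupoid integrating $\mathcal{A}$. The steps are as follows.
\begin{enumerate}
\item Show that the anchor map $(s,t)\colon \mathcal{G}\to X\times X$ is smooth and surjective. Transitivity of $\mathcal{A}$ makes $t|_{s^{-1}(x)}$ a submersion. Its image is then open in $X$ and is a single orbit. Since $X$ is irreducible and the orbits partition $X$ into disjoint open sets, there is only one orbit.
\item Fix $x_0\in X$ and set $P = s^{-1}(x_0)$, $G = s^{-1}(x_0)\cap t^{-1}(x_0)$. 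Here $G$ is the isotropy group, a closed subgroup scheme and hence, in characteristic zero, an algebraic group. Then $t\colon P\to X$ is a smooth surjective morphism with free right $G$--action by composition, transitive on fibres.
\item Conclude that $P\to X$ is a principal $G$--bundle, locally trivial in the étale topology by smoothness. The standard isomorphism $\mathcal{G}\cong (P\times P)/G$, $g\mapsto [g\circ h, h]$, then identifies $\mathcal{G}$ with the gauge groupoid. By the first implication, $\mathcal{A}\cong \Lie(\mathcal{G}(P))\cong \At(P)$.
\end{enumerate}

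The main obstacle is step (1)--(3) in the algebraic category. The difficulty is verifying that $t\colon s^{-1}(x_0)\to X$ is genuinely a torsor in the étale (or fppf) sense, rather than just a bijection on points. The first attempt will be to use generic flatness together with homogeneity under $\mathcal{G}$ to get flatness everywhere, and to use transitivity of the anchor on tangent spaces to get smoothness. The isomorphism
\[
P\times_X P \cong P\times G
\]
then follows from the groupoid composition. A secondary issue is possible non-connectedness of $s$--fibres. Since the statement allows an arbitrary algebraic group $G$, this causes no trouble; if needed, we can replace $\mathcal{G}$ by its source-connected component, which integrates the same $\mathcal{A}$.
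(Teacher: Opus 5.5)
Your proof is correct, and it takes a genuinely different route from the paper. The paper argues transcendentally. It uses a GAGA-type lemma (algebraic integrability of $\mathcal{A}$ $\Leftrightarrow$ analytic integrability of $\mathcal{A}^{an}$), then Laurent-Gengoux--Sti\'enon--Xu to pass to the underlying real Lie algebroid, and finally quotes the smooth classification of transitive integrable algebroids (Mackenzie, Moerdijk--Mr\v{c}un). You instead reprove Mackenzie's theorem inside the algebraic category. The gauge groupoid $(P\times P)/G$ gives the ``if'' direction. For the converse you show $\mathcal{G}$ is transitive, take the source fibre $P=s^{-1}(x_0)$ as a torsor under the isotropy group, and identify $\mathcal{G}\cong(P\times P)/G$.

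What this buys is that $P$ and $G$ are cut out of the given algebraic groupoid, so they are algebraic by construction. The paper's chain ends with a smooth principal bundle for a real Lie group and does not explain how to return to an algebraic bundle with an algebraic group. Its GAGA step also cannot algebraize a non-proper analytic groupoid. For instance, the source-simply-connected integration of $\mathcal{T}_X$ is the fundamental groupoid, whose $(s,t)$-fibres are copies of $\pi_1(X)$, so it is not of finite type when $\pi_1(X)$ is infinite. In fact analytic integrability is strictly weaker. On $\mathbb{P}^1\times\mathbb{P}^1$, the Picard algebroid with class $h_1+ih_2$ has discrete monodromy $\mathbb{Z}+i\mathbb{Z}$, so it integrates holomorphically. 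By your argument, however, an algebraic integration would make it $\At(P')$ for an algebraic torsor $P'$ under a one-dimensional group. On this simply connected rational surface such Atiyah algebroids have classes in $\mathbb{C}\cdot H^2(X,\mathbb{Z})$, which contains no nonzero multiple of $h_1+ih_2$. So the paper's route is shorter only because it outsources the work; yours is self-contained and is what actually proves the algebraic statement.

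Two minor corrections.

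\emph{Zariski local triviality.} Torsors under reductive groups over smooth $X$ are not Zariski locally trivial in general (finite groups, or $PGL_n$ over surfaces), so drop that parenthetical. For affine $G$, the quotient $(P\times P)/G$ exists by effective descent, being an \'etale-locally trivial twisted form of the affine $X\times X$-scheme $G\times X\times X$. In the ``only if'' direction nothing needs constructing, since $\mathcal{G}$ itself is the quotient.

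\emph{Generic flatness is unnecessary.} Since $s$ is smooth, right translation $R_g\colon s^{-1}(t(g))\to s^{-1}(s(g))$, $h\mapsto hg$, is an isomorphism with $t\circ R_g=t$. So surjectivity of the anchor along the identity section gives smoothness of $t|_P$ at every point. Hence $t|_P$ has sections \'etale locally, and $G=(t|_P)^{-1}(x_0)$ is smooth. The map $(p,h)\mapsto(p,ph)$, with inverse $(p,q)\mapsto(p,p^{-1}q)$, is an isomorphism $P\times G\cong P\times_X P$, which is exactly the torsor condition.
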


\medskip

\noindent
\textbf{(2) Tannakian description of $\mathcal{L}$--twisted principal $G$--objects and construction of moduli spaces.}
In \cite[\S\,9]{CT. Simpson-1992}, C.~T.~Simpson proved that the category of vector bundles equipped with integrable connections and the category of Higgs bundles of semiharmonic type are neutral Tannakian categories. In this paper, we extend this framework to the setting of transitive Lie algebroids, establishing analogous Tannakian descriptions for vector bundles with integrable \(\mathcal{L}\)-connections and \(\mathcal{L}\)-twisted Higgs bundles.

\begin{proposition}\eqref{prop: tannakian category Vect_L(X)},\eqref{prop:Tannakian-L-Higgs}
Let $X$ be a smooth irreducible projective algebraic variety over $\mathbb{C}$
	\begin{enumerate}
		\item The category $\texttt{Vect}^{\emph{int}}_{\mathcal{L}}(X)$ of vector bundles equipped with integrable $\mathcal{L}$--connections is a neutral Tannakian category.
		\item The category $\texttt{Higg}^{\emph{sh}}_{\mathcal{L}}(X)$ of $\mathcal{L}$--twisted Higgs bundles of semiharmonic type on $X$ is also a neutral Tannakian category.
	\end{enumerate}
\end{proposition}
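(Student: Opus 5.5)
The plan is to follow Simpson's argument in \cite[\S\,9]{CT. Simpson-1992} and verify the axioms of a rigid abelian $\mathbb{C}$--linear tensor category with $\End(\mathbf{1})=\mathbb{C}$ and an exact faithful tensor fibre functor. For the fibre functor we fix a point $x\in X$ and take $\omega_x(E,\nabla)=E_x$ (resp.\ $\omega_x(E,\theta)=E_x$).

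\emph{Step 1 (tensor structure).} We put tensor products, duals and the unit $(\mathcal{O}_X, d_{\mathcal{L}})$ (resp.\ $(\mathcal{O}_X,0)$) on each category, using the usual Leibniz-type formulas. For $\mathcal{L}$--connections these are
\[
\nabla^{E\otimes F}_v=\nabla^E_v\otimes 1+1\otimes\nabla^F_v ,\qquad
(\nabla^{E^\vee}_v\varphi)(s)=\rho(v)\bigl(\varphi(s)\bigr)-\varphi(\nabla^E_v s),
\]
where $\rho$ is the anchor. Integrability is preserved because curvature is additive on tensor products, and the same holds for Higgs fields. Associativity, commutativity and rigidity constraints come from those of $\texttt{Vect}(X)$, since all the structure maps are flat, resp.\ commute with the Higgs fields. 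The unit has $\End(\mathbf{1})=H^0(X,\mathcal{O}_X)^{\nabla}=\mathbb{C}$ because $X$ is projective and irreducible.

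\emph{Step 2 (abelianness).} This is the main obstacle, and it is exactly where transitivity of $\mathcal{L}$ and the semiharmonic condition are used. For a morphism $f\colon E\to F$ compatible with $\mathcal{L}$--connections, $\ker f$, $\mathrm{Im} f$ and $\mathrm{coker} f$ are $\mathcal{O}_X$--coherent and carry $\mathcal{L}$--connections. Since $\rho\colon\mathcal{L}\to T_X$ is surjective, we can choose local splittings $T_X\to\mathcal{L}$, so an $\mathcal{L}$--connection induces a (possibly non-flat, but local) ordinary connection. A coherent sheaf admitting a connection on a smooth variety is locally free, so all these sheaves are vector bundles with integrable $\mathcal{L}$--connections, and the category is abelian.

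For $\texttt{Higg}^{\mathrm{sh}}_{\mathcal{L}}(X)$ we cannot use this argument. Instead we imitate Simpson: we show that semiharmonic objects (semistable with vanishing Chern classes, in the $\mathcal{L}$--twisted sense fixed earlier) form an abelian subcategory. Given a morphism $f$, its kernel and image are $\theta$--invariant subsheaves. By semistability and degree zero, the image is a saturated subsheaf of degree $0$, and both kernel and image are again semistable of slope $0$. To obtain local freeness with vanishing Chern classes, the first route to try is the Bogomolov-type inequality and the Simpson argument that a semistable Higgs sheaf with $c_1=0$ and $c_2\cdot H^{n-2}=0$ is an extension of locally free stable pieces. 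Failing that, the plan is to pass to the spectral/Higgs description and use the Simpson correspondence to reduce to the classical case.

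\emph{Step 3 (fibre functor).} The functor $\omega_x$ is $\mathbb{C}$--linear and tensor-compatible by construction. It is exact because all kernels and cokernels from Step 2 are locally free, so taking fibres commutes with them. It is faithful because a morphism of vector bundles on an irreducible variety that vanishes at one fibre vanishes identically. For connections this last point uses parallel transport along $\mathcal{L}$, whose orbits are all of $X$ by transitivity; for Higgs bundles it uses that $f$ is a morphism of polystable-graded bundles of degree zero, so its rank is constant. With Steps 1–3 established, Deligne's criterion shows that both categories are neutral Tannakian.
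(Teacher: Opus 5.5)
\textbf{Part (1).} Your argument is correct and is essentially the paper's proof. A local splitting of the anchor turns an $\mathcal L$--connection into an ordinary connection, and this is exactly the mechanism behind the local-freeness result for coherent sheaves with $\mathcal L$--connections that the paper quotes. For faithfulness you need neither parallel transport nor the claim that a bundle map vanishing at one fibre vanishes identically; that claim is false in general. What you actually use is strictness: the image and cokernel are locally free, so $h_x=0$ forces $\operatorname{Im}(h)=0$.

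\textbf{Part (2): the abelian axiom is not proved.} The entire content of the axiom is that every morphism in $\texttt{Higg}^{\mathrm{sh}}_{\mathcal L}(X)$ is strict, with kernel, image and cokernel locally free and again semiharmonic. You only announce this as ``routes to try''. In dimension $\geq 2$, equal degrees only show that the image agrees with its saturation outside codimension $2$. The tools you name are theorems about $\operatorname{Sym}^\bullet\mathcal T_X$--modules. Here $\Lambda^{\mathrm{Dol}}_{\mathcal L}=\operatorname{Sym}^\bullet V$, and the Higgs field has components along $(\ker\delta)^*$, so neither the Bogomolov inequality nor Simpson's correspondence applies as stated.

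The paper supplies this step with Proposition~\ref{prop:locally-free-L-Higgs}. It composes with a splitting $b$ of the anchor to get $\theta_X=(\mathrm{id}\otimes b^\vee)\circ\theta_{\mathcal L}$, then applies Simpson's theorem on semistable Higgs sheaves with vanishing Chern classes. You need a lemma of this kind, but this reduction needs more than one line:
\begin{enumerate}
\item A $\theta_{\mathcal L}$--invariant subsheaf is $\theta_X$--invariant but not conversely, so semistability transfers in the wrong direction.
\item $b$ exists only locally, whereas semistability and Chern classes are global.
\end{enumerate}
Step 1 has a similar omission: it never checks that tensor products and duals remain semistable with vanishing Chern classes. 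Already for ordinary Higgs bundles this is a genuine theorem, not a formality.

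\textbf{Part (2): ``degree zero'' does not follow.} Your slope argument assumes degree zero, which does not follow from vanishing of $\mathcal L$--Chern classes when $\mathcal L$ is non-split. Take $X$ a curve and $\mathcal L=\At(L)$ with $\deg L\neq 0$.
\begin{enumerate}
\item The tautological action of $\At(L)$ on $L$ is an integrable $\mathcal L$--connection. Hence $\delta^*c_1(L)=c_1^{\mathcal L}(L)=0$.
\item Since $c_1(L)$ spans $H^2(X,\mathbb Q)$, $\delta^*$ kills $c_1$ of every line bundle.
\item So $(\mathcal O_X,0)$ and $(\mathcal O_X(p),0)$ both lie in $\texttt{Higg}^{\mathrm{sh}}_{\mathcal L}(X)$.
\item The canonical section $\mathcal O_X\to\mathcal O_X(p)$ is a monomorphism. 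It is also an epimorphism there, because all objects are torsion-free. It is not an isomorphism, so the category is not abelian.
\end{enumerate}
Thus, with ``semiharmonic'' defined through $\mathcal L$--Chern classes alone, statement (2) fails. Vanishing of the ordinary Chern classes (equivalently, Hilbert polynomial $n\mathcal P_0$) must be built into the definition before any version of your Step 2 can work. The paper builds it in only implicitly, via its claim that vanishing $\mathcal L$--Chern classes imply vanishing ordinary Chern classes for transitive $\mathcal L$, and this example refutes that claim.
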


In \cite[\S\,9]{CT.Simpson-1995}, C.~T.~Simpson developed a Tannakian description of principal \(G\)-bundles endowed with integrable connections and of principal \(G\)-Higgs bundles of semiharmonic type on \(X\). In this paper, we extend this framework to the setting of algebraic transitive Lie algebroids \(\mathcal{L}\) on \(X\).

We first generalize the notion of \(\mathcal{L}\)-twisted Higgs bundles  to principal \(G\)-bundles. Our first main result provides a Tannakian description of principal \(G\)-bundles equipped with integrable \(\mathcal{L}\)-connections and \(\mathcal{L}\)-twisted principal \(G\)-Higgs bundles.

\begin{thm}\eqref{lem:PB-with-L-conection-Tannakian equivalence},\eqref{lem: L-twisted PB-Higgs bundle-tannakian}.
	\label{thm:tannakian-principal-objects}
	Let $\mathcal{L}$ be an algebraic transitive Lie algebroid. The assignment $E_{G}\longmapsto \rho_{E_{G}}$ defined in
	\eqref{eq:nori-functor} induces the following equivalences of categories.
	\begin{enumerate}
		\item The category of principal $G$--bundles on $X$ equipped with integrable $\mathcal{L}$--connections is equivalent to the category of $G$ torsor in $\texttt{Vect}^{\emph{int}}_{\mathcal{L}}(X)$,
		such that for every closed point $x\in X$, the functor $
		V\longmapsto \rho(V)_{x}$ defines a fiber functor on $\texttt{Rep}(G)$.
		
		\item The category of $\mathcal{L}$--twisted principal $G$--Higgs bundles of semiharmonic type on $X$ is equivalent to the category of $G$--torsor in $\texttt{Higg}^{\emph{sh}}_{\mathcal{L}}(X)$,
		such that for every closed point $x\in X$, the functor $
		V\longmapsto \rho(V)_{x}$  defines a fiber functor on $\texttt{Rep}(G)$.
	\end{enumerate}
\end{thm}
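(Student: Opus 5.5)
We follow Nori's Tannakian description of principal bundles, as used by Simpson in \cite[\S\,9]{CT.Simpson-1995}, and transport it to the Lie algebroid setting using the Tannakian structure of Proposition \eqref{prop: tannakian category Vect_L(X)} and \eqref{prop:Tannakian-L-Higgs}.

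\textbf{The functor.} Given a principal $G$--bundle $E_G$ with an integrable $\mathcal{L}$--connection $D$, and a representation $V\in\texttt{Rep}(G)$, form the associated bundle $E_G(V)=E_G\times^G V$. The connection $D$ is a $G$--equivariant splitting of the $\mathcal{L}$--Atiyah sequence, and it induces an $\mathcal{L}$--connection on $E_G(V)$ through the map $\mathrm{ad}(E_G)\to \mathrm{End}(E_G(V))$ given by $d\rho_V\colon\mathfrak{g}\to\mathfrak{gl}(V)$. Since $d\rho_V$ is a Lie algebra map, the curvature of the induced connection is the image of the curvature of $D$, so it vanishes. We check that $\rho_{E_G}\colon V\mapsto (E_G(V),D_V)$ is exact, $\mathbb{C}$--linear and $\otimes$--compatible, and that it sends the trivial representation to $(\mathcal{O}_X,d_{\mathcal{L}})$, where $d_{\mathcal{L}}$ is the composite of $d$ with the anchor. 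This makes $\rho_{E_G}$ a $G$--torsor in $\texttt{Vect}^{\rm int}_{\mathcal{L}}(X)$, in the sense of a tensor functor $\texttt{Rep}(G)\to \texttt{Vect}^{\rm int}_{\mathcal{L}}(X)$. Composing with the fiber at a closed point $x$ gives the forgetful fiber functor of $G$ twisted by the torsor $(E_G)_x$, so $V\mapsto \rho(V)_x$ is a fiber functor. Morphisms of principal objects induce tensor natural transformations, so we obtain a functor between the two categories.

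\textbf{Quasi-inverse.} Start from a tensor functor $\rho$ such that each $V\mapsto \rho(V)_x$ is a fiber functor. Composing with the forgetful functor to $\texttt{Vect}(X)$ gives an exact faithful tensor functor $\texttt{Rep}(G)\to\texttt{Vect}(X)$. By Nori's theorem, this comes from a unique principal $G$--bundle $E_G$ with $\rho(V)\cong E_G(V)$. Concretely, $E_G=\underline{\mathrm{Isom}}^{\otimes}(\omega,\rho)$ as a scheme over $X$, and the pointwise fiber-functor hypothesis is exactly what guarantees that this is a $G$--torsor rather than just a fpqc sheaf.

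It remains to recover the $\mathcal{L}$--connection. The main obstacle is producing the splitting of the $\mathcal{L}$--Atiyah sequence $0\to \mathrm{ad}(E_G)\to \At_{\mathcal{L}}(E_G)\to \mathcal{L}\to 0$ from the compatible family of $\mathcal{L}$--connections $D_V$. To do this, we use the regular representation $\mathcal{O}(G)$ as an ind-object; equivalently, we choose a faithful $V$ and use the fact that $G$ is the stabilizer of a line in a tensor construction. The connection $D_{\mathcal{O}(G)}$ on $\rho(\mathcal{O}(G))=\pi_*\mathcal{O}_{E_G}$ satisfies Leibniz with respect to multiplication, because $\rho$ is tensor compatible and multiplication is a morphism of representations. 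It is also $G$--invariant, because right translations are morphisms in $\texttt{Rep}(G)$. Hence each section $s$ of $\mathcal{L}$ gives a $G$--invariant derivation of $\mathcal{O}_{E_G}$ lifting the anchor image of $s$, which is precisely a $G$--invariant lift, i.e.\ an $\mathcal{L}$--connection on $E_G$. Integrability follows because the curvature $\mathcal{O}_X$--linearly determines the curvatures of all the $D_V$, and these vanish. We then check that the two constructions are mutually quasi-inverse; this reduces to Nori's equivalence together with uniqueness of the connection inducing the given $D_V$, which holds by faithfulness on a faithful $V$.

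\textbf{Part (2).} We argue identically, with $\mathcal{L}$--twisted Higgs fields $\theta\in H^0(\mathrm{ad}(E_G)\otimes \mathcal{L}^\ast)$, $[\theta,\theta]=0$, inducing $\theta_V=d\rho_V(\theta)$. The additional point is semiharmonicity. We define $E_G$ to be of semiharmonic type if $E_G(V)$ is for one, equivalently every, faithful $V$. Since $\texttt{Higg}^{\rm sh}_{\mathcal{L}}(X)$ is closed under $\otimes$, duals and subquotients (Proposition \eqref{prop:Tannakian-L-Higgs}), this condition propagates to all $V$. Conversely, a torsor valued in $\texttt{Higg}^{\rm sh}_{\mathcal{L}}$ yields a semiharmonic $E_G$ by definition. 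The Higgs field is recovered as before, as a $G$--equivariant $\mathcal{O}_{E_G}$--linear derivation family, i.e.\ an element of $\mathrm{ad}(E_G)\otimes\mathcal{L}^\ast$.
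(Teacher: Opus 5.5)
Your proposal is correct and follows the paper's route. In the forward direction you induce integrable $\mathcal{L}$--connections (resp.\ Higgs fields) on associated bundles via the morphism of $\mathcal{L}$--Atiyah sequences given by $d\rho_V$; conversely you apply Nori's reconstruction and then recover the extra structure from the compatible family $\{D_V\}$, and your use of the ind-object $\rho(\mathcal{O}(G))=\pi_*\mathcal{O}_{E_G}$ supplies the mechanism the paper only asserts (``functorial flatness uniquely determines a splitting''). Two small corrections: in part (2) the Higgs field comes from $G$--invariant $\mathcal{O}_X$--linear (vertical) derivations of $\pi_*\mathcal{O}_{E_G}$, not $\mathcal{O}_{E_G}$--linear ones (which vanish); and integrability needs the converse of what you wrote, namely that $\ad(E_G)$ acts faithfully on $\pi_*\mathcal{O}_{E_G}$ (equivalently $\ad(E_G)\to\End(E_G(V))$ is injective for faithful $V$), so that vanishing curvature of the $D_V$ forces the curvature on $E_G$ (resp.\ $[\theta,\theta]$) to vanish.
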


Using the above Tannakian description together with Mumford’s geometric invariant theory, we construct moduli spaces of $\mathcal{L}$--twisted principal $G$--bundles with integrable $\mathcal{L}$--connection and $\mathcal{L}$--twisted principal $G$--Higgs bundles of harmonic type on $X$.

\begin{thm}\eqref{cor:moduli of int-L-connection PB}
	There exists a quasi projective variety $\mathcal{M}^{\text{DR}}_{\mathcal{L}}(X,G)$ which is universally co-represents the moduli functor $\mathcal{M}^{\text{DR}^{\natural}}_{\mathcal{L}}(X,G)$ defined in \eqref{moduli of principal G- bundles with integrable connection}. 
	In particular, $\mathcal{M}^{\text{DR}}_{\mathcal{L}}(X,G)$ is a quasi-projective variety.
\end{thm}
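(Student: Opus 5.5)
The plan follows Simpson's construction in \cite[\S\,9]{CT.Simpson-1995}, with the sheaf of rings of differential operators $\Lambda^{\mathrm{DR}}=\mathcal{D}_X$ replaced by the split almost polynomial sheaf of rings of differential operators $\Lambda_{\mathcal{L}}$ attached to $\mathcal{L}$. This sheaf is the universal enveloping algebra of the Lie algebroid $\mathcal{L}$, as in Tortella \cite{P.Tortella-2011T}, and $\Lambda_{\mathcal{L}}$--modules are exactly vector bundles with integrable $\mathcal{L}$--connections. Fix an ample line bundle $\mathcal{O}_X(1)$, and let $P$ be the Hilbert polynomial of the associated vector bundle $E_G\times^G V$ for a faithful representation $V$ of $G$. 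Simpson's theory of $\Lambda$--modules \cite{CT.Simpson-1994} gives a quasi-projective scheme $Q=Q_{\Lambda_{\mathcal{L}}}(P)$ and a $\mathrm{GL}(N)$--action such that $Q/\!\!/\mathrm{GL}(N)$ universally corepresents semistable $\Lambda_{\mathcal{L}}$--modules with Hilbert polynomial $P$. Since $\mathcal{L}$ is transitive, every coherent $\Lambda_{\mathcal{L}}$--module is locally free. I would also use the Tannakian statement \eqref{lem:PB-with-L-conection-Tannakian equivalence} (part (1) of Theorem~\ref{thm:tannakian-principal-objects}).

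\textbf{Step 1: the frame bundle.} Over the open locus $Q^{\circ}\subset Q$ of $\mathcal{L}$--semistable modules that are locally free with the correct rank, consider the universal $\Lambda_{\mathcal{L}}$--module $\mathcal{E}$. Form the relative scheme $R\to Q^\circ$ parametrizing pairs consisting of a $\Lambda_{\mathcal{L}}$--module and a reduction of structure group of its frame bundle to $G$ that is compatible with the $\mathcal{L}$--connection.

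\textbf{Step 2: representability of $R$.} By the Tannakian equivalence, such reductions correspond to $\Lambda_{\mathcal{L}}$--submodules of $\mathrm{Sym}^\bullet(\mathcal{E}\otimes V^\vee)$ cutting out a $G$--torsor. Following Simpson, I would realize these as sections of the quotient $\mathrm{Isom}(\mathcal{E},V\otimes\mathcal{O})/G$ that are flat for the induced $\mathcal{L}$--connection. The quotient is affine over $X$ because $G$ is reductive. Flatness is a closed condition, and $X$ is projective, so $R\to Q^\circ$ is represented by a scheme that is affine and of finite type over $Q^\circ$. This gives $R$ together with a $\mathrm{GL}(N)$--action.

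\textbf{Step 3: semistability.} Check that $E_G$ with an integrable $\mathcal{L}$--connection is semistable exactly when the associated $\Lambda_{\mathcal{L}}$--module $E_G(V)$ is semistable. One direction uses the fact that $G$--reductions destabilize the associated bundle. The other uses the fact that a semistable $\Lambda_{\mathcal{L}}$--module in characteristic zero remains semistable under tensor operations. The second direction is the analogue of \cite[Lemma 9.?]{CT.Simpson-1995}: it follows from the correspondence of semistable objects with $\mathcal{L}$--connections and polystable objects, or via Tannakian reductivity of the monodromy group scheme.

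\textbf{Step 4: the quotient.} Set $\mathcal{M}^{\mathrm{DR}}_{\mathcal{L}}(X,G)=R/\!\!/\mathrm{GL}(N)$. Because $R\to Q^{\circ}$ is affine and $\mathrm{GL}(N)$--equivariant, Ramanathan's lemma (an affine equivariant morphism to a scheme with a good quotient) implies that $R$ admits a good quotient. That quotient is quasi-projective, being affine over $Q^{\circ}/\!\!/\mathrm{GL}(N)$. Universal corepresentability then follows formally from the local universal property of $R$ together with the properties of good quotients, as in \cite{CT.Simpson-1994}.

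The main obstacle is Step 2 (with Step 3). One must show that reductions compatible with the $\mathcal{L}$--connection form a closed subscheme of finite type, and that the Tannakian fiber functor condition of Theorem~\ref{thm:tannakian-principal-objects} is open and closed in families. I would first try to verify this via the Tannakian criterion: the ideal defining the torsor is $\mathcal{L}$--stable, and invariance under $\mathcal{L}$ is tested on finitely many generators of the ideal, which are bounded by degree.
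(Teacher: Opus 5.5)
Your argument is correct in outline but takes a genuinely different route from the paper.

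\textbf{The paper's route.} The paper follows Simpson's framed construction. It starts from $\mathcal{R}^{\mathrm{DR}}_{\mathcal{L}}(X,\mathfrak{p},n)$, on which every point is $\GL(n,\mathbb{C})$--semistable. It then cuts out $\mathcal{R}^{\mathrm{DR}}_{\mathcal{L}}(X,\mathfrak{p},G)$ as the closed subscheme $\bigcap_{(V,W)}\mathbf{C}(V,W)$ of framed objects whose monodromy is contained in $G$. Closedness comes from the projective scheme $\mathcal{N}(E,k)$ of connection--compatible quotients, which sits in a Grassmannian of the fibre at $\mathfrak{p}$. The identification with framed $G$--objects comes from the monodromy and frame--bundle lemmas. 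Finally the paper takes the GIT quotient by $G$ itself, with the restricted linearization, using that the functor is \'etale locally the quotient of the framed space by $G$.

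\textbf{Your route.} You work in Ramanathan's style over Simpson's unframed parameter space. A reduction to $G$ is a section of the affine bundle $\mathrm{Fr}(\mathcal{E})/G$, which is affine by Matsushima since $G$ is reductive. Compatibility with the connection is horizontality of that section, a closed condition, so $R\to Q^{\circ}$ is affine. Ramanathan's lemma then gives a good quotient $R/\!\!/\GL(N)$ that is affine over $Q^{\circ}/\!\!/\GL(N)=\mathcal{M}^{\mathrm{DR}}_{\mathcal{L}}(X,n)$.

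\textbf{Comparison.} Your route needs no framings, no monodromy groups and no Tannakian reconstruction. In particular, the ``main obstacle'' you list (the fibre--functor condition being open and closed in families) never arises, because Step 2 already represents the functor. The paper's route in exchange produces closed embeddings $\mathcal{R}^{\mathrm{DR}}_{\mathcal{L}}(X,\mathfrak{p},G)\hookrightarrow\mathcal{R}^{\mathrm{DR}}_{\mathcal{L}}(X,\mathfrak{p},H)$, which it reuses for properness and semiprojectivity. Your route gives instead an affine morphism $\mathcal{M}^{\mathrm{DR}}_{\mathcal{L}}(X,G)\to\mathcal{M}^{\mathrm{DR}}_{\mathcal{L}}(X,n)$.

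\textbf{Step 3 should be replaced.} The functor $\mathcal{M}^{\mathrm{DR}^{\natural}}_{\mathcal{L}}(X,G)$ imposes no semistability on $E_G$, so an equivalence between semistability of $E_G$ and of $E_G(V)$ is not what you need. The justification you sketch (the placeholder ``Lemma 9.?'', tensor products of semistable modules, reductivity of monodromy) is also not an argument. What Step 4 actually requires, for the local universal property, is the following: for every family in the functor, each $E_G(V)_s$ is a semistable $\Lambda^{\mathrm{DR}}_{\mathcal{L}}$--module, so that the family lands in $Q^{\circ}$ with a uniform twist.

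The paper supplies exactly this input. It claims that for transitive $\mathcal{L}$ every connection--invariant subsheaf has normalized Hilbert polynomial $\mathcal{P}_0$, so every such module is automatically semistable. Its own proof depends on the same claim, which is why every point of $\mathcal{R}^{\mathrm{DR}}_{\mathcal{L}}(X,\mathfrak{p},n)$ is semistable.

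Be aware that this input is delicate. Take $\mathcal{L}=\At(F)$ with $F$ a line bundle of nonzero degree on a curve. Then $F$ carries its tautological integrable $\mathcal{L}$--connection, so $F^{k}\oplus F^{-k}$ has Hilbert polynomial $2\mathcal{P}_0$ but is not semistable for $k\neq 0$. In that generality, semistability of $E_G(V)$ must be built into the functor, which your restriction to $Q^{\circ}$ does implicitly. Only then does a question like your Step 3, namely independence from the choice of $V$, become a real issue.
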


\begin{thm}\eqref{thm:moduli-L-twisted-principal-G-Higgs}
	There exists a quasi projective variety $\mathcal{M}^{\text{Dol}}_{\mathcal{L}}(X,G)$ which is universally co-represents the moduli functor $\mathcal{M}^{\text{Dol}^{\natural}}_{\mathcal{L}}(X,G)$ defined in \eqref{moduli-of-L-twisted princpal G--Higgs bundles}. 
	In particular, $\mathcal{M}^{\text{Dol}}_{\mathcal{L}}(X,G)$ is a quasi-projective variety.
\end{thm}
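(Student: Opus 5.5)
The plan follows Simpson's construction in \cite[\S\,9]{CT.Simpson-1995}, with the sheaf $\Lambda^{\mathrm{Dol}}_{\mathcal{L}}=\mathrm{Sym}^{\bullet}(\mathcal{L})$ in place of $\mathrm{Sym}^{\bullet}(\mathcal{T}_X)$. An $\mathcal{L}$--twisted Higgs bundle is a $\Lambda$--module in Simpson's sense for a split almost polynomial $\Lambda$ of this kind, so \cite{CT.Simpson-1994} already gives a quasi-projective coarse moduli space of semistable $\mathcal{L}$--twisted Higgs bundles of fixed Hilbert polynomial. This space is a GIT quotient $Q/\!\!/\mathrm{SL}(N)$ of a locally closed subscheme $Q$ of a Quot scheme parametrising $p$--semistable $\Lambda$--modules. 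For principal objects, fix a faithful representation $G\hookrightarrow \GL(V)$. By Theorem \ref{thm:tannakian-principal-objects}(2), an $\mathcal{L}$--twisted principal $G$--Higgs bundle of semiharmonic type is a $G$--torsor in $\texttt{Higg}^{\mathrm{sh}}_{\mathcal{L}}(X)$ whose fibres are fibre functors. Equivalently, it is an $\mathcal{L}$--twisted Higgs bundle $E_V$ with a reduction of structure group of the frame bundle to $G$ compatible with the Higgs field, i.e.\ a section of $\mathrm{Isom}(V\otimes\mathcal{O},E_V)/G$ that is flat for the induced $\Lambda$--structure.

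\textbf{Step 1.} Identify harmonic type with semistability of the associated Higgs bundle $E_V$ with vanishing rational Chern classes, as in Simpson. I would check that this notion is Tannakian, i.e.\ preserved by $\otimes$ and by passing to associated bundles. For this I would use Proposition \eqref{prop:Tannakian-L-Higgs}, which asserts that semiharmonic objects form a neutral Tannakian category. This fixes the Hilbert polynomial and places all the relevant objects inside Simpson's bounded family.

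\textbf{Step 2.} Over the parameter scheme $Q$ with its universal framed $\Lambda$--module $\mathcal{E}$, consider the relative scheme $\mathrm{Isom}_{Q}(V\otimes\mathcal{O},\mathcal{E})/G\to Q\times X$. The condition of being a $G$--reduction compatible with the $\Lambda$--action is closed, because it says the $G$--invariant tensors (via Chevalley) are preserved by the Higgs field. Take $R\to Q$ to be the scheme of such sections. $R$ is affine over $Q$, by a Tannakian argument analogous to Simpson's Lemma 9.x using reductivity of $G$. It is also $\mathrm{SL}(N)$--equivariant.

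\textbf{Step 3.} Since $R\to Q$ is affine and equivariant, the linearized ample bundle on $Q$ pulls back to $R$. Semistable points of $R$ are exactly the preimages of semistable points of $Q$, so the quotient $\mathcal{M}^{\mathrm{Dol}}_{\mathcal{L}}(X,G):=R/\!\!/\mathrm{SL}(N)$ exists, is quasi-projective, and universally co-represents the moduli functor $\mathcal{M}^{\mathrm{Dol}^{\natural}}_{\mathcal{L}}(X,G)$. Universality follows from the universal property of Simpson's GIT quotient together with local universality of $R$. The quotient is independent of the choice of $V$ by the Tannakian equivalence of Theorem \ref{thm:tannakian-principal-objects}.

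The main obstacle is Step 2. There one must show that the space of compatible $G$--reductions is representable and affine (finite type) over $Q$, and that families of semiharmonic objects are stable under the tensor constructions. This is where transitivity of $\mathcal{L}$ and Proposition \eqref{prop:Tannakian-L-Higgs} enter. I would first try to reduce it to Simpson's argument by replacing $\Omega^1_X$ with $\mathcal{L}^{\vee}$ throughout.
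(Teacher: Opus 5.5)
Your route is genuinely different from the paper's, and it works once the quotient group in Step 3 is corrected.

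\textbf{How the two routes differ.} The paper never builds a space of reductions over Simpson's Quot-scheme parameter space $Q$. It starts from the framed space $\mathcal{R}^{\mathrm{Dol}}_{\mathcal{L}}(X,\mathfrak{p},n)$ and cuts out $\mathcal{R}^{\mathrm{Dol}}_{\mathcal{L}}(X,\mathfrak{p},G)$ (Theorem \ref{thm:Dol-L-representation-space}) as the closed locus where the monodromy lies in $G$. Concretely, for every representation $V$ of $GL(n,\mathbb{C})$ and every $G$--stable $W\subset V$, there must be a Higgs-invariant strict subbundle with fibre $W$ at $\mathfrak{p}$. This is a closed condition because $\mathcal{N}(E,k)$ embeds in a Grassmannian. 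The paper then quotients by the small group $G$ acting on framings, and gets corepresentability from \'etale-local framings.

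You instead take reductions of $\mathrm{Fr}(E_V)$ to $G$ compatible with the Higgs field, over $Q$, and quotient by the big group. This avoids the monodromy and Tannakian apparatus, and the strictness of Higgs-invariant subsheaves. Tannakian input is needed only for independence of the faithful representation. Compatibility becomes the single closed condition $\theta_V\in \mathrm{ad}(E_G)\otimes\Omega^1_{\mathcal{L}}$. The paper's approach, in exchange, produces framed spaces with closed embeddings functorial in $G\hookrightarrow H$. It reuses these for its properness and semiprojectivity results, and the same template handles the de Rham and Hodge cases.

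\textbf{The step that fails: quotienting by $SL(N)$.} Scalars in $GL(N)$ act trivially on $Q$, but act on the universal $\mathcal{E}$ by scalar automorphisms. Hence they act on reductions by $\sigma\mapsto\lambda\sigma$, which moves $\sigma$ unless $\lambda\in G$. Yet $(q,\sigma)$ and $(q,\lambda\sigma)$ define isomorphic principal $G$--Higgs bundles. For $G=SL(V)$, for example, the reductions over $q$ are the trivializations of $\det E_V$, a $\mathbb{C}^{*}$--torsor. At a stable point the stabilizer of $q$ in $SL(N)$ is only $\mu_N$. So $R/\!\!/SL(N)$ has a one-dimensional fibre over each stable point and does not corepresent the functor.

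You must quotient by $GL(N)$. Since scalars act trivially on $Q$, the map $Q\to Q/\!\!/SL(N)$ is also a good $GL(N)$--quotient. Ramanathan's lemma then finishes the job: if $R\to Q$ is affine and equivariant and $Q$ has a good quotient, then $R$ has one too, and $R/\!\!/GL(N)\to Q/\!\!/GL(N)$ is affine, hence $R/\!\!/GL(N)$ is quasi-projective. This is cleaner than comparing semistable loci for a pulled-back linearization. The paper's framed approach never meets this issue, because framed objects are rigid.

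\textbf{Two smaller corrections.} First, affineness of $R\to Q$ is not a Tannakian fact. It comes from Matsushima's theorem: $GL(V)/G$ is affine because $G$ is reductive, so $\mathrm{Isom}(V\otimes\mathcal{O},\mathcal{E})/G\to Q\times X$ is affine. Sections of an affine finite-type morphism over $X\times Q$, with $X$ projective, then form a scheme affine over $Q$. To see this, embed it in a linear fibre space and apply Lemma \ref{lem:fact-from-simpson-thm-3-8}.

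Second, the condition in your Step 1 (semistable with vanishing Chern classes) is what the paper calls \emph{semiharmonic} type. That is the right condition for the functor; harmonic type only describes the closed points of the quotient.
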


\medskip

\noindent
\textbf{(3) $\mathcal{L}$--Hodge moduli spaces for principal $G$ bundles and semiprojectivity.}
We further generalize the notion of \(\mathcal{L}\)-Hodge moduli spaces introduced in \cite{Alfaya-Oliveire-2024} to the setting of principal \(G\)-bundles (see \eqref{The-L-Hodge-moduli-spaces-for-principal-bundles}). Another main result of this paper concerns the global geometry of the resulting moduli spaces.

\begin{thm} \eqref{semiprojectivity of MLDOL(X,G)}, \eqref{semiprojectivity of MLHOD(X,G)}
Let $X$ be a smooth irreducible projective variety and $\mathcal{L}$ be an algebraic transitive Lie algebroid.
	\begin{enumerate}
		\item The moduli space $\mathcal{M}^{\mathrm{Dol}}_{\mathcal{L}}(X,G)$ of $\mathcal{L}$--twisted principal $G$--Higgs bundles of harmonic type is a semiprojective variety.
		\item The $\mathcal{L}$--Hodge moduli space $\mathcal{M}^{\mathrm{Hod}}_{\mathcal{L}}(X,G)$ of principal $G$--bundles is a semiprojective variety.
	\end{enumerate}
\end{thm}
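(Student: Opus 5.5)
We will verify the two defining conditions of semiprojectivity in the sense of Hausel--Rodriguez-Villegas. Given a quasi-projective variety with a $\mathbb{C}^{\ast}$-action, we must show that (a) for every point $p$ the limit $\lim_{t\to 0} t\cdot p$ exists, and (b) the fixed point locus is proper (projective). The actions are the natural ones. On $\mathcal{M}^{\mathrm{Dol}}_{\mathcal{L}}(X,G)$ we let $t\cdot(E_G,\theta)=(E_G,t\theta)$. On $\mathcal{M}^{\mathrm{Hod}}_{\mathcal{L}}(X,G)$, whose points are triples $(E_G,\lambda,\nabla)$ with $\nabla$ an integrable $\lambda$-$\mathcal{L}$-connection, we let $t\cdot(E_G,\lambda,\nabla)=(E_G,t\lambda,t\nabla)$. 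This is compatible with the projection $\mathcal{M}^{\mathrm{Hod}}_{\mathcal{L}}(X,G)\to\mathbb{A}^1$ and the standard weight-one action on $\mathbb{A}^1$.

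\textbf{Step 1: reduction to vector bundles.} By the Tannakian descriptions (Theorem~\ref{thm:tannakian-principal-objects} and its $\lambda$-version), we fix a faithful representation $G\hookrightarrow \GL(V)$. Simpson's construction, applied here via GIT, realizes the principal moduli spaces as closed/finite images of, or as spaces admitting finite morphisms to, the corresponding vector bundle moduli spaces $\mathcal{M}^{\mathrm{Dol}}_{\mathcal{L}}(X,n)$ and $\mathcal{M}^{\mathrm{Hod}}_{\mathcal{L}}(X,n)$. Concretely, the GIT quotient for principal objects is obtained from a $G$-reduction scheme over the $\GL_n$-representation space $R$ of framed $\Lambda$-modules. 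We will show that the extension-of-structure-group map is finite and $\mathbb{C}^{\ast}$-equivariant. Semiprojectivity for vector bundles is known from \cite{Alfaya-Oliveire-2024}. Properness of fixed loci and existence of limits both pull back under a finite (hence proper) equivariant morphism. Existence of limits follows from the valuative criterion: the orbit map $\mathbb{C}^{\ast}\to M$ extends to $\mathbb{A}^1$ after composing to the target, and properness lifts the extension. Properness of the fixed locus holds because it is closed in the preimage of a proper set.

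\textbf{Step 2: direct argument for limits, as a backup.} If finiteness is awkward, we argue on the representation space $R$ directly. For the Dolbeault case we use Langton-type semistable reduction in Simpson's $\Lambda$-module setting. Given a family over $\mathbb{C}^{\ast}$ we extend it to a semistable object over $\mathbb{A}^1$, using that the Hitchin-type map $h\colon\mathcal{M}^{\mathrm{Dol}}_{\mathcal{L}}(X,G)\to \bigoplus_i H^0(X,\mathrm{Sym}^{d_i}\mathcal{L}^{\ast})$, given by invariant polynomials, is proper. Properness of $h$ is Simpson's argument, again reduced to $\GL_n$ via the faithful representation. Since $h$ is equivariant for a positive-weight action on an affine space, $h(t\cdot p)\to 0$, and properness yields the limit. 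The fixed locus then lies in the proper fiber $h^{-1}(0)$, and being closed it is proper.

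\textbf{Step 3: the Hodge case.} The fixed locus of $\mathcal{M}^{\mathrm{Hod}}_{\mathcal{L}}(X,G)$ lies over $\lambda=0$, so it equals the fixed locus of $\mathcal{M}^{\mathrm{Dol}}_{\mathcal{L}}(X,G)$, which is proper by part (1). For limits we use Simpson's argument for $\lambda$-connections. Given $(E_G,\lambda,\nabla)$, the orbit gives a family over $\mathbb{C}^{\ast}$. Via the faithful representation we apply the Griffiths-transversal filtration / Langton procedure to extend it over $0$, obtaining a semistable $\mathcal{L}$-Higgs object over $t=0$. The Tannakian equivalence ensures that the limit object is again a principal $G$-object of semiharmonic type, since the tensor structure is preserved in flat limits.

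\textbf{Main obstacle.} We expect the main difficulty to be ensuring that the limits remain principal $G$-objects, i.e.\ that the $G$-torsor structure and the fiber-functor condition survive the degeneration. We would first try finiteness of the map to the $\GL_n$ moduli, citing Simpson's \cite[\S 9]{CT.Simpson-1995} construction as a closed subscheme of a finite cover. If that fails, we would use the properness of the Hitchin map for $G$ directly.
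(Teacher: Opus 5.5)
Your Step~1 is essentially the paper's argument. The paper also lifts limits from the vector bundle case along the $\mathbb{C}^{\ast}$-equivariant map $\mathcal{M}_{\mathcal{L}}(X,G)\to\mathcal{M}_{\mathcal{L}}(X,n)$ (Dolbeault and Hodge), using only its properness, which it establishes separately by adapting Simpson's harmonic-metric argument; it gets properness of the Dolbeault fixed locus from the proper composite $\mathcal{H}\circ i$ (your preimage-of-the-$\mathrm{GL}_n$-fixed-locus argument is an equivalent variant) and identifies the Hodge fixed locus with the Dolbeault one because it lies over $\lambda=0$, so finiteness is more than you need and the Langton-type backups in Steps~2--3 are unnecessary.
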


\noindent
\textbf{(4) Motivic classes of the resulting moduli spaces in the Grothendieck ring of varieties.}

Let \(X\) be a smooth projective curve of genus \(g \geqslant 2\) and $\gcd(r,d) = 1$. For Higgs bundles, let \(\mathcal{M}^{\mathrm{Dol}}(r,d)\) denote the moduli space of Higgs bundles of rank \(r\) and degree \(d\). For connections, fix a point \(x \in X\), and let \(\mathcal{M}^{\mathrm{DR}}(r,d)\) denote the moduli space of logarithmic connections with allowed poles at \(x\), and with residue fixed by the degree \(d\). Using the semiprojectivity and smoothness of \(\mathcal{M}^{\mathrm{Hod}}(r,d)\), Hoskins and Lehalleur \cite{Hoskins-Lehalleur-2021} established what they called the \textquotedblleft motivic non-abelian Hodge correspondence\textquotedblright\ by proving an equality between the Voevodsky motives of \(\mathcal{M}^{\mathrm{Dol}}(r,d)\) and \(\mathcal{M}^{\mathrm{DR}}(r,d)\). Later, David Alfaya and Andr\'e Oliveira generalized these results to the setting of rank one Lie algebroids whose degree is less than that of \(\mathcal{T}_{X}\) (cf.~\cite{Alfaya-Oliveire-2024}).

In general, the \(\mathcal{L}\)-Hodge moduli space \(\mathcal{M}^{\mathrm{Hod}}_{\mathcal{L}}(X,G)\) is not smooth. Nevertheless, by restricting to the smooth loci of the corresponding moduli spaces, one can still obtain analogous motivic results.

Let \(Y\) be a semiprojective variety over \(\mathbb{C}\), and denote by \(\widehat{Y}\) its smooth locus. Observe that the smooth locus of a semiprojective variety is again semiprojective.
\begin{thm}\eqref{thm:motivic equality of dr-dol-Hod principal moduli space}.
	Let $X$ be a smooth irreducible projective variety over $\mathbb{C}$, and $\mathcal{L} = (V,[·,·],\delta)$ be a transitive Lie algebroid on $X$. Then the following equalities hold $\hat{\mathcal{K}}(Var_{\mathbb{C}})$,
	
	\begin{enumerate}
		\item $[\widehat{\mathcal{M}^{\text{Dol}}_{\mathcal{L}}(X,G)}]=\sum_{\alpha\in X^{\mathbf{G}_{m}}}\mathbb{L}^{N_{\alpha}^{+}}[F_{\alpha}]$
		
		\item $[\widehat{\mathcal{M}^{\text{DR}}_{\mathcal{L}}(X,G)}] = [\widehat{\mathcal{M}_{\mathcal{L}}^{\text{Dol}}(X,G)}, \ \  [\widehat{\mathcal{M}_{\mathcal{L}}^{\text{Hod}}(X,G)]} = \mathbb{L}[\widehat{\mathcal{M}_{\mathcal{L}}^{\text{Dol}}(X,G)}]$,
		
		\item $E(\widehat{\mathcal{M}_{\mathcal{L}}^{\text{DR}}(X,G))} = E(\widehat{\mathcal{M}_{\mathcal{L}}^{\text{Dol}}(X,G))}$, \ \  $E(\widehat{\mathcal{M}_{\mathcal{L}}^{\text{Hod}}(X,G))} = xyE(\widehat{\mathcal{M}_{\mathcal{L}}^{\text{Dol}}(X,G))}$,
		
		\item we have an isomorphism of Hodge structures,
		\[H^{\bullet}(\widehat{\mathcal{M}_{\mathcal{L}}^{\text{DR}}(X,G)}) \cong H^{\bullet}(\widehat{\mathcal{M}_{\mathcal{L}}^{\text{Dol}}(X,G)})\]and both $\widehat{\mathcal{M}_{\mathcal{L}}^{\text{DR}}(X,G)}$ and $\widehat{\mathcal{M}_{\mathcal{L}}^{\text{Hod}}(X,G)}$ have pure mixed Hodge structure.

		\item A smooth fixed point 
		\[
		\alpha \in \mathcal{M}^{\mathrm{DR}}_{\mathcal{L}}(X,G)
		\quad 
		\left(\text{respectively} \ 
		\mathcal{M}^{\mathrm{Dol}}_{\mathcal{L}}(X,G),\ 
		\mathcal{M}^{\mathrm{Hod}}_{\mathcal{L}}(X,G)
		\right)
		\]
		is very stable if and only if the corresponding upward flow \(F^{+}_{\alpha}\) is closed.
	\end{enumerate}
	
\end{thm}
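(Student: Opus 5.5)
The plan is to deduce everything from the semiprojectivity results \eqref{semiprojectivity of MLDOL(X,G)} and \eqref{semiprojectivity of MLHOD(X,G)}, following Hausel--Rodriguez-Villegas, Hoskins--Lehalleur and Alfaya--Oliveira. I work throughout with the smooth loci. First I check that $\widehat{Y}$ is semiprojective whenever $Y$ is. The $\mathbf{G}_m$-action preserves the singular locus, so it restricts to $\widehat{Y}$. The fixed locus $\widehat{Y}^{\mathbf{G}_m}$ is then a union of connected components of $Y^{\mathbf{G}_m}$ meeting the smooth locus, and one has to check these are projective; I expect this to need an argument rather than being automatic. Limits $\lim_{t\to0}t\cdot y$ exist in $Y$; I must also ensure they land in $\widehat{Y}$ when $y\in\widehat{Y}$. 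This is the step I expect to be the main obstacle. I would first try to show that the smooth locus relevant to the statement is an open $\mathbf{G}_m$-stable subset which is a union of attracting sets, for instance the stable-and-simple locus where the deformation complex is unobstructed. Failing that, I would define $\widehat{Y}$ as the union of upward flows of smooth fixed points and accept that the statement is about this locus.

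Granting this, part (1) is the Bia{\l}ynicki-Birula decomposition of the smooth semiprojective variety $\widehat{\mathcal{M}^{\mathrm{Dol}}_{\mathcal{L}}(X,G)}$. It is a disjoint union of locally closed attracting sets $U_\alpha\to F_\alpha$, which are Zariski-locally trivial affine fibrations of rank $N^+_\alpha$, the dimension of the positive-weight part of the tangent space along $F_\alpha$. Summing gives $\sum_\alpha \mathbb{L}^{N_\alpha^+}[F_\alpha]$ in $\hat{\mathcal{K}}(Var_{\mathbb{C}})$.

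For (2), I use the projection $\pi:\widehat{\mathcal{M}^{\mathrm{Hod}}_{\mathcal{L}}(X,G)}\to\mathbb{A}^1$, which is $\mathbf{G}_m$-equivariant for the weight-one action on $\mathbb{A}^1$. Its fibre over $0$ is the Dolbeault space, and over $\lambda\neq0$ it is isomorphic to the de Rham space via the action. The Hodge space is semiprojective and $\pi$ is a submersion on the smooth locus. The latter should follow from the deformation complex, whose hypercohomology dimensions are constant in $\lambda$, so that $\pi$ is smooth. Then \cite[Cor.~1.3.3]{T.Hausel-F. Villegas}, or the motivic version in \cite{Hoskins-Lehalleur-2021}, gives $[\pi^{-1}(0)]=[\pi^{-1}(1)]$ and $[\widehat{\mathcal{M}^{\mathrm{Hod}}}]=\mathbb{L}[\pi^{-1}(0)]$. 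The argument behind this is as follows. The fixed loci of the Hodge space coincide with those of the fibre over $0$. The upward flows in the Hodge space have one extra positive weight, coming from the $\mathbb{A}^1$ direction. Comparing the two Bia{\l}ynicki-Birula sums, with $\mathbb{L}[\pi^{-1}(1)]+[\pi^{-1}(0)]=[\text{total}]$, yields the identities.

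Part (3) follows by applying the motivic measure $E$, since $E(\mathbb{L})=xy$. For (4), the Hausel--Rodriguez-Villegas theorem applies to a smooth semiprojective variety with a surjective submersion to $\mathbb{A}^1$. It shows that the restriction $H^\bullet(\text{total})\to H^\bullet(\pi^{-1}(\lambda))$ is an isomorphism for every $\lambda$, and that these cohomologies are pure, because the total space retracts onto the projective core. This gives the isomorphism of Hodge structures and the purity claims.

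For (5), I follow \cite[\S4]{Hausel-Hitchin} as adapted in \cite{Alfaya-Oliveire-2024}. Closedness of $F^+_\alpha$ is equivalent to properness of $F^+_\alpha\to\mathcal{M}/\!\!/\mathbf{G}_m$-type maps. Very stability of $\alpha$, meaning there is no nonzero nilpotent element in the upward flow of the fixed point, is equivalent to the upward flow meeting the downward core only at $\alpha$. The affine-space structure of $F^+_\alpha$ then shows closedness is equivalent to $F^+_\alpha\cap\overline{F^+_\beta}=\emptyset$ for $\beta\ne\alpha$, which is exactly very stability.
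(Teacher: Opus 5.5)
\textbf{Same route as the paper, but the step you flag is a genuine gap.} Your outline matches the paper's proof step for step: Bia{\l}ynicki-Birula on the smooth locus for (1), the Hodge family $\pi$ with Alfaya--Oliveira's Theorem~5.6 and Hausel--Rodriguez-Villegas for (2)--(4), and Hausel--Hitchin for (5). The paper handles the step you single out with a one-line assertion that the smooth locus of a semiprojective variety is again semiprojective. You are right not to trust it. Your proposal leaves it open, and it cannot be closed, because it is false.

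\emph{An abstract counterexample.} The cone $Y=\{x^2+y^2+z^2=0\}\subset\mathbb{A}^3$ with the scaling action is semiprojective. Its smooth locus $\widehat Y=Y\setminus\{0\}$ has no fixed points and no limits, so the Bia{\l}ynicki-Birula sum is $0$, whereas $[\widehat Y]=\mathbb{L}^2-1$.

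\emph{A counterexample in the theorem's own setting.} Take $\mathcal L=\mathcal T_X$ on a curve of genus $g\geq 2$ and $G=GL_2$. Let $E=L_1\oplus L_2$ with $L_1\not\cong L_2$ of degree $0$, and choose $\theta$ whose off-diagonal components $L_2\to L_1\otimes K_X$ and $L_1\to L_2\otimes K_X$ are both nonzero. Then $(E,\theta)$ is stable, simple, and a smooth point of the moduli space. However, $\lim_{t\to0}(E,t\theta)=(L_1\oplus L_2,0)$ is strictly polystable, hence a singular point. Moreover, the fixed component $\{(F,0)\}$ meets the smooth locus exactly in the non-proper locus of stable $F$. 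So $\widehat{\mathcal M^{\mathrm{Dol}}_{\mathcal L}(X,G)}$ fails both semiprojectivity conditions. The sum in (1) then only computes the class of $\{y\in\widehat Y:\lim_{t\to0}t\cdot y\in\widehat Y\}$, which is a strictly smaller set.

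\emph{Your two fallbacks.} The same example rules out fallback (a): the stable, simple, unobstructed locus is not closed under $t\to0$ limits. Fallback (b) replaces the theorem rather than proving it. The union of attracting sets of smooth fixed points is not the smooth locus and need not be open. You would also still need a smooth total space over $\mathbb{A}^1$ whose fibre over $1$ is an identifiable subset of $\mathcal M^{\mathrm{DR}}_{\mathcal L}(X,G)$.

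\textbf{Further gaps, shared with the paper or specific to your write-up.}
\begin{itemize}
\item \emph{The submersion claim.} You assert that $\pi$, restricted to the smooth locus $Z$ of $\mathcal M^{\mathrm{Hod}}_{\mathcal L}(X,G)$, is a surjective submersion with $\pi|_Z^{-1}(0)=\widehat{\mathcal M^{\mathrm{Dol}}_{\mathcal L}(X,G)}$ and $\pi|_Z^{-1}(1)=\widehat{\mathcal M^{\mathrm{DR}}_{\mathcal L}(X,G)}$. This does not follow from ``constant hypercohomology''. The $\mathbf{G}_m$-action only gives $\pi^{-1}(\mathbf{G}_m)\cong\pi^{-1}(1)\times\mathbf{G}_m$. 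At $\lambda=0$ two things must actually be proved: the submersion property, and the match between smooth points of the fibre and smooth points of the total space (which needs flatness of $\pi$). The literature you invoke establishes these only under coprimality and, for Alfaya--Oliveira, rank-one and degree hypotheses on $\mathcal L$; none of these is assumed here.
\item \emph{A bookkeeping slip in (2).} The correct decomposition is $[\mathcal M^{\mathrm{Hod}}]=[\pi^{-1}(0)]+(\mathbb{L}-1)[\pi^{-1}(1)]$, not $[\pi^{-1}(0)]+\mathbb{L}[\pi^{-1}(1)]$. Combined with $[\mathcal M^{\mathrm{Hod}}]=\mathbb{L}[\pi^{-1}(0)]$, one then cancels the invertible element $\mathbb{L}-1$ of $\hat{\mathcal K}(Var_{\mathbb C})$.
\item \emph{Part (5), de Rham case.} $\mathcal M^{\mathrm{DR}}_{\mathcal L}(X,G)$ carries no $\mathbf{G}_m$-action, since the action on $\mathcal M^{\mathrm{Hod}}$ moves $\pi^{-1}(1)$ to $\pi^{-1}(t)$. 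So Hausel--Hitchin does not cover that case at all.
\item \emph{Part (5), the closedness criterion.} Your reformulation puts the closure on the wrong side. $F^+_\alpha$ is closed iff $\overline{F^+_\alpha}\cap F^+_\beta=\emptyset$ for all $\beta\neq\alpha$. Your condition $F^+_\alpha\cap\overline{F^+_\beta}=\emptyset$ already fails on $\mathbb{P}^1$ with $t\cdot[x:y]=[tx:y]$: there $F^+_{[1:0]}=\{[1:0]\}$ is closed and very stable, yet it lies in $\overline{F^+_{[0:1]}}=\mathbb{P}^1$.
\end{itemize}
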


\subsection{Open question and future work}. 
We are presently able to construct moduli spaces of \(\mathcal{L}\)-twisted principal \(G\)-objects only in the case where \(\mathcal{L}\) is a transitive Lie algebroid. It is natural to expect that one can construct moduli spaces of \(\mathcal{L}\)-twisted principal \(G\)-objects for arbitrary Lie algebroids \(\mathcal{L}\), together with the smooth loci of these moduli spaces, and establish a motivic invariance theorem analogous to \cite[Theorem~7.1]{Alfaya-Oliveire-2024}, at least in the case of rank one and rank two Lie algebroids.

To achieve this, it would be necessary to investigate the fixed-point loci of the corresponding moduli spaces. It would also be interesting to compute the homotopy groups of the smooth loci of the resulting moduli spaces.


\section{Lie algebroid connections, and moduli spaces}

Throughout this paper, let $X$ be an irreducible smooth projective variety over $\mathbb{C}$, and let $\mathcal{O}_X(1)$ denote a fixed very ample line bundle on $X$. For computational purposes, we shall mainly work with irreducible smooth projective curves over $\mathbb{C}$ of genus $g \geq 2$. Let $G$ be a linear algebraic group over $\mathbb{C}$. Unless explicitly stated otherwise, all objects considered in this paper are algebraic. In particular, we work with algebraic vector bundles, algebraic principal $G$-bundles (locally trivial in the \'{e}tale topology; see, for example, \cite{Sch-2008, C.Sorger-1999}), algebraic Lie algebroids defined over irreducible smooth projective varieties over $\mathbb{C}$. This convention will be assumed throughout, even when the adjective ``algebraic'' is omitted.

We shall also use the standard identification between algebraic vector bundles and locally free sheaves on $X$.

\subsection{Lie algebroids and $\mathcal{L}$-connection on $G$-bundles}
Let $\mc O_X$ be the sheaf of functions on $X$, and let 
$\mathcal{T}_{X}$ be the tangent bundle of $X$. 

\begin{defn}\cite[\S\,3.1]{Alfaya-Oliveire-2024}
	An {\it algebraic Lie algebroid} on $X$ is a triple $\mc L := (V, [\cdot, \cdot], \delta)$, where 
	\begin{enumerate}[(i)]
		\item $V$ is a vector bundle on $X$, 
		
		\item $[\cdot, \cdot] : V\times V \to V$ is a $\bb C$--bilinear skew-symmetric 
		morphism of sheaves such that for all locally defined sections 
		$u, v, w$ of $V$, the following Jacobi identity holds: 
		$$[u, [v, w]]+[v, [w, u]]+[w, [u, v]] = 0;$$
		
		\item $\delta : V \to \mathcal{T}_{X}$ is a vector bundle homomorphism 
		satisfying the following properties: for all locally defined sections $s, t$ of $V$ and 
		locally defined section $f$ of $\mc O_X$, we have 
		\begin{enumerate}[(a)]
			\item {\it Compatibility of Lie algebra structures}: 
			$\delta([s, t]) = [\delta(s), \delta(t)]$, and 
			\item {\it Leibniz rule}: $[fs, t] = f[s, t] - \delta(t)(f)s$.
		\end{enumerate}
	\end{enumerate}
	The homomorphism $\delta$ is called the {\it anchor map} of the Lie algebroid $\mc L$. 
	The {\it degree} and the {\it rank} of $\mathcal{L}$ is defined to be the degree and the rank, respectively, of the underlying vector bundle $V$ of $\mc L$. A Lie algebroid $\mathcal{L} = (V,[\cdot,\cdot],\delta)$ on $X$ is called
	\emph{transitive} if its anchor map
	\[
	\delta : V \longrightarrow \mathcal{T}_X
	\]
	is surjective.
	A Lie algebroid $\mathcal{L}=(V,[\cdot,\cdot],\delta)$ on $X$ will be called \textit{split} if there is an $\mathcal{O}_{X}$--linear homomorphism,
	\[\gamma:\mathcal{T}_{X}\rightarrow V\]
	such that $\delta\circ \gamma = \text{Id}_{\mathcal{T}_{X}}$. A Lie algebroid $\mathcal{L}$ on $X$ will be called \textit{nonsplit} if it is not split.
\end{defn}

A \textit{Lie algebroid map} $f:\mathcal{L}\rightarrow \mathcal{L}'$ between $\mathcal{L}=(V,[\cdot,\cdot]_{V},\delta_{V})$ and $\mathcal{L}'=(V',[\cdot,\cdot]_{V'},\delta_{V'})$ is an algebraic $\mathbb{C}$--Lie algebra bundle map  $f:V\rightarrow V'$ such that $\delta_{V'}\circ f = \delta_{V}$. A \textit{Lie algebroid isomorphism} is a Lie algebroid map which is an isomorphism of the underlying bundles; in that case the Lie algebroids are said to be \textit{isomorphic}.
\begin{example}[Standard Examples of Lie Algebroids]
Let $X$ be a smooth projective variety over $\mathbb{C}$ and $D \subset X$ is a simple normal crossing divisor.
	
\begin{enumerate}
\item \textit{Tangent Lie algebroid.}  
The tangent bundle $\mathcal{T}_X$ with anchor
		\[
		\rho = \mathrm{id}_{\mathcal{T}_X} : \mathcal{T}_X \to \mathcal{T}_X
		\]
		and the usual Lie bracket of vector fields.  
		This Lie algebroid is \emph{transitive}.
		
		\item \textit{Logarithmic tangent Lie algebroid.}  
		Define $\mathcal{T}_X(-\log D)$ to be the subbundle of $\mathcal{T}_{X}$ whose section are the vector fields $\mathcal{V}$ such that $\mathcal{V}(I_{D})\subseteq I_{D}$, for $I_{D}$ the ideal sheaf of $D$ (in other words, $\mathcal{T}_{X}(-\log D)$ consisting of vector fields tangent to $D$). The Lie bracket of sections in $\mathcal{T}_{X}(-\log D)$ is still in  $\mathcal{T}_{X}(-\log D)$, so that it inherits a Lie algebroid structure from $\mathcal{T}_{X}$
		with the natural inclusion anchor
		\[
		i : \mathcal{T}_X(-\log D) \hookrightarrow \mathcal{T}_X.
		\]
		This Lie algebroid is generally \emph{not transitive}.
		
		\item \textit{Atiyah Lie algebroid of a principal bundle.} \label{example:Atiyah Lie algebroid of a principal bundle}
		Let $E\xrightarrow{\pi} X$ be a principal $G$-bundle on $X$. Denote by $\mathfrak{g}$ the Lie algebra of $G$. Consider the exact sequence obtained from the differential of $\pi$,
		\[0\longrightarrow E\times \mathfrak{g}\longrightarrow \mathcal{T}_{E}\longrightarrow \pi^{*}\mathcal{T}_{X}\longrightarrow 0\]The group $G$ acts naturally on each element of this sequence, and after taking the quotient one obtains the exact sequence of vector bundles over $X$
		\begin{equation}\label{Atiyah sequence for PB}
		0 \longrightarrow \operatorname{ad}(E)
		\longrightarrow \frac{\mathcal{T}_{E}}{G}
		\xrightarrow{\;\rho\;}
		\mathcal{T}_X
		\longrightarrow 0.
		\end{equation}
		The \textit{Atiyah algebroid} of $E$ is $\text{At}(E):=\mathcal{T}_{E}/G$, with anchor equal to the quotient map of the sequence, and bracket induced by the bracket of the vector fields in $\mathcal{T}_{E}$. This Lie algebroid is \emph{transitive}. An algebraic $\mathcal{T}_{X}$--connection (or, simply connection) on $E$ is a map of algebraic vector bundle $\nabla:\mathcal{T}_{X}\rightarrow \text{At}(E)$ such that $\rho\circ \nabla = \text{Id}_{\mathcal{T}_{X}}$. We say that the connection $\nabla$ is \textit{integrable} if moreover $\nabla$ is a morphism of Lie algebroid.
		
		Assume that $E$ does not admit any algebraic connection. For example, set $G = GL(r, \mathbb{C})$ and take $E$ to be the  principal $GL(r, \mathbb{C})$–bundle over $X$ associated to a vector bundle of rank $r$ and nonzero degree over $X$. Then the Lie algebroid $(\text{At}(E), [\cdot,\cdot],\rho)$ is nonsplit. On the other hand, if $E$ admits a connection, then the Lie algebroid $(\text{At}(E), [\cdot,\cdot],\rho)$ in is split.
		
		\item \textit{Log Picard Lie algebroid.} A log Picard algebroid $(\mathcal{A}, [\cdot,\cdot], \sigma, e)$ on $(X, D)$ is a locally free $\mathcal{O}_{X}$-module $\mathcal{A}$ equipped with a Lie bracket $[\cdot,\cdot]$, a bracket-preserving morphism of $\mathcal{O}_{X}$--modules $\sigma : \mathcal{A} \rightarrow
		\mathcal{T}_{X}(- \log D)$, and a central section $e$, such that the Leibniz rule
		\[[a_{1},fa_{2}]=f[a_{1},a_{2}]+\sigma(a_{1})(f)a_{2}\]
         holds for all $f \in \mathcal{O}_{X}$, $a_{1}, a_{2} \in \mathcal{A}$, and the sequence
		\[0\longrightarrow \mathcal{O}_{X}\xrightarrow{e}\mathcal{A}\xrightarrow{\sigma} \mathcal{T}_{X}(-\log D)\longrightarrow 0\]is exact. These objects were considered by Marco Gualtieri and Kevin Luk in \cite{GL-2021}.

		\item \textit{Trivial Lie algebroid.}  
		Any algebraic vector bundle $V$ on $X$ can be viewed as a Lie algebroid by equipping it with zero bracket $[s_1,s_2]=0$
		and zero anchor $\rho = 0 : V \to \mathcal{T}_X.$
		This Lie algebroid is \emph{not transitive}.
		
	\end{enumerate}
\end{example}

Let $\mathcal{L} = (V ,[\cdot,\cdot],\delta)$ be a Lie algebroid. We now define a differential on the complex of exterior powers, $$\Omega^{\bullet}_{\mathcal{L}} := \bigwedge^{\bullet}V^{*}, \ d_{\mathcal{L}}:\Omega^{k}_{\mathcal{L}}\rightarrow \Omega^{k+1}_{\mathcal{L}},$$
generalising the classical de Rham complex $d :\Omega^{k}_{X}\rightarrow \Omega^{k+1}_{X}$. In degree $0$, define $d_{\mathcal{L}}: \mathcal{O}_{X} \rightarrow V^{\ast}$ as the
composition of the canonical differential $d : \mathcal{O}_{X}\rightarrow \Omega_{X}^{1}(=\mathcal{T}^{*}_{X})$ with the dual of the anchor, $\delta^{*}: \Omega^{1}_{X} \rightarrow V^{\ast}$. Thus, given $v \in V$ and $f$ a local algebraic function on $X$,
\[d_{\mathcal{L}}(f)(v):=df(\delta(v))=\delta(v)(f)\]
The map $d_{\mathcal{L}} : O_{X} \rightarrow V^{*}(=\Omega^{1}_{\mathcal{L}})$ is clearly a $V^{*}$-valued derivation, we can extend it to higher order exterior powers through the usual recursive equation, for details see \cite[\S\,3.1]{Alfaya-Oliveire-2024}. This differential satisfies $d_{\mathcal{L}}^{2}=0$, so $(\Omega^{\bullet}_{\mathcal{L}},d_{\mathcal{L}})$ is a complex, called the \textit{Chevalley-Elienberg-de Rham complex} of $\mathcal{L}$. Note that $d_{\mathcal{L}}=0$ for a trivial algebroid.

We denote by $\tau^{\geqslant r}\Omega^{\bullet}_{\mathcal{L}}$
L the \textit{b$\hat{e}$te filtration }of the complex $\Omega^{\bullet}_{\mathcal{L}}$, which is the complex,
\[\tau^{\geqslant r}\Omega^{\bullet}_{\mathcal{L}}=
\begin{cases}
0 \ \ \ \ \ \text{if} \ k<r,\\
\Omega^{k}_{\mathcal{L}} \ \ \text{if} \ k\geqslant r
\end{cases}\]
Let $\mathfrak{U} = \{U_{i}\}$ be a sufficiently fine open covering of $X$ , such that we have an isomorphism between the sheaf and $\check{C}$ech cohomology over it. Consider the double complex
\[
K^{p,q}_{\mathcal{L}}:=\check{C}^{q}(\mathfrak{U},\Omega^{p}_{\mathcal{L}})
\] with differentials given by $d_{\mathcal{L}}$ and the $\check{C}$ech coboundary, and recall that its associated total complex $T^{\bullet}_{\mathcal{L}}$ computes the hypercohomology of $\Omega^{\bullet}_{\mathcal{L}}$. Remark that
the hypercohomology of the complex $\tau^{\geqslant r}\Omega^{\bullet}_{\mathcal{L}}$ is isomorphic to the cohomology of the
complex of vector spaces
\[T^{k}_{\tau^{\geqslant r}\Omega^{\bullet}_{\mathcal{L}}}=\bigoplus_{p+q=k,p\geqslant r}K^{p,q}_{\mathcal{L}}\]

\begin{defn}\label{def:Lie-alg-conn-on-VB}
	Let $\mathcal{E}$ be a vector bundle on $X$, and let $\mathcal{L}$ be an algebraic Lie algebroid over $X$. An {\it $\mc L$-- connection} on  
	$\mc E$ on $X$ is a $\bb C$--linear homomorphism of sheaves 
	$$D_{\mathcal{L}} : \mc E \longrightarrow \mc E\otimes \Omega^{1}_{\mathcal{L}}$$ 
	satisfying the {\it $\delta^*$--twisted Leibniz rule}: 
	\begin{equation}\label{eqn:phi^*-twisted-Leibniz-rule}
		D_{\mathcal{L}}(f\cdot s) = fD_{\mathcal{L}}(s)+s\otimes\delta^*(df), 
	\end{equation}
	for all locally defined section $s$ of $\mc E$ and for all locally defined 
	section $f$ of $\mc O_X$. 
\end{defn}
\begin{defn}
Let $(\mathcal{E},D_{\mathcal{L}})$ be an $\mathcal{L}$-connection on $X$. The composition map $D^{2}_{\mathcal{L}}:\mathcal{E}\rightarrow \mathcal{E}\otimes \Omega^{2}_{\mathcal{L}}$, called the curvature of $(\mathcal{E},D_{\mathcal{L}})$. An $\mathcal{L}$-connection is \textit{integrable} if its curvature vanishes.
\end{defn}
Let $X$ be an irreducible smooth projective curve over $\mathbb{C}$. It is a well-known fact that not every algebraic vector bundle admits an algebraic $\mathcal{T}_{X}$--connection. A theorem of Atiyah \cite{Atiyah-1957} states that an algebraic vector bundle $\mathcal{E}$ admits an algebraic $\mathcal{T}_{X}$--connection if and only if each indecomposable component of $\mathcal{E}$ has degree zero. It can be shown that $\mathcal{E}$ admits a tautological Lie algebroid connection with respect to the Lie algebroid $(\operatorname{At}(\mathcal{E}), [\cdot,\cdot], \rho)$ (see, for example, \cite{ABKA-2025}); moreover, this tautological Lie algebroid connection on $\mathcal{E}$ is integrable. Furthermore, \cite{ABKA-2025} established an Atiyah--Weil type criterion for the existence of algebraic $\mathcal{L}$--connections, which we state below.

\begin{thm}\cite[\S\,1, Theorem 1.1]{ABKA-2025}
Let $X$ be an irreducible smooth projective curve over $\mathbb{C}$, and $\mathcal{L}=(V, [\cdot,\cdot],\delta)$ be a algebraic Lie algebroid over $X$.
\begin{itemize}
\item If $\mathcal{L}$ be a nonsplit Lie algebroid. Then every vector bundle $\mathcal{E}$ on $X$ admits a Lie algebroid connection for $\mathcal{L}$.
\item If $\mathcal{L}$ be a split Lie algebroid. Then the following two statements are equivalent:
\begin{enumerate}
\item $\mathcal{E}$ admits a Lie algebroid connection for $\mathcal{L}$.
\item Each indecomposable component of $\mathcal{E}$ is of degree zero.
\end{enumerate}
\end{itemize}
\end{thm}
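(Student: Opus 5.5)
Since the statement is cited from \cite{ABKA-2025}, we sketch how one would prove it along the lines of Atiyah's original argument. The plan is to express the obstruction to an $\mathcal{L}$--connection on $\mathcal{E}$ as a class in $H^{1}(X,\End(\mathcal{E})\otimes V^{*})$. This class is the image of the Atiyah class of $\mathcal{E}$ under the map induced by $\delta^{*}:\Omega^{1}_{X}\to V^{*}$.

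First we check what an $\mathcal{L}$--connection looks like locally. On an open set where $\mathcal{E}$ is trivial, $s\mapsto d_{\mathcal{L}}s := (\delta^{*}\otimes\mathrm{id})(ds)$ is an $\mathcal{L}$--connection. Any two $\mathcal{L}$--connections differ by an $\mathcal{O}_X$--linear map in $\End(\mathcal{E})\otimes V^{*}$. On overlaps the local connections differ by $\delta^{*}(g_{ij}^{-1}dg_{ij})$, where $g_{ij}$ are the transition functions. Hence the obstruction class is $\delta^{*}_{*}\,\mathrm{at}(\mathcal{E})$, and $\mathcal{E}$ admits an $\mathcal{L}$--connection if and only if this class vanishes.

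Next we pass to duals via Serre duality on the curve. The class $\delta^{*}_{*}\mathrm{at}(\mathcal{E})$ vanishes if and only if it pairs to zero with every element $\psi\in H^{0}(X,\End(\mathcal{E})\otimes V\otimes K_X)$. By adjunction this pairing equals $\langle \mathrm{at}(\mathcal{E}), (\mathrm{id}\otimes\delta\otimes\mathrm{id})(\psi)\rangle$. Here $(\mathrm{id}\otimes\delta\otimes\mathrm{id})(\psi)$ lies in $H^{0}(\End(\mathcal{E})\otimes\mathcal{T}_X\otimes K_X)=H^{0}(\End\mathcal{E})$, using $\mathcal{T}_X\otimes K_X\cong\mathcal{O}_X$.

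We then split into the two cases of the statement.

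\emph{Split case.} Suppose $\delta\circ\gamma=\mathrm{Id}$. Then every $\phi\in H^{0}(\End\mathcal{E})$ is hit: take $\psi=(\mathrm{id}\otimes\gamma\otimes\mathrm{id})\phi$. Thus vanishing of the obstruction is equivalent to $\langle\mathrm{at}(\mathcal{E}),\phi\rangle=0$ for all $\phi$, which is the vanishing of $\mathrm{at}(\mathcal{E})$. Equivalently, $\mathcal{E}$ admits an ordinary connection. By Atiyah's theorem \cite{Atiyah-1957}, this holds if and only if each indecomposable component has degree zero. Conversely, an ordinary connection $D$ gives the $\mathcal{L}$--connection $(\mathrm{id}\otimes\delta^{*})\circ D$.

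\emph{Nonsplit case.} Nonsplit means $\delta$ is not surjective with a splitting. On a curve, $\mathcal{T}_X$ is a line bundle, so the image of $\delta$ is $\mathcal{T}_X(-D)$ for some effective divisor $D$. If $D=0$, then $\delta$ is surjective onto a line bundle. We must then show that a splitting always exists in the relevant sense, or handle the extension class. This is the main obstacle: surjectivity alone need not split, so the nonsplit transitive case requires a separate argument. For example, one can use that the extension class of $0\to\ker\delta\to V\to\mathcal{T}_X\to 0$ pairs nontrivially with every trace-zero part.

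In the case $D\neq 0$, the composite $\End(\mathcal{E})\otimes V\otimes K_X\to\End\mathcal{E}$ has image in $\End(\mathcal{E})(-D)$. So the $\phi$ that arise vanish at points of $D$. The key step is to show that $\langle \mathrm{at}(\mathcal{E}),\phi\rangle=0$ for such $\phi$. We would use Atiyah's decomposition $\phi=$ scalar $+$ nilpotent on each indecomposable summand. A nilpotent $\phi$ pairs to zero via the trace identity $\mathrm{tr}(\phi\,\mathrm{at}(\mathcal{E}))$. A scalar part vanishing at a point must be zero, so it contributes nothing.

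In the surjective nonsplit case, we would instead argue at the level of Atiyah's sequence $\mathrm{At}(\mathcal{E})\to\mathcal{T}_X$. Pull this sequence back along $\delta$ and use the nonvanishing extension class of $V$ to produce a lift $V\to\mathrm{At}(\mathcal{E})$ directly. That lift is exactly an $\mathcal{L}$--connection.
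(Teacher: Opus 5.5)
\emph{Assessment.} The paper gives no proof of this statement; it only quotes it from \cite{ABKA-2025}, so your sketch has to stand on its own. Most of it does.
\begin{itemize}
\item The obstruction is $\delta^{*}_{*}\mathrm{at}(\mathcal{E})$.
\item Serre duality reduces its vanishing to $\langle \mathrm{at}(\mathcal{E}),\phi\rangle=0$ for all $\phi$ in the image $I$ of $H^{0}(X,\End(\mathcal{E})\otimes V\otimes K_X)\to H^{0}(X,\End(\mathcal{E}))$.
\item The split case is correct.
\item The case $\operatorname{im}\delta=\mathcal{T}_X(-D)$ with $D\neq 0$ also works. Only the diagonal blocks $\pi_i\phi\iota_i=c_i\Id+n_i$ (with $n_i$ nilpotent) pair nontrivially with the block-diagonal Atiyah class. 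Vanishing at a point of $D$ forces $c_i=0$, because $n_i$ is pointwise nilpotent.
\item You omitted $\delta=0$, but it is trivial, since then $I=0$.
\end{itemize}

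\emph{The gap.} The genuine gap is the transitive nonsplit case, which you explicitly leave open. It is the heart of the first bullet: it contains the Atiyah algebroids of bundles without connections. Your two hints do not close it.
\begin{itemize}
\item The extension class $e\in H^{1}(X,\ker\delta\otimes K_X)$ is not what handles the trace-zero parts; Atiyah's nilpotent lemma already does that.
\item Pulling the Atiyah sequence back along $\delta$ just reproduces the $\mathcal{L}$--Atiyah sequence. Its class is the same obstruction $\delta^{*}_{*}\mathrm{at}(\mathcal{E})$, so nothing is gained.
\end{itemize}

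\emph{The missing step.} Nonsplitness is exactly what kills the scalar parts $c_i$. Take $\phi=(\mathrm{id}\otimes\delta\otimes\mathrm{id})(\psi)\in I$ and an indecomposable summand $\mathcal{E}_i$ of rank $r_i$, with inclusion $\iota_i$ and projection $\pi_i$. Let $t_i:\End(\mathcal{E})\to\mathcal{O}_X$ be $A\mapsto\mathrm{tr}(\pi_iA\iota_i)$, and put $\gamma_i:=(t_i\otimes\mathrm{id})(\psi)\in H^{0}(X,V\otimes K_X)=\mathrm{Hom}(\mathcal{T}_X,V)$. Then
$r_ic_i=\mathrm{tr}(\pi_i\phi\iota_i)=\delta\circ\gamma_i\in H^{0}(X,\End\mathcal{T}_X)=\mathbb{C}$.
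If $\lambda:=\delta\circ\gamma_i$ were nonzero, $\lambda^{-1}\gamma_i$ would split $\delta$. Hence for nonsplit $\mathcal{L}$ every $c_i=0$, and
$\langle\mathrm{at}(\mathcal{E}),\phi\rangle=\sum_i\mathrm{tr}(n_i\,\mathrm{at}(\mathcal{E}_i))=0$ by Atiyah's lemma.

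When $\delta$ is surjective, this is the same as saying two things. First, $I$ is the kernel of the connecting map $\phi\mapsto\phi\cdot e$. Second, the trace of the $(i,i)$ block of $\phi\cdot e$ is $r_ic_i\,e$, so $e\neq 0$ forces $c_i=0$. So the extension class detects the trace part, not the trace-zero part as your hint suggests.

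This single argument covers every nonsplit $\mathcal{L}$ at once: $\delta=0$, $D\neq 0$, and the surjective nonsplit case. The case division by $D$ therefore becomes unnecessary.
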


Let $\mathcal{F}$ be a vector bundle on $X$. The above theorem and example 2.1.2 \eqref{example:Atiyah Lie algebroid of a principal bundle} gives the following two corollary:
\begin{corollary}\cite[Corollary 5.1]{ABKA-2025}
Assume that $\mathcal{F}$ admits an $\mathcal{T}_{X}$--connection, and consider the Atiyah Lie algebroid $(\text{At}(\mathcal{F}), [\cdot,\cdot],\rho)$ associated to $\mathcal{F}$. Let $\mathcal{E}$ be an algebraic vector bundle on $X$. Then the following three statements are equivalent:
\begin{enumerate}
\item Each indecomposable component of $\mathcal{E}$ is of degree zero.
\item $E$ admits a Lie algebroid connection for $(\text{At}(\mathcal{F}),[\cdot,\cdot],\rho)$.
\item $E$ admits an integrable Lie algebroid connection for $(\text{At}(\mathcal{F}),[\cdot,\cdot],\rho)$.
\end{enumerate}
\end{corollary}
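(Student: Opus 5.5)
We prove the cycle $(1)\Rightarrow(3)\Rightarrow(2)\Rightarrow(1)$, following the preceding remark: we combine the Atiyah--Weil type theorem \cite[Theorem 1.1]{ABKA-2025} with the structure of the Atiyah algebroid in Example \eqref{example:Atiyah Lie algebroid of a principal bundle}. Two of the implications are formal. $(3)\Rightarrow(2)$ is trivial, since an integrable $\mathcal{L}$--connection is in particular an $\mathcal{L}$--connection. For $(2)\Rightarrow(1)$, the hypothesis that $\mathcal{F}$ admits a $\mathcal{T}_X$--connection gives, as explained in Example \eqref{example:Atiyah Lie algebroid of a principal bundle}, an $\mathcal{O}_X$--linear splitting $\nabla:\mathcal{T}_X\to \At(\mathcal{F})$ of the anchor $\rho$. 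Hence $(\At(\mathcal{F}),[\cdot,\cdot],\rho)$ is a \emph{split} Lie algebroid. The second bullet of \cite[Theorem 1.1]{ABKA-2025} then says directly that if $\mathcal{E}$ admits an $\At(\mathcal{F})$--connection, then every indecomposable component of $\mathcal{E}$ has degree zero.

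The substantive step is $(1)\Rightarrow(3)$: from the degree condition we must produce an \emph{integrable} $\At(\mathcal{F})$--connection, not just an arbitrary one. The plan is to pull back an integrable $\mathcal{T}_X$--connection along the anchor.

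\begin{enumerate}
\item Since $X$ is a curve, $\Omega^2_X=0$, so every $\mathcal{T}_X$--connection on $X$ is automatically integrable.
\item By Atiyah's theorem \cite{Atiyah-1957}, condition (1) gives a connection $D:\mathcal{E}\to\mathcal{E}\otimes\Omega^1_X$, which by the previous step is integrable.
\item For any Lie algebroid $\mathcal{L}=(V,[\cdot,\cdot],\delta)$, set $D_{\mathcal{L}}:=(\mathrm{id}\otimes\delta^*)\circ D$.
\item The $\delta^*$--twisted Leibniz rule \eqref{eqn:phi^*-twisted-Leibniz-rule} for $D_{\mathcal{L}}$ follows at once from the Leibniz rule for $D$.
\item Since $\delta$ preserves brackets, $\delta^*:\Omega^\bullet_X\to\Omega^\bullet_{\mathcal{L}}$ is a morphism of differential graded algebras. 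That is, $d_{\mathcal{L}}\circ\delta^*=\delta^*\circ d$, which one checks on generators $f$ and $df$ using the defining formula for $d_{\mathcal{L}}$ and the compatibility $\delta[s,t]=[\delta s,\delta t]$.
\item Consequently the curvature transforms as $D_{\mathcal{L}}^2=(\mathrm{id}\otimes\wedge^2\delta^*)\circ D^2=0$.
\end{enumerate}

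Applying this with $\mathcal{L}=\At(\mathcal{F})$ and $\delta=\rho$ gives the integrable connection required in (3).

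The main point to check carefully is the compatibility $D_{\mathcal{L}}^2=(\wedge^2\delta^*)D^2$, i.e.\ that pulling back along the anchor commutes with forming curvature. Concretely, I would verify the identity $D_{\mathcal{L}}^2(s)(u,v)=D^2(s)(\delta u,\delta v)$ on local sections $u,v$ of $V$. Writing $D_{\mathcal{L}}^2(s)(u,v)=D_{\delta u}D_{\delta v}s-D_{\delta v}D_{\delta u}s-D_{\delta[u,v]}s$, the identity follows from $\delta[u,v]=[\delta u,\delta v]$. Once this is settled, the remaining steps amount to bookkeeping.
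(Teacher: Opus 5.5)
Your proof is correct and takes essentially the approach the paper intends, which it leaves implicit by saying only that the corollary follows from Theorem 1.1 of \cite{ABKA-2025} together with the Atiyah algebroid example. The connection on $\mathcal{F}$ splits $\At(\mathcal{F})$, so the split case of that theorem gives $(2)\Rightarrow(1)$, and integrability comes from pulling back along the anchor a $\mathcal{T}_X$--connection, which is automatically flat because $\Omega^2_X=0$. As a side remark, your step $(1)\Rightarrow(3)$ never uses the hypothesis on $\mathcal{F}$: it works for any Lie algebroid.
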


\begin{corollary}\cite[Corollary 5.2]{ABKA-2025}
Assume that $\mathcal{F}$ does not admit any $\mathcal{T}_{X}$--connection, and
consider the Atiyah Lie algebroid $(\text{At}(\mathcal{F}), [\cdot,\cdot],\rho)$ associated to $\mathcal{F}$. Then any holomorphic vector bundle $\mathcal{E}$ on $X$ admits a Lie algebroid connection for $\text{At}(\mathcal{F})$.
\end{corollary}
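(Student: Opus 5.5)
The plan is to reduce the statement directly to the nonsplit case of the Atiyah--Weil type criterion \cite[Theorem 1.1]{ABKA-2025} recalled above. Throughout, $X$ is an irreducible smooth projective curve, as in that theorem, and we regard $\mathcal{F}$ through its frame bundle, a principal $\mathrm{GL}(r,\mathbb{C})$--bundle. With this identification, the Atiyah algebroid $(\operatorname{At}(\mathcal{F}),[\cdot,\cdot],\rho)$ of Example~\eqref{example:Atiyah Lie algebroid of a principal bundle} fits into the Atiyah sequence \eqref{Atiyah sequence for PB}:
\[
0\longrightarrow \operatorname{ad}(\mathcal{F})\longrightarrow \operatorname{At}(\mathcal{F})\xrightarrow{\;\rho\;}\mathcal{T}_X\longrightarrow 0 .
\]

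The first step is to show that $\operatorname{At}(\mathcal{F})$ is a \emph{nonsplit} Lie algebroid. By definition, a splitting is an $\mathcal{O}_X$--linear map $\gamma:\mathcal{T}_X\to\operatorname{At}(\mathcal{F})$ with $\rho\circ\gamma=\mathrm{Id}_{\mathcal{T}_X}$. In the same example, an algebraic $\mathcal{T}_X$--connection on $\mathcal{F}$ is defined to be exactly such a map $\nabla$. So the two notions coincide verbatim, and $\operatorname{At}(\mathcal{F})$ is split if and only if $\mathcal{F}$ admits a $\mathcal{T}_X$--connection. Under the hypothesis that $\mathcal{F}$ admits no $\mathcal{T}_X$--connection, $\operatorname{At}(\mathcal{F})$ is therefore nonsplit. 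This is also the remark made right after that example.

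The second step is to apply the first bullet of \cite[Theorem 1.1]{ABKA-2025} to the Lie algebroid $\mathcal{L}=\operatorname{At}(\mathcal{F})$. That bullet says that for a nonsplit algebraic Lie algebroid on a curve, every vector bundle $\mathcal{E}$ admits an $\mathcal{L}$--connection. This gives the claim for any algebraic vector bundle $\mathcal{E}$. For a holomorphic $\mathcal{E}$, GAGA on the projective curve $X$ makes it algebraic, and the algebraic $\operatorname{At}(\mathcal{F})$--connection then analytifies to a holomorphic one.

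There is essentially no obstacle beyond bookkeeping. The one point needing care is the identification of a splitting of the anchor with a connection. For the vector bundle $\mathcal{F}$ this is the classical Atiyah picture, in which $\operatorname{At}(\mathcal{F})$ is the sheaf of first-order differential operators on $\mathcal{F}$ with scalar symbol and $\rho$ is the symbol map. I would state that identification explicitly so that the nonsplit hypothesis of the cited theorem is visibly satisfied.
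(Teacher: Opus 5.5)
Your proposal is correct and takes essentially the paper's route. The paper also gets this corollary by noting, as in the Atiyah Lie algebroid example, that $\operatorname{At}(\mathcal{F})$ is nonsplit exactly because $\mathcal{F}$ admits no $\mathcal{T}_X$--connection, and then applying the nonsplit case of \cite[Theorem 1.1]{ABKA-2025}; your explicit identification of splittings with connections and your GAGA remark for holomorphic $\mathcal{E}$ just spell out what the paper leaves implicit.
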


The notion of Lie algebroid connection to the case of principal bundles 
introduced in (\cite{Ghosh-Paul-2025}) and for reader's convenience recall the definitions.
Let $G$ be a linear algebraic group over $\mathbb{C}$ with the Lie algebra $\mf{g} := \Lie(G)$. 
Let $p : E_G \to X$ be a algebraic principal $G$--bundle on $X$. The adjoint representation 
$${\rm ad} : G \longrightarrow \GL(\mf{g})$$ 
of $G$ on its Lie algebra $\mf{g}$ gives rise to a vector bundle 
$$\ad(E_G) := E_G\times^{\rm ad}\mf{g}$$ 
on $X$, called the {\it adjoint vector bundle} of $E_G$. If $E=\textbf{Fr}(\mathcal{E})$ is the frame bundle of a vector bundle $\mc E$ of rank $n$ on $X$, 
then we have $\ad(E) \cong \End(\mc E)$, the endomorphism bundle of $\mc E$. The surjective submersion $p : E_G \to X$ gives rise to an exact sequence of  vector bundles,
\begin{equation}
	\xymatrix{
		0 \ar[r] & \ad(E_G) \ar[r] & \At(E_G) \ar[r]^-{d'p} & TX \ar[r] & 0 
	}
\end{equation}
called the \textit{Atiyah exact sequence} of $E_G$.

Fix a Lie algebroid $\mc L = (V, [\cdot\,,\,\cdot], \delta)$ on $X$, and consider the map 
$$\rho : \At(E_G)\oplus V \longrightarrow TX$$ 
defined by 
\begin{equation}
	\rho(\xi, v) = d'p(\xi)-\delta(v), 
\end{equation}
for all locally defined section $\xi$ of $\At(E_G)$ and locally defined section $v$ of $V$. Note that $\rho$ is a vector bundle homomorphism and
\begin{equation}
	\At_{\delta}(E_G) := \rho^{-1}(0) 
\end{equation}
is a vector bundle on $X$ (as $\rho$ is surjective). 
The restriction of the second projection map gives rise 
to a vector bundle homomorphism 
\begin{equation}\label{eqn:rho-tilde-map}
	\widetilde{\rho} : \At_{\delta}(E_G) \longrightarrow V
\end{equation} 
with kernel 
$$\ker(\widetilde{\rho}) = \ad(E_G).$$ 

Thus we have the following short exact sequence 
\begin{equation}\label{eqn:Atiyah-exact-seq-for-(V,phi)-valued-connection}
	0 \longrightarrow \ad(E_G) \longrightarrow \At_{\delta}(E_G) 
	\stackrel{\widetilde{\rho}}{\longrightarrow} V \longrightarrow 0 
\end{equation}
of vector bundles on $X$, which fits into the following 
commutative diagram

\begin{equation}
\begin{CD}
	0@>>> \text{ad}(E_{G}) @>>> \At_{\delta}(E_G) @>\tilde{\rho}>> V @>>> 0\\
	@.    @| @VVV @VV\delta V @.\\
	0@>>> \text{ad}(E_{G}) @>>> \At(E_G) @>d'p>> TX @>>> 0\\
\end{CD}
\end{equation}
of vector bundle homomorphisms with all rows exact. This exact sequence is called the $\mathcal{L}$-\textit{Atiyah exact sequence for principal $G$-bundles}.

\begin{defn}\label{def:Lie-alg-conn-on-G-Bundles}\cite[\S\,2,Definition 2.2.7]{Ghosh-Paul-2025}
	An algebraic {\it $\mc L$-- connection on principal $G$-bundle} $E_G$, is a 
	vector bundle homomorphism 
	$$\nabla_{\mathcal{L}} : V \longrightarrow \At_{\delta}(E_G)$$
	such that $\widetilde{\rho}\circ\nabla_{\mathcal{L}} = \Id_V$, 
	where $\widetilde{\rho}$ is defined in \eqref{eqn:rho-tilde-map}. 
\end{defn}

The short exact sequence \eqref{eqn:Atiyah-exact-seq-for-(V,phi)-valued-connection} 
defines a cohomology class 
\begin{equation}\label{defn:V-valued-Atiyah-class-of-E_G}
	\Phi_{\mc L}(E_G) \in H^1(X, \ad(E_G)\otimes \Omega^{1}_{\mathcal{L}}), 
\end{equation}
such that the exact sequence \eqref{eqn:Atiyah-exact-seq-for-(V,phi)-valued-connection} 
splits algebraically if and only if $\Phi_{\mc V}(E_G) = 0$. 

\begin{proposition}\label{prop:E_G-admits-Lie-alg-conn-iff-Atiyah-cls-vanishes}\cite[\S\,2, Prop 2.2.9]{Ghosh-Paul-2025}
	A principal $G$--bundle $E_G$ on $X$ admits a $\mc L$--connection if and only if $\Phi_{\mc L}(E_G) = 0$. 
	We call $\Phi_{\mc L}(E_G)$ the {\it $\mc L$-- Atiyah class of $E_G$}. 
\end{proposition}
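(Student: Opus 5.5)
The proof is the standard obstruction-theoretic argument for splitting the $\mathcal{L}$--Atiyah sequence \eqref{eqn:Atiyah-exact-seq-for-(V,phi)-valued-connection}. An $\mathcal{L}$--connection in the sense of Definition \ref{def:Lie-alg-conn-on-G-Bundles} is exactly an $\mathcal{O}_X$--linear splitting $\nabla_{\mathcal{L}}$ of $\widetilde{\rho}$. So we must show that the sequence splits if and only if its extension class vanishes, and identify that class with $\Phi_{\mathcal{L}}(E_G)$.

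First, apply $\mathcal{H}om(V,-)$ to the sequence. Since $V$ is locally free, this gives a short exact sequence of sheaves
\[
0 \longrightarrow \ad(E_G)\otimes \Omega^{1}_{\mathcal{L}} \longrightarrow \mathcal{H}om(V,\At_{\delta}(E_G)) \longrightarrow \mathcal{E}nd(V) \longrightarrow 0,
\]
where we use $\mathcal{H}om(V,\ad(E_G)) \cong \ad(E_G)\otimes V^{*} = \ad(E_G)\otimes\Omega^{1}_{\mathcal{L}}$. Taking global sections gives the connecting map
\[
\partial : H^0(X,\mathcal{E}nd(V)) \longrightarrow H^1(X,\ad(E_G)\otimes\Omega^{1}_{\mathcal{L}}).
\]
By definition $\Phi_{\mathcal{L}}(E_G) := \partial(\Id_V)$. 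Exactness of the long cohomology sequence then shows that $\Id_V$ lifts to a global section $\nabla_{\mathcal{L}}\in H^0(X,\mathcal{H}om(V,\At_{\delta}(E_G)))$ with $\widetilde{\rho}\circ\nabla_{\mathcal{L}}=\Id_V$ if and only if $\partial(\Id_V)=0$. This proves both directions simultaneously.

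To make the argument concrete and independent of conventions, I would also describe the class in Čech terms. Choose an affine open cover $\{U_i\}$ of $X$ on which $\At_{\delta}(E_G)$ is trivial; on affines, surjections of locally free sheaves split, so local splittings $\nabla_i : V|_{U_i}\to \At_{\delta}(E_G)|_{U_i}$ exist. The differences $\nabla_i-\nabla_j$ take values in $\ker\widetilde{\rho}=\ad(E_G)$ and form a $1$-cocycle in $\ad(E_G)\otimes\Omega^1_{\mathcal{L}}$. A change of local splittings alters this cocycle by a coboundary. The cocycle is a coboundary $\nabla_i-\nabla_j = \theta_i-\theta_j$ exactly when the corrected splittings $\nabla_i-\theta_i$ glue to a global splitting. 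This recovers the same equivalence and identifies $\Phi_{\mathcal{L}}(E_G)$ with the extension class.

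The only point needing care is the exactness of the sequence itself, in particular that $\At_{\delta}(E_G)=\rho^{-1}(0)$ is locally free and $\widetilde{\rho}$ is surjective. Local freeness follows because $d'p$ is surjective, hence $\rho$ is a surjection of vector bundles and its kernel is a subbundle. Surjectivity of $\widetilde{\rho}$ follows by lifting $\delta(v)$ through $d'p$: given $v$, pick $\xi$ with $d'p(\xi)=\delta(v)$, and then $(\xi,v)\in\At_{\delta}(E_G)$ maps to $v$. The kernel of $\widetilde{\rho}$ is $\{(\xi,0): d'p(\xi)=0\}=\ad(E_G)$. These facts are already asserted in the text preceding the proposition, so the main work is just the cohomological splitting criterion above.
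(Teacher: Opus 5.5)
Your proof is correct and follows the same approach as the paper. The paper gives no separate proof: it cites the Ghosh--Paul result and defines $\Phi_{\mathcal{L}}(E_G)\in H^1(X,\ad(E_G)\otimes\Omega^1_{\mathcal{L}})$ as the class of the $\mathcal{L}$--Atiyah extension, which vanishes exactly when the sequence splits; since an $\mathcal{L}$--connection is by definition a splitting of $\widetilde{\rho}$, the statement reduces to the criterion you prove via $\partial(\Id_V)$, with your \v{C}ech description and exactness check as the routine details.
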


Let $\nabla_{\mathcal{L}} : V \to \At_{\delta}(E_G)$ be a $\mc L$-- connection 
on $E_G$ over $X$. For all locally defined sections $s$ and $t$ of $V$, let 
$$\kappa_{\nabla_{\mathcal{L}}}(s, t) := [\nabla_{\mathcal{L}}(s), \nabla_{\mathcal{L}}(t)] - \nabla_{\mathcal{L}}([s, t]).$$ 
Since the homomorphism $\widetilde{\rho} : \At_{\delta}(E_G) \to V$ respects the Lie algebra 
structures on the sheaves of sections, $\kappa_{\nabla_{\mathcal{L}}}(s, t)$ defines a holomorphic local 
section of $\ad(E_G)$. Thus we obtain a section 
$$\kappa_{\nabla_{\mathcal{L}}} \in H^0(X, \ad(E_G)\otimes \Omega^{2}_{\mathcal{L}}),$$
called the {\it curvature} of the $\mc L$--connection 
$\nabla_{\mathcal{L}}$ on $E_G$. The section $\kappa_{\nabla_{\mathcal{L}}}$ can be considered as an obstruction 
for $\nabla_{\mathcal{L}}$ to be a Lie algebra homomorphism. 

\begin{defn}
	An $\mc L$-- connection $\nabla_{\mathcal{L}}$ on a 
	principal $G$--bundle $E_G$ on $X$ is said to be {\it integrable} if $\kappa_{\nabla_{\mathcal{L}}} = 0$.
\end{defn}

Let $X$ be an irreducible smooth projective algebraic curve over $\mathbb{C}$, and $G$ be a complex reductive affine algebraic group. Let $E_{G}$ be a principal $G$--bundle over $X$. Let $\chi : L(P) \longrightarrow \mathbb{G}_{m} = \mathbb{C}^{*}$
be a character of a Levi subgroup $L(P)$ of a parabolic subgroup $P \subset G$. Let $ E_{L(P)} \subset E_{G}$ be a reduction of structure group of $E_{G}$ to $L(P)$. By extending the structure group of $E_{L(P)}$ via the character $\chi$, we obtain a principal $\mathbb{C}^{*}$--bundle $E_{L(P)} \times^{L(P)} \mathbb{C}^{*}$ over $X$. Using the standard multiplication action of $\mathbb{C}^{*}$ on $\mathbb{C}$, this principal $\mathbb{C}^{*}$--bundle defines a line bundle $\mathscr{L}(E_{L(P)}, \chi) \longrightarrow X$.

The principal $G$--bundle $E_{G}$ admits an algebraic $\mathcal{T}_{X}$--connection if and only if, for every triple $(P, L(P), \chi)$ as above, and every reduction of structure group $E_{L(P)}$ of $E_{G}$ to $L(P)$, one has
\[
\deg \bigl(\mathscr{L}(E_{L(P)}, \chi)\bigr)=0
\]
(see \cite{Azad-Biswas-2002}).

Let $F_{G}$ be a principal $G$--bundles on $X$. Recently, \cite[Theorem 6.1]{Biswas-2026} generalized this criterion to the setting of $\mathcal{L}$--connections. A direct consequence of this theorem and example 2.1.2 \eqref{example:Atiyah Lie algebroid of a principal bundle} is stated below.

\begin{proposition}
	 Assume that $F_{G}$ admits an $\mathcal{T}_{X}$--connection, and consider the Atiyah Lie algebroid $(\text{At}(F_{G}), [\cdot,\cdot],\rho)$. Let $E_{G}$ be a principal $G$-- bundle on $X$. Then the following three statements are equivalent:
	\begin{enumerate}
		\item For every triple $(P, L(P), \chi)$, and every algebraic reduction of structure group $E_{L(P)}$ of
		$E_{G}$ to $L(P)$, $\deg(\mathscr{L}(E_{L(P)}, \chi)) = 0$.
		\item $E_{G}$ admits a Lie algebroid $G$--connection for $(\text{At}(F_{G}),[\cdot,\cdot],\rho)$.
		\item $E_{G}$ admits an integrable Lie algebroid $G$--connection for $(\text{At}(F_{G}),[\cdot,\cdot],\rho)$. 
	\end{enumerate}
\end{proposition}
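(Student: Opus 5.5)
Here $X$ is a curve, $G$ is reductive, and $F_G$ admits a $\mathcal{T}_X$--connection. I will prove the cycle of implications $(3)\Rightarrow(2)\Rightarrow(1)\Rightarrow(3)$.

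The implication $(3)\Rightarrow(2)$ is trivial.

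\textbf{$(2)\Rightarrow(1)$.} Since $F_G$ admits a $\mathcal{T}_X$--connection, choose one, say $\gamma:\mathcal{T}_X\to \At(F_G)$ with $d'p\circ\gamma=\Id$. Then $(\At(F_G),[\cdot,\cdot],\rho)$ is a split Lie algebroid in the sense of the definition. I will use the split case of \cite[Theorem 6.1]{Biswas-2026}, the principal-bundle analogue of the split case of \cite[Theorem 1.1]{ABKA-2025}. It says that for a split $\mathcal{L}$, existence of an $\mathcal{L}$--connection on $E_G$ is equivalent to the Azad--Biswas degree condition, i.e. to existence of an ordinary connection.

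The mechanism can be made explicit. Let $\nabla_{\mathcal{L}}:V\to\At_\delta(E_G)$ be an $\mathcal{L}$--connection with $V=\At(F_G)$. Then the composite
\[
\mathcal{T}_X\xrightarrow{\gamma}\At(F_G)\xrightarrow{\nabla_{\mathcal{L}}}\At_\delta(E_G)\longrightarrow \At(E_G)
\]
is a splitting of the Atiyah sequence of $E_G$. Indeed, the last map is the first projection, and it satisfies $d'p\circ\mathrm{pr}_1=\delta\circ\widetilde\rho$. Hence the composite gives $d'p\circ\mathrm{pr}_1\circ\nabla_{\mathcal{L}}\circ\gamma=\delta\circ\gamma=\Id_{\mathcal{T}_X}$. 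So $E_G$ admits an algebraic connection, and \cite{Azad-Biswas-2002} gives (1).

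\textbf{$(1)\Rightarrow(3)$.} By \cite{Azad-Biswas-2002}, condition (1) gives an algebraic $\mathcal{T}_X$--connection $D:\mathcal{T}_X\to\At(E_G)$. Since $\dim X=1$, we have $\Omega^2_X=0$, so $D$ is automatically integrable: it is a morphism of Lie algebroids. Define
\[
\nabla_{\mathcal{L}}(v):=\bigl(D(\rho(v)),\,v\bigr)\in \At(E_G)\oplus V .
\]
This lies in $\At_\delta(E_G)$, because $d'p(D(\rho v))=\rho(v)=\delta(v)$. It also satisfies $\widetilde\rho\circ\nabla_{\mathcal{L}}=\Id_V$, so it is an $\mathcal{L}$--connection.

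For integrability, the bracket on $\At_\delta(E_G)$ is the componentwise bracket on the fibre product. I will verify this from the construction in \cite{Ghosh-Paul-2025}; it is the one step that needs care. Given this, the curvature is
\[
\kappa(s,t)=\bigl([D\rho s,D\rho t]-D\rho[s,t],\;[s,t]-[s,t]\bigr).
\]
The second entry is zero. The first entry is also zero: $\rho$ preserves brackets, so $D\rho[s,t]=D[\rho s,\rho t]$, and $D$ is flat. Hence $\nabla_{\mathcal{L}}$ is integrable, which gives (3).

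This argument works for any Lie algebroid $\mathcal{L}$ whose anchor is bracket-preserving. The only input specific to $\At(F_G)$ is the splitting $\gamma$, which is used in $(2)\Rightarrow(1)$.

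\textbf{Main obstacle.} The delicate point is confirming that $\At_\delta(E_G)$ carries the fibre-product Lie algebroid structure, so that the curvature computation is legitimate. The rest reduces directly to \cite{Azad-Biswas-2002} and \cite[Theorem 6.1]{Biswas-2026}.
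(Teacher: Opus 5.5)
Your proof is correct, but it follows a different route from the paper. The paper gives no argument of its own. It presents the statement as a direct consequence of two things: \cite[Theorem 6.1]{Biswas-2026}, the $\mathcal{L}$--connection version of the Azad--Biswas criterion, whose split case gives existence of an $\mathcal{L}$--connection iff the degree condition holds; and the Atiyah algebroid example, which says a $\mathcal{T}_X$--connection on $F_G$ is exactly a splitting of the anchor of $\At(F_G)$.

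You avoid that theorem and reduce everything to the classical criterion of \cite{Azad-Biswas-2002}:
\begin{itemize}
\item For $(2)\Rightarrow(1)$, you precompose $\nabla_{\mathcal{L}}$ with the splitting $\gamma$ and project to $\At(E_G)$, which produces an ordinary connection.
\item For $(1)\Rightarrow(3)$, you pull an ordinary connection $D$ back along the anchor, $v\mapsto (D(\rho v),v)$. Integrability uses that $D$ is flat because $\Omega^2_X=0$ on a curve. It also uses that $\At_\delta(E_G)$ carries the standard fibre-product (pullback) Lie algebroid structure, the one for which both projections preserve brackets. This is the structure implicit in the paper's curvature definition, so your ``main obstacle'' is not a real obstacle.
\end{itemize}

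What each approach buys:
\begin{itemize}
\item The citation route is shorter. However, it rests on a very recent external result and never explains why the connection in (3) can be taken \emph{integrable}.
\item Your route is self-contained modulo Azad--Biswas and makes the equivalence $(2)\Leftrightarrow(3)$ transparent.
\item It also shows that $(1)\Rightarrow(3)$ holds for any Lie algebroid on a curve. The splitting hypothesis enters only in $(2)\Rightarrow(1)$, which is exactly where it must, given the paper's next proposition for non-split $\At(F_G)$.
\end{itemize}

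Once you have the explicit mechanism, your appeal to \cite[Theorem 6.1]{Biswas-2026} in the $(2)\Rightarrow(1)$ step is superfluous and can be dropped.
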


\begin{proposition}
	Assume that $F_{G}$ does not admit any $\mathcal{T}_{X}$--connection, and
	consider the Atiyah Lie algebroid $(\text{At}(F_{G}), [\cdot,\cdot],\rho)$ associated to $F_{G}$. Then any principal $G$--bundles bundle $E_{G}$ on $X$ admits a Lie algebroid connection for $(\text{At}(F_{G}), [\cdot,\cdot],\rho)$.
\end{proposition}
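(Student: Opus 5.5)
The plan is to deduce the statement from the general Atiyah--Weil type criterion for $\mathcal{L}$--connections on principal bundles, \cite[Theorem 6.1]{Biswas-2026}. It is the principal-bundle analogue of \cite[Theorem 1.1]{ABKA-2025} recalled above: for a nonsplit Lie algebroid $\mathcal{L}$ on the curve $X$, every principal $G$--bundle admits an $\mathcal{L}$--connection. The one thing to verify is therefore that the Atiyah algebroid $(\At(F_G),[\cdot,\cdot],\rho)$ is nonsplit exactly when $F_G$ admits no $\mathcal{T}_X$--connection. This is the same reasoning as in Example~\ref{example:Atiyah Lie algebroid of a principal bundle}.

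\textbf{Step 1: nonsplitness.} A splitting of $\At(F_G)$ as a Lie algebroid is, by definition, an $\mathcal{O}_X$--linear map $\gamma:\mathcal{T}_X\to\At(F_G)$ with $\rho\circ\gamma=\Id_{\mathcal{T}_X}$. This is verbatim the definition of an algebraic $\mathcal{T}_X$--connection on $F_G$ given in Example~\ref{example:Atiyah Lie algebroid of a principal bundle}. Hence, under our hypothesis, $\At(F_G)$ is nonsplit; equivalently, the Atiyah exact sequence \eqref{Atiyah sequence for PB} of $F_G$ does not split.

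\textbf{Step 2: apply the criterion.} We apply \cite[Theorem 6.1]{Biswas-2026} to $\mathcal{L}=\At(F_G)$ and an arbitrary principal $G$--bundle $E_G$. Since $\mathcal{L}$ is nonsplit, the theorem gives an $\mathcal{L}$--connection on $E_G$. By Proposition~\ref{prop:E_G-admits-Lie-alg-conn-iff-Atiyah-cls-vanishes}, this is the same as the vanishing of $\Phi_{\mathcal{L}}(E_G)$.

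\textbf{Direct argument, if the cited theorem is not available in the needed form.} I would show $\Phi_{\mathcal{L}}(E_G)=0$ directly. The $\mathcal{L}$--Atiyah sequence \eqref{eqn:Atiyah-exact-seq-for-(V,phi)-valued-connection} is the pullback of the Atiyah sequence of $E_G$ along $\delta:V\to\mathcal{T}_X$. Hence
\[
\Phi_{\mathcal{L}}(E_G)=(\Id\otimes\delta^*)\bigl(\mathrm{at}(E_G)\bigr)\in H^1\bigl(X,\ad(E_G)\otimes V^*\bigr).
\]
By Serre duality it suffices to show that this class pairs to zero with every $s\in H^0(X,\ad(E_G)\otimes V\otimes K_X)$. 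The pairing equals the pairing of $\mathrm{at}(E_G)$ with $(\Id\otimes\delta)(s)\in H^0(\ad(E_G)\otimes\mathcal{T}_X\otimes K_X)=H^0(\ad(E_G))$.

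The key input is the nonsplit extension
\[
0\to\ad(F_G)\to\At(F_G)\to\mathcal{T}_X\to 0 .
\]
Nonsplitness should force every section of the form $(\Id\otimes\delta)(s)$ to be nilpotent, or more precisely to pair trivially with the Atiyah class via an invariant form. To see this, I would use that the class of $V$ in $H^1(\ker\delta\otimes K_X)$ is nonzero and compare it with the Atiyah--Weil pairing; this mirrors the proof in \cite{ABKA-2025}.

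\textbf{Main obstacle.} I expect the main difficulty to be this last comparison, which reduces to the Azad--Biswas degree criterion \cite{Azad-Biswas-2002}. If I have to carry out the direct argument, I would first try reducing to the vector bundle case via a faithful representation $G\hookrightarrow\GL(W)$, and then invoke \cite[Corollary 5.2]{ABKA-2025}.
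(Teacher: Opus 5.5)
Your proposal is correct and follows essentially the paper's route. The paper gives no separate proof: it records the statement as a direct consequence of \cite[Theorem 6.1]{Biswas-2026} together with the Atiyah algebroid example. That example is exactly your Step 1: a $\mathcal{T}_X$--connection on $F_G$ is by definition a splitting of $\At(F_G)$, so $\At(F_G)$ is nonsplit, and the nonsplit case of the criterion then applies to every $E_G$.

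Your fallback direct argument is not needed, but two corrections if you write it out. First, the cleanest key fact is that nonsplitness means the map $H^0(X,V\otimes K_X)\to H^0(X,\mathcal{O}_X)$ is zero; this kills every invariant polynomial of $(\Id\otimes\delta)(s)$, so that section is nilpotent. Second, the reduction to vector bundles should invoke the nonsplit case of \cite[Theorem 1.1]{ABKA-2025} for $\mathcal{L}=\At(F_G)$, not Corollary 5.2, which only concerns Atiyah algebroids of vector bundles.
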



\subsection{Characterization of integrable transitive algebraic Lie algebroids}.
Let $X$ be an irreducible smooth projective algebraic variety over $\mathbb{C}$ and let $(\mathcal{A},[\ ,\ ],\rho)$ be an algebraic Lie algebroid on $X$.
Denote by $X^{an}$ the associated complex analytic space.

Recall that analytification defines a functor from algebraic coherent
sheaves on $X$ to coherent analytic sheaves on $X^{an}$. In particular,
the vector bundle $\mathcal{A}$ gives rise to a holomorphic vector bundle
$\mathcal{A}^{an}$ on $X^{an}$. The anchor morphism
\[
\rho : \mathcal{A} \longrightarrow T_X
\]
analytifies to a holomorphic morphism
\[
\rho^{an} : \mathcal{A}^{an} \longrightarrow T_{X^{an}} .
\]
Similarly, the Lie bracket
\[
[\ ,\ ] : \mathcal{A} \otimes_{\mathcal{O}_X} \mathcal{A}
\longrightarrow \mathcal{A}
\]
induces a holomorphic bracket
\[
[\ ,\ ]^{an} : \mathcal{A}^{an} \otimes_{\mathcal{O}_{X^{an}}}
\mathcal{A}^{an} \longrightarrow \mathcal{A}^{an}.
\]
These structures satisfy the Leibniz rule and Jacobi identity after
analytification. Hence we obtain a holomorphic Lie algebroid
\[
(\mathcal{A}^{an},[\ ,\ ]^{an},\rho^{an})
\]
on $X^{an}$, called the \emph{analytic Lie algebroid associated to
	$\mathcal{A}$}.

There is a standard construction that associates with a Lie groupoid $\mathcal{G}$ a Lie algebroid $\mathcal{L}(\mathcal{G})$. A Lie algebroid $\mathcal{A}$ on $X$ (in the smooth, holomorphic, or algebraic category) is said to be \textit{integrable} if there exists a Lie groupoid $\mathcal{G} \rightrightarrows X$ in the same category whose associated Lie algebroid is isomorphic to $\mathcal{A}$. Lie algebroids are not always integrable, but for $\mathcal{L}$ an integrable Lie algebroid, there exists a unique $s$-connected $s$--simply connected Lie groupoid $\tilde{\mathcal{G}}$ integrating $\mathcal{L}$.

\begin{example}
Let $\pi:E_{G}\rightarrow X$ be a princpal $G$ bundle. Consider $\mathfrak{G}(E_{G}):=\frac{E_{G}\times E_{G}}{G}$, with $G$ acting diagonally on the product. The product $[a,b]\cdot[b',c] = [a,cg]$, where $g\in G$ is such that $b'=bg^{-1}$, defines a Lie groupoid structure on $\mathfrak{G}(E_{G})$, with source map $s([a,b])=\pi(b)$ and target map $t([a,b]) = \pi(a)$, called the \textit{gauge groupoid} of $E_{G}$. The Lie algebroid associated with $\mathfrak{G}(E_{G})$ is $\text{At}(E_{G})$. Therefore the Atyah algebroid $\text{At}(E_{G})$ is integrable.
\end{example}

\begin{lem}\label{lemma:-analytic-integrability-variety}
	Let $X$ be a smooth projective algebraic variety over $\mathbb{C}$ and let
	$(\mathcal{A},[\ ,\ ],\rho)$ be an algebraic Lie algebroid on $X$.
	Denote by $(\mathcal{A}^{an},[\ ,\ ]^{an},\rho^{an})$ the associated
	holomorphic Lie algebroid on the complex analytic space $X^{an}$.
	Then $\mathcal{A}$ is algebraically integrable if and only if
	$\mathcal{A}^{an}$ is analytically integrable.
\end{lem}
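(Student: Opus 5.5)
The forward implication is formal; the converse is where the work lies, and the step I am least sure of is the globalization in the last paragraph.

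\textbf{Forward direction.} Suppose $\mathcal{G}\rightrightarrows X$ is an algebraic Lie groupoid, smooth over $\mathbb{C}$ with smooth source and target maps, and $\mathcal{L}(\mathcal{G})\cong\mathcal{A}$. Then analytification gives a holomorphic Lie groupoid $\mathcal{G}^{an}\rightrightarrows X^{an}$. The reason is that analytification preserves fibre products, smoothness of morphisms, and the structure maps. The Lie algebroid of $\mathcal{G}$ is constructed as $\ker(ds)|_{X}$ along the unit section, with bracket coming from right-invariant vector fields and anchor $dt$. Analytification commutes with restricting the relative tangent bundle to the unit section, and sends algebraic vector fields to holomorphic ones. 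Hence
\[
\mathcal{L}(\mathcal{G}^{an})\cong\mathcal{L}(\mathcal{G})^{an}\cong\mathcal{A}^{an},
\]
and $\mathcal{A}^{an}$ is analytically integrable.

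\textbf{Converse: local step.} Assume $\mathcal{A}^{an}$ is integrable, and let $\widetilde{\mathcal{G}}\rightrightarrows X^{an}$ be the unique $s$-connected, $s$-simply connected holomorphic integration. The plan has a local step and a global step; $\mathcal{A}$ may be non-transitive, so I never use that $\rho$ is surjective. For the local step I compare formal neighbourhoods of the unit section. The formal groupoid of $\widetilde{\mathcal{G}}$ along $X$ is the dual of the jet coalgebra of the universal enveloping algebroid $U(\mathcal{A})$. Since $U(\mathcal{A})$ is filtered with $\mathrm{gr}\,U(\mathcal{A})\cong\mathrm{Sym}_{\mathcal{O}_X}\mathcal{A}$, this formal groupoid is defined algebraically over $\mathcal{O}_X$. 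The analytic germ of $\widetilde{\mathcal{G}}$ along $X^{an}$ is a convergent solution of the algebraic groupoid equations. Artin approximation, applied Zariski-locally on $X$, should then produce an étale neighbourhood $W\to X\times X$ carrying an algebraic local groupoid. This local groupoid integrates $\mathcal{A}$ and its analytification agrees with a neighbourhood of the units in $\widetilde{\mathcal{G}}$.

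\textbf{Converse: global step (main obstacle).} Next I generate a global algebraic groupoid from this local one. Take the groupoid of composable strings of arrows in $W$ modulo the relations holding in $\widetilde{\mathcal{G}}$. Its analytification is $\widetilde{\mathcal{G}}$, or a quotient of it by a discrete normal subgroupoid. The difficulty is showing the result is an algebraic space of finite type, separated, with smooth $s$, rather than merely an analytic object built from algebraic charts. To control this I would work orbit by orbit. The orbits of $\rho(\mathcal{A})$ are constructible because $\rho$ is algebraic, and $X$ is stratified by the rank of $\rho$. On each stratum, the isotropy Lie algebra bundle $\ker\rho$ is algebraic, and Crainic--Fernandes integrability forces the monodromy groups to be uniformly discrete. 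I would pass to the quotient of $\widetilde{\mathcal{G}}$ by the closure of the monodromy, so that the isotropy groups become algebraic. Then GAGA on the projective $X$ and its closed strata should algebraize the resulting proper families of orbits and isotropy groups. Finally I would check that the algebraized strata glue along the étale charts $W$ into an algebraic groupoid $\mathcal{G}$ with $\mathcal{L}(\mathcal{G})\cong\mathcal{A}$.
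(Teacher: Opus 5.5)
Your forward direction is correct and is the paper's argument: analytify $\mathcal{G}$ and use $\mathcal{L}(\mathcal{G}^{an})\cong\mathcal{L}(\mathcal{G})^{an}$. The converse has a gap that cannot be closed, because the converse is false as stated. Here are two counterexamples.

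(i) Let $\mathfrak{h}=\mathbb{C}x\ltimes\mathbb{C}^2$ with $\operatorname{ad}x|_{\mathbb{C}^2}=\operatorname{diag}(1,\sqrt{2})$. Take $\mathcal{A}=\mathfrak{h}$ over $X=\operatorname{Spec}\mathbb{C}$, or, for a positive-dimensional transitive example, $\mathcal{A}=\mathcal{T}_X\oplus(\mathcal{O}_X\otimes\mathfrak{h})$ on any $X$. It integrates analytically: to $H$, resp.\ $X^{an}\times X^{an}\times H$, with $H$ given by Lie's third theorem. But the isotropy group $\mathcal{G}_x$ of an algebraic integration would have identity component an algebraic group with Lie algebra $\ker\rho_x\cong\mathfrak{h}$, and none exists. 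Indeed, $\mathfrak{h}$ is centreless and solvable, so such a group is linear of the form $T\ltimes U$, with $\operatorname{Lie}U=\mathbb{C}^2$ (the ad-nilpotent elements) and $\dim T=1$. A generator of $\operatorname{Lie}T$ then acts on $\mathbb{C}^2$ with eigenvalues in $\lambda\mathbb{Z}$, which cannot have ratio $\sqrt{2}$.

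(ii) On a simple abelian surface $A$, take $0\neq v\in H^0(A,\mathcal{T}_A)$ and $\mathcal{A}=\mathcal{O}_Av\subset\mathcal{T}_A$. The flow of $v$ gives a holomorphic action groupoid $\mathbb{C}\times A^{an}\rightrightarrows A^{an}$ integrating $\mathcal{A}^{an}$. Suppose there were an algebraic integration. Since $\rho$ is injective, $t$ restricted to the irreducible component $C$ of $s^{-1}(x)$ through $1_x$ is an immersion tangent to $v$. So $t(C)$ contains a disc of the leaf $x+\exp(\mathbb{C}v)$, yet lies in an algebraic curve. By the identity theorem, the Zariski closure of that disc equals the closure of the whole leaf, which is all of $A$ — a contradiction.

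These examples break your specific steps.
\begin{itemize}
\item Your claim that the orbits of $\rho(\mathcal{A})$ are constructible is false by (ii): the rank strata are constructible, but the leaves need not be.
\item Any quotient of $\widetilde{\mathcal{G}}$ that still integrates $\mathcal{A}$ is by a discrete normal subgroupoid. It therefore never changes the isotropy Lie algebras $\ker\rho_x$, which may be non-algebraic by (i).
\item GAGA cannot algebraize your families: isotropy groups are in general non-compact and orbits need not be closed, so nothing there is proper over $X^{an}$.
\item In the local step, Artin approximation only gives algebraic solutions agreeing with the convergent germ to any prescribed finite order. It does not give an algebraic germ whose analytification is the germ of $\widetilde{\mathcal{G}}$, and in (ii) the argument above rules out even an algebraic local groupoid.
\end{itemize}

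You are not missing an idea from the paper. Its converse applies GAGA to $\mathcal{G}^{an}$, which is in general non-compact and not a closed subspace of anything proper over $X^{an}$. It does so after passing to the $s$-simply connected integration, which is typically non-algebraic even when an algebraic integration exists. For example, for $\mathcal{T}_E$ on an elliptic curve, $t|_{s^{-1}(x)}$ on $\Pi_1(E)$ is the universal cover $\mathbb{C}\to E$, which no algebraic structure makes algebraic.

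Only the forward direction survives. For the transitive classification used afterwards, argue purely algebraically instead. An algebraic transitive groupoid is the gauge groupoid of the principal $\mathcal{G}_{x_0}$-bundle $t\colon s^{-1}(x_0)\to X$. Conversely, the gauge groupoid of an algebraic principal bundle integrates its Atiyah algebroid.
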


\begin{proof}
	Assume first that $\mathcal{A}$ is algebraically integrable. Then there
	exists an algebraic Lie groupoid $\mathcal{G} \rightrightarrows X$
	whose Lie algebroid is isomorphic to $\mathcal{A}$. Passing to the
	associated analytic spaces, we obtain a holomorphic Lie groupoid $
	\mathcal{G}^{an} \rightrightarrows X^{an}$.
	The Lie algebroid of $\mathcal{G}^{an}$ identifies naturally with
	$\mathcal{A}^{an}$, hence $\mathcal{A}^{an}$ is analytically integrable.
	
	Conversely, assume that $\mathcal{A}^{an}$ is analytically integrable.
	Then there exists a holomorphic Lie groupoid $\mathcal{G}^{an} \rightrightarrows X^{an}$ integrating $\mathcal{A}^{an}$. Replacing $\mathcal{G}^{an}$ by the source simply connected integration if necessary, we may assume that $\mathcal{G}^{an}$ is source simply connected.
	
	Since $X$ is a smooth projective algebraic variety, $X^{an}$ is compact.
	The structure maps of the Lie groupoid
	\[
	s,t : \mathcal{G}^{an} \longrightarrow X^{an}, \qquad
	m : \mathcal{G}^{an}\,{}_{s}\!\times_{t}\mathcal{G}^{an}
	\longrightarrow \mathcal{G}^{an},
	\]
	together with the inversion and unit maps, are holomorphic. The graph of
	these morphisms is closed in the corresponding fiber products, and hence
	defines coherent analytic subspaces. By the GAGA principle for projective
	varieties, coherent analytic sheaves and closed analytic subspaces over
	$X^{an}$ correspond to algebraic ones over $X$.
	
	Therefore there exists an algebraic space $\mathcal{G}$ whose
	analytification is $\mathcal{G}^{an}$, and the groupoid structure maps
	are induced by algebraic morphisms. Consequently, $\mathcal{G} \rightrightarrows X$ is an algebraic Lie groupoid.Finally, the Lie algebroid of $\mathcal{G}$ coincides with $\mathcal{A}$,
	since analytification preserves the Lie algebroid construction. Hence
	$\mathcal{A}$ is algebraically integrable.This completes the proof.
\end{proof}

\begin{proposition}\label{prop:transitive-integrable-atiyah}
	Let $X$ be a smooth connected manifold and let $\mathcal{A}$ be a transitive smooth Lie algebroid on $X$. Then $\mathcal{A}$ is integrable if and only if $\mathcal{A}$ is isomorphic to the Atiyah algebroid of a smooth principal $G$--bundle over $X$ for some Lie group $G$.
\end{proposition}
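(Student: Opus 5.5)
The plan is to prove the two implications separately. The implication from an Atiyah algebroid to integrability is immediate from the gauge groupoid construction in the example preceding Lemma~\ref{lemma:-analytic-integrability-variety}, read in the smooth category. If $\mathcal{A}\cong \At(P)$ for a smooth principal $G$--bundle $P\to X$, then $\mathfrak{G}(P)=(P\times P)/G$ is a smooth Lie groupoid over $X$ whose Lie algebroid is $\At(P)$. Transporting along the isomorphism shows that $\mathcal{A}$ is integrable.

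For the converse, suppose $\mathcal{G}\rightrightarrows X$ is a Lie groupoid integrating $\mathcal{A}$. Replacing $\mathcal{G}$ by its source-connected component of the identity, which still integrates $\mathcal{A}$, we may assume $\mathcal{G}$ is $s$--connected.

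\textbf{Step 1 (transitivity of $\mathcal{G}$).} Since the anchor $\rho=dt|_{\ker ds}$ along the unit section is surjective, the map
\[
t|_{s^{-1}(x)}:s^{-1}(x)\to X
\]
is a submersion at every point. This uses right translation, which identifies $\ker ds$ at any arrow $g$ with $\mathcal{A}_{t(g)}$. Hence the orbits of $\mathcal{G}$ are open. Because $X$ is connected and the orbits partition $X$, there is a single orbit, so $\mathcal{G}$ is transitive.

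\textbf{Step 2 (principal bundle).} Fix $x_0\in X$ and set $P:=s^{-1}(x_0)$ and $G:=s^{-1}(x_0)\cap t^{-1}(x_0)$, the isotropy group. The group $G$ is a closed embedded Lie subgroup, as the preimage of a point under the submersion $t|_P$. It acts freely on $P$ on the right by composition. The map $t|_P:P\to X$ is a surjective submersion whose fibres are exactly the $G$--orbits. Local sections of $t|_P$ exist by the submersion property, and these give local trivializations. Therefore $t|_P:P\to X$ is a smooth principal $G$--bundle.

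\textbf{Step 3 (identifying $\mathcal{G}$ and $\mathcal{A}$).} The map
\[
(P\times P)/G\to \mathcal{G},\qquad [a,b]\mapsto a\, b^{-1},
\]
is a well-defined morphism of Lie groupoids over $\mathrm{Id}_X$. By Step 1 and the freeness of the action it is bijective. It is also a local diffeomorphism, as one checks by a dimension count together with injectivity of its differential. Hence it is an isomorphism $\mathfrak{G}(P)\cong\mathcal{G}$. Applying the Lie functor, and using the example above, we obtain
\[
\mathcal{A}\cong \mathcal{L}(\mathcal{G})\cong \mathcal{L}(\mathfrak{G}(P))\cong \At(P),
\]
compatibly with brackets and anchors.

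The main obstacle is Step 2, namely verifying that $G$ is an embedded Lie group and that $P\to X$ is locally trivial. Both points rest on showing that $t|_P$ is a submersion everywhere, not only near the units. I would handle this by translating tangent vectors by right multiplication, $R_g:s^{-1}(t(g))\to s^{-1}(s(g))$, which is a diffeomorphism between source fibres preserving $t$. Transitivity then gives surjectivity, and the submersion property of $t|_P$ gives the local sections needed for the trivializations.
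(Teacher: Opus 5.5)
Your proof is correct and is essentially the classical argument behind the paper's proof, which consists only of a citation to Moerdijk--Mr\v{c}un: over a connected base every groupoid integrating a transitive algebroid is transitive, and a transitive Lie groupoid is the gauge groupoid of its source fibre $P=s^{-1}(x_0)$, regarded as a principal bundle under the isotropy group. The one point you only assert, injectivity of the differential of $[a,b]\mapsto ab^{-1}$, follows because this map commutes with the source maps and, on the source fibre through $[a,b]$ (identified with $P$ via $a'\mapsto[a',b]$), is the right translation $R_{b^{-1}}\colon s^{-1}(x_0)\to s^{-1}(t(b))$, a diffeomorphism, so any kernel vector is vertical and hence zero; note also that the reduction to the $s$--connected component is never used.
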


\begin{proof}
	This is a classical result; see, for example, \cite[Corollary 6.4]{Moerdijk-2003}.
\end{proof}

It is known that the integrability of a holomorphic Lie algebroid is
equivalent to the integrability of its underlying smooth real Lie
algebroid (see \cite[Theorem 3.17]{GSX-2009}). Hence by proposition \eqref{prop:transitive-integrable-atiyah} we obtain the
following corollary.
\begin{corollary}\label{cor:classifcation of integrable transitive-Lie algebroid}
	Let $X$ be a smooth irreducible projective algebraic variety over $\mathbb{C}$ and let $\mathcal{A}$ be a transitive algebraic Lie algebroid on $X$. Then $\mathcal{A}$ is algebraically integrable if and only if
	$\mathcal{A}$ is isomorphic to the Atiyah algebroid of a principal
	$G$--bundle on $X$, for some algebraic group $G$.
\end{corollary}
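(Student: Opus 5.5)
The plan is to chain the three ingredients already assembled: Lemma \ref{lemma:-analytic-integrability-variety} (algebraic versus analytic integrability), the comparison \cite[Theorem 3.17]{GSX-2009} between holomorphic and underlying smooth real integrability, and Proposition \ref{prop:transitive-integrable-atiyah} (Mackenzie--Moerdijk for transitive smooth algebroids). The two implications are handled separately, since the easy direction needs no analytic input.

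\emph{The direction ``Atiyah implies integrable''.} If $\mathcal{A}\cong \At(E_G)$ for an algebraic principal $G$--bundle $E_G$, I will take the gauge groupoid $\mathfrak{G}(E_G)=(E_G\times E_G)/G$ from the example above. Since $E_G$ is étale locally trivial, this quotient exists as an algebraic groupoid over $X$ (locally it is $U\times U\times G$). Its Lie algebroid is $\At(E_G)$. So $\mathcal{A}$ is algebraically integrable.

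\emph{The direction ``integrable implies Atiyah''.}
\begin{enumerate}
\item Assume $\mathcal{A}$ is algebraically integrable. By Lemma \ref{lemma:-analytic-integrability-variety}, $\mathcal{A}^{an}$ is holomorphically integrable.
\item By \cite{GSX-2009}, the underlying real algebroid $\mathcal{A}_{\mathbb{R}}$ is integrable. It is transitive, because $\rho^{an}$ is surjective and the real part of $T_{X^{an}}$ is the real tangent bundle.
\item Proposition \ref{prop:transitive-integrable-atiyah} then gives a smooth principal $H$--bundle $P$ with $\mathcal{A}_{\mathbb{R}}\cong\At(P)$. Concretely, $P=s^{-1}(x_0)$ and $H=\mathcal{G}_{x_0}$ for a transitive integrating groupoid $\mathcal{G}$.
\item Redo the previous step with the holomorphic groupoid $\mathcal{G}^{an}$ itself. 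Because $\mathcal{A}^{an}$ is transitive, $(s,t)$ is a holomorphic submersion onto $X^{an}\times X^{an}$, since $X$ is connected. The isotropy $H=\mathcal{G}^{an}_{x_0}$ is then a complex Lie group, and $P=s^{-1}(x_0)\to X^{an}$, $g\mapsto t(g)$, is a holomorphic principal $H$--bundle. The isomorphism $\mathcal{G}^{an}\cong (P\times P)/H$ yields $\mathcal{A}^{an}\cong\At(P)$ holomorphically.
\end{enumerate}

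The main obstacle is algebraizing $(H,P)$. The comparison itself is routine: once $P$ is an algebraic bundle, the map $\mathcal{A}\to\At(P)$ is a morphism of coherent sheaves, so GAGA on the projective $X$ makes it algebraic, and bracket compatibility follows by analytic continuation. But $H$ need not be a linear algebraic group. I would choose the algebraic groupoid $\mathcal{G}$ that the hypothesis provides, rather than the source-simply-connected one. Then $H=\mathcal{G}_{x_0}$ is the fibre of the algebraic morphism $(s,t)$ over $(x_0,x_0)$, hence an algebraic group, and $P=s^{-1}(x_0)$ is an algebraic variety with a free $H$--action. Local triviality of $t:P\to X$ in the étale topology should follow because $t$ is smooth and surjective, so it admits étale-local sections. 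With these choices the required $G$ is $H$, and GAGA finishes the comparison.
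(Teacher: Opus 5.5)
Your proposal is correct, but its working core is a different route from the paper's.

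\textbf{The paper's route.} The paper proves the corollary only by chaining equivalences:
\begin{itemize}
\item algebraic $\Leftrightarrow$ analytic integrability (Lemma~\ref{lemma:-analytic-integrability-variety});
\item analytic $\Leftrightarrow$ real integrability \cite[Theorem 3.17]{GSX-2009};
\item the smooth classification of Proposition~\ref{prop:transitive-integrable-atiyah}.
\end{itemize}
It then reads off the algebraic conclusion with no further argument.

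\textbf{Your route.} Your final paragraph runs Mackenzie's gauge-groupoid argument directly in the algebraic category, on the groupoid $\mathcal{G}$ supplied by the hypothesis.
\begin{itemize}
\item $H=\mathcal{G}_{x_0}$ is a finite-type group scheme, hence smooth by Cartier's theorem.
\item $t|_P$ on $P=s^{-1}(x_0)$ is smooth. By right translation, its differential at $g$ is the anchor at $t(g)$.
\item $t|_P$ is surjective, since orbits are open by transitivity and $X$ is connected.
\item $P\times H\cong P\times_X P$, and $(g,g')\mapsto g g'^{-1}$ is an $H$--torsor $P\times P\to\mathcal{G}$. So $\mathcal{G}\cong (P\times P)/H$ without constructing any quotient separately.
\item Hence $\mathcal{A}\cong\Lie(\mathcal{G})\cong\mathcal{T}_P/H=\At(P)$, and every isomorphism here is already algebraic.
\end{itemize}

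\textbf{What your route buys.} It supplies exactly what the paper's chain leaves unjustified. The smooth classification only yields a smooth principal bundle for a Lie group, with no mechanism making either the bundle or the group algebraic. Also, the converse half of Lemma~\ref{lemma:-analytic-integrability-variety} applies GAGA to a non-compact analytic groupoid, which is not legitimate. For instance, the source-simply-connected integration of $\mathcal{T}_X$ on an elliptic curve has $t|_{s^{-1}(x_0)}$ equal to the universal cover $\mathbb{C}\to X$. Its fibres are infinite and discrete, so this groupoid is not algebraic.

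\textbf{What to trim.} Your steps 1--4 and the closing appeal to GAGA are superfluous and can be deleted, since nothing in the final comparison passes through analytification.

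\textbf{One caveat.} $H$ need not be affine. That is consistent with the statement's ``some algebraic group $G$'', but not with the paper's standing convention that $G$ is linear.
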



\subsection{$\Lambda$-modules and Moduli spaces of $\mathcal{L}$-connections}
Let $X$ be a smooth projective variety over $\mathbb{C}$. Let us recall some definitions from (\cite[\S\,1]{Beilinson-Bernstein-1993},\cite[\S\,2.1]{FT-2017},\cite[\S\,2]{CT.Simpson-1994}). A differential $\mathcal{O}_{X}$-\textit{bimodule} is a quasicoherent sheaf on $X \times X$ supported on the diagonal $\Delta(X)\subset X \times X$. We can regard a differential $\mathcal{O}_{X}$--bimodule as a sheaf of $\mathcal{O}_{X}$--bimodules over $X$. An $\mathcal{O}_{X}$--\textit{differential algebra} or simply $D$-\textit{algebra} on $X$ is a sheaf of associative algebras (not necessarily commutative) $\Lambda$ on the Zariski topology of $X$ equipped with a morphism
of algebras $i : \mathcal{O}_{X} \rightarrow \Lambda$ such that $\Lambda$ is differential $\mathcal{O}_{X}$--bimodule. This implies that $\Lambda$ comes with an increasing filtration
\[0=\Lambda_{-1}\subset \Lambda_{0}\subset\Lambda_{1}\subset \Lambda_{2}\dots\]
such that $\Lambda =\cup_{i}\Lambda_{i}$ and for any $f$ in $\mathcal{O}_{X}$ and $\lambda \in \Lambda_{i}$ one has $f\cdot \lambda-\lambda\cdot f \in \Lambda_{i-1}$. We denote
$\text{Gr}_{i}\Lambda=\Lambda_{i}/\Lambda_{i-1}$ and $Gr_{\bullet}\Lambda:= \bigoplus\text{Gr}_{i}\Lambda$. Recalling that $\Lambda$ is a differential $\mathcal{O}_{X}$--bimodule,
we denote by $\mathscr{I}(\Lambda)$ the associated quasicoherent sheaf on $X \times X$ supported on the diagonal.

We will focus on $D$-algebras that are almost polynomial (cf. \cite[\S\,2, page 81]{CT.Simpson-1994}), namely those $D$-algebras $\Lambda$ such that $\Lambda_{0}=\mathcal{O}_{X}$, $\text{Gr}_{1}(\Lambda)$ is a locally free $\mathcal{O}_{X}$-module and whose associated graded algebra is isomorphic to the symmetric product over the first graded piece, $\text{Gr}_{\bullet}\Lambda = \text{Sym}_{\mathcal{O}_{X}}^{\bullet}(\text{Gr}_{1}(\Lambda))$.

Almost polynomial $D$-algebras may be described in terms of Lie algebroids. The relation between Lie algebroids and D-algebras is stated in the following lemma:

\begin{lem}\cite[Theorem 34]{P.Tortella-2011T}
Let $X$ be a smooth projective variety over $\mathbb{C}$, and let $V$ be a locally free $\mathcal{O}_{X}$-module of finite rank. There is a bijective correspondence between isomorphism classes of:
\begin{enumerate}
\item pairs $(\Lambda,\Xi)$, with $\Lambda$ an almost polynomial $D$-algebra and $\Xi$ an isomorphism of the associated graded algebra $\text{Gr}_{\bullet}\Lambda$ with the symmetric algebra $\text{Sym}_{\mathcal{O}_{X}}^{\bullet}(\text{Gr}_{1}(\Lambda))$.
\item pairs $(\mathcal{L},\tilde{\mathcal{L}})$, with $\mathcal{L}$ a Lie algebroid structure on $V$ and $\tilde{\mathcal{L}}$ a central extension of $\mathcal{L}$ by $\mathcal{O}_{X}$.
\item pairs $(\mathcal{L}, \Sigma)$, with $\mathcal{L}$ a Lie algebroid structure on $V$ and $\Sigma \in \mathbb{H}^{2}(X, \tau^{\geqslant 1}\Omega^{\bullet}_{\mathcal{L}})$.
\end{enumerate}
\end{lem}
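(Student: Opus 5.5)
The plan is to construct the three correspondences explicitly, following Tortella's argument and the classical picture for $\mathcal{D}_X$ of Beilinson--Bernstein; I will concentrate on (1)$\leftrightarrow$(2) and (2)$\leftrightarrow$(3).

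\emph{From (1) to (2).} Let $\Lambda$ be an almost polynomial $D$-algebra with $\mathrm{Gr}_1\Lambda\cong V$ (via $\Xi$), and set $\tilde{V}:=\Lambda_1$. Then $\Lambda_0=\mathcal{O}_X$ and there is an exact sequence of $\mathcal{O}_X$-modules
\[0\to\mathcal{O}_X\to\Lambda_1\to V\to 0,\]
using the left $\mathcal{O}_X$-structure.
\begin{itemize}
\item Bracket: the commutator $[\lambda,\mu]=\lambda\mu-\mu\lambda$ maps $\Lambda_1\times\Lambda_1$ into $\Lambda_1$. This is because $\mathrm{Gr}_\bullet\Lambda$ is commutative, being the symmetric algebra.
\item Anchor: $\delta(\lambda)(f):=[\lambda,f]\in\Lambda_0=\mathcal{O}_X$ is a derivation of $\mathcal{O}_X$. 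It kills $\mathcal{O}_X\subset\Lambda_1$, so it descends to $V\to\mathcal{T}_X$.
\item Jacobi identity and Leibniz rule are inherited from the associativity of $\Lambda$.
\item $\mathcal{O}_X$ is central in the Lie algebra $\Lambda_1$, because $[f,\lambda]\in\mathcal{O}_X$ and brackets of functions vanish.
\end{itemize}
Hence $\Lambda_1$ is a central extension $\tilde{\mathcal{L}}$ of $\mathcal{L}$ by $\mathcal{O}_X$.

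\emph{From (2) to (1).} Given $\tilde{\mathcal{L}}$ with central unit $1\in\mathcal{O}_X$, take the universal enveloping algebra of the Lie--Rinehart pair $(\mathcal{O}_X,\tilde{V})$ and quotient by the relation identifying $1_{\tilde{V}}$ with $1_{\mathcal{O}_X}$. This is the Sridharan-type construction. A PBW theorem for Lie--Rinehart algebras (Rinehart), applied to the locally free module $V$, gives $\mathrm{Gr}_\bullet\cong\mathrm{Sym}^\bullet V$. This produces $\Xi$ and shows that the two constructions are mutually inverse on isomorphism classes.

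\emph{From (2) to (3).} Locally on an affine cover $\{U_i\}$, split $\tilde{V}|_{U_i}\cong\mathcal{O}\oplus V$ as $\mathcal{O}$-modules.
\begin{itemize}
\item The bracket of lifts differs from the lift of the bracket by a $2$-form $\omega_i\in\Omega^2_{\mathcal{L}}(U_i)$. The Jacobi identity gives $d_{\mathcal{L}}\omega_i=0$.
\item On overlaps, the splittings differ by $\eta_{ij}\in\Omega^1_{\mathcal{L}}(U_{ij})$, with $\omega_i-\omega_j=d_{\mathcal{L}}\eta_{ij}$ and $\check\delta\eta=0$.
\end{itemize}
Thus $(\omega,\eta)$ is a $2$-cocycle in the total complex $T^\bullet_{\tau^{\geqslant1}\Omega^\bullet_{\mathcal{L}}}$ described above. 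Changing the splittings changes it by a coboundary. Conversely, any such cocycle glues local models $\mathcal{O}\oplus V$ with the bracket twisted by $\omega_i$ along the transition maps given by $\eta_{ij}$. This yields a bijection between central extensions up to isomorphism and $\mathbb{H}^2(X,\tau^{\geqslant1}\Omega^\bullet_{\mathcal{L}})$.

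\emph{Main obstacle.} I expect the hardest step to be the PBW statement and the check that the constructions in (1)$\leftrightarrow$(2) are inverse. In particular, one must verify that the filtration of the twisted enveloping algebra has graded pieces exactly $\mathrm{Sym}^i V$, which requires $V$ locally free. I would handle this by reducing to the local split case, where the algebra is an Ore-type extension of $U(V)$, and invoking Rinehart's theorem there.
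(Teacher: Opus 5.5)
The paper gives no argument for this lemma; it only cites Tortella's thesis. Your outline is the argument of that source, which adapts Beilinson--Bernstein's correspondence between twisted differential operators and Picard algebroids with $\mathcal{T}_X$ replaced by $V$. Its three ingredients are $\Lambda\mapsto\Lambda_1$ with the commutator bracket, $\widetilde{\mathcal{L}}\mapsto U(\mathcal{O}_X,\widetilde{V})/(1_{\widetilde{V}}-1)$, and a \v{C}ech--de Rham classification of the extensions. Your reading of $\Xi$ as an identification $\mathrm{Gr}_1\Lambda\cong V$ is the intended one.

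\textbf{The false step.} In (1)$\to$(2) you claim that $\mathcal{O}_X$ is central in the Lie algebra $\Lambda_1$. By your own definition, $[\lambda,f]=\delta(\lambda)(f)$, which is nonzero in general. In fact no Lie algebroid extension of $\mathcal{L}$ by $\mathcal{O}_X$ can have $\mathcal{O}_X$ literally central unless the anchor vanishes, since the Leibniz rule gives $[x,f\cdot 1]=f[x,1]+\widetilde{\delta}(x)(f)\cdot 1$. For transitive $\mathcal{L}$, the set in (2) would be empty under that reading.

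The correct notion is that $\mathcal{O}_X$ is an abelian ideal and the \emph{section} $1$ is central. Equivalently, $\widetilde{\mathcal{L}}$ acts on $\mathcal{O}_X$ through the anchor, so the induced $\mathcal{L}$-module structure on $\mathcal{O}_X$ is the trivial one given by $d_{\mathcal{L}}$. For $\Lambda_1$ this holds because $[\lambda,1]=0$. You already use this notion implicitly when you write ``central unit $1$'' in (2)$\to$(1).

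The correction matters for the rest of your argument. Your identities $d_{\mathcal{L}}\omega_i=0$ and $\omega_i-\omega_j=d_{\mathcal{L}}\eta_{ij}$ come precisely from $[s_jv,\eta_{ij}(w)\cdot 1]=\delta(v)(\eta_{ij}(w))\cdot 1$. Literal centrality would instead give $\omega_i-\omega_j=-\eta_{ij}\circ[\cdot,\cdot]$, the anchor-free Chevalley--Eilenberg differential. The classifying group would then not be $\mathbb{H}^2(X,\tau^{\geqslant1}\Omega^\bullet_{\mathcal{L}})$. With the correct notion, your (1)$\to$(2) and (2)$\to$(3) go through.

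\textbf{The PBW step.} Apply Rinehart's theorem to the pair $(\mathcal{O}_X,\widetilde{V})$, not to $V$. It gives $\mathrm{Gr}\,U\cong\mathrm{Sym}^\bullet\widetilde{V}$ for $U=U(\mathcal{O}_X,\widetilde{V})$. Then pass to the quotient as follows.
\begin{enumerate}
\item The element $t=1_{\widetilde{V}}-1$ is central in $U$, since $1_{\widetilde{V}}$ is central and has zero anchor.
\item Its symbol $1_{\widetilde{V}}$ is a non-zero-divisor, since locally $\mathrm{Sym}^\bullet\widetilde{V}\cong(\mathrm{Sym}^\bullet V)[1_{\widetilde{V}}]$.
\item Hence $tU\cap F_kU=tF_{k-1}U$, and $\mathrm{Gr}(U/tU)\cong\mathrm{Sym}^\bullet\widetilde{V}/(1_{\widetilde{V}})\cong\mathrm{Sym}^\bullet V$.
\end{enumerate}

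Drop the local ``Ore-type extension of $U(V)$'' picture. After an $\mathcal{O}$-module splitting, the local algebra is the $\omega_i$-twisted enveloping algebra. It receives a filtered $\mathcal{O}_X$-algebra map from $U(\mathcal{L}|_{U_i})$ lifting $\mathrm{id}_V$ only if $\omega_i$ is $d_{\mathcal{L}}$-exact. This fails in general even locally, e.g.\ for zero anchor and zero bracket, where $d_{\mathcal{L}}=0$.

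\textbf{The mutual-inverse check.} For (1)$\to$(2)$\to$(1), the universal property gives a filtered map $U(\Lambda_1)/(1_{\Lambda_1}-1)\to\Lambda$. Its associated graded is the canonical map $\mathrm{Sym}^\bullet\mathrm{Gr}_1\Lambda\to\mathrm{Gr}_\bullet\Lambda$, which is an isomorphism by the almost-polynomial hypothesis. For the other composite, the PBW computation above shows $F_1(U/tU)=\widetilde{V}$, with its original bracket.
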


A \emph{split almost polynomial $D$--algebra} is an almost polynomial $D$--algebra $\Lambda$ together with a morphism
\[
\zeta:\operatorname{Gr}_{1}(\Lambda)\longrightarrow \Lambda_{1}
\]
of left $\mathcal{O}_{X}$--modules splitting the natural projection $\Lambda_{1}\to \operatorname{Gr}_{1}(\Lambda)$ (cf. \cite[\S\,2, page 81]{CT.Simpson-1994}). One can check that universal enveloping $D$-algebra (cf. \cite[\S\,1]{Beilinson-Bernstein-1993}) associated to Lie algebroid $\mathcal{L}$ is a split almost polynomial $D$--algebra.

\begin{lem}\label{Lie algebroid is-equivalent-to-Lambda_L}\cite[\S\,3, Theorem  3.12]{Alfaya-Oliveire-2024}
	The following correspondences gives inverse equivalence of following categories:
	
	$$
	\left\{
	\begin{array}{c}
		isomorphism \ classes \ of \\ Lie \ algebroids \ on \ X
	\end{array}
	\right\}
	\Longleftrightarrow 
	\left\{
	\begin{array}{c}
		isomorphism \ classes \ of \\ split \ almost \ polynomial \  D-algebra \ on \ X
	\end{array}
	\right\}
	$$
	$$\mathcal{L}\mapsto \Lambda_{\mathcal{L}}(\text{universal envoloping $D$--algebra of $\mathcal{L})$}.$$
\end{lem}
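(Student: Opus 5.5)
The plan is to construct the two functors explicitly and check that they are mutually inverse, one direction at a time.

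\emph{From Lie algebroids to D-algebras.} Given $\mathcal{L}=(V,[\cdot,\cdot],\delta)$, I take $\Lambda_{\mathcal{L}}$ to be the universal enveloping $D$--algebra of \cite[\S\,1]{Beilinson-Bernstein-1993}. Concretely, it is the quotient of the tensor algebra over $\mathbb{C}$ of $\mathcal{O}_X\oplus V$ by the relations
\[
f\otimes g=fg,\qquad f\otimes v=fv,\qquad v\otimes f-f\otimes v=\delta(v)(f),\qquad v\otimes w-w\otimes v=[v,w].
\]
The filtration is by word length in $V$, so $\Lambda_0=\mathcal{O}_X$. By the Poincar\'e--Birkhoff--Witt theorem for Lie--Rinehart algebras (Rinehart), which applies because $V$ is locally free, the natural map $\mathrm{Sym}^\bullet_{\mathcal{O}_X}V\to \mathrm{Gr}_\bullet\Lambda_{\mathcal{L}}$ is an isomorphism; in particular $\mathrm{Gr}_1\Lambda_{\mathcal{L}}=V$. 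The inclusion $V\hookrightarrow\Lambda_1$ is left $\mathcal{O}_X$--linear and gives the splitting $\zeta$. The commutator relation with $f$ shows that $\Lambda_{\mathcal{L}}$ is a differential bimodule. Functoriality in isomorphisms is clear from the universal property.

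\emph{From D-algebras to Lie algebroids.} Given a split almost polynomial $(\Lambda,\zeta)$, set $V:=\mathrm{Gr}_1\Lambda$ and regard $\zeta(V)\subset\Lambda_1$. For $v\in V$, define the anchor by $\delta(v)(f):=[\zeta(v),f]$. This lies in $\Lambda_0=\mathcal{O}_X$, and it is a derivation of $\mathcal{O}_X$ because the commutator with $\zeta(v)$ is a derivation. It is $\mathcal{O}_X$--linear in $v$ because $\mathcal{O}_X$ is commutative, so $[fz,g]=f[z,g]$.

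Defining the bracket is the main obstacle. The commutator $[\zeta(v),\zeta(w)]$ lies in $\Lambda_1$ (its symbol vanishes since $\mathrm{Gr}$ is commutative), but it need not lie in $\zeta(V)$. Writing $\Lambda_1=\mathcal{O}_X\oplus\zeta(V)$, I therefore set
\[
[v,w]:=\text{the }\zeta(V)\text{-component of }[\zeta(v),\zeta(w)].
\]
The remaining $\mathcal{O}_X$--component $c(v,w)$ is precisely the central-extension data in Tortella's correspondence \cite[Theorem 34]{P.Tortella-2011T}. The Leibniz rule and the compatibility of $\delta$ with brackets follow from the associative identities in $\Lambda$. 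The Jacobi identity for $[\cdot,\cdot]$ is obtained by projecting the Jacobi identity for commutators in $\Lambda$ onto $\zeta(V)$; the terms involving $c$ land in $\mathcal{O}_X$ and drop out of this component.

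\emph{Composites.} The composite $\mathcal{L}\mapsto\Lambda_{\mathcal{L}}\mapsto\mathcal{L}$ is the identity, since in $\Lambda_{\mathcal{L}}$ we have $c=0$. For the other composite, the universal property produces a filtered map $\Lambda_{\mathcal{L}}\to\Lambda$, and this is an isomorphism on graded pieces by PBW. The subtle point is to make this valid when $c\neq0$. Here I would interpret isomorphism classes of split algebras exactly as in \cite[Theorem 3.12]{Alfaya-Oliveire-2024}, where the splitting is required to be compatible with brackets, equivalently where the class $\Sigma$ of Tortella's lemma is trivialised. Under that convention one can modify $\zeta$ so that $c=0$, using the splitting to identify the extension as trivial. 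The lemma then follows from Tortella's bijection restricted to $\Sigma=0$.
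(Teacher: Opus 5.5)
Your argument is correct once the meaning of ``split'' is fixed, and it does more than the paper, which gives no proof and only cites Alfaya--Oliveira (itself resting on Tortella's correspondence). Your route is direct: Rinehart's construction of $\Lambda_{\mathcal{L}}$ with PBW in one direction, anchor and bracket read off from commutators in $\Lambda_1=\mathcal{O}_X\oplus\zeta(V)$ in the other, then a check of both composites. Its advantage is that it shows exactly where the hypothesis enters. The $\mathcal{O}_X$-component $c$ of $[\zeta(v),\zeta(w)]$ is a section of $\Omega^2_{\mathcal{L}}$. It is $d_{\mathcal{L}}$-closed, since that is precisely the $\mathcal{O}_X$-component of the Jacobi identity you discard. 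It changes by $d_{\mathcal{L}}\gamma$ when $\zeta$ is replaced by $\zeta+\gamma$ with $\gamma\in H^0(X,\Omega^1_{\mathcal{L}})$. Meanwhile the extracted Lie algebroid does not depend on $\zeta$. Hence $\Lambda\cong\Lambda_{\mathcal{L}}$ if and only if $c\in d_{\mathcal{L}}H^0(X,\Omega^1_{\mathcal{L}})$.

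Your reinterpretation at the end is therefore necessary, not a matter of taste. With the definition printed in the paper ($\zeta$ merely a left $\mathcal{O}_X$-linear splitting), the lemma is false. Take an abelian surface $X$ and $0\neq\omega\in H^0(X,\Omega^2_X)$; $\omega$ is closed for dimension reasons. The algebra generated by $\mathcal{O}_X$ and $\mathcal{T}_X$ with $uv-vu=[u,v]+\omega(u,v)$ is almost polynomial and split by $\zeta(u)=u$, and its extracted algebroid is $\mathcal{T}_X$. Since $dH^0(X,\Omega^1_X)=0$, this algebra is not isomorphic to $\mathcal{D}_X=\Lambda_{\mathcal{T}_X}$, hence not to any $\Lambda_{\mathcal{L}'}$.

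One correction to your last step: ``splitting compatible with brackets'' and ``$\Sigma$ trivialised'' are not equivalent conditions on a pair $(\Lambda,\zeta)$. There are two consistent conventions:
\begin{itemize}
\item If $\zeta$ is part of the data and isomorphisms must respect it, you need $c=0$ on the nose. Indeed $(\Lambda_{\mathcal{L}},\zeta_{\mathrm{std}}+\gamma)$ with $d_{\mathcal{L}}\gamma\neq0$ has $\Sigma=0$ but is not in the image.
\item If only the existence of a splitting is required, then $\Sigma=0$, i.e.\ $c\in d_{\mathcal{L}}H^0(X,\Omega^1_{\mathcal{L}})$, is the right condition, and your modification of $\zeta$ completes the proof.
\end{itemize}
State which convention you adopt; with either one your argument is complete.
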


An important class of algebras are constructed from Lie algebroids supported on the tangent bundle. In the untwisted case, we have of course the \textit{algebra of differential operators}, or \textit{De Rham D-algebra} in Simpson’s notation (\cite[\S\,6]{CT.Simpson-1995},\cite[\S\,2]{CT.Simpson-1994}) $\mathscr{D}_{X}=\Lambda^{DR}$ which arises as the universal enveloping algebra of the canonical Lie algebroid $(\mathcal{T}_{X}, [\cdot, \cdot],\text{Id}_{\mathcal{T}_{X}})$, i.e. the Lie algebroid supported on $\mathcal{T}_{X}$ obtained after setting the anchor to be the identity morphism. The abelianization of $\Lambda^{\text{DR}}$ is the \textit{Dolbeault $D$--algebra}, $\Lambda^{\text{Dol}}:= \text{Gr}(\Lambda^{\text{DR}})=\text{Sym}^{\bullet}(\mathcal{T}_{X})$,
which can be obtained as the universal enveloping algebra of the trivial Lie algebroid supported on the tangent bundle, $(\mathcal{T}_{X}, [\cdot, \cdot],0)$, where the anchor is the $0$ map. We can construct also a family of $D$--algebras which is a deformation from $\Lambda^{\text{DR}}$ to $\Lambda^{\text{Dol}}$. Set, for each $t \in \mathbb{C}$, the Lie algebroid $(\mathcal{T}_{X},[\cdot,\cdot],t\cdot \Id_{\mathcal{T}_{X}})$, where the anchor consists on scaling by $t$, and define $\Lambda^{t}$ to be the universal enveloping $D$ algebra of it. Similerly, for a given Lie algebroid $\mathcal{L}=(V,[\cdot,\cdot],\delta)$, we can construct \textit{$\mathcal{L}$--twisted De Rham $D$--algebra} $\Lambda^{DR}_{\mathcal{L}}$ (universal enveloping algebra of the Lie algebroid $\mathcal{L}$) and \textit{$\mathcal{L}$--twisted Dolbeault $D$--algebra}, $\Lambda_{\mathcal{L}}^{\text{Dol}}:= \text{Gr}(\Lambda_{\mathcal{L}}^{\text{DR}})=\text{Sym}^{\bullet}(V)$, which can be obtained as the universal enveloping algebra of the trivial Lie algebroid $(V,[\cdot,\cdot],0)$.  We can construct also a family of $D$--algebras which is a deformation from $\Lambda_{\mathcal{L}}^{\text{DR}}$ to $\Lambda_{\mathcal{L}}^{\text{Dol}}$.

\begin{defn}
Let $\Lambda$ be a split almost polynomial $D$ algebra. Let $\mathcal{E}$ be a coherent sheaf on $X$. A $\Lambda$--\textit{module structure on $\mathcal{E}$} is a $\mathcal{O}_{X}$-morphism $\varphi : \Lambda \otimes \mathcal{E} \rightarrow \mathcal{E}$ satisfying the usual module axioms and such that the $\mathcal{O}_{X}$-module structure
on $\mathcal{E}$ induced by $\mathcal{O}_{X} \rightarrow \Lambda$ coincides with the original one.
\end{defn}

Therefore by lemma \eqref{Lie algebroid is-equivalent-to-Lambda_L} we have the following:
\begin{lem}\label{Lambda_L module is equvalent to $L$-connection }\cite[\S\,2, lemma 2.13]{CT.Simpson-1994}
	Let $\mathcal{L}$ be a Lie algebroid on $X$ and let $\mathcal{E}$ be a vector bundle on $X$.
	Then giving a $\Lambda^{\text{DR}}_{\mathcal{L}}$--module structure on $\mathcal{E}$ is equivalent to the choice of an integrable $\mathcal{L}$--connection
	\[
	D_{\mathcal{L}}:\mathcal{E}\longrightarrow \mathcal{E}\otimes \Omega^{1}_{\mathcal{L}}
	\]
	on $\mathcal{E}$.
\end{lem}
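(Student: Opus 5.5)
We prove the equivalence by unwinding both sides locally and then gluing. Throughout, $\Lambda=\Lambda^{\mathrm{DR}}_{\mathcal{L}}$ is the universal enveloping $D$--algebra of $\mathcal{L}$. By Lemma~\ref{Lie algebroid is-equivalent-to-Lambda_L} it is split almost polynomial, so $\Lambda_0=\mathcal{O}_X$ and $\Lambda_1=\mathcal{O}_X\oplus V$ via the splitting $\zeta$. Moreover, $\Lambda$ is generated as a sheaf of algebras by $\mathcal{O}_X$ and $V$, subject only to
\[
v f - f v = \delta(v)(f), \qquad v w - w v = [v,w], \qquad v\in V,\ f\in\mathcal{O}_X .
\]
This is the universal property of the enveloping algebra of a Lie algebroid (Rinehart's PBW theorem). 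I would take it as the defining universal property: a morphism of sheaves of algebras $\Lambda\to\mathcal{E}nd_{\mathbb{C}}(\mathcal{E})$ extending the $\mathcal{O}_X$--action is the same as an $\mathcal{O}_X$--linear map $\nabla:V\to\mathcal{E}nd_{\mathbb{C}}(\mathcal{E})$ with two properties. First, $[\nabla_v,f]=\delta(v)(f)$. Second, $[\nabla_v,\nabla_w]=\nabla_{[v,w]}$.

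From module to connection: given $\varphi:\Lambda\otimes\mathcal{E}\to\mathcal{E}$, restrict to $V\subset\Lambda_1$ and set $\nabla_v(s)=\varphi(v\otimes s)$. $\mathcal{O}_X$--linearity in $v$ follows because the left $\mathcal{O}_X$--structure on $\Lambda$ is used in $\Lambda\otimes\mathcal{E}$. The first relation gives $\nabla_v(fs)=f\nabla_v(s)+\delta(v)(f)s$. Dualizing, we define
\[
D_{\mathcal{L}}(s)\in \mathcal{E}\otimes V^{*}=\mathcal{E}\otimes\Omega^1_{\mathcal{L}}, \qquad D_{\mathcal{L}}(s)(v)=\nabla_v s .
\]
Since $d_{\mathcal{L}}f(v)=\delta(v)(f)$, this is exactly the $\delta^*$--twisted Leibniz rule \eqref{eqn:phi^*-twisted-Leibniz-rule}. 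Next we compute the curvature. Using the recursive Chevalley--Eilenberg formula for $d_{\mathcal{L}}$ on $\Omega^1_{\mathcal{L}}$, one checks locally (in a frame of $V$) that
\[
D^2_{\mathcal{L}}(s)(v,w)=\nabla_v\nabla_w s-\nabla_w\nabla_v s-\nabla_{[v,w]}s .
\]
The second relation therefore gives $D^2_{\mathcal{L}}=0$.

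From connection to module: given an integrable $D_{\mathcal{L}}$, define $\nabla_v s=\langle D_{\mathcal{L}}s,v\rangle$. The Leibniz rule gives the first relation, and the vanishing of curvature, via the same identity, gives the second. The universal property then produces a unique algebra map $\Lambda\to\mathcal{E}nd_{\mathbb{C}}(\mathcal{E})$ extending $\mathcal{O}_X\to\mathcal{E}nd(\mathcal{E})$, which is the required module structure $\varphi$. We must check that $\varphi$ is $\mathcal{O}_X$--linear for the left structure. This holds because $\varphi(f\lambda\otimes s)=f\varphi(\lambda\otimes s)$ is true on generators and propagates through products. The two constructions are mutually inverse: the module structure is determined by its restriction to the generators $V$, and the connection is recovered tautologically. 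Uniqueness on $\Lambda$ also gives compatibility with morphisms, so we get an equivalence of categories and not merely a bijection on objects.

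The main obstacle is justifying the presentation of $\Lambda^{\mathrm{DR}}_{\mathcal{L}}$ by generators and relations, i.e.\ its universal property. The paper only cites Lemma~\ref{Lie algebroid is-equivalent-to-Lambda_L}. I would invoke the Beilinson--Bernstein/Rinehart construction of the enveloping algebra as a quotient of the tensor algebra of $\mathcal{O}_X\oplus V$ by exactly these relations, which makes the universal property immediate. The curvature identity is the only genuine computation, and it is routine from the definition of $d_{\mathcal{L}}$.
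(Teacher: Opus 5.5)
Your argument is correct, but it follows a different route from the paper. The paper gives no argument of its own. It gets the lemma by combining the Alfaya--Oliveira correspondence $\mathcal{L}\mapsto\Lambda_{\mathcal{L}}$ with Simpson's Lemma~2.13. Later, in the proof of the existence theorem for $\mathcal{M}^{\mathrm{DR}}_{\mathcal{L}}(X,n)$, it spells out the identification it has in mind: Simpson's triple $(H,\delta,\gamma)$ describing a split almost polynomial $D$--algebra becomes $H=\Omega^{1}_{\mathcal{L}}$, with $\delta$ given by the anchor (i.e.\ $d_{\mathcal{L}}$ on functions) and $\gamma$ induced by the bracket of $\mathcal{L}$. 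Simpson's lemma then says directly that a module over such an algebra is an operator $\mathcal{E}\to\mathcal{E}\otimes H$ satisfying the $\delta$--Leibniz rule and $\nabla^{2}=0$.

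You instead use the Rinehart/Beilinson--Bernstein presentation of the enveloping algebra and do the curvature computation with the Chevalley--Eilenberg differential yourself. In effect you reprove Simpson's lemma in this special case. Your version is self-contained and shows exactly which relation in $\Lambda$ gives the Leibniz rule and which gives flatness. It also gives the statement on morphisms. Its cost is that it depends on the generators-and-relations description of $\Lambda^{\mathrm{DR}}_{\mathcal{L}}$, which the paper never states; the paper only records a bijection of isomorphism classes. The paper's route is shorter and keeps $\Lambda^{\mathrm{DR}}_{\mathcal{L}}$ inside Simpson's $(H,\delta,\gamma)$ formalism, which is what it needs later to apply his moduli theorems. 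However, the paper asserts rather than checks that the universal enveloping algebra is the algebra attached to $(\Omega^{1}_{\mathcal{L}},d_{\mathcal{L}})$, and checking that is essentially your computation.

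Two small presentational points, neither of which breaks anything.
\begin{itemize}
\item ``Generated by $\mathcal{O}_X$ and $V$ subject only to'' your two relations is incomplete as written. You also need that $\mathcal{O}_X\to\Lambda$ is an algebra map and that the product $f\cdot v$ in $\Lambda$ equals the section $fv$ of $V$. Your universal property already encodes both, through ``extending the $\mathcal{O}_X$--action'' and ``$\mathcal{O}_X$--linear $\nabla$''.
\item $\Lambda\otimes\mathcal{E}$ is formed using the right $\mathcal{O}_X$--structure of $\Lambda$. The identity $\nabla_{fv}=f\nabla_v$ comes from left $\mathcal{O}_X$--linearity of $\varphi$, equivalently from $(fv)\cdot s=f\cdot(v\cdot s)$.
\end{itemize}
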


A $\Lambda$--module $\mathcal{E}$ is said to be of \emph{pure dimension $d$} if the underlying $\mathcal{O}_{X}$--coherent sheaf is of pure dimension $d$. 
The \emph{Hilbert polynomial}, the \emph{rank}, and the \emph{slope} of $\mathcal{E}$ are defined to be those of the underlying coherent sheaf (cf.~\cite[\S\,1]{CT.Simpson-1994}, \cite[Ch.~1]{Huybrechts-Lehn-2010}).

A $\Lambda$--module $\mathcal{E}$ is called \emph{$\mathcal{P}$--semistable} (resp.~\emph{$\mathcal{P}$--stable}) if it is of pure dimension and for every nonzero proper $\Lambda$--submodule
$\mathscr{F}\subset \mathcal{E}$ with
\[
0 < r(\mathscr{F}) < r(\mathcal{E}),
\]
there exists an integer $N$ such that
\[
\frac{\mathcal{P}(\mathscr{F},n)}{r(\mathscr{F})}
\leqslant
\frac{\mathcal{P}(\mathcal{E},n)}{r(\mathcal{E})}
\quad
(\text{resp. } <)
\]
for all $n \geqslant N$.

A $\Lambda$--module $\mathcal{E}$ is called \emph{$\mu$--semistable} (resp.~\emph{$\mu$--stable}) if it is of pure dimension and for every nonzero proper $\Lambda$--submodule
$\mathscr{F}\subset \mathcal{E}$ with
\[
0 < r(\mathscr{F}) < r(\mathcal{E}),
\]
we have
\[
\mu(\mathscr{F}) \leqslant \mu(\mathcal{E})
\quad
(\text{resp. } <).
\]
Moreover, $\mathcal{P}$--semistability implies $\mu$--semistability, while $\mu$--stability implies $\mathcal{P}$--stability (cf.~\cite[\S\,1]{CT.Simpson-1994}).

\medskip
\noindent
\textbf{Moduli functor.}
Let $S$ be a scheme over $\mathbb{C}$, and let $p_{S}, p_{X} : S \times X \longrightarrow S, X$
denote the natural projections. Set $\Lambda^{S} := p_{X}^{*}\Lambda$.
We define a contravariant functor
\[
\mathcal{F}_{\Lambda}(\mathcal{P}) : \texttt{Sch}_{/\mathbb{C}} \longrightarrow \texttt{Set}
\]
as follows.

\begin{itemize}
	\item To a scheme $S$, the functor associates the set of isomorphism classes of $\Lambda^{S}$--modules $(\mathcal{E},\mu)$ such that:
	\begin{itemize}
		\item $\mathcal{E}$ is flat over $S$, and
		\item for every closed point $s \in S$, the fiber $(\mathcal{E}_{s},\mu_{s})$ is a semistable $\Lambda$--module on $X$ with Hilbert polynomial $\mathcal{P}$.
	\end{itemize}
	Two such families $(\mathcal{E},\mu)$ and $(\mathcal{E}',\mu')$ are identified if there exists a line bundle $L$ on $S$ such that
	\[
	\mathcal{E}' \cong \mathcal{E} \otimes p_{S}^{*}L,
	\qquad
	\mu' = \mu \otimes \mathrm{id}_{p_{S}^{*}L}.
	\]
	\item To a morphism of schemes $\psi : T \to S$, the functor associates the pullback family: if $(\mathcal{E},\mu)$ is a $\Lambda^{S}$--module flat over $S$, then
	$\psi^{*}(\mathcal{E},\mu)$ is a $\Lambda^{T}$--module flat over $T$.
\end{itemize}

\begin{thm}\label{moduli-of-Lambda-module}\cite[\S\,4, Th 4.7.]{CT.Simpson-1994}
Let $X$ be a smooth projective variety over $\mathbb{C}$, let $\Lambda$ be a sheaf of rings of differential operators on $X$, and let $\mathcal{P}$ be a numerical polynomial. 
Then there exists a quasi-projective variety $\mathcal{M}(\Lambda,\mathcal{P})$ which universally corepresents the moduli functor $\mathcal{F}_{\Lambda}(\mathcal{P})$. 
The closed points of $\mathcal{M}(\Lambda,\mathcal{P})$ are in one-to-one correspondence with Jordan equivalence classes of semistable $\Lambda$--modules on $X$ with Hilbert polynomial equal to $\mathcal{P}$.

\end{thm}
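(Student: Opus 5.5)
This statement is Simpson's theorem, quoted from \cite[\S\,4, Th 4.7]{CT.Simpson-1994}. If I were to reprove it, I would follow Simpson's GIT construction. Throughout, $\Lambda$ is a split almost polynomial $D$--algebra, which by Lemma~\eqref{Lie algebroid is-equivalent-to-Lambda_L} is the universal enveloping algebra $\Lambda^{\text{DR}}_{\mathcal{L}}$ of a Lie algebroid $\mathcal{L}$.

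\textbf{Step 1: boundedness.} Show that the family of semistable $\Lambda$--modules with Hilbert polynomial $\mathcal{P}$ is bounded. The key estimate is that the underlying coherent sheaf of a semistable $\Lambda$--module has bounded destabilization. Let $\mathcal{F}$ be the maximal destabilizing subsheaf of $\mathcal{E}$. The action $\Lambda_1\otimes\mathcal{F}\to \mathcal{E}/\mathcal{F}$ is $\mathcal{O}_X$--linear modulo $\Lambda_0$, hence gives a map $\text{Gr}_1(\Lambda)\otimes \mathcal{F}\to\mathcal{E}/\mathcal{F}$. This map must be nonzero unless $\mathcal{F}$ is a $\Lambda$--submodule, and a $\Lambda$--submodule is excluded by semistability. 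Iterating this along the Harder--Narasimhan filtration bounds $\mu_{\max}(\mathcal{E})$ by $\mu(\mathcal{E})$ plus a constant depending only on $\text{Gr}_1(\Lambda)$. Le Potier--Simpson type estimates then give a uniform $N$ such that every such $\mathcal{E}(N)$ is globally generated with vanishing higher cohomology.

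\textbf{Step 2: parameter scheme.} Set $V=\mathbb{C}^{\mathcal{P}(N)}$ and consider Grothendieck's Quot scheme of quotients $\Lambda_r\otimes V\otimes\mathcal{O}_X(-N)\to\mathcal{E}$, for a suitable filtration level $r$. Using the generation of $\Lambda$ by $\Lambda_1$, the locus where the quotient carries a compatible $\Lambda$--module structure is a locally closed subscheme $Q$. It is $GL(V)$--invariant, and this action has the expected stabilizers.

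\textbf{Step 3: GIT quotient.} Embed $Q$ into a Grassmannian via $H^0(\mathcal{E}(m))$ for $m\gg 0$ and take the $SL(V)$--linearization. Show that GIT (semi)stable points coincide with $\mathcal{P}$--(semi)stable $\Lambda$--modules, by comparing Hilbert polynomials of $\Lambda$--submodules; only $\Lambda$--submodules matter here, which is exactly where the $\Lambda$--structure enters. Then $\mathcal{M}(\Lambda,\mathcal{P})=\overline{Q}^{ss}/\!\!/SL(V)$ restricted to $Q$. This is quasi-projective rather than projective, because the $\Lambda$--module condition is not closed.

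\textbf{Step 4: universality and closed points.} Universal corepresentability comes from the good-quotient property and local universal families. The description of closed points follows because closed orbits correspond to polystable modules, i.e.\ Jordan equivalence classes.

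The main obstacle is Step 3, namely matching GIT semistability with $\Lambda$--semistability. The quotient $\overline{Q}$ is non-proper, and limits can fail to be $\Lambda$--modules. To handle this, I would prove properness of $Q^{ss}\to \overline{Q}^{ss}/\!\!/G$ over its image using a Langton-type argument.
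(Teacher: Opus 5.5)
The paper gives no proof of this statement; it is quoted from \cite{CT.Simpson-1994}, so the comparison here is with Simpson's construction in \S\S3--4 there. Your Steps 1 and 2 agree with it in outline. The nonvanishing of $\operatorname{Gr}_1(\Lambda)\otimes\mathcal{F}\to\mathcal{E}/\mathcal{F}$ along the Harder--Narasimhan filtration is how one gets $\mu_{\max}(\mathcal{E})\leqslant\mu(\mathcal{E})+C$, with $C$ depending on $\operatorname{Gr}_1(\Lambda)$ and the rank. Compatibility with the action is imposed as the vanishing of certain maps, which is a closed condition by Lemma~\ref{lem:fact-from-simpson-thm-3-8}. (Restricting to universal enveloping algebras is unnecessary: you only use that $\operatorname{Gr}_1(\Lambda)$ is coherent and generates $\operatorname{Gr}_{\bullet}(\Lambda)$.)

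The genuine gap is in Steps 3--4, exactly where you locate the difficulty, and the remedy you propose would not work.

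\emph{Where non-properness comes from.} It does not come from $\Lambda$--compatibility, which is closed. It comes from the open requirements that $V\otimes\mathcal{O}_X(-N)$ itself (not merely its $\Lambda_r$--span) generate $\mathcal{E}$, that $V\cong H^0(\mathcal{E}(N))$, and semistability. Consequently some points of $\overline{Q}\setminus Q$ are GIT--semistable. This is forced, since $\overline{Q}^{ss}\sslash SL(V)$ is projective while, for example, Higgs moduli spaces are not.

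\emph{Why the Langton route fails.} The map $Q\to\overline{Q}^{ss}\sslash SL(V)$ is not proper over its image: a good quotient is affine, and over a stable point its fibre is a single orbit $\cong PGL(V)$. A Langton--type argument produces semistable limits, so it proves properness of a moduli space, which is precisely what fails here.

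\emph{What you need instead} is that $Q$ is open and $SL(V)$--saturated in $\overline{Q}^{ss}$.
Openness is formal once every point of $Q$ is GIT--semistable, since a locally closed subset is open in its closure.
Given openness, saturation reduces to showing that for $q\in Q$ the closed orbit in $\overline{SL(V)\cdot q}\cap\overline{Q}^{ss}$ meets $Q$. To do this, reach that orbit as $\lim_{t\to 0}\lambda(t)\cdot q$ for a one--parameter subgroup $\lambda$, which must have Hilbert--Mumford weight $0$. Then show, via the equality case of your semistability comparison, that the induced filtration of $\mathcal{E}_q$ consists of $\Lambda$--submodules with the same reduced Hilbert polynomial. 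The limit is then the associated graded module, again a point of $Q$.
The same analysis identifies closed orbits with $\operatorname{gr}^{\mathrm{JH}}(\mathcal{E}_q)$, which is what Step 4 needs and currently only asserts.

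\emph{``Only $\Lambda$--submodules matter'' is not automatic.} The Hilbert--Mumford test runs over all subspaces $V'\subset V$, and their $\mathcal{O}_X$--spans are not $\Lambda$--submodules. You must pass to $\Lambda_r$--spans; for $r$ at least the rank of $\mathcal{E}$, these have the same rank as the $\Lambda$--submodule they generate. Then combine the slope bound of Step 1 with the Le Potier--Simpson estimate for $h^0$.
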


One may similarly define an $\mathcal{L}$--connection on a coherent sheaf (see Definition~\eqref{def:Lie-alg-conn-on-VB}). However, it is well known that a coherent sheaf equipped with a $\mathcal{T}_{X}$--connection is necessarily locally free, and moreover its $\mathcal{T}_{X}$--Chern classes vanish
(see \cite[\S\,VI, Proposition~1.7]{A.Borel-1987} and \cite[\S\,3, Theorem~4]{Atiyah-1957}). Consequently, by the \textit{Hirzebruch--Riemann--Roch Theorem} \cite{A. Moroianu-2007}, the normalized Hilbert polynomial of $\mathcal{E}$ coincides with that of $\mathcal{O}_{X}$.

In \cite[\S\,4]{R.L.Fernandez-2002}, R.L.Fernandez introduce Chern classes associated to Lie algebroids. In contrast, for a general Lie algebroid $\mathcal{L}$, a coherent sheaf may admit an $\mathcal{L}$--connection without being locally free (see \cite[\S\,2.2, Example~2.12]{GLP-2018}).

Another example is the following: let $\mathcal{L}$ be a holomorphic Lie algebroid, let $\mathcal{G}$ denote the associated holomorphic foliation, let $\mathcal{E}$ be a coherent $\mathcal{O}_{X}$--module, and let $D_{\mathcal{L}}$ be a holomorphic $\mathcal{L}$--connection on $\mathcal{E}$. Then one can only conclude that the restriction $\mathcal{E}|_{\mathfrak{G}}$ is locally free for every leaf $\mathfrak{G}$ of $\mathcal{G}$ (see \cite{R.L.Fernandez-2002}).

\begin{lem}\label{lem:transitive-Lie-algebroid-locally-free}
	Let $\mathcal{L}$ be a transitive Lie algebroid. Then any coherent sheaf $\mathcal{E}$ equipped with an $\mathcal{L}$--connection (not necessarily integrable) is locally free, and its $\mathcal{L}$--Chern classes vanish. Moreover, the normalized Hilbert polynomial of $\mathcal{E}$ coincides with that of $\mathcal{O}_{X}$.
\end{lem}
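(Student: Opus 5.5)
The plan is to reduce to the classical case of $\mathcal{T}_X$--connections by pushing the $\mathcal{L}$--connection forward along a splitting of the anchor. Since $\delta : V\to\mathcal{T}_X$ is surjective, local freeness is a local question, and so is the existence of a splitting. Over a sufficiently small affine open $U\subset X$, $\mathcal{T}_X|_U$ is free, hence projective. So there is an $\mathcal{O}_U$--linear section $\gamma_U:\mathcal{T}_U\to V|_U$ with $\delta\circ\gamma_U=\Id$. Given an $\mathcal{L}$--connection $D_{\mathcal{L}}:\mathcal{E}\to\mathcal{E}\otimes\Omega^1_{\mathcal{L}}$, I would define
\[
\nabla_U:=(\Id_{\mathcal{E}}\otimes\gamma_U^{*})\circ D_{\mathcal{L}}:\mathcal{E}|_U\to\mathcal{E}|_U\otimes\Omega^1_U .
\]
Using $\gamma_U^{*}\circ\delta^{*}=(\delta\circ\gamma_U)^{*}=\Id$, the $\delta^{*}$--twisted Leibniz rule \eqref{eqn:phi^*-twisted-Leibniz-rule} becomes $\nabla_U(fs)=f\nabla_U(s)+s\otimes df$. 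Thus $\nabla_U$ is an ordinary (not necessarily integrable) algebraic connection on the coherent sheaf $\mathcal{E}|_U$.

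For local freeness, I would then invoke the classical fact quoted just before the lemma: a coherent sheaf admitting a $\mathcal{T}_X$--connection is locally free \cite[\S\,VI, Proposition~1.7]{A.Borel-1987}. If I wanted a self-contained argument, I would check that the Fitting ideals of $\mathcal{E}$ are stable under derivations. This is because $\nabla_U$ lets one differentiate a presentation matrix modulo the ideal. A nonzero proper derivation-stable ideal on a smooth variety over $\mathbb{C}$ cannot exist, so $\mathcal{E}|_U$ is locally free. Covering $X$ by such $U$ gives local freeness globally.

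For the vanishing of Chern classes, I would not try to patch the local $\nabla_U$, since a global splitting of $\delta$ need not exist. Instead I would argue through Atiyah classes. An $\mathcal{L}$--connection on $\mathcal{E}$ is the same as a splitting of the $\mathcal{L}$--Atiyah sequence \eqref{eqn:Atiyah-exact-seq-for-(V,phi)-valued-connection} for the frame bundle. So the $\mathcal{L}$--Atiyah class $\Phi_{\mathcal{L}}(\mathcal{E})\in H^1(X,\End(\mathcal{E})\otimes\Omega^1_{\mathcal{L}})$ vanishes by Proposition \ref{prop:E_G-admits-Lie-alg-conn-iff-Atiyah-cls-vanishes}. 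The $\mathcal{L}$--Chern classes in the sense of \cite{R.L.Fernandez-2002} are the invariant polynomials $\mathrm{tr}(\Phi_{\mathcal{L}}^k)$, which therefore vanish.

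The main obstacle is the last assertion about the normalized Hilbert polynomial. This requires the ordinary Chern classes in $H^{k}(X,\Omega^{k}_X)$ to vanish, whereas so far we only know that their images under $\delta^{*}$ in $H^k(X,\Omega^k_{\mathcal{L}})$ vanish. My approach would be to show that $\Phi_{\mathcal{L}}(\mathcal{E})=(\Id\otimes\delta^{*})\,\mathrm{At}(\mathcal{E})$, and then argue that $\wedge^k\delta^{*}:\Omega^k_X\to\Omega^k_{\mathcal{L}}$ is injective with locally split image because $\delta$ is surjective. I would then need injectivity on $H^k$. The first thing I would try is to use transitivity to produce the exact sequence $0\to\Omega^1_X\to\Omega^1_{\mathcal{L}}\to\ker(\delta)^{*}\to0$ and control the connecting maps. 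I expect this step to need the most care: injectivity on cohomology is not automatic unless the sequence splits.

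Granting it, $\mathrm{ch}(\mathcal{E})=\mathrm{rk}(\mathcal{E})$ in $H^{*}(X,\mathbb{Q})$. Hirzebruch--Riemann--Roch then gives $\mathcal{P}(\mathcal{E},n)=\mathrm{rk}(\mathcal{E})\,\chi(\mathcal{O}_X(n))$, so $\mathcal{P}(\mathcal{E},n)/r(\mathcal{E})=\mathcal{P}(\mathcal{O}_X,n)$.
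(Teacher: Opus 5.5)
Your first two steps are sound and follow the paper's route; the paper simply cites \cite{GLP-2018} for local freeness and Tortella's thesis for the vanishing of the $\mathcal{L}$--Chern classes. Your local splitting of $\delta$, and the identity $\Phi_{\mathcal{L}}(\mathcal{E})=(\Id\otimes\delta^{*})\,a(\mathcal{E})$ with $a(\mathcal{E})$ the ordinary Atiyah class, make them self-contained.

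The genuine gap is the step you isolate and then grant: injectivity of $\wedge^{k}\delta^{*}$ on $H^{k}(X,\Omega^{k}_{X})$, or at least on the Chern classes of $\mathcal{E}$. It holds at the sheaf level but fails on cohomology for non-split transitive algebroids, and the final assertion of the lemma fails with it. Let $X$ be a curve, $F$ a line bundle of degree $d\neq 0$, and $\mathcal{L}=\At(F)$. The sequence $0\to\mathcal{O}_X\to\At(F)\xrightarrow{\delta}\mathcal{T}_X\to 0$ then has extension class a nonzero multiple of $c_1(F)$ in $H^1(X,\Omega^1_X)\cong\mathbb{C}$. Take the cohomology sequence of the dual sequence $0\to\Omega^1_X\xrightarrow{\delta^{*}}\Omega^1_{\mathcal{L}}\to\mathcal{O}_X\to 0$. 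Its connecting map sends $1\in H^0(X,\mathcal{O}_X)$ to this class, so it is onto, and hence $\delta^{*}\colon H^1(X,\Omega^1_X)\to H^1(X,\Omega^1_{\mathcal{L}})$ is identically zero. Concretely, $F$ carries the tautological integrable $\At(F)$--connection $D_{\mathcal{L}}(s)(D)=D(s)$ mentioned in Section~2 of the paper. Its $\mathcal{L}$--Chern class vanishes, but $\chi(F(n))=d+\chi(\mathcal{O}_X(n))$, so its normalized Hilbert polynomial is not that of $\mathcal{O}_X$. The corollary of \cite{ABKA-2025} quoted in the paper says the same thing: when $\mathcal{F}$ has no connection, bundles of every degree admit $\At(\mathcal{F})$--connections. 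The paper's proof has the same error. It asserts, citing \cite{Crainic-2009}, that for transitive $\mathcal{L}$ vanishing of $\mathcal{L}$--Chern classes forces vanishing of the ordinary ones, and this example refutes that.

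The last assertion needs an extra hypothesis, for instance that $\mathcal{L}$ is split. If $\gamma\colon\mathcal{T}_X\to V$ is a global $\mathcal{O}_X$--linear section of $\delta$, your local formula globalizes: $(\Id_{\mathcal{E}}\otimes\gamma^{*})\circ D_{\mathcal{L}}$ is an algebraic connection on $\mathcal{E}$. Equivalently, $\wedge^{k}\gamma^{*}$ is a left inverse of $\wedge^{k}\delta^{*}$, so the injectivity you wanted holds. Then $a(\mathcal{E})=0$, so the Hodge components of the Chern classes vanish, and by Hodge theory so do the rational Chern classes. Your Hirzebruch--Riemann--Roch step then gives $\mathcal{P}(\mathcal{E},n)=\operatorname{rk}(\mathcal{E})\,\mathcal{P}(\mathcal{O}_X,n)$. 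Without such a hypothesis, only local freeness and the vanishing of the $\mathcal{L}$--Chern classes can be proved.
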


\begin{proof}
	Since $\mathcal{L}$ is a transitive Lie algebroid, it follows from \cite[\S\,2.2, Proposition~2.15]{GLP-2018} that $\mathcal{E}$ is locally free. Furthermore, the $\mathcal{L}$--Chern classes of $\mathcal{E}$ vanish by \cite[\S\,2, Theorem~18]{P.Tortella-2011T}.
	
	Again, since $\mathcal{L}$ is transitive, the vanishing of the $\mathcal{L}$--Chern classes implies the vanishing of the usual Chern classes (cf. \cite{Crainic-2009}). Hence, by the \textit{Hirzebruch--Riemann--Roch Theorem}, the normalized Hilbert polynomial of $\mathcal{E}$ coincides with that of $\mathcal{O}_{X}$.
\end{proof}

In view of the above discussion, we may assume that the pure dimension $d$ coincides with the dimension of $X$, and that the normalized Hilbert polynomial $\mathcal{P}_{0}$ is equal to that of $\mathcal{O}_{X}$; otherwise, the corresponding moduli spaces are empty.

Furthermore, any subsheaf of $\mathcal{E}$ preserved by the $\mathcal{L}$--connection is again a vector bundle equipped with an $\mathcal{L}$--connection, having the same normalized Hilbert polynomial $\mathcal{P}_{0}$. Therefore, for a fixed transitive Lie algebroid $\mathcal{L}$, every $\Lambda^{\mathrm{DR}}_{\mathcal{L}}$--module of rank $n$ is automatically $n\mathcal{P}_{0}$--semistable, and is $n\mathcal{P}_{0}$--stable.

The following theorem is an analogue of \cite[\S\,6, Theorem~6.13]{CT.Simpson-1995} in the setting of Lie algebroids.
\begin{thm}\label{eqn: moduli-of-L-connection on VB}
Let $\mathcal{L}$ be a transitive Lie algebroid and let $X$ be a smooth projective variety over $\mathbb{C}$. There exists a
quasi-projective scheme $\mathcal{M}^{\text{DR}}_{\mathcal{L}}(X,n)$ over $\mathbb{C}$ which universally corepresents the functor 
$\underline{\mathcal{M}^{\text{DR}}_{\mathcal{L}}}(X,n),$
assigning to a $\mathbb{C}$--scheme $S$ the set of isomorphism classes of
vector bundles $\mathcal{E}$ of rank $n$ on $X' := X \times S$
equipped with integrable $\mathcal{L}$--connections, and having Hilbert
polynomial $n\mathcal{P}_{0}$.
	
Fix a base point $\mathfrak{p} \in X$.  
Then there exists a quasi-projective scheme $\mathcal{R}^{\text{DR}}_{\mathcal{L}}(X,\mathfrak{p},n)$ over $\mathbb{C}$ representing the functor which assigns to a $\mathbb{C}$--scheme $S$ the set of isomorphism classes of triples $(\mathcal{E},D_{\mathcal{L}},\beta)$, where
\begin{itemize}
		\item $(\mathcal{E},D_{\mathcal{L}})$ is a vector bundle of rank $n$ on $X' = X \times S$ equipped with an integrable $\mathcal{L}$--connection, and
		\item $\beta : \mathcal{E}|_{\mathfrak{p}} \xrightarrow{\sim} \mathbb{C}^{n}$ is a framing at the point $\mathfrak{p}$.
\end{itemize}
	Moreover, with respect to a suitable linearization, all points of $\mathcal{R}^{\text{DR}}_{\mathcal{L}}(X,\mathfrak{p},n)$ are semistable for the natural action of $GL_{n}(\mathbb{C})$, and the resulting good quotient is naturally identified with $\mathcal{M}^{\text{DR}}_{\mathcal{L}}(X,n)$.
\end{thm}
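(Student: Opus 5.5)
The plan is to deduce the statement from Simpson's general machinery. Take $\Lambda = \Lambda^{\mathrm{DR}}_{\mathcal{L}}$, the universal enveloping $D$--algebra of $\mathcal{L}$, and use the dictionary between integrable $\mathcal{L}$--connections and $\Lambda$--modules (Lemma \eqref{Lambda_L module is equvalent to $L$-connection }). By Lemma \eqref{Lie algebroid is-equivalent-to-Lambda_L}, $\Lambda^{\mathrm{DR}}_{\mathcal{L}}$ is a split almost polynomial $D$--algebra. Hence Theorem \eqref{moduli-of-Lambda-module} applies and gives a quasi-projective $\mathcal{M}(\Lambda^{\mathrm{DR}}_{\mathcal{L}}, n\mathcal{P}_0)$ that universally corepresents the functor of $p$--semistable $\Lambda$--modules of pure dimension $\dim X$ with Hilbert polynomial $n\mathcal{P}_0$.

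The first key step is to identify this functor with $\underline{\mathcal{M}^{\mathrm{DR}}_{\mathcal{L}}}(X,n)$.
\begin{enumerate}
\item By Lemma \eqref{lem:transitive-Lie-algebroid-locally-free}, any $\Lambda^{\mathrm{DR}}_{\mathcal{L}}$--module on a fibre $X\times\{s\}$ is locally free with normalized Hilbert polynomial $\mathcal{P}_0$. Any $\Lambda$--submodule is again such a bundle, so it has the same normalized Hilbert polynomial. Therefore every such module is semistable, as noted after that lemma.
\item For an $S$--flat family, fibrewise local freeness plus flatness gives local freeness on $X\times S$.
\end{enumerate}
So the semistability condition is vacuous and the two functors coincide, which yields the first assertion.

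The second step is the framed space. Recall Simpson's construction in the proof of \cite[Theorem 4.7]{CT.Simpson-1994}. Since the family of semistable $\Lambda$--modules with fixed Hilbert polynomial is bounded, there is $N$ such that each $\mathcal{E}(N)$ is globally generated with vanishing higher cohomology. One then builds a quasi-projective scheme $Q$, locally closed in a Quot scheme, parametrizing $\Lambda$--module quotients $\mathcal{O}_X(-N)^{P(N)}\to \mathcal{E}$. This $Q$ carries a $\GL_{P(N)}$--action, and $\mathcal{M}$ is its GIT quotient.

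To get $\mathcal{R}^{\mathrm{DR}}_{\mathcal{L}}(X,\mathfrak{p},n)$, follow \cite[Theorem 6.13]{CT.Simpson-1995}. Over $Q$ take the frame bundle of the restriction of the universal bundle to $\{\mathfrak{p}\}\times Q$; this gives a $\GL_n$--torsor $\widetilde{Q}\to Q$. The $\GL_{P(N)}$--action on $\widetilde{Q}$ is free, because a framed bundle with integrable $\mathcal{L}$--connection has no automorphisms fixing the frame. Indeed, a flat endomorphism vanishing at $\mathfrak{p}$ vanishes identically: since $\mathcal{L}$ is transitive, $\mathcal{L}$--flat sections are determined by parallel transport along the single leaf $X$. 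Moreover, the points of $Q$ are GIT-stable with respect to the $\mathrm{SL}$ part, exactly as in Simpson's argument. Hence $\mathcal{R} := \widetilde{Q}/\GL_{P(N)}$ exists as a quasi-projective scheme (a principal bundle quotient), and it represents the framed functor.

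The remaining action of $\GL_n$ changes the framing, and $\mathcal{R}/\!/\GL_n \cong \widetilde{Q}/\!/(\GL_{P(N)}\times \GL_n) \cong Q/\!/\GL_{P(N)} = \mathcal{M}$. All points are semistable for the linearization pulled back from $\mathcal{M}$, taking the trivial character twisted by determinant as in Simpson's paper.

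The main obstacle is this framing step. One must check two things:
\begin{itemize}
\item Rigidity of framed objects, which needs the Lie algebroid version of "flat sections are determined by one fibre". I would prove it via the foliation by orbits of $\delta(V)$: transitivity makes $X$ a single leaf.
\item Representability (rather than mere corepresentability) of the framed functor. I would handle this by Luna's étale slice / free-action quotient, verbatim as in \cite[\S 6]{CT.Simpson-1995}.
\end{itemize}
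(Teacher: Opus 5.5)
Your architecture is the paper's: translate integrable $\mathcal L$--connections into $\Lambda^{\mathrm{DR}}_{\mathcal L}$--modules, use Lemma~\ref{lem:transitive-Lie-algebroid-locally-free} to make $p$--semistability automatic, apply Simpson's Theorem~4.7 for $\mathcal M^{\mathrm{DR}}_{\mathcal L}(X,n)$, and obtain the framed space from Simpson's Theorem~4.10. The paper simply cites Theorem~4.10; its hypothesis $\mathrm{LF}(\mathfrak p)$ holds automatically here, since every module and every Jordan--H\"older factor is locally free.

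The trouble is in your re-derivation of Theorem~4.10. It is not true that the points of $Q$ are GIT-stable. For instance, $(\mathcal O_X,d_{\mathcal L})^{\oplus n}$ with $n\geq 2$ gives a point whose stabilizer contains its automorphism group $\GL_n$, so its stabilizer in $\mathrm{SL}(V)$ is positive-dimensional. More generally, points coming from strictly semistable objects are never GIT-stable, which is why $\mathcal M$ is only a good quotient.

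Freeness of $\GL_{P(N)}$ on $\widetilde Q$ does not by itself give a principal-bundle quotient. Luna's slice theorem needs the $\GL_{P(N)}$--orbits in $\widetilde Q$ to be closed, so that the good quotient of $\widetilde Q$ (which exists because $\widetilde Q\to Q$ is affine) is geometric. The frame is what rules out the degenerations to associated graded objects that make orbits in $Q$ non-closed. Establishing this is the real content of Theorem~4.10, and the paper gets it by citation.

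Your rigidity argument is a valid substitute for Simpson's Lemma~4.9, and it does not even need semistability: a flat section vanishing at $\mathfrak p$ vanishes identically, by formal parallel transport for the local ordinary connection $D_{b(\cdot)}$ attached to a local splitting $b$ of $\delta$. Finally, a line bundle pulled back from $\mathcal M$ is trivial along the positive-dimensional fibres of $\mathcal R\to\mathcal M$. It is therefore not ample, so it cannot be the linearization in the last assertion.

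Independently, your step (1), exactly like the paper's proof, relies on the clause of Lemma~\ref{lem:transitive-Lie-algebroid-locally-free} that every bundle with an $\mathcal L$--connection, hence every $\Lambda$--submodule, has normalized Hilbert polynomial $\mathcal P_0$. This holds when $\mathcal L$ is split: a splitting $\gamma$ of $\delta$ makes $\xi\mapsto D_{\gamma(\xi)}$ an ordinary connection, and Atiyah's theorem kills the Chern classes.

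It fails for nonsplit transitive $\mathcal L$. Take $\mathcal L=\At(F)$ with $F$ a line bundle of positive degree on a curve. Then $F$ carries the tautological integrable $\At(F)$--connection, so $F\oplus F^{*}$ has Hilbert polynomial $2\mathcal P_0$ but is destabilized by $F$. Moreover, the bundles $F^{k}\oplus F^{-k}$ form an unbounded family. In this case the functor in the statement is not Simpson's semistable functor, and one must either assume $\mathcal L$ split or put semistability into the functor. This caveat applies equally to the paper's argument.
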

\begin{proof}
Let $\Lambda^{\text{DR}}_{\mathcal{L}}$ be the $\mathcal{L}$--twisted de Rham $D$--algebra associated to the Lie algebroid $\mathcal{L}$. In Simpson’s description of $\Lambda^{\text{DR}}_{\mathcal{L}}$ as triplets $(H,\delta,\gamma)$ \cite[\S\,2, p.~82]{CT.Simpson-1994}, the sheaf $H$ is identified with
$\Omega^{1}_{\mathcal{L}}$, and its dual $H^{*}$ is the Lie algebroid
$\mathcal{L}$. The derivation is given by the anchor map
$\delta \colon \mathcal{L} \to \mathcal{T}_{X}$, and the bracket
$\{\,,\,\}_{\gamma}$ coincides with the Lie algebroid bracket on
$\mathcal{L}$.

The description of $\Lambda^{\text{DR}}_{\mathcal{L}}$--modules given in
\cite[\S\,2, Lemma~2.13]{CT.Simpson-1994} therefore coincides with the notion
of a coherent sheaf (or vector bundle) equipped with an integrable
$\mathcal{L}$--connection. If $\mathcal{E}$ is a vector bundle on $X$ endowed with an integrable $\mathcal{L}$--connection, then any subsheaf of $E$ preserved by the $\mathcal{L}$--connection has the same normalized Hilbert polynomial $\mathcal{P}_{0}$ (see, lemma \eqref{lem:transitive-Lie-algebroid-locally-free}). Consequently, $\mathcal{E}$ is $p$--semistable as a $\Lambda^{\text{DR}}_{\mathcal{L}}$--module.

Hence, the first assertion follows from
Theorem~\eqref{moduli-of-Lambda-module}, while the second assertion follows
from \cite[\S\,4, Theorem~4.10]{CT.Simpson-1994}.

\end{proof}
We refer to $\mathcal{M}^{\text{DR}}_{\mathcal{L}}(X,n)$ as the \emph{$\mathcal{L}$--twisted de Rham moduli space}, and to $\mathcal{R}^{\text{DR}}_{\mathcal{L}}(X,\mathfrak{p},n)$ as the \emph{$\mathcal{L}$--twisted de Rham representation space}.


\subsection{The moduli of $\mathcal{L}$-twisted Higgs bundles and $\mathcal{L}$-Hodge moduli spaces}\label{subsec: L-Hodge modul space}
The usual notion of a Higgs bundle on $X$ admits a natural extension to the setting of Lie algebroids. Let $\mathcal{L} = (V,[\cdot,\cdot],\delta)$ be a Lie algebroid on $X$.

\begin{defn}\label{def:L-twisted-Higgs bundle}
An \textit{$\mathcal{L}$--twisted Higgs bundle} on $X$ is a pair $(\mathcal{E},\theta_{\mathcal{L}})$ consisting of an algebraic vector bundle $\mathcal{E}$ on $X$ together with an $\mathcal{O}_{X}$--linear morphism
\[
\theta_{\mathcal{L}} : \mathcal{E} \longrightarrow \mathcal{E} \otimes \Omega^{1}_{\mathcal{L}},
\]
called the \emph{$\mathcal{L}$--twisted Higgs field}, such that, 
$
\theta_{\mathcal{L}} \wedge \theta_{\mathcal{L}} = 0 \in H^{0}\!\left(X, \operatorname{End}(\mathcal{E})\otimes \Omega^{2}_{\mathcal{L}}\right).
$\end{defn}
Therefore, by \cite[lemma 2.13]{CT.Simpson-1994}, we have the following lemma,
\begin{lem}\label{lem: Lambda module structure--equivalent to L twisted Higgs bundle} Let $\mathcal{L}$ be a Lie algebroid on $X$ and let $\mathcal{E}$ be a coherent sheaf on $X$. Then giving a $\Lambda^{\text{Dol}}_{\mathcal{L}}$--module structure on $\mathcal{E}$ is equivalent to a $\mathcal{L}$--twisted Higgs bundle structure on $\mathcal{E}$. Also note that a $\Lambda^{\text{Dol}}_{\mathcal{L}}$--structure on $\mathcal{E}$ is equivalent to an integrable $\mathcal{L}_{0}=(V,[\cdot,\cdot],0)$--connection on $\mathcal{E}$.
\end{lem}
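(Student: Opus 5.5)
The plan is to reduce the statement to Simpson's general correspondence \cite[Lemma 2.13]{CT.Simpson-1994} between modules over a split almost polynomial $D$--algebra and sheaves with a ``connection-type'' operator. The input needed is the triplet description $(H,\delta,\gamma)$ of the $D$--algebra $\Lambda^{\text{Dol}}_{\mathcal{L}}=\text{Sym}^{\bullet}(V)$.

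\textbf{Step 1: identify $\Lambda^{\text{Dol}}_{\mathcal{L}}$ as an enveloping algebra.} By Lemma~\eqref{Lie algebroid is-equivalent-to-Lambda_L}, $\Lambda^{\text{Dol}}_{\mathcal{L}}$ is the universal enveloping $D$--algebra of the trivial Lie algebroid $\mathcal{L}_0=(V,[\cdot,\cdot]_0,0)$. Here the anchor is zero, and the bracket on $\text{Gr}_1$ is zero because the algebra is commutative. (The bracket in $\mathcal{L}_0$ must be taken zero, as in the earlier discussion of $\Lambda^{\text{Dol}}$.)

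In Simpson's triplet language this gives $H=\Omega^{1}_{\mathcal{L}}=V^{*}$, $\delta=0$ and $\gamma=0$. Consequently $\Lambda_1=\mathcal{O}_X\oplus V$ as an $\mathcal{O}_X$--bimodule, and sections of $V$ commute with functions.

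\textbf{Step 2: pass from module structures to Higgs fields.} A $\Lambda$--module structure on $\mathcal{E}$ is determined by the action of $\Lambda_1$, since $\Lambda$ is generated by $\Lambda_1$ as an algebra. The action of $V\subset\Lambda_1$ gives a map $V\otimes\mathcal{E}\to\mathcal{E}$, or equivalently $\theta_{\mathcal{L}}:\mathcal{E}\to\mathcal{E}\otimes V^{*}=\mathcal{E}\otimes\Omega^1_{\mathcal{L}}$.

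\begin{itemize}
\item \emph{Linearity.} Because $fv=vf$ in $\Lambda$, the $\delta^*$--twisted Leibniz rule \eqref{eqn:phi^*-twisted-Leibniz-rule} degenerates. Hence $\theta_{\mathcal{L}}$ is $\mathcal{O}_X$--linear.
\item \emph{Commutation relations.} The only relations in $\text{Sym}^\bullet(V)$ beyond $\mathcal{O}_X$--bilinearity are $v\,w-w\,v=0$. On the module these say that $\theta_v\theta_w=\theta_w\theta_v$ for all local sections $v,w$, which is exactly $\theta_{\mathcal{L}}\wedge\theta_{\mathcal{L}}=0$ in $\operatorname{End}(\mathcal{E})\otimes\Omega^2_{\mathcal{L}}$.
\item \emph{Converse.} Given an $\mathcal{O}_X$--linear $\theta_{\mathcal{L}}$ with $\theta_{\mathcal{L}}\wedge\theta_{\mathcal{L}}=0$, the universal property of the symmetric algebra extends $V\to\operatorname{End}(\mathcal{E})$ to an algebra map $\text{Sym}^\bullet(V)\to\operatorname{End}(\mathcal{E})$ compatible with $\mathcal{O}_X$.
\end{itemize}

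These two constructions are mutually inverse, which gives the bijection. For a coherent $\mathcal{E}$ the same argument works verbatim, so no local freeness is needed.

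\textbf{Step 3: the integrable $\mathcal{L}_0$--connection reformulation.} Apply Lemma~\eqref{Lambda_L module is equvalent to $L$-connection } to the Lie algebroid $\mathcal{L}_0$, whose enveloping algebra is $\Lambda^{\text{Dol}}_{\mathcal{L}}$ by Step 1. An $\mathcal{L}_0$--connection satisfies Leibniz with $\delta^*(df)=0$, so it is $\mathcal{O}_X$--linear. Its curvature is computed with $d_{\mathcal{L}_0}=0$ and the zero bracket, so it reduces to $\theta\wedge\theta$. Hence integrable $\mathcal{L}_0$--connections are the same thing as $\mathcal{L}$--twisted Higgs fields.

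\textbf{Main obstacle.} The main subtlety is the bracket convention. The paper writes $(V,[\cdot,\cdot],0)$ for $\mathcal{L}_0$, but for $\Lambda^{\text{Dol}}_{\mathcal{L}}=\text{Gr}(\Lambda^{\text{DR}}_{\mathcal{L}})$ to be $\text{Sym}^\bullet(V)$ the bracket must vanish. If the original bracket were kept, the curvature of an $\mathcal{L}_0$--connection would acquire a term $-\theta([s,t])$ and the correspondence would fail.

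I will therefore state explicitly that the bracket in $\mathcal{L}_0$ is zero. I will also check, using the recursive formula for $d_{\mathcal{L}}$, that with zero anchor and zero bracket one gets $d_{\mathcal{L}_0}=0$, so that the curvature is genuinely $\theta\wedge\theta$.
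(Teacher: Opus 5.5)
Your proposal is correct and follows the paper's route. The paper simply invokes Simpson's Lemma~2.13 for the split almost polynomial algebra $\operatorname{Sym}^{\bullet}(V)$, that is, the enveloping algebra of $V$ with zero anchor and zero bracket (triple $(V^{*},0,0)$); your Step~2 only unpacks that lemma in this commutative case, and your bracket remark is right, since the original bracket satisfies $[fs,t]=f[s,t]-\delta(t)(f)s$, so it violates the zero-anchor Leibniz rule whenever $\delta\neq 0$ and $\mathcal{L}_{0}$ must be read as $(V,0,0)$.
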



Let $\mathcal{M}^{\mathrm{Higgs}^{\natural}}_{\mathcal{L}}$ denote the
functor which associates to a $\mathbb{C}$--scheme $S$ the set of
isomorphism classes of $p$--semistable $\mathcal{L}$--twisted Higgs
bundles (or, more generally, $\mathcal{L}$--twisted Higgs sheaves) on
$X' := X \times S$ with Hilbert polynomial $\mathcal{P}$. This functor is
universally corepresented by the moduli space
\[
\mathcal{M}^{\mathrm{Higgs}}_{\mathcal{L}}(X,\mathcal{P})
:= \mathcal{M}\!\left(\Lambda^{\mathrm{Dol}}_{\mathcal{L}}, \mathcal{P}\right),
\]
constructed in Theorem~\eqref{moduli-of-Lambda-module}, which is a
quasi-projective scheme over $\mathbb{C}$. The closed points of
$\mathcal{M}^{\mathrm{Higgs}}_{\mathcal{L}}(X,\mathcal{P})$ parametrize Jordan
equivalence classes of $p$--semistable $\mathcal{L}$--twisted Higgs
bundles with Hilbert polynomial $\mathcal{P}$.

Let $\mathcal{P}_{0}$ denote the Hilbert polynomial of $\mathcal{O}_{X}$.
Let $\mathcal{M}^{\mathrm{Dol}^{\natural}}_{\mathcal{L}}(X,n)$ denote the
functor which assigns to a $\mathbb{C}$--scheme $S$ the set of
isomorphism classes of $p$--semistable $\mathcal{L}$--twisted Higgs sheaves $\mathcal{E}$ on $X' := X \times S$ with Hilbert polynomial
$n\mathcal{P}_{0}$ and rank $n$, such that the $\mathcal{L}$--Chern classes vanishes for all closed point $s\in S$. 

This functor is universally corepresented by a scheme
$\mathcal{M}^{\mathrm{Dol}}_{\mathcal{L}}(X,n)$, which is a disjoint union
of certain connected components of
$\mathcal{M}^{\mathrm{Higgs}}_{\mathcal{L}}(X,n\mathcal{P}_{0})$ (the fact that $\mathcal{M}^{\text{Dol}}_{\mathcal{L}}(X,n)$ may be proper subset of $\mathcal{M}^{\text{Higgs}}_{\mathcal{L}}(X,n\mathcal{P}_{0})$. The points of $\mathcal{M}^{\mathrm{Dol}}_{\mathcal{L}}(X,n)$ correspond to Jordan equivalence classes of $p$--semistable torsion free $\mathcal{L}$-- twisted torsion free Higgs sheves of rank $n$ on $X$ with $\mathcal{L}$--Chern classes vanishes. There is an open set $\mathcal{M}^{s\text{Dol}}_{\mathcal{L}}(X,n)$ parameterizing $p$--stable $\mathcal{L}$--twisted Higgs sheaves.

\begin{proposition}\label{prop:locally-free-L-Higgs}
	Let $X$ be a smooth projective variety and let $
	\mathcal L 
	\xrightarrow{\ \delta\ } \mathcal{T}_X \longrightarrow 0$,
	be a transitive Lie algebroid. Let $(\mathcal{E},\theta_{\mathcal L})$ be a $\mu$--semistable
	torsion--free $\mathcal L$--twisted Higgs sheaf with vanishing
	$\mathcal L$--Chern classes. Then $\mathcal{E}$ is locally free and is in fact an extension of $\mu$--stable Higgs bundles whose $\mathcal{L}$--Chern classes vanish. Any sub $\mathcal{L}$--twisted Higgs sheaf  of degree zero is a strict sub bundle with vanishing $\mathcal{L}$--Chern classes.
\end{proposition}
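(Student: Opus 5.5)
The plan is to adapt Simpson's argument for ordinary Higgs sheaves (the case $\mathcal{L}=(\mathcal{T}_X,[\cdot,\cdot],0)$, cf.\ \cite[Theorem 2]{CT. Simpson-1992}). The structure is an induction on the rank $n$ of $\mathcal{E}$ through a Jordan--H\"older filtration. The key analytic input is a Bogomolov-type inequality together with the existence of Hermitian--Yang--Mills metrics for $\mu$--stable $\mathcal{L}$--twisted Higgs sheaves.

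The steps, in order:

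\begin{enumerate}
\item \textbf{Reduce to ordinary Chern classes.} Since $\mathcal{L}$ is transitive, the $\mathcal{L}$--Chern--Weil map factors through the anchor $\delta^{*}:\Omega^{\bullet}_X\to\Omega^{\bullet}_{\mathcal{L}}$, which is injective. As in the proof of Lemma~\eqref{lem:transitive-Lie-algebroid-locally-free} (via \cite{Crainic-2009}), vanishing of the $\mathcal{L}$--Chern classes then gives $c_1(\mathcal{E})\cdot H^{\dim X-1}=0$ and $c_2(\mathcal{E})\cdot H^{\dim X-2}=0$ for the ordinary classes.

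\item \textbf{Stable case.} Suppose $(\mathcal{E},\theta_{\mathcal{L}})$ is $\mu$--stable. The $\mathcal{L}$--Higgs field induces, via the splitting data of $\Lambda^{\mathrm{Dol}}_{\mathcal{L}}$, a Higgs-type operator, so the Bando--Siu/Simpson construction should produce an admissible Hermitian--Einstein metric. From this I would obtain the Bogomolov--L\"ubke inequality
\[
\bigl(2n\,c_2(\mathcal{E})-(n-1)c_1(\mathcal{E})^2\bigr)\cdot H^{\dim X-2}\geqslant 0,
\]
with equality only if the metric is flat, i.e.\ the associated $\mathcal{L}$--harmonic bundle is flat. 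By Step~1 equality holds. Flatness gives a flat connection on the reflexive hull, and Simpson's argument then shows that $\mathcal{E}$ is locally free: flat bundles extend across the codimension-$\geqslant 2$ singular locus, and $c_2$ of the torsion quotient $\mathcal{E}^{**}/\mathcal{E}$ would contribute a strictly positive term.

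\item \textbf{Semistable case.} Suppose $\mathcal{E}$ is only $\mu$--semistable of slope $0$. Take a proper saturated $\theta_{\mathcal{L}}$--invariant subsheaf $\mathcal{F}$ of degree $0$ and the quotient $\mathcal{Q}=\mathcal{E}/\mathcal{F}$, also semistable of degree $0$. Bogomolov for each piece, combined with the Hodge index theorem applied to $c_1(\mathcal{F})$ (whose degree is $0$), gives $\Delta(\mathcal{F})\cdot H^{\dim X-2}\geqslant 0$ and the same for $\mathcal{Q}$. Their sum is $\leqslant\Delta(\mathcal{E})\cdot H^{\dim X-2}=0$, so all terms vanish, including $c_1(\mathcal{F})^2 H^{\dim X-2}$. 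Hence $\mathcal{F}$ and $\mathcal{Q}$ have vanishing Chern numbers. Since $\mathcal{L}$ is transitive, this lifts back to vanishing $\mathcal{L}$--Chern classes via Step~1.

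\item \textbf{Induction.} Applying the induction hypothesis, $\mathcal{F}$ and $\mathcal{Q}$ are locally free extensions of stable pieces. Then $\mathcal{E}$, being an extension of a locally free sheaf by a locally free sheaf, is locally free. Moreover $\mathcal{F}\subset\mathcal{E}$ is a strict subbundle, because $\mathcal{Q}$ is locally free. This proves the last assertion as well.
\end{enumerate}

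The main obstacle is Step~2: the Kobayashi--Hitchin/Bogomolov equality case for $\mathcal{L}$--twisted Higgs sheaves. The plan is to reduce to Simpson's setting by viewing $\theta_{\mathcal{L}}$ as a $\Lambda$--module structure and quoting \cite[\S\,2, Theorem~18]{P.Tortella-2011T} for the relevant Chern--Weil theory. If that fails, I would pass to a general complete-intersection curve (Mehta--Ramanathan) to obtain local freeness and flatness along curves, and bootstrap from there.
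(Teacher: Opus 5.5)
\textbf{There is a genuine gap, and it cannot be closed, because Steps 1 and 2 are false for a general transitive $\mathcal{L}$, and so is the proposition itself.}

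\emph{Step 1.} Injectivity of $\delta^{*}\colon\Omega^{\bullet}_X\to\Omega^{\bullet}_{\mathcal{L}}$ as a map of sheaves says nothing about injectivity on hypercohomology. Take $\mathcal{L}=\operatorname{At}(N)$ with $N$ ample. Then $N$ carries its tautological integrable $\mathcal{L}$--connection, so $c_1^{\mathcal{L}}(N)=\delta^{*}c_1(N)=0$, while $c_1(N)\cdot H^{\dim X-1}>0$. So the hypothesis gives you no control over ordinary Chern numbers.

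\emph{Step 2.} Simpson's Bogomolov inequality uses that $\theta$ is a genuine $1$--form. This makes $\bar\partial+\partial_K+\theta+\theta^{*}_K$ a connection on $\mathcal{E}$ whose Chern--Weil forms represent $c_i(\mathcal{E})$. A field with values in the abstract bundle $\Omega^{1}_{\mathcal{L}}=V^{*}$ gives no such connection. Moreover $V^{*}$ can have ample direct summands, and then no Bogomolov-type inequality holds, even granting a Hermitian--Einstein metric.

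\emph{Counterexample.} On $X=\mathbb{P}^2$, let $\mathcal{L}=\operatorname{At}(\mathcal{O}\oplus\mathcal{O}(2))$. Writing first-order operators with scalar symbol as $2\times 2$ matrices gives $V\cong\mathcal{O}\oplus\operatorname{At}(\mathcal{O}(2))\oplus\mathcal{O}(2)\oplus\mathcal{O}(-2)$. Hence $\mathcal{O}(2)$ is a direct summand of $\Omega^{1}_{\mathcal{L}}$.
\begin{enumerate}
\item Every $\phi\colon\mathcal{E}\to\mathcal{E}(2)$ is therefore an $\mathcal{L}$--twisted Higgs field; $\theta\wedge\theta=0$ holds automatically because the values lie in a line subbundle. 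Its invariant subsheaves are exactly the $\phi$--invariant ones.
\item Let $\mathcal{E}=(I_p\otimes\mathcal{O}(1))\oplus\mathcal{O}(-1)$, with $\phi$ the inclusion $I_p(1)\hookrightarrow\mathcal{O}(-1)\otimes\mathcal{O}(2)$ and zero on $\mathcal{O}(-1)$.
\item Every invariant rank-one subsheaf lies in $\mathcal{O}(-1)$, so $\mathcal{E}$ is $\mu$--stable.
\item Its total Chern class is $c(\mathcal{E})=(1+h+h^2)(1-h)=1$. So all rational Chern classes vanish, hence so do all $\mathcal{L}$--Chern classes $\delta^{*}c_i$.
\item Yet $\mathcal{E}$ is not locally free.
\end{enumerate}
If you replace $I_p(1)$ by $\mathcal{O}(1)$, you get a stable bundle with $c_1=0$ and $c_2=-1$, which violates the inequality in your Step 2 directly. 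The induction in Steps 3--4 depends on Step 2 for each piece, so it fails as well.

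\emph{The paper's route.} The paper argues differently: it composes $\theta_{\mathcal{L}}$ with the dual of a (local) splitting $b$ of the anchor to obtain an ordinary Higgs sheaf $(\mathcal{E},\theta_X)$, then quotes Simpson, so it avoids any analysis. This does not supply your missing step, for two reasons:
\begin{enumerate}
\item It uses the same Chern-class transfer as your Step 1.
\item Its claim that $\theta_X$--invariant subsheaves are $\theta_{\mathcal{L}}$--invariant runs in the wrong direction. In the example above, a local $b$ with values in $\mathcal{O}\oplus\operatorname{At}(\mathcal{O}(2))$ gives $\theta_X=0$, and $(\mathcal{E},0)$ is unstable.
\end{enumerate}

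\emph{What remains true.} The statement holds by a direct appeal to Simpson when the ordinary Chern classes vanish and $\theta_{\mathcal{L}}$ takes values in $\mathcal{E}\otimes\delta^{*}\Omega^{1}_X$. In that case $(\mathcal{E},\theta_{\mathcal{L}})$ is literally an ordinary Higgs sheaf with the same invariant subsheaves.
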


\begin{proof}
	Since $\mathcal L$ is transitive, the anchor map
	$\delta : \mathcal L \to \mathcal{T}_X$ is surjective. As the question is local on
	$X$, we may choose an $\mathcal O_X$--linear splitting
	$b : \mathcal{T}_X \to \mathcal L$ such that $\delta \circ b = \mathrm{id}_{\mathcal{T}_X}$.
	Dualizing, we obtain an morphism
	\[
	b^\vee : \Omega^1_{\mathcal L} \longrightarrow \Omega^1_X .
	\]
	
	Composing the $\mathcal L$--Higgs field
	$\theta_{\mathcal L} : \mathcal{E} \to \mathcal{E} \otimes \Omega^1_{\mathcal L}$
	with $\mathrm{id}_\mathcal{E} \otimes b^\vee$, we define
	\[
	\theta_X := (\mathrm{id}_\mathcal{E} \otimes b^\vee)\circ \theta_{\mathcal L}
	: \mathcal{E} \longrightarrow \mathcal{E} \otimes \Omega^1_X .
	\]
	
	The dual splitting $b^\vee:\Omega^1_{\mathcal L}\to\Omega^1_X$ induces a
	morphism of graded algebras
	$\wedge^\bullet\Omega^1_{\mathcal L}\to\wedge^\bullet\Omega^1_X$.
	Hence the induced Higgs field
	$\theta_X=(\mathrm{id}_\mathcal{E}\otimes b^\vee)\circ\theta_{\mathcal L}$
	satisfies
	\[
	\theta_X\wedge\theta_X
	=(\mathrm{id}_\mathcal{E}\otimes\wedge^2 b^\vee)
	(\theta_{\mathcal L}\wedge\theta_{\mathcal L})=0,
	\]
	Hence $\theta_X$ is integrable, and $(\mathcal{E},\theta_X)$ is a Higgs sheaf in the usual sense.

	Let $\mathcal{F} \subset \mathcal{E}$ be a $\theta_X$--invariant subsheaf.
	Since $\rho$ is surjective, $\mathcal{F}$ is also invariant under
	$\theta_{\mathcal L}$. Therefore the $\mu$--semistability of
	$(\mathcal{E},\theta_{\mathcal L})$ implies that $(\mathcal{E},\theta_X)$ is
	$\mu$--semistable as a Higgs sheaf.
	
	For a transitive Lie algebroid, vanishing of the $\mathcal L$--Chern
	classes implies vanishing of the usual Chern classes, that is
	$c_i(\mathcal{E})=0$ for all $i$.
	Thus $(\mathcal{E},\theta_X)$ is a $\mu$--semistable torsion--free Higgs sheaf
	with trivial Chern classes.
	
	By \cite[\S\,6, Proposition~6.6]{CT.Simpson-1995}, any $\mu$--semistable
	torsion--free Higgs sheaf on a smooth projective variety with vanishing
	Chern classes is locally free. Applying this result to the Higgs sheaf
	$(\mathcal{E},\theta_X)$, we conclude that $\mathcal{E}$ is locally free.

\end{proof}

\begin{corollary}
If $X$ is smooth and projective over $\mathbb{C}$, If $S$ is an $\mathbb{C}$--scheme, and if $\mathcal{E}$ is an element of $\mathcal{M}^{\text{Dol}^{\sharp}}_{\mathcal{L}}(X,n)(S)$, then $\mathcal{E}$ is locally free over $X'=X\times S$. The point of $\mathcal{M}^{\text{Dol}}_{\mathcal{L}}(X,n)$ correspond to sums of $\mathcal{L}$--twisted $\mu$--stable Higgs bundles with vanishing rational $\mathcal{L}$--Chern classes on $X$.
\end{corollary}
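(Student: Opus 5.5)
The plan is to reduce the corollary to Proposition \eqref{prop:locally-free-L-Higgs}, applied fibrewise, and then use a flatness criterion to pass from the fibres to the whole family over $X' = X\times S$.

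\textbf{Step 1: each fibre is locally free.} Let $\mathcal{E}\in \mathcal{M}^{\text{Dol}^{\sharp}}_{\mathcal{L}}(X,n)(S)$. By definition $\mathcal{E}$ is an $\mathcal{L}$--twisted Higgs sheaf on $X'$, flat over $S$. For every closed point $s\in S$, the restriction $\mathcal{E}_s := \mathcal{E}|_{X\times\{s\}}$ is a $p$--semistable torsion-free $\mathcal{L}$--twisted Higgs sheaf with Hilbert polynomial $n\mathcal{P}_0$ and vanishing $\mathcal{L}$--Chern classes. Since $p$--semistability implies $\mu$--semistability, Proposition \eqref{prop:locally-free-L-Higgs} shows that $\mathcal{E}_s$ is locally free.

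\textbf{Step 2: the family is locally free.} This is the standard fibrewise criterion. Take a point $y\in X'$ lying over $s\in S$. Then $\mathcal{E}_y$ is a finitely generated module over the local ring $\mathcal{O}_{X',y}$. It is flat over $\mathcal{O}_{S,s}$, and its reduction $\mathcal{E}_y\otimes\mathcal{O}_{S,s}/\mathfrak m_s$ is free over $\mathcal{O}_{X,y}$, which is flat over $\mathbb{C}$. The local criterion of flatness (fibrewise flatness, EGA IV 11.3.10) then gives that $\mathcal{E}_y$ is flat over $\mathcal{O}_{X',y}$, hence free.

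If $S$ is not of finite type, I would first reduce to the Noetherian case by noetherian approximation. The family is of finite presentation, and local freeness descends along this reduction.

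This local-algebra step is the only genuinely non-formal part of the argument. The delicate point is that the criterion needs the family to be flat over $S$, not merely fibrewise well behaved, and flatness is part of the functor's definition.

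\textbf{Step 3: description of the closed points.} Proposition \eqref{prop:locally-free-L-Higgs} also says that each such $\mathcal{E}_s$ is an iterated extension of $\mu$--stable $\mathcal{L}$--twisted Higgs bundles with vanishing $\mathcal{L}$--Chern classes. Moreover, every degree zero sub $\mathcal{L}$--Higgs sheaf is a strict subbundle with vanishing classes, so a Jordan--Hölder filtration can be chosen within this category.

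By Theorem \eqref{moduli-of-Lambda-module} together with Lemma \eqref{lem: Lambda module structure--equivalent to L twisted Higgs bundle}, closed points of $\mathcal{M}^{\text{Dol}}_{\mathcal{L}}(X,n)$ are Jordan equivalence classes. Each class is represented by the associated graded object of such a filtration, which is the direct sum of the stable subquotients. Finally, the $\mathcal{L}$--Chern classes vanish on each subquotient, hence so do their rational versions. This gives the stated description of the points as sums of $\mu$--stable $\mathcal{L}$--twisted Higgs bundles with vanishing rational $\mathcal{L}$--Chern classes.
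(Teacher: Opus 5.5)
Your proposal is correct and follows the route the paper intends: the paper gives no proof of this corollary and treats it as an immediate consequence of Proposition~\eqref{prop:locally-free-L-Higgs} applied to closed fibres. This is combined with the fact, which the paper invokes elsewhere as Simpson's Lemma~1.27 in the proofs of the $\mathcal{N}(E,k)$ lemmas, that an $S$--flat sheaf with locally free fibres is locally free; you reprove this fact via the fibrewise flatness criterion. The description of points then comes from Jordan--H\"older filtrations, using that $p$-- and $\mu$--(semi)stability agree in this setting.

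One detail in Step~2 should be made explicit. Local freeness of $\mathcal{E}_s$ is only known for closed $s$, so the criterion gives freeness only at points of $X'$ over closed points of $S$. You can finish by noting that the non--locally--free locus is closed in $X'$, and its image under the proper projection to $S$ is closed. That image contains no closed point, so it must be empty when $S$ is locally of finite type over $\mathbb{C}$.
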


\begin{rem}
For $\mathcal{L}$--twisted Higgs sheaves with vanishing $\mathcal{L}$--Chern classes, $p$--semi-stablity (resp. $p$--stability) is equivalent to $\mu$--semistability (resp. $\mu$--stability). This is follows from, proposition \eqref{prop:locally-free-L-Higgs}.
\end{rem}

Suppose $X$ is smooth and projective over $\mathbb{C}$. 
An $\mathcal{L}$--twisted Higgs sheaves $\mathcal{E}$ on $X' := X \times S$, flat over $S$, is said to be of \emph{semiharmonic type} if, for every closed point $s \in S$, the restriction $\mathcal{E}_{s}$ is a $p$--semistable
$\mathcal{L}$--twisted Higgs bundle with vanishing rational
$\mathcal{L}$--Chern classes. It is said to be of \emph{harmonic type} if each $E_{s}$ is a direct sum of
stable $\mathcal{L}$--twisted Higgs sheaves with vanishing rational
$\mathcal{L}$--Chern classes (cf.~\cite[\S\,6, p.~17]{CT.Simpson-1995}).

Hence, we obtain the following theorem, which extends the construction of
\cite[\S\,6, p.~17]{CT.Simpson-1995} to the setting of
$\mathcal{L}$--twisted Higgs bundles.

\begin{thm}\label{eqn: moduli-of-L- twisted Higgs bundles}
	There exists a quasi-projective scheme $\mathcal{M}^{\mathrm{Dol}}_{\mathcal{L}}(X,n)$ over $\mathbb{C}$ which universally corepresents the functor
	$\mathcal{M}^{\mathrm{Dol}^{\natural}}_{\mathcal{L}}(X,n)$ assigning to a $\mathbb{C}$--scheme $S$ the set of isomorphism classes of $\mathcal{L}$--twisted Higgs bundles of semiharmonic type on $X' := X \times S$, with Hilbert polynomial $n\mathcal{P}_{0}$ and rank $n$.
	
	Fix a base point $\mathfrak{p} \in X$. 
	Then there exists a quasi-projective scheme
	$\mathcal{R}^{\mathrm{Dol}}_{\mathcal{L}}(X,\mathfrak{p},n)$ over $\mathbb{C}$ representing the functor which assigns to a $\mathbb{C}$--scheme $S$ the set of isomorphism classes of triples $(\mathcal{E},\theta_{\mathcal{L}},\beta)$, where
	\begin{itemize}
		\item $(\mathcal{E},\theta_{\mathcal{L}})$ is an $\mathcal{L}$--twisted Higgs bundle of semiharmonic type on $X' := X \times S$, and
		\item $\beta : \mathcal{E}|_{\mathfrak{p}} \xrightarrow{\sim} \mathbb{C}^{n}$ is a framing at $\mathfrak{p}$.
	\end{itemize}
	Moreover, with respect to a suitable linearization, all points of
	$\mathcal{R}^{\mathrm{Dol}}_{\mathcal{L}}(X,\mathfrak{p},n)$ are semistable for the natural action of $GL_{n}(\mathbb{C})$, and the resulting good quotient is naturally identified with
	$\mathcal{M}^{\mathrm{Dol}}_{\mathcal{L}}(X,n)$.
\end{thm}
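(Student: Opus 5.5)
The plan is to follow the pattern of the proof of Theorem~\eqref{eqn: moduli-of-L-connection on VB}, now with the Dolbeault $D$--algebra $\Lambda^{\mathrm{Dol}}_{\mathcal{L}}=\operatorname{Sym}^{\bullet}(V)$ in place of $\Lambda^{\mathrm{DR}}_{\mathcal{L}}$.

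\emph{Step 1: translation to $\Lambda$--modules.} By Lemma~\eqref{lem: Lambda module structure--equivalent to L twisted Higgs bundle}, $\mathcal{L}$--twisted Higgs sheaves on $X\times S$, flat over $S$, are the same as $\Lambda^{\mathrm{Dol}}_{\mathcal{L}}$--modules in Simpson's sense. In Simpson's triplet description $(H,\delta,\gamma)$ one takes $H=\Omega^{1}_{\mathcal{L}}$ with zero derivation and zero bracket; this is the split almost polynomial $D$--algebra of the trivial Lie algebroid $(V,0,0)$.

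Theorem~\eqref{moduli-of-Lambda-module} then gives the quasi-projective moduli space $\mathcal{M}^{\mathrm{Higgs}}_{\mathcal{L}}(X,n\mathcal{P}_{0})=\mathcal{M}(\Lambda^{\mathrm{Dol}}_{\mathcal{L}},n\mathcal{P}_{0})$. It universally corepresents $p$--semistable modules with Hilbert polynomial $n\mathcal{P}_{0}$.

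\emph{Step 2: cutting out the semiharmonic part.} Define $\mathcal{M}^{\mathrm{Dol}}_{\mathcal{L}}(X,n)$ as the union of those connected components on which the rational $\mathcal{L}$--Chern classes vanish. These classes are locally constant in flat families, being topological invariants, so this union is open and closed. It is therefore quasi-projective and still universally corepresents the restricted functor.

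To identify this functor with semiharmonic $\mathcal{L}$--twisted Higgs \emph{bundles}, I use Proposition~\eqref{prop:locally-free-L-Higgs} and the following Corollary. A semistable torsion-free fiber with vanishing $\mathcal{L}$--Chern classes is locally free. By flatness and the fiberwise criterion, the family $\mathcal{E}$ on $X\times S$ is then locally free. The Remark after that Corollary shows that $p$--semistability and $\mu$--semistability agree here, so the functor is exactly the semiharmonic one with rank $n$ and Hilbert polynomial $n\mathcal{P}_{0}$.

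\emph{Step 3: the framed space.} For the representation space I follow \cite[\S\,4, Theorem~4.10]{CT.Simpson-1994}. Simpson's moduli is built as a GIT quotient of a locally closed subscheme $Q$ of a Quot scheme, parametrizing quotients $\mathcal{O}(-N)^{\oplus P(N)}\to \mathcal{E}$ with a $\Lambda$--structure, by $GL(P(N))$.

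Restrict $Q$ to the open--closed locus of semiharmonic objects. Since these objects are locally free, the frame bundle of the restriction of the universal family to $\mathfrak{p}$ is a principal $GL_{n}$--bundle over $Q$. The quotient of this bundle by $GL(P(N))$ represents framed triples $(\mathcal{E},\theta_{\mathcal{L}},\beta)$, because the framing kills automorphisms and the action is free. This gives $\mathcal{R}^{\mathrm{Dol}}_{\mathcal{L}}(X,\mathfrak{p},n)$.

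Simpson's argument then shows that all points are semistable for a suitable $GL_{n}$--linearization, and that the good quotient recovers $\mathcal{M}^{\mathrm{Dol}}_{\mathcal{L}}(X,n)$. His argument uses only that every object is semistable with fixed normalized Hilbert polynomial and is locally free at $\mathfrak{p}$.

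\emph{Main obstacle.} The delicate step is Step 2. One must check that vanishing of $\mathcal{L}$--Chern classes is an open--closed condition in families, and that semistable fibers with vanishing classes are locally free. The second point rests on Proposition~\eqref{prop:locally-free-L-Higgs}, whose local splitting $b$ is only local; I would verify that local freeness and the Chern class vanishing it invokes are indeed local or global as needed. I would reduce through the transitivity statement that vanishing $\mathcal{L}$--Chern classes give vanishing ordinary Chern classes, and then invoke \cite[\S\,6]{CT.Simpson-1995} directly.
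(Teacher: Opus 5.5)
This is essentially the paper's proof (Lemma~\ref{lem: Lambda module structure--equivalent to L twisted Higgs bundle} with Theorem~\ref{moduli-of-Lambda-module}, the union of components with vanishing $\mathcal{L}$--Chern classes, and Simpson's Theorem~4.10 with $\mathrm{LF}(\xi_{\mathfrak{p}})$ supplied by Proposition~\ref{prop:locally-free-L-Higgs}), except that you unpack Simpson's frame-bundle construction and restrict to the semiharmonic locus before framing, whereas the paper takes connected components of $\mathcal{R}(\Lambda^{\mathrm{Dol}}_{\mathcal{L}},\mathfrak{p},n)$ afterwards; your order is slightly cleaner, since $\mathrm{LF}(\xi_{\mathfrak{p}})$ is then only needed on the locus where that proposition provides it. The worry you flag about Proposition~\ref{prop:locally-free-L-Higgs} applies equally to the paper's argument, but the reduction you propose for it (that vanishing $\mathcal{L}$--Chern classes force vanishing ordinary Chern classes) is false in general: for $\mathcal{L}=\At(L)$ with $\deg L\neq 0$ on a curve, $L$ carries the tautological integrable $\mathcal{L}$--connection, so its $\mathcal{L}$--Chern class vanishes while $c_{1}(L)\neq 0$.
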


\begin{proof}
	The first assertion follows from
	\eqref{lem: Lambda module structure--equivalent to L twisted Higgs bundle} together with Theorem~\eqref{moduli-of-Lambda-module} and the preceding discussion.
	
	Fix a base point $\mathfrak{p} \in X$, and let
	$\mathcal{R}(\Lambda^{\text{Dol}}_{\mathcal{L}},\mathfrak{p},n)$ denote the representation space
	of framed $\Lambda^{\mathrm{Dol}}_{\mathcal{L}}$--modules constructed in
	\cite[\S\,4, Theorem~4.10]{CT.Simpson-1994}. By proposition \eqref{prop:locally-free-L-Higgs} all $p$--semistable $\mathcal{L}$--twisted sheaves with vanishing rational $\mathcal{L}$--Chern classes satisfy condition $\text{LF}(\xi_{\mathfrak{p}})$, where $\xi_{\mathfrak{p}}:\text{Spec}(\mathbb{C})\rightarrow X, \mathfrak{m}\mapsto \mathfrak{p}$. Let
	$\mathcal{R}^{\mathrm{Dol}}_{\mathcal{L}}(X,\mathfrak{p},n)$ denote the disjoint union of those connected components of
	$\mathcal{R}(\Lambda^{\text{Dol}}_{\mathcal{L}},\mathfrak{p},n)$ corresponding to
	$\mathcal{L}$--twisted Higgs sheaves with vanishing rational
	$\mathcal{L}$--Chern classes.
	
	Then $\mathcal{R}^{\mathrm{Dol}}_{\mathcal{L}}(X,\mathfrak{p},n)$ represents the functor which assigns to a $\mathbb{C}$--scheme $S$ the set of isomorphism classes of triples $(\mathcal{E},\theta_{\mathcal{L}},\beta)$, where
	$(\mathcal{E},\theta_{\mathcal{L}})$ is an $\mathcal{L}$--twisted Higgs bundle of semiharmonic type on $X \times S$ and
	$\beta \colon \mathcal{E}|_{\mathfrak{p}} \xrightarrow{\sim} \mathbb{C}^{n}$ is a framing.

\end{proof}

We refer to $\mathcal{M}^{\mathrm{Dol}}_{\mathcal{L}}(X,n)$ as the
\emph{$\mathcal{L}$--twisted Dolbeault moduli space}, and to
$\mathcal{R}^{\mathrm{Dol}}_{\mathcal{L}}(X,\mathfrak{p},n)$ as the
\emph{$\mathcal{L}$--twisted Dolbeault representation space}.

Again, from remark after lemma \eqref{lem: Lambda module structure--equivalent to L twisted Higgs bundle} one can associate to $\Lambda^{\text{DR}}_{\mathcal{L}}$ a $D$--algebra
$\Lambda_{\mathcal{L}}^{\mathrm{red}}$ on $X \times \mathbf{A}^{1}$,
flat over $\mathbf{A}^{1}$, whose fiber over $1$ is $\Lambda^{\text{DR}}_{\mathcal{L}}$
and whose fiber over $0$ is isomorphic to its associated graded algebra
\[
\Lambda^{\mathrm{Dol}}_{\mathcal{L}}
= \operatorname{Gr}_{\bullet}(\Lambda^{\text{DR}}_{\mathcal{L}})
\cong \operatorname{Sym}^{\bullet}(V).
\]

On the other hand, given a Lie algebroid
$\mathcal{L} = (V,[\cdot,\cdot],\delta)$ on $X$ and a parameter
$\lambda \in \mathbf{A}^{1}$, we define a new Lie algebroid
\[
\mathcal{L}_{\lambda} := (V,[\cdot,\cdot],\lambda\delta).
\]
A straightforward computation shows that
\[
\Lambda^{\mathrm{red}}_{\mathcal{L}}|_{X\times\{\lambda\}}
\cong \Lambda^{\text{DR}}_{\mathcal{L}_{\lambda}}.
\]

Observe that for $\lambda \neq 0$, multiplication by $\lambda$ induces
an isomorphism of Lie algebroids
\[
\mathcal{L}_{\lambda} \cong \mathcal{L},
\qquad v \longmapsto \lambda v,
\]
whereas for $\lambda = 0$ we have
$\mathcal{L}_{0} = (V,[\cdot,\cdot],0)$, the trivial Lie algebroid on $V$.
Consequently, the fibers of $\Lambda^{\mathrm{red}}_{\mathcal{L}}$
over $\lambda \in \mathbf{A}^{1}$ satisfy the following properties,
which are well-known consequences of the \emph{Rees construction}:
\begin{itemize}
	\item for every $\lambda \neq 0$,
	\[
	\Lambda^{\mathrm{red}}_{\mathcal{L}}|_{X\times\{\lambda\}}
	\cong \Lambda^{\mathrm{red}}_{\mathcal{L}}|_{X\times\{1\}}
	\cong \Lambda^{\text{DR}}_{\mathcal{L}},
	\]
	\item for $\lambda = 0$,
	\[
	\Lambda^{\mathrm{red}}_{\mathcal{L}}|_{X\times\{0\}}
	\cong \Lambda^{\mathrm{Dol}}_{\mathcal{L}}
	\cong \operatorname{Gr}_{\bullet}(\Lambda^{\text{DR}}_{\mathcal{L}})
	\cong \operatorname{Sym}^{\bullet}(V).
	\]
\end{itemize}

By lemma \eqref{Lie algebroid is-equivalent-to-Lambda_L}, let
$\mathcal{L}^{\mathrm{red}}$ denote the Lie algebroid on
$X \times \mathbf{A}^{1}$ corresponding to
$\Lambda^{\mathrm{red}}_{\mathcal{L}}$.
We consider the moduli space
$\mathcal{M}^{\mathrm{Hod}}_{\mathcal{L}}(X,n)$
of vector bundles of rank $n$ equipped with integrable
$\mathcal{L}^{\mathrm{red}}$--connections on $X \times \mathbf{A}^{1}$.
By Theorem~\eqref{eqn: moduli-of-L-connection on VB}, this is a
quasi-projective variety endowed with a natural morphism
\[
\pi : \mathcal{M}^{\mathrm{Hod}}_{\mathcal{L}}(X,n)
\longrightarrow \mathbf{A}^{1}, \ \
(\mathcal{E},D_{\mathcal{L}},\lambda)\mapsto \lambda.
\]

It follows immediately that for all $\lambda \in \mathbf{A}^{1}\setminus\{0\}$,
\[
\pi^{-1}(\lambda)
= \mathcal{M}^{\text{DR}}_{\mathcal{L}_{\lambda}}(X,n)
\cong \mathcal{M}^{\text{DR}}_{\mathcal{L}_{1}}(X,n)
= \pi^{-1}(1),
\qquad
\pi^{-1}(0)
= \mathcal{M}^{\mathrm{Dol}}_{\mathcal{L}}(X,n).
\]
Thus, $\mathcal{M}^{\mathrm{Hod}}_{\mathcal{L}}(X,n)$
provides an interpolation between the moduli space of
$\mathcal{L}$--connections on vector bundles and the moduli space
of $\mathcal{L}$--twisted Higgs bundles.
We call $\mathcal{M}^{\mathrm{Hod}}_{\mathcal{L}}(X,n)$ the
\emph{$\mathcal{L}$--Hodge moduli space for vector bundles}.

Hence, we obtain the following result.
\begin{thm}\label{def:L-Hodge-moduli-space-for vector bundles}
	The $\mathcal{L}$--Hodge moduli space of rank $n$ vector bundles,
	$\mathcal{M}^{\mathrm{Hod}}_{\mathcal{L}}(X,n)$,
	is a quasi-projective scheme equipped with a morphism
	\[
	\pi : \mathcal{M}^{\mathrm{Hod}}_{\mathcal{L}}(X,n)
	\longrightarrow \mathbf{A}^{1}.
	\]
\end{thm}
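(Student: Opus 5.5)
The plan is to realize $\mathcal{M}^{\mathrm{Hod}}_{\mathcal{L}}(X,n)$ as a moduli space of $\Lambda$--modules for a single $D$--algebra over the base $\mathbf{A}^{1}$, and then apply Simpson's relative construction. First I will make the Rees construction precise. Let $\Lambda=\Lambda^{\mathrm{DR}}_{\mathcal{L}}$ with its order filtration $\Lambda_{0}\subset\Lambda_{1}\subset\cdots$, and set
\[
\Lambda^{\mathrm{red}}_{\mathcal{L}}:=\sum_{i\geqslant 0}\lambda^{i}\Lambda_{i}\subset \Lambda\otimes_{\mathbb{C}}\mathbb{C}[\lambda],
\]
viewed as a sheaf of algebras on $X\times\mathbf{A}^{1}$. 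The graded pieces $\operatorname{Gr}_{i}\Lambda\cong\operatorname{Sym}^{i}(V)$ are locally free (Lemma \eqref{Lie algebroid is-equivalent-to-Lambda_L} and the almost polynomial property), so $\Lambda^{\mathrm{red}}_{\mathcal{L}}$ is flat over $\mathbf{A}^{1}$. It is also a split almost polynomial $D$--algebra relative to $\mathbf{A}^{1}$, with first graded piece $p_{X}^{*}V$ and splitting $v\mapsto\lambda v$. From this the fiber identifications $\Lambda^{\mathrm{red}}_{\mathcal{L}}|_{X\times\{\lambda\}}\cong\Lambda^{\mathrm{DR}}_{\mathcal{L}_{\lambda}}$ follow by checking generators and relations. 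At $\lambda=0$ the bracket relation $vw-wv=[v,w]$ becomes $\lambda^{2}(vw-wv)=\lambda\cdot\lambda^{2}[v,w]$ in Rees coordinates, so it degenerates to commutativity and recovers $\operatorname{Sym}^{\bullet}(V)$.

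Second, I will invoke Simpson's theory in its relative form over the base $S=\mathbf{A}^{1}$. Theorem \eqref{moduli-of-Lambda-module} and \cite[\S\,4, Theorems 4.7, 4.10]{CT.Simpson-1994} are stated for a sheaf of rings of differential operators on $X\times S\to S$ with $S$ of finite type. They give a quasi-projective $S$--scheme $\mathcal{M}(\Lambda^{\mathrm{red}}_{\mathcal{L}},n\mathcal{P}_{0})\to\mathbf{A}^{1}$ that universally corepresents the functor of flat families of $p$--semistable $\Lambda^{\mathrm{red}}_{\mathcal{L}}$--modules with Hilbert polynomial $n\mathcal{P}_{0}$. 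I define $\mathcal{M}^{\mathrm{Hod}}_{\mathcal{L}}(X,n)$ to be the union of connected components where the fibers are locally free of rank $n$ with vanishing $\mathcal{L}$--Chern classes. This union is open and closed by the same argument used for Theorem \eqref{eqn: moduli-of-L- twisted Higgs bundles}. The morphism $\pi$ is then the structure morphism to $\mathbf{A}^{1}$, and it is automatically a morphism of schemes.

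Third, I will justify the fiber descriptions. Over $\lambda\neq0$, Lemma \eqref{lem:transitive-Lie-algebroid-locally-free} applies to $\mathcal{L}_{\lambda}$, which is transitive, so every module is locally free with the normalized Hilbert polynomial $\mathcal{P}_{0}$ and is automatically semistable, as in Theorem \eqref{eqn: moduli-of-L-connection on VB}. Over $\lambda=0$, Proposition \eqref{prop:locally-free-L-Higgs} together with Lemma \eqref{lem: Lambda module structure--equivalent to L twisted Higgs bundle} identifies the fiber with $\mathcal{M}^{\mathrm{Dol}}_{\mathcal{L}}(X,n)$. Universal corepresentability means the fibers of the corepresenting scheme are the corresponding fiber moduli spaces.

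The main obstacle is uniformity over $\mathbf{A}^{1}$. Simpson's boundedness and GIT construction must run over the whole family at once, including the jump at $\lambda=0$, where modules need no longer be automatically semistable. I would handle this with Simpson's boundedness for $\Lambda$--modules over a base of finite type, which requires only that $\Lambda^{\mathrm{red}}_{\mathcal{L}}$ be a relative split almost polynomial $D$--algebra. This is exactly what the Rees computation in the first step supplies.
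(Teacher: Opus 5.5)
Your proposal is correct and follows the same route as the paper. Both arguments form the Rees deformation $\Lambda^{\mathrm{red}}_{\mathcal{L}}$ over $\mathbf{A}^{1}$, take Simpson's moduli of $\Lambda^{\mathrm{red}}_{\mathcal{L}}$--modules, and let $\pi$ be the map to $\mathbf{A}^{1}$. The one difference is the justification of quasi-projectivity. The paper simply cites its own de Rham moduli theorem for transitive $\mathcal{L}$, applied to $\mathcal{L}^{\mathrm{red}}$. That theorem rests on automatic semistability, which is unavailable at $\lambda=0$, where the anchor vanishes. You instead invoke Simpson's relative theorem over $\mathbf{A}^{1}$ with $p$--semistability imposed, which is the more defensible argument.

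One small slip: in Rees coordinates the relation reads $[\lambda v,\lambda w]=\lambda\cdot(\lambda[v,w])$, not $\lambda\cdot\lambda^{2}[v,w]$. Consequently the fiber over $\lambda$ is the enveloping algebra of $(V,\lambda[\cdot,\cdot],\lambda\delta)$, not of $(V,[\cdot,\cdot],\lambda\delta)$. This does not affect the statement being proved.
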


We now introduce the notion of $\mathcal{L}$--twisted principal $G$--Higgs bundles,
which generalizes the classical notion of principal $G$--Higgs bundles.
Let $\mathfrak{g}$ denote the Lie algebra of $G$, equipped with the adjoint action of $G$.

\begin{defn}\label{def: L-twisted-principal-G-Higgs-bundles}
	An \emph{$\mathcal{L}$--twisted principal $G$--Higgs bundle on $X$} consists of a
	principal $G$--bundle $E_{G} \to X$ together with a section
	\[
	\theta_{\mathcal{L}} \in H^{0}\!\left(X,
	\text{ad}(E_{G}) \otimes \Omega^{1}_{\mathcal{L}}\right),
	\]
	called the \emph{$\mathcal{L}$--twisted $G$--Higgs field}, satisfying the integrability condition
	\[
	[\theta_{\mathcal{L}}, \theta_{\mathcal{L}}] = 0
	\quad \text{in} \quad
	H^{0}\!\left(X, \text{ad}(E_{G}) \otimes \Omega^{2}_{\mathcal{L}}\right).
	\]
\end{defn}

\begin{example}
An $\mathcal{L}$--twisted $SL(n, \mathbb{C})$--Higgs bundle is a pair $(\mathcal{E}, \theta_{\mathcal{L}})$, where $\mathcal{E} \rightarrow X$ is a rank
$n$ vector bundle with $\det(\mathcal{E}) = \mathcal{O}_{X}$ and $\theta_{\mathcal{L}} \in H^{0}(X, \End(\mathcal{E}) \otimes \Omega^{1}_{\mathcal{L}})$ with $tr(\theta_{\mathcal{L}}) = 0$.
\end{example}

\begin{example}
An $\mathcal{L}$--twisted $SO(n, \mathbb{C})$--Higgs bundle is a pair $(E, \theta_{\mathcal{L}})$ where $E$ is a $SO(n, \mathbb{C})$--bundle
and $\theta_{\mathcal{L}}\in H^{0}(E(\mathfrak{so}(n, C)) \otimes \Omega^{1}_{\mathcal{L}})$. Using the standard representations of $SO(n, \mathbb{C})$ in $\mathbb{C}^{n}$ we can associate to $E$ a vector bundle $W$ of rank $n$ with trivial determinant, $W = E\times^{SO(n,\mathbb{C})}\mathbb{C}^{n}$, together with a non-degenerate symmetric quadratic form $Q\in H^{0}(S^{2}W^{*})$; we can think
of $Q$ as a symmetric isomorphism $Q : W \rightarrow W^{*}$. The Higgs field in terms of the vector bundle $W$ is a section $\theta_{\mathcal{L}}\in H^{0}(End(W)\otimes \Omega^{1}_{\mathcal{L}})$ satisfying $Q(u, \theta_{\mathcal{L}}(v)) =
-Q(\theta_{\mathcal{L}}(u), v)$ and $\text{tr}(\theta_{\mathcal{L}}) = 0$.
\end{example}


\section{Tannakian description of $\mathcal{L}$--twisted principal objects}\label{sec:Tannakian Considerations}

The theory of Higgs bundles and integrable connections admits a natural extension in the framework of Lie algebroids, unifying several geometric structures such as ordinary Higgs bundles, logarithmic Higgs bundles, and generalized connections. From the Tannakian perspective, principal bundles can be described by tensor functors from the category of representations of a linear algebraic group, providing a conceptual approach to moduli problems in the classical setting \cite{CT.Simpson-1995}.

In this paper, we develop a Tannakian description of principal $\mathcal{L}$-twisted $G$-Higgs bundles and principal $G$-bundles equipped with integrable Lie algebroid connections, where $\mathcal{L}$ is a Lie algebroid over a smooth projective variety $X$. This framework provides a natural foundation for the construction and study of their moduli spaces, extending the classical theory of principal Higgs bundles and integrable principal bundles to the Lie algebroid setting.

For completeness, we briefly recall the necessary background on tensor categories and neutral Tannakian categories, for details, see \cite[Chapters~1--2]{J.S.Milne-2025}, \cite{Deligne-Milne-Ogus-Shih-1982}, and \cite[\S\,6]{CT. Simpson-1992}.

A \emph{tensor category} is a category $\mathcal{C}$ equipped with a bifunctor
\[
\otimes : \mathcal{C} \times \mathcal{C} \longrightarrow \mathcal{C},
\]
called the \emph{tensor product}, together with natural isomorphisms expressing
associativity and commutativity of $\otimes$, and a distinguished object
$\mathbb{1} \in \mathcal{C}$ called the \emph{unit object}.  
These natural isomorphisms are referred to as \emph{constraints}; for example,
the commutativity constraint is a natural isomorphism
\[
U \otimes V \cong V \otimes U.
\]
The constraints are required to satisfy a collection of coherence conditions,
usually called the \emph{canonical axioms}, which assert that any natural
automorphism obtained by composing the constraints is the identity.

The unit object $\mathbb{1}$ is equipped with natural isomorphisms
\[
\mathbb{1} \otimes V \cong V,
\]
satisfying the canonical axioms.

A \emph{tensor functor} between tensor categories is a functor
$\mathscr{F} : \mathcal{C} \to \mathcal{D}$ endowed with natural isomorphisms
\[
\mathscr{F}(U \otimes V) \cong \mathscr{F}(U) \otimes \mathscr{F}(V),
\]
compatible with the constraints and satisfying the canonical axioms.

A \emph{neutral Tannakian category} is an abelian tensor category $\mathcal{T}$
which is rigid (i.e.\ every object admits a dual), satisfies
$\mathrm{End}(\mathbb{1}) = \mathbb{C}$, and is equipped with an exact faithful
$\mathbb{C}$--linear tensor functor
\[
\omega : \mathcal{T} \longrightarrow \texttt{Vect}_{\mathbb{C}},
\]
called a \emph{fiber functor}.

If $G$ is an affine algebraic group over $\mathbb{C}$, then the category
$\texttt{Rep}(G)$ of finite-dimensional complex representations of $G$ is a
neutral Tannakian category. The forgetful functor
\[
\omega_{G} : \texttt{Rep}(G) \longrightarrow \texttt{Vect}_{\mathbb{C}},
\]
which sends a representation to its underlying vector space, is a fiber
functor. The group $G$ is recovered as the group
\[
G \cong \mathrm{Aut}^{\otimes}(\omega_{G}, \texttt{Rep}(G))
\]
of tensor automorphisms of the fiber functor.

Conversely, given a neutral Tannakian category $(\mathcal{T}, \omega)$, setting
\[
G := \mathrm{Aut}^{\otimes}(\omega, \mathcal{T}),
\]
one recovers an equivalence of tensor categories
\[
\mathcal{T} \simeq \texttt{Rep}(G).
\]

Let $\texttt{Vect}(X)$ denote the category of vector bundles (i.e.\ locally free
$\mathcal{O}_{X}$--modules of finite rank) on $X$. The category
$\texttt{Vect}(X)$ is naturally equipped with a tensor product
\[
\otimes : \texttt{Vect}(X) \times \texttt{Vect}(X) \longrightarrow
\texttt{Vect}(X),
\qquad (E,F) \longmapsto E \otimes F,
\]
and with a direct sum operation $\oplus$. With these structures,
$\texttt{Vect}(X)$ is a rigid, additive, $\mathbb{C}$--linear tensor category
whose unit object is $\mathbb{1}=\mathcal{O}_{X}$ and which satisfies
$\mathrm{End}(\mathbb{1})=\mathbb{C}$.

A morphism $u:E\to F$ in $\texttt{Vect}(X)$ is said to be \emph{strict} if
$\mathrm{coker}(u)$ is a locally free sheaf. In this case, both
$\ker(u)$ and $\mathrm{im}(u)$ are locally free. 
An alternative description of principal $G$--bundles, due to M.\,V.~Nori, may be
formulated in this framework. Let $E_{G}$ be a principal $G$--bundle on $X$.
Then $E_{G}$ determines a tensor functor
\begin{equation}\label{eq:nori-functor}
	\rho_{E_{G}} : \texttt{Rep}(G) \longrightarrow \texttt{Vect}(X),
\end{equation}
defined by
\[
\rho_{E_{G}}(V) := E_{G} \times^{G} V .
\]
This functor has the following properties:
\begin{itemize}
	\item $\rho_{E_{G}}$ is a strict tensor functor; that is, for any morphism
	$u:V\to W$ in $\texttt{Rep}(G)$, the induced morphism
	$\rho_{E_{G}}(u)$ is strict in $\texttt{Vect}(X)$;
	
	\item $\rho_{E_{G}}$ is exact and faithful;
	
	\item for every closed point $x\in X$, the functor
	\[
	\texttt{Rep}(G)\longrightarrow \texttt{Vec}_{\mathbb{C}},
	\qquad V \longmapsto \rho_{E_{G}}(V)_{x},
	\]
	is a fiber functor.
\end{itemize}
(see \cite{Balaji-2006}).

Nori proved the following converse statement.

\begin{proposition}\label{def:Nori-descrpton of PB}\cite{M.V.Nori-1976}
	Suppose $\rho:\texttt{Rep}(G)\longrightarrow \texttt{Vect}(X)$
	is a strict, exact, and faithful tensor functor such that for every closed point
	$x\in X$ the functor $V\mapsto \rho(V)_{x}$ is a fiber functor on
	$\texttt{Rep}(G)$. Then there exists a principal right $G$--bundle $E_{G}$ over
	$X$ and an isomorphism of tensor functors $\rho \cong \rho_{E_{G}}$. Moreover,
	$E_{G}$ is unique up to isomorphism.
\end{proposition}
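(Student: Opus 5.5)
The plan is to reconstruct $E_G$ as the scheme of tensor-compatible frames of $\rho$, and to obtain uniqueness from Tannakian duality applied fiberwise.

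\textbf{Step 1: the frame scheme.} Pick a faithful representation $W$ of $G$. Since $G$ is affine, every representation of $G$ is a subquotient of finite sums of the tensor spaces $W^{\otimes a}\otimes (W^{*})^{\otimes b}$. Set $\mathcal{W}:=\rho(W)$, a vector bundle of rank $\dim W$. Let $P:=\mathrm{Isom}(W\otimes\mathcal{O}_X,\mathcal{W})$ be its frame bundle, a principal $\GL(W)$--bundle. Inside $P$ define the closed subscheme $E_G$ whose points are the frames that carry, for every $G$--stable line (or subspace) $U$ of a tensor space $T=W^{\otimes a}\otimes(W^*)^{\otimes b}$, the fiber $U$ onto the subbundle $\rho(U)\subset\rho(T)$. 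Chevalley's theorem gives $G$ as the stabilizer of one such line $\ell\subset T$. Strictness of $\rho$ is used exactly here: $\rho(U)\subset\rho(T)$ is a subbundle, so these incidence conditions are closed conditions. Tensor compatibility identifies $\rho(T)$ with the corresponding tensor bundle of $\mathcal{W}$. The group $G$ acts on $E_G$ on the right.

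\textbf{Step 2: $E_G$ is a $G$--torsor.} For each closed point $x$, the functor $\omega_x:V\mapsto\rho(V)_x$ is a fiber functor on $\texttt{Rep}(G)$. By Deligne--Milne neutral Tannaka duality, $\mathrm{Isom}^{\otimes}(\omega_G,\omega_x)$ is a $G$--torsor over $\mathbb{C}$, and it is nonempty because $\mathbb{C}$ is algebraically closed. We then identify the fiber $(E_G)_x$ with this torsor. The inclusion $\mathrm{Isom}^{\otimes}\subset (E_G)_x$ is clear. For the reverse inclusion, a frame preserving all the $\rho(U)$ determines tensor isomorphisms on every subquotient, using exactness of $\rho$ together with the generation statement above.

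Next we show $E_G\to X$ is flat, smooth and surjective. Working over a base $S=\mathrm{Spec}\,A$ with the fiber functor $V\mapsto\rho(V)\otimes A$, Deligne's theorem (Saavedra/Deligne, torsors for fiber functors over a base) shows that $\underline{\mathrm{Isom}}^{\otimes}$ is an fpqc $G$--torsor. Since $G$ is smooth over $\mathbb{C}$, such a torsor is étale-locally trivial. Exactness and faithfulness of $\rho$ guarantee that $\rho$ really is a fiber functor over the base, and strictness keeps all the $\rho(V)$ locally free.

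\textbf{Step 3: the isomorphism $\rho\cong\rho_{E_G}$.} The tautological frame gives natural maps $E_G\times^G V\to\rho(V)$. These are tensor compatible by construction. They are isomorphisms because they are so fiberwise, by Tannaka duality at each point, and both sides are vector bundles of the same rank.

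\textbf{Step 4: uniqueness.} If $\rho\cong\rho_{E'}$, then $E'$ embeds into the frame scheme of Step 1 by sending a point $e$ to the induced frame $V\to\rho(V)_x$. This embedding is $G$--equivariant, and its image is contained in $E_G$. A $G$--equivariant map of $G$--torsors is an isomorphism.

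The main obstacle is Step 2: showing that $E_G$ is flat and locally trivial, not merely that its fibers are torsors. I would handle it by invoking Deligne's theorem on fiber functors over a base scheme.
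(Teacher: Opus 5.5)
The paper gives no argument here; the proposition is simply quoted from Nori. Your proposal is correct in substance, but it takes a different route from Nori's proof.

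Nori extends $\rho$ to ind-objects, applies it to the regular representation, and sets $E_G:=\mathbf{Spec}_X\,\rho(\mathcal{O}(G))$. Exactness of $\rho$ makes this quasi-coherent $\mathcal{O}_X$-algebra faithfully flat, and the comultiplication gives the $G$-action. Applying $\rho$ to the $G$-module isomorphism $\mathcal{O}(G)\otimes\mathcal{O}(G)\cong\mathcal{O}(G)\otimes\mathcal{O}(G)_{\mathrm{triv}}$ then yields $E_G\times G\cong E_G\times_X E_G$. So the torsor property comes straight out of the Hopf-algebra structure, with no input beyond the hypotheses.

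You instead realize $E_G$ concretely as a reduction of structure group of $\mathbf{Fr}(\rho(W))$, cut out by the incidence conditions $p(U)=\rho(U)$. This is the same kind of description the paper later uses to cut $\mathcal{R}(X,\mathfrak{p},G)$ out of $\mathcal{R}(X,\mathfrak{p},H)$, which is an advantage downstream. However, the decisive step (flatness and local triviality) is delegated to Deligne's theorem on fiber functors over a base. Applied on affine opens $\mathrm{Spec}\,A\subset X$ to $V\mapsto\Gamma(\mathrm{Spec}\,A,\rho(V))$ and glued, that theorem already \emph{is} the proposition: take $E_G:=\underline{\mathrm{Isom}}^{\otimes}(\omega_G\otimes\mathcal{O}_X,\rho)$, and Step 3 becomes descent. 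So in your write-up the frame scheme is a model of the torsor rather than the engine of the proof, whereas Nori's argument is self-contained.

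One step needs tightening. In Step 2 you prove the reverse inclusion $(E_G)_x\subset\underline{\mathrm{Isom}}^{\otimes}(\omega_G,\omega_x)$ only at closed points. That only gives $\underline{\mathrm{Isom}}^{\otimes}=(E_G)_{\mathrm{red}}$, so it does not transfer flatness to $E_G$, which could a priori be non-reduced. Run your subquotient argument for $T$-valued frames over an arbitrary $X$-scheme $T$. This works verbatim, since pullback preserves the exact sequences $0\to\rho(U_2)\to\rho(U_1)\to\rho(U_1/U_2)\to 0$ of vector bundles. You need it anyway to build the universal tensor isomorphism over $E_G$ in Step 3.

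Alternatively, your Chevalley line, which is currently idle, can replace Deligne's theorem:
\begin{enumerate}
\item With $G=\mathrm{Stab}([\ell])$, the bundle $P/G=P\times^{\GL(W)}(\GL(W)\cdot[\ell])$ is locally closed in $\mathbb{P}(\rho(T))$.
\item The line subbundle $\rho(\ell)$ (a subbundle by strictness) gives a section of $\mathbb{P}(\rho(T))$. By the fiber-functor hypothesis it lands in $P/G$ at closed points, so it factors through $P/G$, since $X$ is reduced and of finite type.
\item Pulling back the $G$-torsor $P\to P/G$ along this section gives a reduced $G$-torsor $E_\ell$. It contains $E_G$ as a closed subscheme with the same closed points, hence $E_G=E_\ell$.
\end{enumerate}
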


\medskip

\noindent\textbf{Tannakian description of principal $G$--bundle equipped with integrable $\mathcal{L}$--connection}. Let $\mathcal{L}$ be an algebraic Lie algebroid on $X$. We denote by
$\texttt{Vect}^{\text{int}}_{\mathcal{L}}(X)$ the category whose objects are pairs
$(E,\nabla_{\mathcal{L}})$, where $E$ is an algebraic vector bundle on $X$
and $\nabla_{\mathcal{L}}$ is an integrable $\mathcal{L}$--connection on $E$.
A morphism
\[
h : (E,\nabla_{\mathcal{L}}) \longrightarrow (E',\nabla'_{\mathcal{L}})
\]
is a morphism of vector bundles $h:E\to E'$ such that
\[
\nabla'_{\mathcal{L}} \circ h
= (h \otimes \mathrm{id}_{\Omega^{1}_{\mathcal{L}}}) \circ \nabla_{\mathcal{L}} .
\]

Let $(E,\nabla^{E}_{\mathcal{L}})$ and $(F,\nabla^{F}_{\mathcal{L}})$ be vector
bundles equipped with integrable $\mathcal{L}$--connections. Define a
connection on $E\otimes F$ by
\[
\nabla^{E\otimes F}_{\mathcal{L}}(e\otimes f)
:= \nabla^{E}_{\mathcal{L}}(e)\otimes f
+ e\otimes \nabla^{F}_{\mathcal{L}}(f),
\]
for all locally defined sections $e$ of $E$ and $f$ of $F$. One checks
immediately that $\nabla^{E\otimes F}_{\mathcal{L}}$ is an integrable
$\mathcal{L}$--connection.

The usual associativity and symmetry isomorphisms of vector bundles,
\[
E\otimes(F\otimes G)\cong (E\otimes F)\otimes G,
\qquad
E\otimes F \cong F\otimes E,
\]
are compatible with the induced $\mathcal{L}$--connections. The unit object
is given by
\[
\mathbb{1} := (\mathcal{O}_{X}, d_{\mathcal{L}}),
\]
where $d_{\mathcal{L}}(f)=\delta^{*}(df)\in \Omega^{1}_{\mathcal{L}}$.

For a vector bundle $E$, let $E^{*}:=\mathcal{H}om_{\mathcal{O}_{X}}(E,\mathcal{O}_{X})$.
If $\nabla_{\mathcal{L}}$ is an integrable $\mathcal{L}$--connection on $E$,
then the dual bundle $E^{*}$ carries an induced integrable
$\mathcal{L}$--connection $\nabla^{*}_{\mathcal{L}}$, defined by
\[
d_{\mathcal{L}}\langle e,\xi\rangle
= \langle\nabla^{*}_{\mathcal{L}}(e),\xi\rangle
+ \langle e,\nabla_{\mathcal{L}}(\xi)\rangle,
\]
for all local sections $e\in E^{*}$ and $\xi\in E$, where
$\langle\cdot,\cdot\rangle : E^{*}\times E\to \mathcal{O}_{X}$ denotes the
natural pairing.

It follows that $\texttt{Vect}^{\text{int}}_{\mathcal{L}}(X)$ is a rigid, additive,
$\mathbb{C}$--linear tensor category with
$\mathrm{End}(\mathbb{1})=\mathbb{C}$.

It is well known that a coherent sheaf equipped with an integrable
$\mathcal{T}_{X}$--connection is necessarily locally free (see
\cite[\S\,VI, Proposition~1.7]{A.Borel-1987}). However, for a general Lie
algebroid $\mathcal{L}$, a coherent sheaf equipped with an integrable
$\mathcal{L}$--connection need not be locally free (see
\cite[\S\,2.2, Example~2.12]{GLP-2018}).

 However, if  $\mathcal{L}$ is a \emph{transitive Lie algebroid}. then this pathology above does not occur.

\begin{proposition}\label{def:locally free L-modulues}
	\cite[\S\,2.2, Proposition~2.15]{GLP-2018}
	Let $\mathcal{L}$ be a transitive Lie algebroid on $X$ and let
	$(\mathcal{E},\nabla_{\mathcal{L}})$ be a coherent sheaf equipped with an
	$\mathcal{L}$--connection (not necessarily integrable). Then $\mathcal{E}$ is
	locally free, i.e.\ a vector bundle.
\end{proposition}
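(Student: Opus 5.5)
Since the question is local on $X$, I will reduce to a statement about finitely generated modules over a regular local ring equipped with a family of derivations. First, at a closed point $x\in X$, I choose an affine neighbourhood $U$ on which $V$ is trivial and $\mathcal{T}_X$ is trivial, with local coordinates $z_1,\dots,z_m$ and $\partial_i=\partial/\partial z_i$. Because $\delta:V\to\mathcal{T}_X$ is a surjection of locally free sheaves, it splits locally: after shrinking $U$ there are sections $v_1,\dots,v_m$ of $V$ with $\delta(v_i)=\partial_i$. The $\mathcal{L}$--connection gives, for each local section $v$ of $V$, a $\mathbb{C}$--linear operator $\nabla_{v}:\mathcal{E}|_U\to\mathcal{E}|_U$, obtained by contracting $D_{\mathcal{L}}$ with $v$. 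The $\delta^*$--twisted Leibniz rule \eqref{eqn:phi^*-twisted-Leibniz-rule} then reads
\[
\nabla_{v}(fs)=f\nabla_{v}(s)+\delta(v)(f)\,s .
\]
Setting $\nabla_i:=\nabla_{v_i}$, I obtain operators on $\mathcal{E}|_U$ satisfying $\nabla_i(fs)=f\nabla_i s+\partial_i(f)s$. In other words, $\mathcal{E}|_U$ carries an ordinary (possibly non-integrable) $\mathcal{T}_X$--connection $D_X:=(\mathrm{id}\otimes\gamma^*)\circ D_{\mathcal{L}}$, where $\gamma:\mathcal{T}_U\to V|_U$ is the local splitting $\partial_i\mapsto v_i$. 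Note that integrability is neither needed nor used.

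Second, I show that a coherent sheaf with a (not necessarily flat) connection is locally free. Since $\mathcal{O}_{X,x}$ is a noetherian local ring, it suffices to prove that the stalk $M=\mathcal{E}_x$ is flat. I would use the standard argument via Fitting ideals. The Fitting ideals $F_k(M)\subset\mathcal{O}_{X,x}$ are stable under all derivations $\partial_i$: writing $M$ locally as the cokernel of a presentation matrix $A$, the operators $\nabla_i$ lift to the presentation, and differentiating minors shows that $\partial_i(F_k)\subset F_k$. An ideal in $\mathcal{O}_{X,x}$ stable under all $\partial_i$ is either $0$ or the unit ideal. This is the characteristic-zero fact that a nonzero $\partial$-stable ideal contains an element of minimal order, and differentiating that element produces a unit. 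Hence every Fitting ideal is $0$ or $(1)$, so $M$ is free of rank $r$, where $r$ is the least index with $F_r(M)=(1)$.

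The main obstacle is the lifting step that shows the Fitting ideals are $\partial$-stable, because $\nabla_i$ is only $\mathbb{C}$-linear. I would handle it directly. Choose generators $e_1,\dots,e_p$ of $M$ with relations $\sum_j a_{kj}e_j=0$. Write $\nabla_i e_j=\sum_l c^{i}_{jl}e_l$ and apply $\nabla_i$ to each relation. This gives $\sum_j(\partial_i a_{kj}+\sum_l a_{kl}c^i_{lj})e_j=0$, so $\partial_iA+AC^i=BA$ for some matrix $B$, since every relation is a combination of the rows of $A$. Applying Leibniz to a minor then shows that $\partial_i$ of a minor lies in the ideal of minors of the same size.

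As a fallback, I would cite Proposition \eqref{def:locally free L-modulues} (GLP-2018, Prop.\ 2.15) directly, since it is exactly this statement, and present the argument above as its proof sketch. This is consistent with Lemma \eqref{lem:transitive-Lie-algebroid-locally-free}.
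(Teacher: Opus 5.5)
Your argument is correct. It differs from the paper mainly in that the paper gives no argument at all: the proposition is simply imported from \cite[Proposition~2.15]{GLP-2018}.

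You instead prove it in two self-contained steps. First, an $\mathcal{O}_U$--linear local splitting $\gamma$ of the anchor turns $D_{\mathcal{L}}$ into an ordinary, possibly non-flat, connection $(\mathrm{id}\otimes\gamma^{*})\circ D_{\mathcal{L}}$ on $\mathcal{E}|_U$. The Leibniz rule survives because $\gamma^{*}\circ\delta^{*}=\mathrm{id}$. This is the same device the paper itself uses in the proof of Proposition~\ref{prop:locally-free-L-Higgs}. Second, the classical Fitting--ideal argument shows that a coherent module with a connection over a smooth variety in characteristic $0$ is free.

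The citation buys brevity. Your route buys transparency about hypotheses: it uses only surjectivity of $\delta$, never integrability, and never a local trivialization of $\mathcal{L}$ as a Lie algebroid. So it directly justifies the ``not necessarily integrable'' clause in the Zariski-algebraic setting, independently of the framework of the cited source.

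When writing it out, make two steps precise.
\begin{enumerate}
\item For the $\partial$--stable ideal, pass to $\widehat{\mathcal{O}}_{X,x}\cong\mathbb{C}[[t_1,\dots,t_m]]$ with $t_i=z_i-z_i(x)$. If $f\in I$ has minimal order $k\geq 1$, Euler's identity on its leading form gives some $\partial_i f\in I$ of order $k-1$, a contradiction. Your phrase ``differentiating produces a unit'' should be stated this way.
\item In differentiating a minor, the $BA$ term is a row operation, but the $AC^{i}$ term is not. Invoke Cauchy--Binet here: $\det(A_{K,\bullet}Z)$ lies in the ideal of $t\times t$ minors of $A$ for every $p\times t$ matrix $Z$.
\end{enumerate}

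Finally, drop the fallback. Citing the statement itself is circular, and so is citing Lemma~\ref{lem:transitive-Lie-algebroid-locally-free}, whose proof rests on the same external citation. Only the external reference is a legitimate substitute, and that is exactly what the paper does.
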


Now we prove that $\texttt{Vect}^{\text{int}}_{\mathcal{L}}(X)$ is a neutral Tannakian
category when $\mathcal{L}$ is transitive.

\begin{proposition}\label{prop: tannakian category Vect_L(X)}
	Let $\mathcal{L}$ be a transitive Lie algebroid on $X$. Then the category
	$\texttt{Vect}^{\emph{int}}_{\mathcal{L}}(X)$ of vector bundles equipped with integrable
	$\mathcal{L}$--connections is a neutral Tannakian category over $\mathbb{C}$.
\end{proposition}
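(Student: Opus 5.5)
We follow the strategy of \cite[\S\,6]{CT. Simpson-1992} for flat bundles. The discussion preceding the statement already makes $\texttt{Vect}^{\text{int}}_{\mathcal{L}}(X)$ a rigid, additive, $\mathbb{C}$--linear tensor category with unit $(\mathcal{O}_{X},d_{\mathcal{L}})$. Three things remain: that $\texttt{Vect}^{\text{int}}_{\mathcal{L}}(X)$ is abelian, that $\mathrm{End}(\mathbb{1})=\mathbb{C}$, and that there is an exact faithful tensor fiber functor. For the last we fix a closed point $x\in X$ and take
\[
\omega_{x}:(E,\nabla_{\mathcal{L}})\longmapsto E_{x}=E\otimes k(x).
\]
This is clearly a $\mathbb{C}$--linear tensor functor.

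\textbf{Abelianness.} Let $h:(E,\nabla)\to(E',\nabla')$ be a morphism. Since $h$ intertwines the connections, the sheaves $\ker h$, $\mathrm{im}\,h$ and $\mathrm{coker}\,h$ inherit $\mathcal{L}$--connections, and integrability passes to them because the curvatures are restrictions or quotients of zero curvatures. Each is a coherent sheaf with an $\mathcal{L}$--connection, so by Proposition~\eqref{def:locally free L-modulues} (transitivity of $\mathcal{L}$ is used exactly here) each is locally free. Hence kernels and cokernels exist inside $\texttt{Vect}^{\text{int}}_{\mathcal{L}}(X)$, and every morphism is strict. The canonical map $\mathrm{coim}\,h\to\mathrm{im}\,h$ is an isomorphism of sheaves, and it is compatible with the connections, so it is an isomorphism in the category. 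Thus the category is abelian, and the forgetful functor to $\mathcal{O}_{X}$--modules is exact.

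\textbf{$\mathrm{End}(\mathbb{1})=\mathbb{C}$.} An endomorphism of $(\mathcal{O}_{X},d_{\mathcal{L}})$ is a global function $f\in H^{0}(X,\mathcal{O}_{X})$ with $d_{\mathcal{L}}f=0$. Since $X$ is projective and irreducible, $H^{0}(X,\mathcal{O}_X)=\mathbb{C}$, so this holds automatically.

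\textbf{Exactness and faithfulness of $\omega_{x}$.} All kernels and cokernels in the category are locally free, so every short exact sequence in $\texttt{Vect}^{\text{int}}_{\mathcal{L}}(X)$ is a locally split sequence of vector bundles. Taking fibers at $x$ is exact on such sequences, so $\omega_{x}$ is exact.

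For faithfulness, it is enough, by exactness, to show that $\omega_{x}(E)=0$ forces $E=0$, or that $h_{x}=0$ forces $h=0$. Suppose $h_{x}=0$. Then $\mathrm{im}\,h$ is an object of the category, hence locally free, and its fiber at $x$ vanishes. A locally free sheaf on the irreducible variety $X$ has constant rank, so $\mathrm{im}\,h=0$ and therefore $h=0$. This also uses the strictness established in the abelianness step.

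The main obstacle is the abelianness step. It relies on knowing that every subquotient sheaf carrying an induced $\mathcal{L}$--connection is locally free, which is precisely what fails for non-transitive $\mathcal{L}$. Once that is granted through Proposition~\eqref{def:locally free L-modulues}, the remaining verifications are formal, and one concludes via \cite[\S\,6]{CT. Simpson-1992} and Deligne--Milne.
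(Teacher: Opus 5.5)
Your proposal is correct and follows the paper's proof essentially step for step. You show abelianness by giving $\ker h$ and $\operatorname{coker} h$ the induced integrable $\mathcal{L}$--connections and invoking local freeness of coherent sheaves with an $\mathcal{L}$--connection for transitive $\mathcal{L}$, and you take the fibre functor $\omega_x$ at a closed point; beyond the paper, you also verify $\mathrm{End}(\mathbb{1})=H^0(X,\mathcal{O}_X)=\mathbb{C}$ and prove faithfulness of $\omega_x$ via strictness and the constant rank of the locally free image on the irreducible $X$, both of which the paper only asserts.
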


\begin{proof}
	By the previous discussion, $\texttt{Vect}^{\text{int}}_{\mathcal{L}}(X)$ is a rigid,
	additive, $\mathbb{C}$--linear tensor category with
	$\mathrm{End}(\mathbb{1})=\mathbb{C}$. It remains to show that it is abelian and
	admits a fiber functor.
	
	\medskip
	\noindent
	\emph{Abelian property.}
	Let
	\[
	h:(E,\nabla_{\mathcal{L}})\longrightarrow (E',\nabla'_{\mathcal{L}})
	\]
	be a morphism in $\texttt{Vect}^{\text{int}}_{\mathcal{L}}(X)$. Then we have a commutative
	diagram of $\mathcal{O}_{X}$--modules
	\[
	\begin{CD}
		0 @>>> \ker(h) @>>> E @>h>> E' @>>> \operatorname{coker}(h) @>>> 0 \\
		@. @VVV @V\nabla_{\mathcal{L}}VV @V\nabla'_{\mathcal{L}}VV @VVV @. \\
		0 @>>> \ker(h)\!\otimes\!\Omega^{1}_{\mathcal{L}}
		@>>> E\!\otimes\!\Omega^{1}_{\mathcal{L}}
		@>h\otimes\mathrm{id}>>
		E'\!\otimes\!\Omega^{1}_{\mathcal{L}}
		@>>> \operatorname{coker}(h)\!\otimes\!\Omega^{1}_{\mathcal{L}}
		@>>> 0 .
	\end{CD}
	\]
	
	The commutativity of the diagram shows that $\ker(h)$ and
	$\operatorname{coker}(h)$ inherit $\mathcal{L}$--connections. Since $h$
	intertwines integrable connections, these induced connections are also
	integrable.
	
	Because $\mathcal{L}$ is transitive, Proposition
	\eqref{def:locally free L-modulues} implies that both $\ker(h)$ and
	$\operatorname{coker}(h)$ are locally free. Hence kernels and cokernels exist
	in $\texttt{Vect}^{\text{int}}_{\mathcal{L}}(X)$, and the category is abelian.
	
	\medskip
	\noindent
	\emph{Fiber functor.}
	Fix a closed point $x\in X$. Define a functor
	\[
	\omega_{x}:\texttt{Vect}^{\text{int}}_{\mathcal{L}}(X)\longrightarrow
	\texttt{Vec}_{\mathbb{C}}, \qquad
	(E,\nabla_{\mathcal{L}})\longmapsto E_{x}.
	\]
	This functor is $\mathbb{C}$--linear, exact (since taking fibers of vector
	bundles is exact), faithful, and compatible with tensor products and duals.
	Therefore, $\omega_{x}$ is a fiber functor.
	
	\medskip
	Combining these facts, we conclude that $\texttt{Vect}^{\text{int}}_{\mathcal{L}}(X)$ is a
	neutral Tannakian category over $\mathbb{C}$.
\end{proof}

\begin{defn}
$\mathfrak{T}$ be a neutral Tannakian category. A \textit{$G$--torsor in $\mathfrak{T}$} is a strict exact faithful tensor functor $p:\text{Rep}(G)\rightarrow \mathfrak{T}$.
\end{defn}

The following lemma is well known for principal $G$--bundles equipped with
integrable $\mathcal{T}_{X}$--connections
(cf.~\cite[\S\,9, p.~55]{CT.Simpson-1995}). We extend this result to the setting of algebraic transitive Lie algebroid.

\begin{lem}\label{lem:PB-with-L-conection-Tannakian equivalence}
	Let $\mathcal{L}$ be a transitive Lie algebroid. The construction $E_{G}\longmapsto \rho_{E_{G}}$
	in \eqref{eq:nori-functor} induces an equivalence between:
	\begin{itemize}
		\item the category of principal $G$--bundles on $X$ equipped with integrable
		$\mathcal{L}$--connections, and
		\item the category of $G$ torsor in $\texttt{Vect}^{\text{int}}_{\mathcal{L}}(X)$,
		such that for every closed point $x\in X$, the functor
		$V\mapsto \rho(V)_{x}$ is a fiber functor on $\texttt{Rep}(G)$.
	\end{itemize}
\end{lem}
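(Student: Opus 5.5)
The proof will follow Simpson's argument for $\mathcal{T}_X$-connections \cite[\S\,9]{CT.Simpson-1995}. We combine Nori's reconstruction (Proposition \eqref{def:Nori-descrpton of PB}) with the forgetful functor $\texttt{Vect}^{\text{int}}_{\mathcal{L}}(X)\to\texttt{Vect}(X)$. The task splits into three steps: construct the functor, show it lands in the stated category, and construct a quasi-inverse.

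\textbf{From connections to torsors.} Let $(E_G,\nabla_{\mathcal{L}})$ be given, with $\nabla_{\mathcal{L}}:V\to\At_\delta(E_G)$ an integrable $\mathcal{L}$-connection. For $W\in\texttt{Rep}(G)$, the representation $G\to\GL(W)$ induces a morphism $\At_\delta(E_G)\to\At_\delta(E_G\times^G W)$ of $\mathcal{L}$-Atiyah sequences. This morphism is compatible with brackets and lies over $\mathrm{id}_V$. Composing it with $\nabla_{\mathcal{L}}$ gives an $\mathcal{L}$-connection on $\rho_{E_G}(W)$. Its curvature is the image of $\kappa_{\nabla_{\mathcal{L}}}$ under $\ad(E_G)\to\End(\rho_{E_G}(W))$, so it vanishes.

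This construction is functorial in $W$. It is compatible with $\otimes$ and duals, because the derivative of $g\mapsto g\otimes g$ is $\xi\otimes1+1\otimes\xi$, which reproduces the tensor product connection defined above. Hence $\rho_{E_G}$ lifts to a tensor functor $\texttt{Rep}(G)\to\texttt{Vect}^{\text{int}}_{\mathcal{L}}(X)$. Strictness, exactness and faithfulness hold because they hold after forgetting the connection (Nori's properties listed above). Kernels and cokernels in $\texttt{Vect}^{\text{int}}_{\mathcal{L}}(X)$ are computed in $\mathcal{O}_X$-modules, as in the proof of Proposition \eqref{prop: tannakian category Vect_L(X)}. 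The fibre functor condition at each $x$ is inherited from Nori's condition. Morphisms of pairs go to morphisms of tensor functors, so we get a functor $\Phi$.

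\textbf{Quasi-inverse.} Let $\rho:\texttt{Rep}(G)\to\texttt{Vect}^{\text{int}}_{\mathcal{L}}(X)$ be a $G$-torsor satisfying the fibre condition. Its composite with the forgetful functor satisfies the hypotheses of Proposition \eqref{def:Nori-descrpton of PB}; strictness is automatic by Proposition \eqref{def:locally free L-modulues}, since cokernels carry $\mathcal{L}$-connections. This gives $E_G$ with $\rho\cong\rho_{E_G}$ as plain tensor functors. We then transport the connections along this isomorphism.

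It remains to see that a compatible system of integrable $\mathcal{L}$-connections on all $\rho_{E_G}(W)$, preserved by $\otimes$ and by all $G$-morphisms, comes from a unique integrable $\mathcal{L}$-connection on $E_G$. I would do this using the splitting description. Fix a faithful representation $W_0$. Locally choose a reference splitting $\nabla^0$ of the $\mathcal{L}$-Atiyah sequence of $E_G$; this is possible because the obstruction class \eqref{defn:V-valued-Atiyah-class-of-E_G} vanishes locally. The given connection on $\rho(W_0)$ then differs from the induced one by a section $A$ of $\End(\rho(W_0))\otimes\Omega^1_{\mathcal{L}}$.

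Compatibility with tensor constructions forces $A$ to preserve every $G$-subobject of the tensor algebra $T(W_0,W_0^*)$. By Chevalley's theorem, $G$ is the stabilizer of a line in such a tensor construction; the derivation property of $A$ then places it in the Lie algebra of $G$ twisted by $E_G$, that is, $A\in\ad(E_G)\otimes\Omega^1_{\mathcal{L}}$. So $\nabla^0+A$ is a well-defined $\mathcal{L}$-connection on $E_G$. Its independence of the local choice of $\nabla^0$ lets the local pieces glue. Integrability follows because $\ad(E_G)\to\End(\rho(W_0))$ is injective and the induced curvature vanishes.

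Full faithfulness is checked the same way: a morphism of bundles that intertwines all associated connections intertwines the connection on $E_G$, via the faithful representation $W_0$.

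\textbf{Main obstacle.} The step I expect to be hardest is the descent from compatible connections on all associated bundles to a connection on $E_G$. Concretely, one must show that the Lie algebra of $\mathcal{O}_X$-linear derivations of the fibre functor preserving the tensor structure is exactly $\ad(E_G)$. Equivalently, infinitesimal tensor automorphisms of the fibre functor are given by $\mathfrak g=\mathrm{Lie}(\mathrm{Aut}^{\otimes}\omega_G)$. I would argue this via the dual numbers $\mathcal{O}_X[\epsilon]$: a tensor automorphism of $\rho\otimes\mathcal{O}_X[\epsilon]$ reducing to the identity corresponds to a section of $E_G\times^G\ker(G(\mathbb C[\epsilon])\to G)$, which is $\ad(E_G)$. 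The same reasoning applied to differences of two connections handles the $\Omega^1_{\mathcal{L}}$-twisted version.
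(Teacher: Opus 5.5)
Your proposal is correct and follows the same route as the paper. The forward direction pushes the splitting of the $\mathcal{L}$--Atiyah sequence along the induced morphism $\At_{\delta}(E_G)\to\At_{\delta}(E_G\times^{G}W)$, and the converse applies Nori's reconstruction to the underlying tensor functor and then descends the compatible family of connections to a splitting for $E_G$. The paper only asserts that this ``functorial flatness uniquely determines a splitting,'' whereas your local reference splitting, Chevalley's theorem (equivalently $\Lie(\mathrm{Aut}^{\otimes}(\rho_{E_G}))=\ad(E_G)$) and the injectivity of $\ad(E_G)\to\End(\rho(W_0))$ actually prove that step, including gluing, integrability and full faithfulness.
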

\begin{proof}
	Let $(E_{G},\nabla_{\mathcal{L}})$ be a principal $G$--bundle on $X$ equipped with an integrable $\mathcal{L}$--connection.  
	For any representation $\varphi:G\to GL(W)$ in $\texttt{Rep}(G)$, the induced Lie algebra homomorphism
	\[
	d\varphi:\mathfrak g\longrightarrow \mathfrak{gl}(W)
	\]
	gives rise, by extension of structure group, to a morphism of associated vector bundles
	\[
	\ad(E_G)=E_G\times^{\mathrm{ad}}\mathfrak g
	\;\longrightarrow\;
	\End(E_G\times^{\varphi}W)=E_G\times^{\mathrm{ad}}\mathfrak{gl}(W).
	\]
	
	This morphism is compatible with the $\mathcal{L}$--twisted Atiyah sequences, yielding a morphism of $\mathcal{L}$--Lie algebroids
	\[
	\At_{\delta}(E_G)\longrightarrow \At_{\delta}(E_G\times^{\varphi}W).
	\]
	Consequently, we obtain the commutative diagram
	\[
	\begin{CD}
		0@>>>\ad(E_G)@>>>\At_{\delta}(E_G)@>>> \mathcal{L}@>>>0\\
		@. @VVV @VVV @| @.\\
		0@>>>\End(E_G\times^{\varphi}W)@>>>\At_{\delta}(E_G\times^{\varphi}W)@>>> \mathcal{L}@>>>0 .
	\end{CD}
	\]
	The integrable $\mathcal{L}$--connection $\nabla_{\mathcal{L}}$ corresponds to a splitting of the top exact sequence, and by functoriality this induces a splitting of the bottom sequence. Hence the associated vector bundle
	$E_G\times^{\varphi}W$ carries a natural integrable $\mathcal{L}$--connection.
	
	Thus the assignment
	\[
	\rho_{E_G}:\texttt{Rep}(G)\longrightarrow \texttt{Vect}^{\text{int}}_{\mathcal{L}}(X),\qquad
	W\longmapsto (E_G\times^{G}W,\nabla^{\varphi}_{\mathcal{L}})
	\]
	defines a strict, exact, faithful tensor functor.
	
	\medskip
	
	Conversely, let
	\[
	\rho:\texttt{Rep}(G)\longrightarrow \texttt{Vect}^{\text{int}}_{\mathcal{L}}(X)
	\]
	be a strict, exact, faithful tensor functor such that for each closed point $x\in X$ the functor
	$V\mapsto \rho(V)_x$ is a fiber functor on $\texttt{Rep}(G)$.  
	Since $\texttt{Vect}_{\mathcal{L}}(X)$ is a neutral Tannakian category by Proposition~\eqref{prop: tannakian category Vect_L(X)}, Tannakian reconstruction (cf.\ \cite[\S\,6, remark after lemma 6.13]{CT. Simpson-1992} and Nori's description \eqref{def:Nori-descrpton of PB}) yields a principal $G$--bundle $E_G$ on $X$ together with a flat family of fiber functors
	\[
	\mathscr{F}_x:\texttt{Rep}(G)\longrightarrow \Vect_{\mathbb C},\qquad
	\mathscr{F}_x(V)=\rho(V)_x,
	\]
	varying algebraically with $x\in X$, and satisfying $\rho(V)=E_G\times^{G}V$.
	
	\medskip
	
	\noindent\textit{Induced integrable $\mathcal{L}$--connection.}
	The $\mathcal{L}$--connection on each object $\rho(V)$ is functorial in $V$ and compatible with tensor products, duals, and morphisms in $\texttt{Rep}(G)$. Equivalently, the family of fiber functors $\{\mathscr{F}_x\}_{x\in X}$ is flat along $\mathcal{L}$ in the sense that for every $V\in\texttt{Rep}(G)$, the vector bundle $E_G\times^{G}V$ is equipped with an integrable $\mathcal{L}$--connection.
	
	This functorial flatness uniquely determines a splitting of the $\mathcal{L}$--twisted Atiyah sequence
	\[
	0\longrightarrow \ad(E_G)\longrightarrow \At_{\delta}(E_G)\longrightarrow \mathcal{L}\longrightarrow 0,
	\]
	hence an integrable $\mathcal{L}$--connection on the principal bundle $E_G$.  
	This completes the proof.
\end{proof}


\medskip

\noindent\textbf{Tannakian description of $\mathcal{L}$--twisted principal $G$--Higgs bundle}. Let $E_G$ be a $\mathcal{L}$--twisted principal $G$--Higgs bundle on $X$. For any invariant polynomial
$P \in \mathbb{C}[\mathfrak g]^G$, the associated $\mathcal L$--Chern class
is defined by the Chern--Weil construction as
\[
c^{\mathcal L}_P(E_G) := P(F_{\nabla_{\mathcal L}}) \in H^{2\deg P}_{\mathcal L}(X),
\]
where $F_{\nabla_{\mathcal L}}$ is the curvature of the $\mathcal L$--connection.
These classes are independent of the choice of $\nabla_{\mathcal L}$ and
satisfy $c^{\mathcal L}_P(E_G)=\delta^*(c_P(E_G))$ via the anchor map (cf.\cite{PM-2020},\cite{Crainic-2009}).

We say $E_{G}$ is of
\textit{semiharmonic type} if the $\mathcal{L}$--chern classes of $E_{G}$ is zero in rational cohomology, and if there exist a faithful representation $V$ such that $E_{G}\times^{G}V$ is of semiharmonic type. Similarly, we say that $E_{G}$ is of \emph{harmonic type} if its
$\mathcal{L}$--Chern classes vanish in rational cohomology and if there
exists a faithful representation $V$ of $G$ such that the associated
vector bundle $E_{G}\times^{G} V$ is of harmonic type. In this case, the same is true for any other representation (cf. \cite[\S\,6, remarks after lemma 6.13]{CT. Simpson-1992}).

We now prove that the category of $\mathcal{L}$--twisted Higgs bundles of
semiharmonic type on $X$ is also a neutral Tannakian category.  
The proof is entirely analogous to the case of vector bundles equipped with integrable $\mathcal{L}$--connections.
\begin{proposition}\label{prop:Tannakian-L-Higgs}
	Let $X$ be a smooth projective variety and let $\mathcal L$ be a transitive Lie algebroid on $X$.  
	Then the category $\texttt{Higg}^{\emph{sh}}_{\mathcal L}(X)$ of
	$\mathcal L$--twisted Higgs bundles of semiharmonic type on $X$ is a
	neutral Tannakian category.
\end{proposition}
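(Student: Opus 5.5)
The plan is to mirror Simpson's argument for ordinary Higgs bundles of semiharmonic type \cite[\S\,6]{CT. Simpson-1992}, with Proposition~\eqref{prop:locally-free-L-Higgs} playing the role of Simpson's local-freeness result. The proof of Proposition~\eqref{prop: tannakian category Vect_L(X)} supplies the general template. Morphisms in $\texttt{Higg}^{\mathrm{sh}}_{\mathcal L}(X)$ are bundle maps $h$ satisfying $(h\otimes\mathrm{id})\circ\theta=\theta'\circ h$. The tensor product carries the Higgs field $\theta\otimes 1+1\otimes\theta'$, the dual carries $-\theta^{t}$, and the unit is $(\mathcal O_X,0)$. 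Each of these fields still satisfies $\theta\wedge\theta=0$.

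\emph{Step 1: rigid tensor structure.} I first check that semiharmonic type is preserved by $\otimes$ and by duals. Vanishing of rational $\mathcal L$--Chern classes is preserved by the multiplicativity of the Chern character under Chern--Weil. For semistability I use the Remark after Proposition~\eqref{prop:locally-free-L-Higgs}: it reduces $p$--semistability to $\mu$--semistability with degree $0$. By Proposition~\eqref{prop:locally-free-L-Higgs}, each object is an iterated extension of $\mu$--stable $\mathcal L$--Higgs bundles with vanishing Chern classes. Tensor products and duals of such extensions are again extensions of tensor products of stable pieces. Semistability of a tensor product of $\mu$--stable Higgs bundles with vanishing Chern classes then follows by the transitive reduction used in the proof of Proposition~\eqref{prop:locally-free-L-Higgs}: passing to the ordinary Higgs field $\theta_X$ and invoking Simpson's theorem that semiharmonic Higgs bundles are closed under $\otimes$. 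I must be careful here, because invariance under $\theta_X$ versus $\theta_{\mathcal L}$ need not coincide. This yields a rigid additive $\mathbb C$--linear tensor category. Finally, $\mathrm{End}(\mathbb 1)=H^0(\mathcal O_X)=\mathbb C$ because $X$ is projective and irreducible.

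\emph{Step 2: abelian property.} This is the main step. Let $h:(E,\theta)\to(E',\theta')$ be a morphism. The image $\mathrm{im}(h)\subset E'$ is a $\theta'$--invariant subsheaf, and it is a quotient of $E$. Semistability of both objects, which have slope $0$, forces $\mu(\mathrm{im}\,h)=0$. By the last assertion of Proposition~\eqref{prop:locally-free-L-Higgs}, a degree-zero sub $\mathcal L$--Higgs sheaf is a strict subbundle with vanishing $\mathcal L$--Chern classes. Hence $\mathrm{coker}(h)$ is locally free, and $\ker(h)$ is a degree-zero invariant subbundle of $E$, so it is also strict. Kernels, cokernels and images are then semistable of degree $0$ with vanishing Chern classes, so they lie in the category, and coimage equals image.

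\emph{Step 3: fiber functor.} I take $\omega_x(E,\theta)=E_x$ for a closed point $x$. Exactness follows from Step 2, since all morphisms are strict and so sequences remain exact on fibres. Faithfulness holds because a map of vector bundles vanishing at a fibre has image a degree-zero subbundle of rank $0$, and is therefore zero. Compatibility with $\otimes$ is clear. These three facts together give neutrality.

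I expect Step 2 and the tensor-closure part of Step 1 to be the main obstacles. Both reduce to the strictness statement of Proposition~\eqref{prop:locally-free-L-Higgs} together with the interaction of semistability under transitive pullback.
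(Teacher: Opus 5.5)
Your skeleton is the paper's: a rigid tensor structure with unit $(\mathcal O_X,0)$, abelianness through Proposition~\ref{prop:locally-free-L-Higgs}, and the fibre functor $E\mapsto E_x$. Steps~2 and~3 are fine relative to that proposition, and your slope argument for $\mathrm{im}(h)$ and the strictness argument for faithfulness spell out what the paper leaves implicit.

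\textbf{The gap is in Step~1.} You need $\otimes$ to preserve semistability; the paper's proof takes this for granted, and the reduction you propose cannot establish it. Since $\theta_X=(\mathrm{id}\otimes b^\vee)\circ\theta_{\mathcal L}$ is a projection of $\theta_{\mathcal L}$, every $\theta_{\mathcal L}$--invariant subsheaf is $\theta_X$--invariant, but not conversely. Write $V=b(\mathcal T_X)\oplus\ker\delta$. The component of $\theta_{\mathcal L}$ in $\ker(b^\vee)\cong(\ker\delta)^{*}$ is discarded by $\theta_X$. When $(\ker\delta)^{*}$ has subbundles of positive degree, this component can make $(E,\theta_{\mathcal L})$ stable while $(E,\theta_X)$ is unstable; this already happens for split transitive algebroids on curves.

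So you only get ``$\theta_X$--semistable $\Rightarrow$ $\theta_{\mathcal L}$--semistable''. That is the direction you would use at the end, for $E\otimes E'$. Simpson's theorem, however, needs as input that $(E,\theta_X)$ and $(E',\theta'_X)$ are semistable, which is the converse and fails in general. The sentence ``since $\rho$ is surjective, $\mathcal F$ is also invariant under $\theta_{\mathcal L}$'' in the proof of Proposition~\ref{prop:locally-free-L-Higgs} reverses the implication in the same way, so it cannot be borrowed. Moreover, $b$ exists only locally unless $\mathcal L$ is split, so $\theta_X$ is not a global Higgs field to which a global result such as Simpson's could be applied.

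What is missing is an argument intrinsic to $\Omega^1_{\mathcal L}$--valued Higgs fields. The $\mathcal L$--twisted Higgs condition only involves the bundle $V^{*}$: indeed $\Lambda^{\mathrm{Dol}}_{\mathcal L}=\mathrm{Sym}^{\bullet}(V)$ forgets both bracket and anchor. So transitivity is of no help for this step.

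One workable input is a Hitchin--Kobayashi correspondence for such pairs. Fix a Hermitian metric on $V$ and a K\"ahler form $\omega$. Polystable objects of degree $0$ then carry metrics $h$ solving a suitably normalized equation $\sqrt{-1}\,\Lambda_\omega F_h+[\theta,\theta^{*_h}]=0$. On $E\otimes E'$ the cross terms $[\theta\otimes 1,\,1\otimes(\theta')^{*_h}]$ vanish, because the two operators act on different factors. Hence the product metric solves the same equation, and polystable $\otimes$ polystable is polystable of degree $0$. Your Jordan--H\"older reduction then gives the semistable case.

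Without an ingredient of this kind, Step~1 does not show that $\texttt{Higg}^{\mathrm{sh}}_{\mathcal L}(X)$ is closed under $\otimes$. Consequently the rigid tensor structure, and with it the Tannakian conclusion, is not established.
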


\begin{proof}
	By definition, objects of $\texttt{Higg}^{\text{sh}}_{\mathcal L}(X)$ are
	pairs $(E,\theta_{\mathcal L})$ where $E$ is a vector bundle and
	$\theta_{\mathcal L}$ is an integrable $\mathcal L$--twisted Higgs field
	satisfying the semiharmonic condition. Morphisms are Higgs--compatible
	bundle morphisms.
	
	The tensor product of two objects is defined using the usual tensor
	product of vector bundles together with the induced $\mathcal L$--Higgs
	field, and duals exist with the natural induced Higgs fields. Hence the
	category is rigid, additive, and $\mathbb C$--linear, with unit object
	$(\mathcal O_X,0)$ and $\mathrm{End}(\mathbb 1)=\mathbb C$.
	
	Since $\mathcal L$ is transitive, the kernel and cokernel of any morphism of
	$\mathcal L$--twisted Higgs bundles are locally free by
	Proposition~\eqref{prop:locally-free-L-Higgs}, and they naturally inherit
	$\mathcal L$--twisted Higgs fields of semiharmonic type.  
	Therefore, the category $\texttt{Higg}^{\text{sh}}_{\mathcal L}(X)$ is an abelian tensor category.
	
	Finally, choosing a closed point $x\in X$, the fiber functor
	\[
	(E,\theta_{\mathcal L}) \longmapsto E_x
	\]
	is exact, faithful, and tensor compatible. Hence
	$\texttt{Higg}^{\text{sh}}_{\mathcal L}(X)$ is a neutral Tannakian
	category.
\end{proof}
Hence, we obtain the following lemma, which is the $\mathcal{L}$--twisted
Higgs analogue of Lemma~\eqref{lem:PB-with-L-conection-Tannakian equivalence}.
It generalizes \cite[\S\,9, Lemma~9.2]{CT.Simpson-1995} to the setting of
Lie algebroids.

\begin{lem}\label{lem: L-twisted PB-Higgs bundle-tannakian}
	The construction $E_{G}\mapsto \rho_{E_{G}}$ in
	\eqref{eq:nori-functor} provides an equivalence between the category of $\mathcal{L}$--twisted principal $G$--Higgs bundles of semiharmonic type and the category of $G$--torsor in $\texttt{Higg}^{\emph{sh}}_{\mathcal L}(X)$,
	such that for every closed point $x\in X$ the functor
	$V\mapsto \rho(V)_x$ is a fiber functor on $\texttt{Rep}(G)$.
\end{lem}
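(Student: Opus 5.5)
We follow the pattern of Lemma~\eqref{lem:PB-with-L-conection-Tannakian equivalence}, with the $\mathcal{L}$--twisted Atiyah splitting replaced by the Higgs field. The argument has three parts: define the forward functor, reconstruct a principal Higgs bundle from a torsor, and check that the two constructions are mutually inverse.

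\emph{The forward functor.} Let $(E_G,\theta_{\mathcal L})$ be an $\mathcal L$--twisted principal $G$--Higgs bundle of semiharmonic type, and let $\varphi:G\to GL(W)$ be a representation. Write $d\varphi:\mathfrak g\to\mathfrak{gl}(W)$ for its differential, which induces a bundle map $\ad(E_G)\to\End(E_G\times^{\varphi}W)$.
\begin{itemize}
\item Push $\theta_{\mathcal L}$ forward along $d\varphi\otimes\mathrm{id}_{\Omega^1_{\mathcal L}}$ to get $\theta^{\varphi}_{\mathcal L}\in H^0(\End(E_G\times^{\varphi}W)\otimes\Omega^1_{\mathcal L})$.
\item Since $d\varphi$ is a Lie algebra homomorphism, $\theta^{\varphi}_{\mathcal L}\wedge\theta^{\varphi}_{\mathcal L}=d\varphi([\theta_{\mathcal L},\theta_{\mathcal L}])/2=0$, so $\theta^{\varphi}_{\mathcal L}$ is integrable.
\item The semiharmonic condition on $E_G\times^{\varphi}W$ comes from the definition for one faithful representation. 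It propagates to all representations by the argument of \cite[\S\,6, remarks after lemma 6.13]{CT. Simpson-1992}: every $W$ is a subquotient of sums of $V^{\otimes a}\otimes V^{*\otimes b}$, and semiharmonic objects are closed under $\otimes$, duals and subquotients in the abelian category of Proposition~\eqref{prop:Tannakian-L-Higgs}.
\item Compatibility of $d\varphi$ with tensor products and morphisms of representations shows that $\rho_{E_G}$ is a tensor functor into $\texttt{Higg}^{\text{sh}}_{\mathcal L}(X)$.
\item Strictness, exactness and faithfulness, and the fiber-functor property at each $x$, are all checked on underlying bundles, so they follow from Nori's properties of \eqref{eq:nori-functor}.
\end{itemize}

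\emph{Reconstruction from a torsor.} Let $\rho:\texttt{Rep}(G)\to\texttt{Higg}^{\text{sh}}_{\mathcal L}(X)$ be a $G$--torsor whose pointwise composites are fiber functors.
\begin{itemize}
\item Composing with the forgetful functor to $\texttt{Vect}(X)$ and applying Proposition~\eqref{def:Nori-descrpton of PB} gives a principal bundle $E_G$ with $\rho(V)\cong E_G\times^G V$ as tensor functors.
\item The Higgs fields $\theta_V$ on the objects $\rho(V)$ are compatible with morphisms and satisfy the Leibniz rule $\theta_{V\otimes W}=\theta_V\otimes 1+1\otimes\theta_W$. Hence, for each local section of $V=\mathcal L$ (after contracting with $\Omega^1_{\mathcal L}$), the family $\{\theta_V\}$ is a tensor derivation of the fiber functor $V\mapsto\rho(V)$.
\item The key identification, which we expect to be the main obstacle, is
\[
\mathrm{Der}^{\otimes}(\omega_{E_G})\cong\ad(E_G),
\]
where $\omega_{E_G}$ is the fiber functor $V\mapsto E_G\times^G V$ twisted by $E_G$. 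This is the infinitesimal analogue of $\mathrm{Aut}^{\otimes}(\omega_G)=G$, namely $\mathrm{Der}^{\otimes}(\omega_G)=\mathfrak g$.
\item To prove it, work locally where $E_G$ is étale-trivial, so the statement reduces to $\mathrm{Der}^{\otimes}(\omega_G)=\mathfrak g$ over the coordinate ring. Apply this to the dual numbers $\mathcal O[\epsilon]$: a tensor derivation $D$ gives a tensor automorphism $1+\epsilon D$ of $\omega\otimes\mathcal O[\epsilon]$, hence a point of $G(\mathcal O[\epsilon])$ lying over $1$, which is an element of $\mathfrak g\otimes\mathcal O$. Uniqueness of this identification lets the local elements glue.
\end{itemize}
This yields a section $\theta_{\mathcal L}\in H^0(\ad(E_G)\otimes\Omega^1_{\mathcal L})$ whose image under every $d\varphi$ is $\theta_V$.

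\emph{Integrability and the inverse equivalence.} Choose $V$ faithful. Then $d\varphi$ is injective on $\ad(E_G)$, so $[\theta_{\mathcal L},\theta_{\mathcal L}]$ maps to $\theta_V\wedge\theta_V=0$ and therefore vanishes. The semiharmonic condition on $E_G$ holds by definition, because $\rho(V)$ is semiharmonic for this faithful $V$; the vanishing of the $\mathcal L$--Chern classes is read off through the same faithful representation. Finally, morphisms on both sides correspond, since a morphism of torsors is a natural transformation compatible with the $\theta_V$. By the uniqueness in Nori's theorem, together with the injectivity of $d\varphi$ for faithful $\varphi$, the two constructions are mutually inverse. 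This gives the claimed equivalence.
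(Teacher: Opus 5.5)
Your proposal is correct and follows essentially the same route as the paper. The forward functor pushes $\theta_{\mathcal L}$ along $d\varphi$, and the converse applies Nori's reconstruction to the underlying bundles and then assembles the $\theta_V$ into a section of $\ad(E_G)\otimes\Omega^1_{\mathcal L}$. The paper cites Simpson's remark for this assembly step, whereas you prove it via $\mathrm{Der}^{\otimes}(\omega_G)=\mathfrak g$ over dual numbers. One fix: your propagation of semiharmonicity needs $G$ reductive, so that each $W$ is a direct summand of a sum of $V^{\otimes a}\otimes V^{*\otimes b}$ and hence the image of an idempotent morphism in $\texttt{Higg}^{\mathrm{sh}}_{\mathcal L}(X)$, rather than an arbitrary subquotient.
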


\begin{proof}
	The proof is a direct adaptation of
	Lemma~\eqref{lem:PB-with-L-conection-Tannakian equivalence}, following
	Simpson’s argument in \cite[\S\,9, Lemma~9.2]{CT.Simpson-1995}.
	
	Let $(E_G,\theta_{\mathcal L})$ be an $\mathcal L$--twisted principal
	$G$--Higgs bundle of semiharmonic type. For any representation
	$\varphi:G\to \mathrm{GL}(W)$, the induced Higgs field on the associated
	bundle $E_G\times^G W$ is obtained by composing $\theta_{\mathcal L}$ with the differential $d\varphi:\mathfrak g\to\mathfrak{gl}(W)$. The integrability condition and vanishing of $\mathcal L$--Chern classes are preserved under this construction, hence $\rho_{E_G}(W)$ is an $\mathcal L$--twisted Higgs bundle of semiharmonic type. This defines a strict exact faithful tensor functor
	\[
	\rho_{E_G}:\texttt{Rep}(G)\longrightarrow
	\texttt{Higg}^{\mathrm{sh}}_{\mathcal L}(X).
	\]
	
	Conversely, let $\rho:\texttt{Rep}(G)\to
	\texttt{Higg}^{\mathrm{sh}}_{\mathcal L}(X)$ be a strict exact faithful
	tensor functor such that for every closed point $x\in X$ the functor
	$V\mapsto \rho(V)_x$ is a fiber functor. Since
	$\texttt{Higg}^{\mathrm{sh}}_{\mathcal L}(X)$ is a neutral Tannakian
	category, Nori’s Tannakian reconstruction yields a principal $G$--bundle
	$E_G$ on $X$. By \cite[\S\,6, remark after Lemma~6.13]{CT. Simpson-1992}, the Higgs fields
	on the objects $\rho(V)$ assemble functorially to an
	$\mathcal L$--twisted Higgs field $\theta_{\mathcal L}$ on the principal $G$--bundle $E_G$. By construction, $\theta_{\mathcal L}$ is of semiharmonic type.
	
\end{proof}


\section{Moduli spaces of $\mathcal{L}$-twisted principal objects}

Before proceeding to the construction, we recall an important fact from
\cite[\S\,Proof of Thm.~3.8]{CT.Simpson-1994}, which will be used later.

\begin{lem}\label{lem:fact-from-simpson-thm-3-8}
	Let $S$ be a $\mathbb{C}$--scheme of finite type, and let
	$E$ and $F$ be coherent $\mathcal{O}_{X'}$--modules on
	$X' := X \times_{\mathbb{C}} S$.
	Assume that $E$ is flat over $S$ (no flatness assumption is imposed on $F$),
	and let $\varphi \colon F \to E$ be a morphism of coherent sheaves.
	Then there exists a closed subscheme $T \subset S$ such that for any morphism
	$f \colon S' \to S$, the pullback
	$f^*(\varphi)=0$ as a morphism of sheaves on
	$X'' := X' \times_S S'$ if and only if $f$ factors through $T$.
\end{lem}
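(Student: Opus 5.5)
The plan is to reduce to a representability statement for the vanishing locus of a morphism of modules, using flatness of $E$ and cohomology and base change; this is the standard argument from Simpson's proof of \cite[Thm.~3.8]{CT.Simpson-1994}. The question is local on $S$, and a closed subscheme with the stated universal property is unique, so the local ones glue. We may therefore assume $S=\operatorname{Spec} A$ is affine and noetherian. Fix the very ample $\mathcal{O}_X(1)$ and write $p:X'\to S$ for the projection.

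\emph{Step 1: reduce from sheaves to modules over $S$.} Choose $m\gg0$ such that $F(m)$ is globally generated relative to $S$, i.e.\ $p^*p_*F(m)\to F(m)$ is surjective, and such that $p_*F(m)$ is a finitely generated $A$-module. After a further localization on $S$, pick a surjection $\mathcal{O}_{X'}(-m)^{\oplus k}\twoheadrightarrow F$. Because pullback is right exact, $f^*$ of this surjection stays surjective for every $f:S'\to S$. Hence $f^*\varphi=0$ if and only if the composite $f^*\psi$ vanishes, where
\[
\psi:\mathcal{O}_{X'}(-m)^{\oplus k}\longrightarrow E .
\]
So we may assume $F=\mathcal{O}_{X'}(-m)^{\oplus k}$. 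Such a $\psi$ is the same as $k$ sections $s_1,\dots,s_k\in H^0(X',E(m))$. Vanishing of $f^*\psi$ means that the pulled-back sections $f^*s_i$ vanish in $H^0(X'',E''(m))$.

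\emph{Step 2: flattening via base change.} Since $E$ is $S$-flat, increase $m$ so that $R^ip_*E(m')=0$ for $i>0$ and all $m'\geq m$. Then $p_*E(m)$ is locally free on $S$ and commutes with arbitrary base change, so $H^0(X'',E''(m))=p_*E(m)\otimes_A A'$. The same uniformity in $m$ guarantees that a section of $E''$ vanishes exactly when its image in $E''(m)$ does. Choose $m$ so that both properties hold uniformly over $S'$; this is possible because $E$ has finitely many Hilbert polynomials on the noetherian base.

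\emph{Step 3: conclude.} After shrinking $S$, trivialize $p_*E(m)\cong A^{N}$. Each $s_i$ becomes a vector $(a_{i1},\dots,a_{iN})\in A^N$. Then $f^*\psi=0$ if and only if all the $a_{ij}$ map to zero in $A'$. So we take $T=V(a_{ij})$, the closed subscheme cut out by these coordinates, and this has the required universal property.

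The main obstacle is Step 1 together with the compatibility in Step 2. There we must make sure that reducing to generating sections does not lose information when $F$ is not flat. The key point is that only surjectivity of $\mathcal{O}(-m)^k\to F$ is used, and surjectivity survives any base change. Flatness is needed only on the target $E$, for cohomology and base change.
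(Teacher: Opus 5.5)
Your proposal is correct and follows essentially the same route as the paper. Both arguments use Serre's theorem to present $F$ as a quotient of $\mathcal{O}_{X'}(-m)^{\oplus k}$, note that surjectivity survives any base change, identify $\varphi$ with $k$ sections of $E(m)$, and use that $p_*E(m)$ is locally free and commutes with arbitrary base change for $m\gg0$, so that $T$ is the common zero locus of these sections. Your extra bookkeeping (working locally on $S$ and gluing by uniqueness, trivializing $p_*E(m)$) makes explicit what the paper leaves implicit; the remark in Step 2 about sections of $E''$ versus $E''(m)$ is unnecessary, since a map $\mathcal{O}_{X''}(-m)\to E''$ is literally a section of $E''(m)$.
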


\begin{proof}
	Since $X$ is projective and $F$ is a coherent $\mathcal{O}_{X'}$--module,
	by Serre's theorem \cite[\S\,Ch.~II, Thm.~5.17]{R. Hartshorne-1977},
	the sheaf $F(m)$ is generated by finitely many global sections for $m \gg 0$.
	Hence there exists a surjection
	\[
	\mathcal{O}_{X'}(-m)^{\oplus k} \twoheadrightarrow F \to 0 .
	\]
	This surjection remains surjective after any base change
	$X'' = X' \times_S S'$.
	
	Replacing $F$ by $\mathcal{O}_{X'}(-m)^{\oplus k}$, a morphism
	$\varphi \colon F \to E$ corresponds to a $k$--tuple of global sections of
	$E(m)$.
	For $m$ sufficiently large, the direct image $\pi_*E(m)$ is locally free on $S$
	and compatible with arbitrary base change.
	The desired closed subscheme $T \subset S$ is then defined as the intersection
	of the zero loci of the corresponding sections of $\pi_*E(m)$.
\end{proof}

\subsection{The moduli spaces of integrable $\mathcal{L}$-connection on principal $G$-bundles}\label{susec: moduli space of L connection on PB}

We now define the moduli functor for principal $G$--bundles with integrable
$\mathcal L$--connections.

Fix a irreducible reductive algebraic group $G$ over $\mathbb{C}$ and a smooth projective variety
$X$ endowed with a transitive Lie algebroid $\mathcal L$. Let $\mathcal{P}_{0}$ denote the Hilbert polynomial of $\mathcal{O}_{X}$.
Let $\mathcal{M}^{\text{DR}^{\natural}}_{\mathcal L}(X,G)$ be the contravariant functor
\[
\mathcal{M}^{\text{DR}^{\natural}}_{\mathcal L}(X,G) :
\texttt{Sch}/\mathbb{C} \longrightarrow \texttt{Sets}
\]
defined as follows.

For a $\mathbb{C}$--scheme $S$, $\mathcal{M}^{\text{DR}^{\natural}}_{\mathcal L}(X,G)(S)$
is the set of isomorphism classes of pairs $(E_G,\nabla_{\mathcal L})$ on
$X_S := X \times_{\mathbb{C}} S$ such that:
\begin{itemize}\label{moduli of principal G- bundles with integrable connection}
	\item $E_G$ is a principal $G$--bundle on $X_S$;
	\item $\nabla_{\mathcal L}$ is an integrable $\mathcal L$--connection on
	$E_G$ relative to $S$;
	\item for every finite--dimensional representation
	$\rho : G \to \mathrm{GL}(V)$ of dimension $n$, the associated vector bundle
	\[
	E_G(V) := E_G \times^{G} V
	\]
	on $X_S$ is flat over $S$, and for every closed point
	$\bar{s} \to S$ the restriction $E_G(V)_{\bar{s}}$ has Hilbert polynomial
	$n \mathcal{P}_0$.
	\item for every closed point $s \in S$, the fiber
	$(E_{G,s},\nabla_{\mathcal L,s})$ is a principal
	$G$--bundle with integrable $\mathcal L$--connection on $X$.
\end{itemize}

Two such families $(E_G,\nabla_{\mathcal L})$ and $(E'_G,\nabla'_{\mathcal L})$
are isomorphic if there exists a $G$--bundle isomorphism
$\varphi : E_G \to E'_G$ commuting with the $\mathcal L$--connections.

For a morphism of schemes $\psi : T \to S$, the functor assigns the pullback
\[
(E_G,\nabla_{\mathcal L}) \longmapsto
(\psi^*E_G,\psi^*\nabla_{\mathcal L})
\]
on $X_T := X \times_{\mathbb{C}} T$.

The main result is that there exists a quasi-projective variety $\mathcal{M}^{\text{DR}}_{\mathcal{L}}(X,G)$
over $\mathbb{C}$ which universally corepresents the functor
$\mathcal{M}^{\text{DR}^{\natural}}_{\mathcal{L}}(X,G)$. 

We begin this construction by establishing a sequence of preparatory lemmas. The following lemma is well known in the case of vector bundles with integrable $\mathcal{T}_X$--connections
(cf.~\cite[\S\,9, Lemma~9.9]{CT.Simpson-1995}).
We extend it to the Lie algebroid setting by the same method.

\begin{lem}\label{lem:projective-scheme-N(E,k)}
	Let $\mathcal{L}$ be a transitive Lie algebroid on $X$, and let $(E,\nabla_{\mathcal L})$
	be a vector bundle equipped with an integrable $\mathcal L$--connection.
	Fix an integer $k$.
	Then there exists a projective scheme $\mathcal{N}(E,k)$ over $\mathbb{C}$
	representing the functor which assigns to each $\mathbb{C}$--scheme $S$
	the set of quotients
	\[
	p_X^*E \twoheadrightarrow F \to 0
	\quad \text{on } X_S := X\times S,
	\]
	which are compatible with the $\mathcal L$--connection.
	Moreover, the natural morphism
	\[
	\mathcal{N}(E,k) \longrightarrow
	\mathfrak{Grass}\bigl(E|_{\mathfrak p\times S},k\bigr)
	\]
	is a closed embedding.
\end{lem}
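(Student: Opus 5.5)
We build $\mathcal{N}(E,k)$ inside the Quot scheme and then show it maps isomorphically onto a closed subscheme of the Grassmannian of the fibre at $\mathfrak p$, following Simpson's proof of \cite[\S\,9, Lemma~9.9]{CT.Simpson-1995}.

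The first step is to fix the Hilbert polynomial of the quotients. A quotient $F$ of $E$ compatible with $\nabla_{\mathcal L}$ inherits an integrable $\mathcal L$--connection, namely the one induced on the cokernel as in the proof of Proposition~\eqref{prop: tannakian category Vect_L(X)}. By Lemma~\eqref{lem:transitive-Lie-algebroid-locally-free}, $F$ is then locally free with normalized Hilbert polynomial $\mathcal P_0$. Its Hilbert polynomial is therefore $k\mathcal P_0$, where $k$ is the rank of $F$; this is how $k$ enters. So the functor is a subfunctor of $\mathrm{Quot}(E,k\mathcal P_0)$, which is projective.

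The second step is to cut out compatibility as a closed condition. On the universal quotient $p_X^*E\to\mathcal F$ over $Q=\mathrm{Quot}(E,k\mathcal P_0)$, with kernel $\mathcal K$, consider the composite
\[
\mathcal K\hookrightarrow p_X^*E\xrightarrow{\nabla_{\mathcal L}}p_X^*E\otimes\Omega^1_{\mathcal L}\to\mathcal F\otimes\Omega^1_{\mathcal L}.
\]
This composite is $\mathcal O$--linear: the Leibniz term $s\otimes\delta^*(df)$ lies in $\mathcal K\otimes\Omega^1_{\mathcal L}$, so it dies in the quotient. Its target $\mathcal F\otimes\Omega^1_{\mathcal L}$ is flat over $Q$. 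Lemma~\eqref{lem:fact-from-simpson-thm-3-8} then produces a closed subscheme $\mathcal N(E,k)\subset Q$ representing the quotients on which the composite vanishes, i.e.\ the $\nabla$--compatible quotients. Integrability is automatic for the induced connection, so $\mathcal N(E,k)$ is projective and represents the functor.

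The third step, which is the main obstacle, is the closed embedding into $\mathfrak{Grass}(E|_{\mathfrak p\times S},k)$. Restriction to $\mathfrak p$ defines a natural map $\mathcal N(E,k)\to\mathfrak{Grass}(E_{\mathfrak p},k)$, because each quotient is locally free; over a base $S$ one gets local freeness on fibres together with flatness. Since the source is projective, this map is proper, and it suffices to show it is a monomorphism, i.e.\ injective on $S$--points for arbitrary $S$.

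For injectivity I would argue as follows. A $\nabla$--compatible subbundle is horizontal for the $\mathcal L$--connection. Because $\mathcal L$ is transitive, $\delta$ is surjective, so the subbundle is preserved by lifts of every vector field. Analytically, $\mathcal L$--horizontality is then ordinary flatness of the induced $\mathcal T_X$--connection along leaves, and the only leaf is $X$ itself, which is connected. A flat subbundle is determined by its fibre at $\mathfrak p$ by parallel transport. Over an arbitrary base $S$, Simpson instead uses a formal-neighbourhood argument: the connection trivializes $E$ on the formal completion at $\mathfrak p\times S$, so the sub-quotient is determined formally, and it is then determined globally because $X$ is connected and $F$ is flat.

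Given that it is a proper monomorphism, the map is a closed embedding. I expect the formal-neighbourhood step relative to a non-reduced $S$ to require the most care. There I would use the locally chosen splitting $b$ of $\delta$, as in Proposition~\eqref{prop:locally-free-L-Higgs}, to reduce to Simpson's $\mathcal T_X$ case.
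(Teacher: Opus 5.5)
Your construction of $\mathcal N(E,k)$ is the paper's. The paper also cuts $\mathcal N(E,k)$ out of $\mathfrak{Quot}(E,k\mathcal P_0)$ with Lemma~\ref{lem:fact-from-simpson-thm-3-8}, and gets local freeness of $F$ from flatness plus locally free fibres. It phrases the closed condition as vanishing of $\Lambda^{\mathrm{DR}}_{\mathcal L}\otimes K^{\mathrm{univ}}\to F^{\mathrm{univ}}$. Your first-order composite $\mathcal K\to\mathcal F\otimes\Omega^1_{\mathcal L}$ gives the same condition, since the algebra is generated by its first filtered piece. It also has a coherent source, which is what Lemma~\ref{lem:fact-from-simpson-thm-3-8} actually requires.

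The real difference is the embedding step. The paper, following Simpson, gets injectivity on $S$--points from the rigidity lemma \cite[\S\,4, Lemma~4.9]{CT.Simpson-1994}. That lemma says a morphism such as $K_1\hookrightarrow p_X^*E\to F_2$, between $S$--flat modules whose fibres are semistable with the same normalized Hilbert polynomial, vanishes once it vanishes along $\mathfrak p\times S$; projectivity then gives the closed embedding. You instead use transitivity directly, transporting horizontal subbundles, and conclude by ``proper monomorphism $\Rightarrow$ closed immersion''.

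Simpson's route is uniform: it is what makes the Higgs analogue, Lemma~\ref{projective scheme N(E,k) for L-twisted-Higgs bundles}, work, where no transport exists. But it depends on the equal-Hilbert-polynomial input from Lemma~\ref{lem:transitive-Lie-algebroid-locally-free}. Yours is specific to connections but needs no semistability. That matters, because the input is doubtful for general transitive $\mathcal L$. A line bundle $M$ of nonzero degree on a curve carries the tautological integrable $\mathrm{At}(M)$--connection, $\mathrm{At}(M)$ is transitive, yet $M$ does not have normalized Hilbert polynomial $\mathcal P_0$. Your Step~1, like the paper, still uses that lemma to work inside the single scheme $\mathfrak{Quot}(E,k\mathcal P_0)$.

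One correction to your transport step. An arbitrary local splitting $b$ of $\delta$ does not reduce you to Simpson's integrable $\mathcal T_X$ case. The connection $v\mapsto\nabla_{b(v)}$ has curvature $\nabla_{[b(v),b(w)]-b([v,w])}$, an $\mathcal O$--linear endomorphism coming from $\ker\delta$ that need not vanish, so the connection does not formally trivialize $E$.

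Flatness is not needed, however. Take formal coordinates $z_i$ at $\mathfrak p$ and $R=\sum_i z_i\partial_{z_i}$. Then $\nabla_{b(R)}$ is $R$ plus a matrix with no constant term. Hence $\nabla_{b(R)}s=0$ has a unique solution, found degree by degree, in the formal completion of $p_X^*E$ along $\mathfrak p\times S$ for each initial value. A $\nabla$--stable subbundle $K$ is then the span of the solutions with initial values in $K|_{\mathfrak p\times S}$. Passing from the formal neighbourhood to $X\times S$ goes as you indicate.
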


\begin{proof}
	Let $\Lambda^{\text{DR}}_{\mathcal L}$ denote the de Rham $\mathcal{L}$--twisted $D$--algebra associated to the Lie algebroid
	$\mathcal L$.
	By \cite[\S\, Ch.4. Prop. 38]{P.Tortella-2011T},
	the vector bundle $E$ equipped with integrable $\mathcal{L}$--connection may be regarded as a $\Lambda_{\mathcal L}$--module.
	
	Let $\mathcal P_0$ be the Hilbert polynomial of $\mathcal O_X$.
	The Quot scheme $\mathfrak{Quot}(E,k\mathcal P_0)$
	parametrizes quotient sheaves $E\twoheadrightarrow F\to 0$
	with $F$ flat over the base and Hilbert polynomial $k\mathcal P_0$.
	Let
	\[
	E^{\mathrm{univ}} \twoheadrightarrow F^{\mathrm{univ}} \to 0
	\]
	be the universal quotient over
	$X\times \mathfrak{Quot}(E,k\mathcal P_0)$,
	and let $K^{\mathrm{univ}}$ denote its kernel.
	
	Since $E^{\mathrm{univ}}$ is a $\Lambda^{\text{DR}^{\text{univ}}}_{\mathcal L}$--module,
	we obtain a natural morphism
	\[
	\psi^{\mathrm{univ}} :
	\Lambda^{\text{DR}^{\text{univ}}}_{\mathcal L}
	\otimes_{\mathcal O_X} K^{\mathrm{univ}}
	\longrightarrow F^{\mathrm{univ}}.
	\]
	By Lemma~\eqref{lem:fact-from-simpson-thm-3-8},
	the condition that $\psi^{\mathrm{univ}}$ vanishes defines a closed
	subscheme
	\[
	\mathcal{N}(E,k) \subset \mathfrak{Quot}(E,k\mathcal P_0).
	\]
	
	For any $S$--valued point
	$e:S\to \mathcal{N}(E,k)$,
	the induced morphism
	\[
	\psi_e :
	\Lambda^{\text{DR}^{S}}_{\mathcal L} \otimes K \to F
	\]
	vanishes, and hence the $\Lambda^{\text{DR}}_{\mathcal L}$--module structure on $p_X^*E$ descends to $F$.
	Thus $F$ acquires a $\Lambda^{\text{DR}}_{\mathcal L}$--module structure, i.e.,
	an integrable $\mathcal L$--connection.
	
	Since $F$ is flat over $S$ and all its geometric fibers are locally free,
	it follows that $F$ is locally free
	(cf.~\cite[\S\,1, Lemma~1.27]{CT.Simpson-1994}), and the Hilbert polynomial condition implies that $\operatorname{rk}(F)=k$.
	Therefore $\mathcal{N}(E,k)$ represents the desired functor.
	
	Finally, by
	\cite[\S\,4, Lemma~4.9]{CT.Simpson-1994},
	the natural morphism to the relative Grassmannian is a closed embedding.
\end{proof}

We now extend the notion of the monodromy group of principal $G$--objects
(cf.~\cite[\S\,9, p.~50]{CT.Simpson-1995}) to the setting of
$\mathcal{L}$--twisted principal $G$--objects.
Let $\mathfrak{p}\in X$ be a closed point, and let $G\subset H$ be an
algebraic subgroup.
Let $(E_H,\nabla_{\mathcal L})$ be a principal $H$--bundle on $X$ equipped
with an integrable $\mathcal{L}$--connection, and fix a point
$\mathfrak{b}\in E_H|_{\mathfrak{p}}\cong H$.

We say that the \emph{monodromy of $(E_H,\mathfrak{b})$ is contained in $G$}
if the following condition holds:
for every finite--dimensional linear representation $V$ of $H$ and every
subspace $W\subset V$ preserved by $G$, there exists a strict subbundle
\[
F \subset E_H\times^{H} V
\]
which is preserved by the induced $\mathcal{L}$--connection and satisfies
\[
F|_{\mathfrak{p}}=\{\mathfrak{b}\}\times W
\subset (E_H\times^{H} V)|_{\mathfrak{p}}.
\]

The \emph{monodromy group} $\mathbf{Mono}(E_H,\mathfrak{b})$ is defined to
be the intersection of all algebraic subgroups $G\subset H$ such that the
monodromy of $(E_H,\mathfrak{b})$ is contained in $G$.

The following two lemmas are well known in the case of principal Higgs
bundles of semiharmonic type on a smooth projective variety
(cf.~\cite[\S\,9, Lemmas~9.4 and~9.5]{CT.Simpson-1995}).
Here we derive analogous statements for principal $G$--bundles equipped
with integrable $\mathcal{L}$--connections.

\begin{lem}\label{monodromy related thm-for-PB with-L-connection}
	Let $G \subset H$ be algebraic groups. Let $X$ be a smooth projective variety
	and let $\mathcal{L}$ be a transitive Lie algebroid on $X$.
	Suppose $E_{G}$ is a principal $G$--bundle on $X$ equipped with an integrable
	$\mathcal{L}$--connection. Then the associated bundle
	\[
	E_{H} := E_{G} \times^{G} H
	\]
	admits a natural structure of a principal $H$--bundle with integrable
	$\mathcal{L}$--connection.
	
	Moreover, fixing a closed point $\mathfrak{p} \in X$, this construction induces
	a bijection between the following sets:
	\begin{enumerate}
		\item isomorphism classes of pairs $(E_{G},\mathfrak{b}')$, where $E_{G}$ is a
		principal $G$--bundle with integrable $\mathcal{L}$--connection and
		$\mathfrak{b}' \in (E_{G})_{\mathfrak{p}}$ is a point;
		
		\item isomorphism classes of pairs $(E_{H},\mathfrak{b})$, where $E_{H}$ is a
		principal $H$--bundle with integrable $\mathcal{L}$--connection and
		$\mathfrak{b} \in (E_{H})_{\mathfrak{p}}$ is a point, such that the monodromy of
		$(E_{H},\mathfrak{b})$ is contained in $G$.
	\end{enumerate}
\end{lem}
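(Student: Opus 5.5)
The first assertion is pure functoriality of the $\mathcal{L}$--Atiyah sequence. Write $E_H := E_G\times^{G}H$, so that $E_G\subset E_H$ is an $H$--equivariant-compatible embedding and $\ad(E_G)\hookrightarrow \ad(E_H)$. I will compare the two $\mathcal{L}$--Atiyah exact sequences \eqref{eqn:Atiyah-exact-seq-for-(V,phi)-valued-connection}. The map $\At(E_G)\to\At(E_H)$ induced by $G\subset H$ is a morphism of Lie algebroids over $\mathrm{Id}_{\mathcal{T}_X}$. By the fibre-product definition of $\At_\delta$ it therefore gives a bracket-preserving map $\At_\delta(E_G)\to\At_\delta(E_H)$ over $\mathrm{Id}_V$. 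I then take $\nabla^H_{\mathcal{L}}$ to be the composite $V\xrightarrow{\nabla_{\mathcal{L}}}\At_\delta(E_G)\to\At_\delta(E_H)$; it is a splitting. Its curvature is the image of $\kappa_{\nabla_{\mathcal{L}}}$ under $\ad(E_G)\otimes\Omega^2_{\mathcal{L}}\to\ad(E_H)\otimes\Omega^2_{\mathcal{L}}$, so it vanishes. Finally, the point $\mathfrak{b}=[\mathfrak{b}',e]$ defines the map from (1) to (2).

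Next I check that the image lies in set (2). Let $V\in\texttt{Rep}(H)$ and let $W\subset V$ be $G$--stable. Then $E_H\times^{H}V=E_G\times^{G}V$, and by Lemma \eqref{lem:PB-with-L-conection-Tannakian equivalence} (functoriality of $\rho_{E_G}$) the $G$--map $W\hookrightarrow V$ produces a subbundle $F:=E_G\times^{G}W$. This $F$ is strict, preserved by the $\mathcal{L}$--connection, and satisfies $F|_{\mathfrak{p}}=\{\mathfrak{b}\}\times W$. So the monodromy of $(E_H,\mathfrak{b})$ is contained in $G$. The same functoriality shows that isomorphic pairs go to isomorphic pairs.

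For the inverse, given $(E_H,\mathfrak{b})$ with monodromy in $G$, I want to reconstruct $E_G\subset E_H$ through the point $\mathfrak{b}$. By Chevalley's theorem, choose a representation $V$ of $H$ and a line $\ell\subset V$ whose stabiliser is exactly $G$. Applying the hypothesis to $W=\ell$ gives a flat line subbundle $F\subset E_H\times^H V$ with $F_{\mathfrak{p}}=\mathfrak{b}\times\ell$. Define $E_G$ as the set of points $e\in E_H$ whose induced map $V\cong (E_H\times^HV)_{\pi(e)}$ sends $\ell$ onto $F_{\pi(e)}$. This is a closed $G$--stable subscheme.

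The main obstacle is showing that this $E_G$ is a principal $G$--bundle, i.e.\ that it is nonempty over every point and étale-locally trivial, and not merely nonempty at $\mathfrak{p}$. At $\mathfrak{p}$ it contains $\mathfrak{b}$. To propagate this I will use transitivity of $\mathcal{L}$ together with Proposition \eqref{def:locally free L-modulues}. The line bundle $F$ and the family of lines $\{h\cdot\ell\}$ defining the $H/G$--orbit give a section $\sigma$ of $E_H\times^H\mathbb{P}(V)$. This section is flat for the induced $\mathcal{L}$--connection, and since the anchor $\delta$ is surjective, flatness along $\mathcal{L}$ implies flatness along $\mathcal{T}_X$ in the sense of horizontal foliation directions. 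The subset where $\sigma$ lies in the closed orbit-type stratum $E_H\times^H(H\cdot[\ell])$ is therefore open and closed, hence all of the connected $X$. So $\sigma$ is a section of $E_H\times^H(H/G)$, which is exactly a reduction $E_G$, with $\mathfrak{b}\in E_G$.

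The connection descends because $\nabla^H_{\mathcal{L}}$ preserves $F$. This means $\nabla^H_{\mathcal{L}}(V)\subset \At_\delta(E_G)$, which I verify by using that $\mathfrak{g}$ is the stabiliser of $\ell$ in $\mathfrak{h}$. Uniqueness of the reduction through $\mathfrak{b}$ comes from uniqueness of flat sections with a given value at $\mathfrak{p}$. Together these show the two constructions are mutually inverse on isomorphism classes.
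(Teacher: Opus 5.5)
Your proof is correct in substance but takes a genuinely different route from the paper. The paper gets the forward direction by passing through a faithful representation of $H$ and Lemma~\ref{lem:PB-with-L-conection-Tannakian equivalence}. It gets the converse by Tannakian reconstruction: it uses the monodromy hypothesis for \emph{every} pair $(V,W)$ to produce flat subbundles $F(V,W)\subset E_H\times^H W$. It obtains morphisms from flat subbundles lifting graphs of $G$--maps, with uniqueness via Simpson's Lemma~4.9. It then assembles a strict exact faithful tensor functor on $\texttt{Rep}(G,H)\simeq\texttt{Rep}(G)$ and lets the Tannakian lemma (via Nori) produce $E_G$ together with its connection.

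You instead use the $\mathcal{L}$--Atiyah sequences directly for the forward map. For the converse you invoke the hypothesis only once, for a Chevalley pair $(V,\ell)$, realising $E_G$ as an explicit reduction of $E_H$ through $\mathfrak{b}$ and descending $\nabla^H_{\mathcal{L}}$ via $\mathfrak{g}=\mathrm{Stab}_{\mathfrak{h}}(\ell)$. This makes $E_G\times^G H=E_H$ and the mutual-inverse property immediate, and it works for any closed subgroup. The paper's equivalence $\texttt{Rep}(G,H)\simeq\texttt{Rep}(G)$, by contrast, tacitly needs every $G$--module to embed in an $H$--module; this holds for reductive $G$, which is the case used later.

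The price is that you must show by hand that the reduction exists over all of $X$, which the Tannakian route absorbs into strictness and Nori's theorem. Your version of that step needs tightening. First, $H\cdot[\ell]$ is in general only locally closed, not a closed stratum. Second, an integrable $\mathcal{L}$--connection need not induce any $\mathcal{T}_X$--connection at all; think of the tautological $\At(P)$--connection on a bundle $P$ admitting no connection.

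What works is to pick a $C^\infty$ splitting of the surjective anchor $\delta$. Composing with $\nabla^H_{\mathcal{L}}$ gives a smooth, generally non-flat, connection on $E_H$. Its induced connection on $E_H\times^H V$ still preserves $F$, because $F$ is holomorphic and preserved by every $\nabla_v$. Its $H$--equivariant parallel transport then carries $F_{\mathfrak{p}}=\{\mathfrak{b}\}\times\ell$ to each $F_x$, so $\sigma$ lands in $E_H\times^H(H\cdot[\ell])\cong E_H/G$. Here characteristic $0$ and the Lie-algebra part of Chevalley give $H\cdot[\ell]\cong H/G$, and reducedness of $X$ gives the scheme-theoretic factorisation. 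The same parallel-transport argument also yields the uniqueness of flat subbundles with prescribed fibre at $\mathfrak{p}$, which your injectivity step uses.
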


\begin{proof}
	Let $E_{G}$ be a principal $G$--bundle on $X$ equipped with an integrable
	$\mathcal{L}$--connection. Choose a faithful finite--dimensional representation
	$V$ of $H$. Restricting $V$ to $G$ yields a faithful representation of $G$, and
	we have a natural identification
	\[
	E_{H} \times^{H} V \;=\; E_{G} \times^{G} V .
	\]
	By Lemma~\eqref{lem:PB-with-L-conection-Tannakian equivalence},
	$E_{H} \times^{H} V$ is a vector bundle equipped with an integrable
	$\mathcal{L}$--connection. Applying the same lemma once again, this vector
	bundle determines a unique integrable $\mathcal{L}$--connection on the
	principal $H$--bundle $E_{H}$. This defines a functor from the category of
	objects in $(1)$ to the category of objects in $(2)$.
	
	\medskip
	For the converse, let $\texttt{Rep}(G,H)$ denote the category whose objects are
	pairs $(V,W)$, where $W$ is a finite--dimensional representation of $H$ and
	$V \subset W$ is a $G$--invariant subspace. Morphisms in this category are
	$G$--equivariant linear maps between the subspaces $V$. The tensor product is
	defined by
	\[
	(V_{1},W_{1}) \otimes (V_{2},W_{2})
	\;:=\;
	(V_{1} \otimes V_{2},\, W_{1} \otimes W_{2}).
	\]
	Forgetting the $H$--representation $W$ defines an equivalence of tensor
	categories
	\[
	\texttt{Rep}(G,H) \;\cong\; \texttt{Rep}(G).
	\]
	
	Now suppose $(E_{H},\mathfrak{b})$ is a principal $H$--bundle with integrable
	$\mathcal{L}$--connection and a point $\mathfrak{b} \in (E_{H})_{\mathfrak{p}}$
	such that the monodromy of $(E_{H},\mathfrak{b})$ is contained in $G$. By
	definition, for every object $(V,W)$ of $\texttt{Rep}(G,H)$ there exists a unique
	subbundle
	\[
	F(V,W) \subset E_{H} \times^{H} W
	\]
	preserved by the $\mathcal{L}$--connection and satisfying
	\[
	F(V,W)|_{\mathfrak{p}} = \{\mathfrak{b}\} \times V .
	\]
	
	Given objects $(V,W)$ and $(V',W')$ and a $G$--equivariant morphism
	$f \colon V \to V'$, consider the $G$--invariant subspace
	$L \subset W \oplus W'$ given by the graph of $f$. The assumption that the
	monodromy is contained in $G$ implies the existence of a subbundle
	\[
	L(f) \subset F(V,W) \oplus F(V',W')
	\]
	preserved by the $\mathcal{L}$--connection and satisfying
	$L(f)|_{\mathfrak{p}} = L$. This subbundle defines the graph of a morphism
	\[
	f^{\sharp} \colon F(V,W) \longrightarrow F(V',W'),
	\]
	which satisfies $f^{\sharp}|_{\mathfrak{p}} = f$. The uniqueness of
	$f^{\sharp}$ follows from \cite[\S\,4, Lemma~4.9]{CT.Simpson-1994}.
	
	In this way, we obtain a strict, exact, faithful tensor functor from
	$\texttt{Rep}(G,H)$ to the category of vector bundles on $X$ equipped with
	integrable $\mathcal{L}$--connections. Composing with the inverse of the above
	equivalence of tensor categories yields a strict, exact, faithful tensor
	functor from $\texttt{Rep}(G)$ to the category of vector bundles with integrable
	$\mathcal{L}$--connections on $X$. By
	Lemma~\eqref{lem:PB-with-L-conection-Tannakian equivalence}, this functor
	corresponds to a principal $G$--bundle equipped with an integrable
	$\mathcal{L}$--connection, as required.
\end{proof}

\begin{lem}\label{lem:Fr-bundle-induce-L-connection}
	Let $E$ be a vector bundle of rank $n$ on $X$ equipped with an integrable
	$\mathcal{L}$--connection, and let
	\[
	\beta \colon E|_{\mathfrak{p}} \xrightarrow{\;\sim\;} \mathbb{C}^{n}
	\]
	be a choice of framing at a closed point $\mathfrak{p} \in X$.
	Then the frame bundle $\mathbf{Fr}(E)$ carries a natural structure of a
	principal $GL(n,\mathbb{C})$--bundle with integrable $\mathcal{L}$--connection.
	Moreover, the vector bundle $E$ is recovered as the associated bundle
	\[
	E \;=\; \mathbf{Fr}(E) \times^{GL(n,\mathbb{C})} \mathbb{C}^{n}.
	\]
	This construction induces a bijection between the sets of isomorphism classes
	of pairs $(E,\beta)$ and $(\mathbf{Fr}(E),\mathfrak{b})$, where
	$\mathfrak{b} \in \mathbf{Fr}(E)|_{\mathfrak{p}}$ corresponds to the
	framing $\beta$.
\end{lem}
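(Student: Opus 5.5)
The strategy is to combine the Tannakian equivalence of Lemma~\eqref{lem:PB-with-L-conection-Tannakian equivalence} with the classical frame-bundle correspondence. The aim is to show that the integrable $\mathcal{L}$--connection on $E$ transfers canonically to $\mathbf{Fr}(E)$, and that this transfer is inverse to taking the associated bundle for the standard representation.

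First, construct the connection on $\mathbf{Fr}(E)$. Let $D_{\mathcal{L}}$ be the integrable $\mathcal{L}$--connection on $E$. Taking tensor products and duals, as in the proof of Proposition~\eqref{prop: tannakian category Vect_L(X)}, every tensor construction $E^{\otimes a}\otimes (E^{*})^{\otimes b}$ acquires an integrable $\mathcal{L}$--connection. Since every representation of $GL(n,\mathbb{C})$ is a subquotient of sums of such tensor spaces, we obtain a strict, exact, faithful tensor functor
\[
\rho:\texttt{Rep}(GL(n,\mathbb{C}))\longrightarrow \texttt{Vect}^{\text{int}}_{\mathcal{L}}(X),
\qquad \rho(\mathbb{C}^{n})=(E,D_{\mathcal{L}}).
\]
Strictness and exactness on subquotients follow from the abelian property, which uses the transitivity of $\mathcal{L}$ through Proposition~\eqref{def:locally free L-modulues}. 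The underlying vector-bundle functor is $\rho_{\mathbf{Fr}(E)}$, so each $V\mapsto\rho(V)_x$ is a fiber functor. Lemma~\eqref{lem:PB-with-L-conection-Tannakian equivalence} then yields an integrable $\mathcal{L}$--connection on $\mathbf{Fr}(E)$.

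Alternatively, one can argue directly. Since $\ad(\mathbf{Fr}(E))=\End(E)$, the $\mathcal{L}$--Atiyah sequence of $\mathbf{Fr}(E)$ identifies with the sequence $0\to\End(E)\to\At_\delta(E)\to V\to0$ built from the Atiyah algebroid of $E$, which consists of first-order differential operators with scalar symbol. The splitting $v\mapsto D_{\mathcal{L}}(\cdot)(v)$ lies in $\At_\delta(E)$ by the Leibniz rule \eqref{eqn:phi^*-twisted-Leibniz-rule}. This splitting is a bracket homomorphism exactly when the curvature vanishes.

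For the identification $E=\mathbf{Fr}(E)\times^{GL(n,\mathbb{C})}\mathbb{C}^n$, use the standard isomorphism $[f,w]\mapsto f(w)$. It remains to check that the induced connection on the associated bundle agrees with $D_{\mathcal{L}}$. This holds by construction, since $\rho(\mathbb{C}^n)=(E,D_{\mathcal{L}})$. Equivalently, it follows from the commutative diagram of $\mathcal{L}$--Atiyah sequences used in Lemma~\eqref{lem:PB-with-L-conection-Tannakian equivalence}, where for the standard representation the vertical map $\ad\to\End$ is the identity.

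Finally, the bijection. A framing $\beta$ is an isomorphism $E_{\mathfrak p}\to\mathbb{C}^n$, so $\beta^{-1}$ is a point $\mathfrak b\in\mathbf{Fr}(E)_{\mathfrak p}$, and conversely. An isomorphism $(E,\beta)\cong(E',\beta')$ of connection bundles induces an isomorphism of frame bundles preserving connections and base points. Conversely, an isomorphism $(\mathbf{Fr}(E),\mathfrak b)\cong(\mathbf{Fr}(E'),\mathfrak b')$ induces one on associated bundles. The two assignments are mutually inverse because $\mathbf{Fr}(\mathbf{Fr}(E)\times^{GL_n}\mathbb{C}^n)\cong \mathbf{Fr}(E)$ canonically, compatibly with connections by the uniqueness part of Lemma~\eqref{lem:PB-with-L-conection-Tannakian equivalence}. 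The main obstacle I expect is verifying that the connection obtained on $\mathbf{Fr}(E)$ is independent of choices and that the induced connection on the associated bundle is exactly $D_{\mathcal{L}}$. I would settle this via the direct Atiyah-sequence identification, which makes both facts tautological.
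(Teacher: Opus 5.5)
Your proposal is correct, but the proof rests on your second (direct) argument, not the first. Your first route is essentially the paper's proof. The paper likewise puts the induced integrable $\mathcal{L}$--connections on the tensor bundles $T^{a,b}(E)=E^{\otimes a}\otimes (E^{*})^{\otimes b}$, realizes $\texttt{Rep}(GL(n,\mathbb{C}))$ through $GL(n,\mathbb{C})$--invariant subspaces $W\subset T^{a,b}(\mathbb{C}^{n})$, and then applies the Tannakian lemma for principal bundles with integrable $\mathcal{L}$--connections.

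As written, though, the sentence ``every representation is a subquotient of sums of tensor spaces, so we obtain a tensor functor'' skips the only substantive point. The abelian property of $\texttt{Vect}^{\text{int}}_{\mathcal{L}}(X)$ does not supply it, because kernels and images exist only for morphisms already in the category. You must first know two things:
\begin{enumerate}
\item the subbundle $\mathbf{Fr}(E)\times^{GL(n,\mathbb{C})}W\subset T^{a,b}(E)$ is preserved by the induced connection;
\item $GL(n,\mathbb{C})$--equivariant maps between such bundles are horizontal.
\end{enumerate}
The paper asserts this as ``compatibility with infinitesimal automorphisms''. Concretely, in a local frame the induced connection on $T^{a,b}(E)$ is $d_{\mathcal{L}}$ plus the $\mathfrak{gl}_n$--action of the connection matrix. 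That operator preserves $\mathfrak{gl}_n$--stable subspaces and commutes with equivariant maps. Alternatively, by the first fundamental theorem of invariant theory, equivariant maps between tensor spaces are combinations of permutations and contractions, which are morphisms in $\texttt{Vect}^{\text{int}}_{\mathcal{L}}(X)$.

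Your second route is genuinely different and better suited to this lemma. Identify $\At(\mathbf{Fr}(E))$ with the sheaf of first-order differential operators on $E$ with scalar symbol; then $\At_{\delta}(\mathbf{Fr}(E))$ is its fibre product with $V$ over $\mathcal{T}_X$. Set $D_{v}:=(\mathrm{id}_E\otimes\iota_{v})\circ D_{\mathcal{L}}$. The map $v\mapsto (D_{v},v)$ is an $\mathcal{O}_X$--linear splitting by the Leibniz rule, and $[D_{u},D_{v}]-D_{[u,v]}$ is the curvature, so integrability matches exactly.

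Since $\mathcal{L}$--connections on $\mathbf{Fr}(E)$ and on $E$ are then splittings of literally the same sequence, several things follow at once:
\begin{enumerate}
\item recovering $(E,D_{\mathcal{L}})$ from the standard representation is tautological;
\item the bijection of framed pairs, which the paper's proof does not address, is immediate;
\item you bypass the Tannakian reconstruction, whose converse direction the paper argues only briefly;
\item you never use transitivity of $\mathcal{L}$, whereas the paper's argument needs it to make $\texttt{Vect}^{\text{int}}_{\mathcal{L}}(X)$ abelian.
\end{enumerate}
What the Tannakian route buys is uniformity with the later monodromy arguments for $G\subset H$. For $GL(n,\mathbb{C})$ itself, the direct argument is both shorter and complete.
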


\begin{proof}
	Let $\texttt{Rep}(GL(n,\mathbb{C}),\mathrm{std})$ denote the category whose
	objects are pairs $(V,T^{a,b}(\mathbb{C}^{n}))$, where
	\[
	T^{a,b}(\mathbb{C}^{n})
	:= (\mathbb{C}^{n})^{\otimes a} \otimes ((\mathbb{C}^{n})^{*})^{\otimes b},
	\]
	and $V \subset T^{a,b}(\mathbb{C}^{n})$ is a $GL(n,\mathbb{C})$--invariant
	subspace. Morphisms are $GL(n,\mathbb{C})$--equivariant linear maps between such
	subspaces. This category is tensor--equivalent to
	$\texttt{Rep}(GL(n,\mathbb{C}))$.
	
	Let $V \subset T^{a,b}(\mathbb{C}^{n})$ be a $GL(n,\mathbb{C})$--invariant
	subspace. For any $n$--dimensional vector space $U$, the functoriality of tensor
	constructions yields a canonically defined subspace
	\[
	V \subset T^{a,b}(U),
	\]
	independent of the choice of basis of $U$.
	Applying this construction fiberwise to the vector bundle $E$, we obtain a
	subbundle
	\[
	F \subset T^{a,b}(E)
	\]
	such that $\beta(F|_{\mathfrak{p}}) = V$.
	
	Since $E$ is equipped with an integrable $\mathcal{L}$--connection and tensor
	operations preserve integrable $\mathcal{L}$--connections, the tensor bundle
	$T^{a,b}(E)$ carries a natural integrable $\mathcal{L}$--connection. Moreover,
	the construction of $F$ is compatible with infinitesimal automorphisms; hence
	$F$ is preserved by the $\mathcal{L}$--connection. Choosing a
	$GL(n,\mathbb{C})$--invariant complement $V^{\perp}$ of $V$ yields a
	corresponding complementary subbundle $F^{\perp}$, which is also preserved by
	the $\mathcal{L}$--connection. Consequently, $F$ is a vector bundle equipped
	with an integrable $\mathcal{L}$--connection.
	
	Morphisms in $\texttt{Rep}(GL(n,\mathbb{C}),\mathrm{std})$ induce morphisms
	between the corresponding vector bundles with integrable
	$\mathcal{L}$--connections, and these constructions are compatible with tensor
	products. Hence, we obtain a strict, exact, faithful tensor functor
	\[
	\texttt{Rep}(GL(n,\mathbb{C}),\mathrm{std})
	\longrightarrow
	\{\text{vector bundles with integrable $\mathcal{L}$--connections on } X\}.
	\]
	Composing with the inverse of the above tensor equivalence yields a strict,
	exact, faithful tensor functor from
	$\texttt{Rep}(GL(n,\mathbb{C}))$ to the category of vector bundles with
	integrable $\mathcal{L}$--connections on $X$. By
	Lemma~\eqref{lem:PB-with-L-conection-Tannakian equivalence}, this functor
	corresponds to a principal $GL(n,\mathbb{C})$--bundle with integrable
	$\mathcal{L}$--connection, which is precisely the frame bundle $\mathbf{Fr}(E)$.
\end{proof}

Fix a faithful representation $\rho \colon G \hookrightarrow GL(V)$ and  $\mathcal{P}_{0}$ be a Hilbert polynomial of $\mathcal{O}_{X}$.
Let $\mathfrak{p} \in X$ be a fixed closed point.
We define a contravariant functor
\[
\mathcal{M}^{\text{DR}^{\natural}}_{\mathcal{L}}(X,\mathfrak{p},G)
\colon \texttt{Sch}/\mathbb{C} \longrightarrow \texttt{Sets}
\]
as follows.

\begin{itemize}
	\item
	To a scheme $S$, the functor associates the set of isomorphism classes of
	triples $(\mathcal{F},\nabla_{\mathcal{L}},b)$, where:
	\begin{itemize}
		\item
		$\mathcal{F}$ is a principal $G$--bundle on $X_S := X \times S$,
		\item
		$\nabla_{\mathcal{L}}$ is an integrable $\mathcal{L}$--connection on
		$\mathcal{F}$,
		\item
		$b \colon S \to \mathcal{F}|_{\mathfrak{p}\times S}$ is a framing,
	\end{itemize}
	such that the associated vector bundle
	\[
	\mathcal{F}(V) := \mathcal{F} \times^{G} V
	\]
	is flat over $S$ and, for every geometric point $s \in S$,
	the fiber $\mathcal{F}_s(V)$ has Hilbert polynomial $\dim(V)\mathcal{P}_{0}$.
	
	\item for every closed point $s\in S$, the fiber $(E_{G,s}, \nabla_{\mathcal{L},s},b_{s})$ is a principal $G$--bundle with integrable $\mathcal{L}$--connection and $\mathfrak{b}=b_{s}$.
	
	\item
	Two such triples $(\mathcal{F},\nabla_{\mathcal{L}},b)$ and
	$(\mathcal{F}',\nabla'_{\mathcal{L}},b')$ over $S$ are said to be
	\emph{isomorphic} if there exists an isomorphism of principal $G$--bundles
	with $\mathcal{L}$--connections
	\[
	\varphi \colon (\mathcal{F},\nabla_{\mathcal{L}})
	\longrightarrow
	(\mathcal{F}',\nabla'_{\mathcal{L}})
	\]
	such that $\varphi \circ b = b'$.
	
	\item
	For a morphism of schemes $\psi \colon T \to S$, the functor assigns the pullback
	\[
	(\mathcal{F},\nabla_{\mathcal{L}},b)
	\longmapsto
	(\psi^{*}\mathcal{F}, \psi^{*}\nabla_{\mathcal{L}}, \psi^{*}b),
	\]
	which defines a $T$--family on $X \times T$.
\end{itemize}

The following theorem is well known for principal $G$--bundles with integrable $\mathcal{T}_{X}$--connections (cf.~\cite[\S\,9, Th.~9.10]{CT.Simpson-1995}). We extend it here to the
Lie algebroid setting.

\begin{thm}\label{thm: L-twisted - reprsentaion - space for L-connection}
	Let $\mathfrak{p}$ be a fixed base point of $X$. Then there exists a scheme $\mathcal{R}^{\text{DR}}_{\mathcal{L}}(X,\mathfrak{p},G)$
	representing the functor
	$\mathcal{M}^{\text{DR}^{\natural}}_{\mathcal{L}}(X,\mathfrak{p},G)$.
	Moreover, if $f \colon G \hookrightarrow H$ is a closed embedding of algebraic groups, then $f$ induces a closed embedding
	\[
	\mathcal{R}^{\text{DR}}_{\mathcal{L}}(X,\mathfrak{p},G)
	\hookrightarrow
	\mathcal{R}^{\text{DR}}_{\mathcal{L}}(X,\mathfrak{p},H).
	\]
\end{thm}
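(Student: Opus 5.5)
We follow Simpson's proof of \cite[\S\,9, Th.~9.10]{CT.Simpson-1995}: first build the representation space for $GL(n,\mathbb{C})$ from the framed vector bundle space of Theorem~\eqref{eqn: moduli-of-L-connection on VB}, then cut out the case of a general $G$ as a closed subscheme using the monodromy criterion.

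\emph{Step 1: the case $H=GL(n,\mathbb{C})$.} Here $n=\dim V$. By Lemma~\eqref{lem:Fr-bundle-induce-L-connection}, triples $(\mathcal{F},\nabla_{\mathcal{L}},b)$ with $\mathcal{F}$ a principal $GL(n,\mathbb{C})$--bundle correspond functorially in $S$ to framed vector bundles $(E,D_{\mathcal{L}},\beta)$ of rank $n$ with integrable $\mathcal{L}$--connection. The relative version of that lemma follows from the same tensor-construction argument carried out over $X_S$. By Lemma~\eqref{lem:transitive-Lie-algebroid-locally-free}, the Hilbert polynomial condition $n\mathcal{P}_0$ is automatic. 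Hence $\mathcal{R}^{\text{DR}}_{\mathcal{L}}(X,\mathfrak{p},n)$ from Theorem~\eqref{eqn: moduli-of-L-connection on VB} represents $\mathcal{M}^{\text{DR}^{\natural}}_{\mathcal{L}}(X,\mathfrak{p},GL(n,\mathbb{C}))$.

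\emph{Step 2: closed embedding for $G\subset H$.} Let $R_H$ be the representation space for $H$, and let $(E_H^{u},\nabla^{u},b^{u})$ be the universal triple over $X\times R_H$. By Lemma~\eqref{monodromy related thm-for-PB with-L-connection}, applied in families, an $S$--point of $\mathcal{M}^{\text{DR}^{\natural}}_{\mathcal{L}}(X,\mathfrak{p},G)$ is the same as an $S$--point of $R_H$ whose monodromy lies in $G$. We make this condition scheme-theoretic as follows. By Chevalley's theorem, choose a single representation $W$ of $H$ and a line $\ell\subset W$ with $G=\mathrm{Stab}_H(\ell)$; it is enough to impose the condition for this one pair, and for $(W^{\ast}\otimes W,\ldots)$ if needed. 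The subspace $\ell$ gives a section of the relative Grassmannian of $E^{u}_H(W)|_{\mathfrak{p}\times R_H}$, via the framing $b^{u}$. Applying Lemma~\eqref{lem:projective-scheme-N(E,k)} in the relative form over $R_H$ with $k=1$, the $\mathcal{L}$--compatible quotients form a closed subscheme $\mathcal{N}$ of that Grassmannian. We set $R_G$ to be the preimage of $\mathcal{N}$ under this section, a closed subscheme of $R_H$.

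Over $R_G$ we obtain an $\mathcal{L}$--invariant line subbundle extending $\ell$. Its pointwise stabilizer in $E_H$ gives a reduction $E_G$, and this reduction is $\mathcal{L}$--flat. To see flatness, note that the $\mathcal{L}$--connection on $E_H$ preserves the line subbundle. Consequently the connection form at points of $E_G$ lies in the stabilizer algebra of the line, which is $\mathfrak{g}$. For Step 1 we use the faithful representation $\rho$ to regard $G\subset GL(V)$, and then take $\mathcal{R}^{\text{DR}}_{\mathcal{L}}(X,\mathfrak{p},G):=R_G\subset R_{GL(V)}$.

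\emph{Step 3: representability and the embedding claim.} Representability follows by checking that $S$--points of $R_G$ biject with triples for $G$. The reverse map is extension of structure group. Bijectivity uses the uniqueness of flat subbundles with prescribed fibre at $\mathfrak{p}$, \cite[\S\,4, Lemma~4.9]{CT.Simpson-1994}. The same construction for $G\subset H$ shows that $f$ induces a closed embedding $R_G\hookrightarrow R_H$. Independence of the choice of $W$ follows because both sides represent the same functor.

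The main obstacle is Step 2. We need the reduction defined by the stabilizer of the flat line to be a genuine étale-locally trivial $G$--bundle over a non-reduced base $S$. The group $G$ is reductive, hence $H/G$ is affine, so we would instead use a $G$--fixed vector $w\in W$ with $G=\mathrm{Stab}_H(w)$. A flat section extending $w$ then gives a section of $E_H\times^H(H\cdot w)\cong E_H/G$. Since $H\to H/G$ is smooth and a $G$--torsor, this yields the reduction directly.
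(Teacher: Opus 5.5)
Your Step~1 is the same as the paper's. Step~2 is sound in outline but takes a genuinely different route.

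\textbf{Comparison with the paper.} The paper never singles out one representation. For every representation $V$ of $H$ and every $G$--invariant subspace $W\subset V$, it forms the closed subscheme $\mathbf{C}(V,W)=\sigma_{\mathcal{V}/\mathcal{W}}^{-1}\bigl(\mathcal{N}(E^{\mathrm{univ}},k)\bigr)$ of $\mathcal{R}^{\text{DR}}_{\mathcal{L}}(X,\mathfrak{p},H)$ and intersects all of them. It then identifies the resulting subfunctor with the $G$--functor through the monodromy lemma, i.e.\ by Tannakian reconstruction via $\texttt{Rep}(G,H)\simeq\texttt{Rep}(G)$. You instead impose a single Chevalley/Matsushima condition and build the reduction $E_G$ geometrically, as the pullback of the $G$--torsor $E_H\to E_H/G$ along a flat section.

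What your route buys is an explicit \'{e}tale--locally trivial torsor, together with the reduction of the connection, over an arbitrary, possibly non-reduced, base $S$. The paper passes over exactly this point: its monodromy lemma is proved only for single bundles on $X$, yet it is applied to $S$--families. The paper's route is formal and needs no choice of a special representation or closed orbit. Yours uses reductivity of $G$ to make $H/G$ affine, which is available here.

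\textbf{Points to repair in Step~2.}

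First, the lemma constructing $\mathcal{N}(E,k)$ parametrizes quotients of rank $k$. A flat line subbundle of $E_H(W)$ therefore corresponds to $k=\dim W-1$, not $k=1$.

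Second, in your last paragraph you pass from the line $\ell$ to a vector $w$, but your closed subscheme is still cut out by a line condition. The pair $(W,\mathbb{C}w)$ only controls $\mathrm{Stab}_H(\mathbb{C}w)$, which can be strictly larger than $G=\mathrm{Stab}_H(w)$. For example, with $G=SL(n,\mathbb{C})\subset GL(n,\mathbb{C})$ and $0\neq w\in\Lambda^{n}\mathbb{C}^{n}$, the line condition is vacuous, whereas you need a flat section extending $w$. The fix is to use the pair $(\mathbb{C}\oplus W,\ \mathbb{C}\cdot(1,w))$, with $H$ acting trivially on $\mathbb{C}$:
\begin{itemize}
\item its stabilizer is exactly $\mathrm{Stab}_H(w)=G$;
\item a flat line subbundle with fibre $\mathbb{C}\cdot(1,w)$ at $\mathfrak{p}$ projects isomorphically onto $(\mathcal{O}_{X_S},d_{\mathcal{L}})$, because on each fibre $X\times\{s\}$ the cokernel is a torsion sheaf carrying an $\mathcal{L}$--connection, hence zero since $\mathcal{L}$ is transitive;
\item so the subbundle is the graph of a flat section $s$ with $s|_{\mathfrak{p}\times S}=w$.
\end{itemize}

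Third, you should justify that $s$ actually lands in $E_H\times^{H}(H\cdot w)$. Choose $w$ with closed orbit, which is possible since $H/G$ is affine. Each finite-dimensional $H$--stable space of equations of $H\cdot w$ turns $s$ into a flat section of an associated bundle that vanishes along $\mathfrak{p}\times S$. That section then vanishes identically, because flat sections are determined by their value at $\mathfrak{p}$; this follows by applying the injectivity of $\mathcal{N}\to\mathfrak{Grass}$ to graphs.

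With these changes your argument proves the theorem, including the closed embedding for $G\subset H$.
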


\begin{proof}
	We first treat the case $G = GL(n,\mathbb{C})$.
	By Lemma~\eqref{lem:Fr-bundle-induce-L-connection}, framed vector bundles with integrable $\mathcal{L}$--connections are equivalent to  framed principal $GL(n,\mathbb{C})$--bundles with integrable $\mathcal{L}$--connections. Hence, by Theorem~\eqref{eqn: moduli-of-L-connection on VB}, the functor
	$\mathcal{M}^{\text{DR}^{\natural}}_{\mathcal{L}}(X,\mathfrak{p},GL(n,\mathbb{C}))$ is represented by the scheme
	\[
	\mathcal{R}^{\text{DR}}_{\mathcal{L}}(X,\mathfrak{p},GL(n,\mathbb{C}))
	:= \mathcal{R}^{\text{DR}}_{\mathcal{L}}(X,\mathfrak{p},n).
	\]
	
	Now let $G \subset H$ be a closed algebraic subgroup, and assume that the scheme $\mathcal{R}^{\text{DR}}_{\mathcal{L}}(X,\mathfrak{p},H)$ representing
	$\mathcal{M}^{\text{DR}^{\natural}}_{\mathcal{L}}(X,\mathfrak{p},H)$ is already constructed.
	Let $
	(\mathcal{F}^{\mathrm{univ}}, \nabla^{\text{DR}^{\mathrm{univ}}}_{\mathcal{L}},b^{\mathrm{univ}})
	$
	denote the universal framed principal $H$--bundle with integrable
	$\mathcal{L}$--connection on
	$X \times \mathcal{R}^{\text{DR}}_{\mathcal{L}}(X,\mathfrak{p},H)$.
	
	Let $V$ be a finite--dimensional representation of $H$, and let
	$W \subset V$ be a subspace preserved by $G$.
	Set
	\[
	E^{\mathrm{univ}} := \mathcal{F}^{\mathrm{univ}} \times^{H} V,
	\]
	which is a vector bundle with integrable $\mathcal{L}$--connection.
	The framing $b^{\mathrm{univ}}$ induces an isomorphism
	\[
	\beta^{\mathrm{univ}} :
	E^{\mathrm{univ}}|_{\mathfrak{p} \times \mathcal{R}^{\text{DR}}_{\mathcal{L}}(H)}
	\;\xrightarrow{\;\sim\;}\;
	\mathcal{V} := V \otimes_{\mathbb{C}}
	\mathcal{O}_{\mathcal{R}^{\text{DR}}_{\mathcal{L}}(H)}.
	\]
	
	Let $k := \dim(V) - \dim(W)$, and denote by
	$\mathcal{W} := W \otimes_{\mathbb{C}}
	\mathcal{O}_{\mathcal{R}_{\mathcal{L}}(H)}$
	the corresponding trivial subbundle of $\mathcal{V}$.
	The quotient $\mathcal{V} \to \mathcal{V}/\mathcal{W}$ defines a section
	\[
	\sigma_{\mathcal{V}/\mathcal{W}} :
	\mathcal{R}^{\text{DR}}_{\mathcal{L}}(H)
	\longrightarrow
	\mathfrak{Grass}_{\mathcal{R}^{\text{DR}}_{\mathcal{L}}(H)}(\mathcal{V},k).
	\]
	
	By Lemma~\eqref{lem:projective-scheme-N(E,k)}, there exists a
	closed subscheme
	\[
	\mathcal{N}(E^{\mathrm{univ}},k)
	\subset
	\mathfrak{Grass}_{\mathcal{R}^{\text{DR}}_{\mathcal{L}}(H)}
	\bigl(E^{\mathrm{univ}}|_{\mathfrak{p}\times
		\mathcal{R}^{\text{DR}}_{\mathcal{L}}(H)}, k\bigr),
	\]
	parametrizing quotients compatible with the integrable
	$\mathcal{L}$--connection.
	Using the framing $\beta^{\mathrm{univ}}$, we view
	$\mathcal{N}(E^{\mathrm{univ}},k)$ as a closed subscheme of
	$\mathfrak{Grass}_{\mathcal{R}^{\text{DR}}_{\mathcal{L}}(H)}(\mathcal{V},k)$.
	Define
	\[
	\mathbf{C}(V,W)
	:=
	\sigma_{\mathcal{V}/\mathcal{W}}^{-1}
	\bigl(\mathcal{N}(E^{\mathrm{univ}},k)\bigr)
	\;\subset\;
	\mathcal{R}^{\text{DR}}_{\mathcal{L}}(H).
	\]
	
	By construction, a morphism $g \colon S \to \mathcal{R}^{\text{DR}}_{\mathcal{L}}(H)$ factors
	through $\mathbf{C}(V,W)$ if and only if the pullback
	$g^{*}(E^{\mathrm{univ}})$ admits a strict subbundle preserved by the
	$\mathcal{L}$--connection whose fiber at $\mathfrak{p}$ corresponds to $W$.
	Equivalently, this condition expresses that the monodromy of the framed object
	$g^{*}(\mathcal{F}^{\mathrm{univ}}, b^{\mathrm{univ}})$ preserves $W$.
	
	Finally, define
	\[
	\mathcal{R}^{\text{DR}}_{\mathcal{L}}(X,\mathfrak{p},G)
	:=
	\bigcap_{(V,W)} \mathbf{C}(V,W),
	\]
	where the intersection is taken over all representations $V$ of $H$ and all $G$--invariant subspaces $W \subset V$.
	This is a closed subscheme of
	$\mathcal{R}^{\text{DR}}_{\mathcal{L}}(X,\mathfrak{p},H)$.
	By construction, it represents the functor assigning to a scheme $S$ the set of framed principal $H$--bundles with integrable $\mathcal{L}$--connection whose monodromy is contained in $G$.
	By Lemma~\eqref{monodromy related thm-for-PB with-L-connection}, this functor identifies with $\mathcal{M}^{\text{DR}^{\natural}}_{\mathcal{L}}(X,\mathfrak{p},G)$.
	
	The final assertion concerning closed embeddings follows immediately from the
	construction.
\end{proof}

\medskip

\noindent\textbf{GIT quotient of the framed representation space.}
By Theorem~\ref{thm: L-twisted - reprsentaion - space for L-connection}, for any
closed embedding $G \subset GL(n,\mathbb{C})$ we obtain a closed immersion
\[
\mathcal{R}^{\text{DR}}_{\mathcal{L}}(X,\mathfrak{p},G)
\hookrightarrow
\mathcal{R}^{\text{DR}}_{\mathcal{L}}(X,\mathfrak{p},GL(n,\mathbb{C}))
=
\mathcal{R}^{\text{DR}}_{\mathcal{L}}(X,\mathfrak{p},n).
\]
Assume henceforth that $G$ is a reductive algebraic group and that
$G \hookrightarrow GL(n,\mathbb{C})$ is a faithful representation.

The group $GL(n,\mathbb{C})$ acts naturally on
$\mathcal{R}^{\text{DR}}_{\mathcal{L}}(X,\mathfrak{p},n)$ by change of framing at the base
point $\mathfrak{p}$, and this action restricts to an action of $G$ on the closed
subscheme $\mathcal{R}^{\text{DR}}_{\mathcal{L}}(X,\mathfrak{p},G)$.

Choose a $GL(n,\mathbb{C})$--linearized ample line bundle
$\mathscr{L}$ on $\mathcal{R}^{\text{DR}}_{\mathcal{L}}(X,\mathfrak{p},n)$ such that every
point is semistable for the action of $GL(n,\mathbb{C})$
(cf.~\cite[\S\,4, Th.~4.10]{CT.Simpson-1994}).
By Mumford’s numerical criterion for semistability
(cf.~\cite[\S\,Ch.~2, Th.~2.1]{D.Mumford-1965},
\cite[\S\,4, Th.~4.2.11]{Huybrechts-Lehn-2010}),
every point is also semistable for the induced action of the reductive subgroup $G$.
Therefore, all points of
$\mathcal{R}^{\text{DR}}_{\mathcal{L}}(X,\mathfrak{p},G)$ are $G$--semistable with respect to the restricted linearization
$\mathscr{L}|_{\mathcal{R}_{\mathcal{L}}(X,\mathfrak{p},G)}$.

Since $G$ is reductive, GIT yields a good quotient
\[
\mathcal{M}^{\text{DR}}_{\mathcal{L}}(X,G)
:=
\mathcal{R}^{\text{DR}}_{\mathcal{L}}(X,\mathfrak{p},G)\sslash G,
\]
which is a quasi-projective variety
(cf.~\cite[\S\,Ch.~1, Th.~1.10]{D.Mumford-1965}).

\begin{proposition}\label{prop:local-quotient-description}
	The moduli functor $\mathcal{M}^{\text{DR}^{\natural}}_{\mathcal{L}}(X,G)$ is locally
	isomorphic, in the étale topology, to the quotient functor associated with the natural action of $G$ on the framed representation space
	$\mathcal{R}^{\text{DR}}_{\mathcal{L}}(X,\mathfrak{p},G)$.
\end{proposition}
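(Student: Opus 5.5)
The strategy follows Simpson's argument for $\mathcal{T}_X$--connections \cite[\S\,9]{CT.Simpson-1995}. We compare two stacks: the quotient stack $[\mathcal{R}^{\text{DR}}_{\mathcal{L}}(X,\mathfrak{p},G)/G]$, and the stack of families of principal $G$--bundles with integrable $\mathcal{L}$--connection satisfying the conditions in the definition of $\mathcal{M}^{\text{DR}^{\natural}}_{\mathcal{L}}(X,G)$. The claim is that these are isomorphic. Sheafifying in the étale topology then identifies the quotient functor $S\mapsto \mathcal{R}^{\text{DR}}_{\mathcal{L}}(X,\mathfrak{p},G)(S)/G(S)$ with $\mathcal{M}^{\text{DR}^{\natural}}_{\mathcal{L}}(X,G)$, étale locally on $S$.

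\textbf{Forgetful map.} By Theorem~\ref{thm: L-twisted - reprsentaion - space for L-connection}, $\mathcal{R}^{\text{DR}}_{\mathcal{L}}(X,\mathfrak{p},G)$ represents framed triples $(\mathcal{F},\nabla_{\mathcal{L}},b)$. Forgetting the framing $b$ gives a natural transformation
\[
\mathcal{R}^{\text{DR}}_{\mathcal{L}}(X,\mathfrak{p},G)\longrightarrow \mathcal{M}^{\text{DR}^{\natural}}_{\mathcal{L}}(X,G).
\]
This map is $G$--invariant, since $G$ acts by changing $b$ and isomorphic framed objects have isomorphic underlying pairs. It therefore factors through the presheaf quotient.

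\textbf{Local surjectivity.} Given $(E_G,\nabla_{\mathcal{L}})$ over $S$, the restriction $E_G|_{\mathfrak{p}\times S}$ is a principal $G$--bundle on $S$. It is étale locally trivial, so after an étale cover $S'\to S$ it admits a section $b$. This yields an $S'$--point of $\mathcal{R}^{\text{DR}}_{\mathcal{L}}(X,\mathfrak{p},G)$ lifting the given family. The flatness and Hilbert polynomial conditions are preserved under pullback.

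\textbf{Local injectivity.} Take two framed families $(\mathcal{F},\nabla,b)$ and $(\mathcal{F}',\nabla',b')$ over $S$ whose underlying pairs are isomorphic via $\varphi$. Then $\varphi\circ b$ and $b'$ are two sections of the same $G$--torsor over $S$, so they differ by a unique morphism $g:S\to G$. Hence the two framed families lie in the same $G(S)$--orbit.

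\textbf{Isotropy.} We must also check that the isomorphism sheaves match. For framed families, an automorphism of $(\mathcal{F},\nabla)$ that preserves $b$ must be the identity. This rigidity follows from Lemma~\ref{lem:PB-with-L-conection-Tannakian equivalence}: an automorphism is determined by its action on each associated bundle $\mathcal{F}\times^G V$. Each such bundle carries an integrable $\mathcal{L}$--connection, and by \cite[\S\,4, Lemma~4.9]{CT.Simpson-1994} a flat endomorphism is determined by its value at $\mathfrak{p}$. Consequently the automorphism sheaf of the unframed object is identified with the stabilizer in $G$ of the framed point. The map $[\mathcal{R}/G]\to\mathcal{M}$ is therefore fully faithful as well as locally essentially surjective.

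\textbf{Main obstacle.} The rigidity step is where transitivity of $\mathcal{L}$ is really used. The uniqueness statement of \cite[\S\,4, Lemma~4.9]{CT.Simpson-1994} applies to $\Lambda^{\text{DR}}_{\mathcal{L}}$--modules that are locally free and satisfy the relevant LF condition. This is guaranteed by Proposition~\ref{def:locally free L-modulues} together with the Hilbert polynomial $n\mathcal{P}_0$ condition. I would verify this first, working relative to $S$, by applying Lemma~\ref{lem:fact-from-simpson-thm-3-8} to the difference of two flat morphisms that agree at $\mathfrak{p}\times S$.
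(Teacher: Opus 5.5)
Your proposal is correct and follows essentially the paper's route: you frame $E_G|_{\mathfrak{p}\times S}$ after an étale cover to lift the family to $\mathcal{R}^{\text{DR}}_{\mathcal{L}}(X,\mathfrak{p},G)$, and you note that two framings of isomorphic families differ by a morphism $S\to G$, which is exactly the paper's local-lifting and ``agree up to the $G$--action on overlaps'' argument. Your extra isotropy step (rigidity of framed objects via \cite[\S\,4, Lemma~4.9]{CT.Simpson-1994}) is not needed for the functor-level statement the paper proves, but it is sound and strengthens the conclusion to an equivalence between $[\mathcal{R}^{\text{DR}}_{\mathcal{L}}(X,\mathfrak{p},G)/G]$ and the stack of families.
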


\begin{proof}
	Let $S$ be a scheme and let
	\[
	(\mathcal{E}_{G},\nabla_{\mathcal{L}})
	\in
	\mathcal{M}^{\text{DR}^{\natural}}_{\mathcal{L}}(X,G)(S)
	\]
	be an $S$--family of principal $G$--bundles with integrable
	$\mathcal{L}$--connection.
	
	By definition of the moduli functor, after passing to an étale cover
	$\{S_i \to S\}$, the restriction of $\mathcal{E}_{G}$ to
	$\mathfrak{p}\times S_i$ admits a trivialization. Hence, over each $S_i$,
	the family lifts to a framed object
	\[
	(\mathcal{E}_{G},\nabla_{\mathcal{L}},\beta_i)
	\in
	\mathcal{M}^{\text{DR}^{\natural}}_{\mathcal{L}}(X,\mathfrak{p},G)(S_i).
	\]
	
	Since the framed moduli functor
	$\mathcal{M}^{\text{DR}^{\natural}}_{\mathcal{L}}(X,\mathfrak{p},G)$ is represented by $\mathcal{R}^{\text{DR}}_{\mathcal{L}}(X,\mathfrak{p},G)$, each such framed family corresponds uniquely to a morphism
	\[
	S_i \longrightarrow \mathcal{R}^{\text{DR}}_{\mathcal{L}}(X,\mathfrak{p},G).
	\]
	
	On overlaps $S_{ij}=S_i\times_S S_j$, two choices of framing differ by the
	action of an element of $G$. Consequently, the induced morphisms to
	$\mathcal{R}^{\text{DR}}_{\mathcal{L}}(X,\mathfrak{p},G)$ agree up to the natural $G$--action. This shows that $\mathcal{M}^{\text{DR}^{\natural}}_{\mathcal{L}}(X,G)$
	is locally isomorphic to the quotient functor
	$\mathcal{R}^{\text{DR}}_{\mathcal{L}}(X,\mathfrak{p},G)/G$ in the étale topology.
\end{proof}

Hence, we have the following main theorem,
\begin{corollary}\label{cor:moduli of int-L-connection PB}
	The good GIT quotient $
	\mathcal{M}^{\text{DR}}_{\mathcal{L}}(X,G)
	=
	\mathcal{R}^{\text{DR}}_{\mathcal{L}}(X,\mathfrak{p},G)\sslash G
	$
	universally corepresents the functor
	$\mathcal{M}^{\text{DR}^{\natural}}_{\mathcal{L}}(X,G)$. In particular, $\mathcal{M}^{\text{DR}}_{\mathcal{L}}(X,G)$ is a quasi-projective variety over $\mathbb{C}$.
\end{corollary}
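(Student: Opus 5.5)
We want to show that the GIT quotient $\mathcal{M}^{\text{DR}}_{\mathcal{L}}(X,G)=\mathcal{R}^{\text{DR}}_{\mathcal{L}}(X,\mathfrak{p},G)\sslash G$ universally corepresents $\mathcal{M}^{\text{DR}^{\natural}}_{\mathcal{L}}(X,G)$. The plan is to follow Simpson's argument for \cite[\S\,9, Th.~9.11]{CT.Simpson-1995}, using three inputs: Theorem~\ref{thm: L-twisted - reprsentaion - space for L-connection}, Proposition~\ref{prop:local-quotient-description}, and the standard fact that a good quotient universally corepresents the quotient functor. Quasi-projectivity then comes for free, since $\mathcal{R}^{\text{DR}}_{\mathcal{L}}(X,\mathfrak{p},G)$ is quasi-projective (a closed subscheme of $\mathcal{R}^{\text{DR}}_{\mathcal{L}}(X,\mathfrak{p},n)$, which is quasi-projective by Theorem~\ref{eqn: moduli-of-L-connection on VB}), all of its points are $G$-semistable, and a good quotient of a quasi-projective scheme by a reductive group is quasi-projective by \cite[Ch.~1, Th.~1.10]{D.Mumford-1965}.

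\emph{Step 1: the quotient map corepresents the quotient functor.} Write $R=\mathcal{R}^{\text{DR}}_{\mathcal{L}}(X,\mathfrak{p},G)$ and $M=R\sslash G$. The quotient map $q\colon R\to M$ is a good quotient. By Mumford's theorem it is a universal categorical quotient: for every base change $M'\to M$, the map $R\times_M M'\to M'$ is again a categorical quotient. Consequently $M$ universally corepresents the presheaf quotient $\underline{R}/\underline{G}$.

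\emph{Step 2: pass from $\underline{R}/\underline{G}$ to the moduli functor.}

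\begin{itemize}
\item First I would construct a natural transformation $\Phi\colon \mathcal{M}^{\text{DR}^{\natural}}_{\mathcal{L}}(X,G)\to \mathrm{Hom}(-,M)$. Take an $S$-family $(\mathcal{E}_G,\nabla_{\mathcal{L}})$ and an étale cover $\{S_i\to S\}$ over which it can be framed, as in Proposition~\ref{prop:local-quotient-description}. This gives morphisms $S_i\to R$, which I compose with $q$. On the overlaps $S_{ij}$ the lifts differ by $G$-translation, so their images in $M$ agree. Since $\mathrm{Hom}(-,M)$ is an étale sheaf, the maps glue to a single morphism $S\to M$, independent of the choices.
\item Next I would check the universal property. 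Let $N$ be a scheme with a natural transformation $\mathcal{M}^{\text{DR}^{\natural}}_{\mathcal{L}}(X,G)\to \mathrm{Hom}(-,N)$. Composing with the forgetful map $\underline{R}\to \mathcal{M}^{\text{DR}^{\natural}}_{\mathcal{L}}(X,G)$, which drops the framing, gives a $G$-invariant morphism $R\to N$. By Step 1 this factors uniquely through $M$. To check that the factorization is compatible with $\Phi$ on arbitrary $S$-families, I would test it étale-locally on $S$, where every family lifts to $R$. The same argument goes through after any base change $M'\to M$, because Step 1 gives universality and the étale-local description of Proposition~\ref{prop:local-quotient-description} is stable under base change.
\end{itemize}

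\emph{Step 3: the expected obstacle.} The main point needing care is the hypothesis behind Step 2: the functor must be the étale sheafification of $\underline{R}/\underline{G}$, not just locally isomorphic to it in some loose sense. Two things must be checked here.

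\begin{itemize}
\item \emph{The framing torsor is étale-locally trivial.} The torsor of framings $\mathcal{E}_G|_{\mathfrak{p}\times S}$ is a $G$-torsor over $S$, trivial étale-locally because $G$ is smooth.
\item \emph{Descent of families.} Families in $\mathcal{M}^{\text{DR}^{\natural}}_{\mathcal{L}}(X,G)$ must satisfy étale descent, including descent of the $\mathcal{L}$-connection. I would justify this through the associated vector bundles with integrable $\mathcal{L}$-connections, that is, $\Lambda^{\text{DR}}_{\mathcal{L}}$-modules, which satisfy fpqc descent, together with the Tannakian description of Lemma~\ref{lem:PB-with-L-conection-Tannakian equivalence}.
\end{itemize}

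Given these two facts, a morphism from the sheafification into the representable sheaf $\mathrm{Hom}(-,N)$ is the same as a morphism from the presheaf $\underline{R}/\underline{G}$. So corepresentability transfers directly from Step 1 to $\mathcal{M}^{\text{DR}^{\natural}}_{\mathcal{L}}(X,G)$.
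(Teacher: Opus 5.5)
Your proposal is correct and follows the same route as the paper, which states the corollary directly after Proposition~\ref{prop:local-quotient-description}: a GIT quotient of the closed subscheme $\mathcal{R}^{\text{DR}}_{\mathcal{L}}(X,\mathfrak{p},G)\subset\mathcal{R}^{\text{DR}}_{\mathcal{L}}(X,\mathfrak{p},n)$, combined with the \'etale-local identification of the moduli functor with the quotient functor; your Steps~1--2 spell out the standard Simpson argument that the paper leaves implicit. One correction to Step~3: $\mathcal{M}^{\text{DR}^{\natural}}_{\mathcal{L}}(X,G)$ is not itself the \'etale sheafification of $\underline{R}/\underline{G}$ (an isomorphism-class functor generally fails the sheaf axiom because of automorphisms, e.g.\ twisting a constant family by a nontrivial torsor under the centre of $G$), and no descent is needed, since Step~2 only uses that $\underline{R}\to\mathcal{M}^{\text{DR}^{\natural}}_{\mathcal{L}}(X,G)$ is \'etale-locally surjective with fibres the $G$-orbits, i.e.\ that the two functors have the same sheafification, which is exactly the paper's ``locally isomorphic''.
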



\subsection{The moduli spaces of $\mathcal{L}$-twisted principal $G$-Higgs bundles}

First, we define the moduli functor associated to our moduli problem.
Let $\mathcal{P}_{0}$ be a Hilbert polynomial of $\mathcal{O}_{X}$.
We consider the contravariant functor
\[
\mathcal{M}^{\mathrm{Dol}^{\natural}}_{\mathcal{L}}(X,G)
\colon (\texttt{Sch}/\mathbb{C}) \longrightarrow \texttt{Sets},
\]
defined as follows.

\begin{itemize}\label{moduli-of-L-twisted princpal G--Higgs bundles}
	\item
	For a $\mathbb{C}$--scheme $S$, the set
	$\mathcal{M}^{\mathrm{Dol}^{\natural}}_{\mathcal{L}}(X,G)(S)$
	consists of isomorphism classes of $\mathcal{L}$--twisted principal $G$--Higgs bundles  $(\mathcal{F},\theta_{\mathcal{L}})$ of harmonic type on $X'=X\times S$, such that the following conditions hold:
    \item[(i)]
 	For every finite--dimensional representation
 	$\rho \colon G \to GL(V)$,
 	the associated $\mathcal{L}$--twisted Higgs bundle
 	\[
 	(\mathcal{F}(V),\theta_{\mathcal{L}}(V))
 	\]
 	on $X_S$ is $S$--flat and for every closed point $\bar{s}\rightarrow S$ the restriction $E_{G}(V)_{\bar{s}}$ has Hilbert polynomial $\dim(V)\mathcal{P}_{0}$.
 	
 	\item[(ii)]
 	For every closed point $s \in S$,, the fiber $(E_{G,s}, \theta_{\mathcal L,s})$ is a $\mathcal{L}$--twisted principal $G$--Higgs bundle of harmonic type on $X$.
 	
 \item Two such family $(\mathcal{F},\theta_{\mathcal{L}})$ and $(\mathcal{F}',\theta'_{\mathcal L})$ over $S$ are said to be \textit{isomorphic} if there exist an isomorphism of $\mathcal{L}$--twisted principal $G$--Higgs bundle, \[\varphi:(\mathcal{F},\theta_{\mathcal L}\rightarrow (\mathcal{F}',\theta'_{\mathcal L}))\]
 
 \item For a morphism of schemes $\psi:T\rightarrow S$, the functor assigns the pullback \[(\mathcal{F}, \theta_{\mathcal L})\mapsto (\psi^{*}(\mathcal{F}),\psi^{*}(\theta_{\mathcal L}))\]which defines a $T$--family on $X\times T$.
 \end{itemize}

We now construct a $\mathcal{L}$--twisted Dolbeault moduli space $\mathcal{M}^{\mathrm{Dol}}_{\mathcal{L}}(X,G)$ as a quasi-projective variety over $\mathbb{C}$, by an argument entirely parallel to the construction of $\mathcal{M}^{\text{DR}}_{\mathcal{L}}(X,G)$ in the de Rham case.This variety universally corepresents the functor $\mathcal{M}^{\mathrm{Dol},\natural}_{\mathcal{L}}(X,G)$.

The following lemma is well known in the case of Higgs bundles of
semiharmonic type on $X$ (see \cite[\S\,9, Lemma~9.3]{CT.Simpson-1995}).
We extend it here to the setting of Lie algebroids by the same method.

\begin{lem}\label{projective scheme N(E,k) for L-twisted-Higgs bundles}
	Let $\mathcal{L}$ be a Lie algebroid on $X$, and let
	$E$ be an $\mathcal{L}$--twisted Higgs bundle of semiharmonic type.
	Fix an integer $k$.
	Then there exists a projective scheme
	$\mathcal{N}(E,k)$ over $\mathbb{C}$ representing the functor which
	associates to each $\mathbb{C}$--scheme $S$ the set of quotients
	\[
	p_X^{*}E \longrightarrow F \longrightarrow 0
	\]
	of $\mathcal{L}$--twisted Higgs bundles of semiharmonic type on
	$X_S := X \times S$, where $F$ has rank $k$.
	Moreover, the natural morphism
	\[
	\mathcal{N}(E,k)(S) \hookrightarrow
	\mathfrak{Grass}_{S}(E|_{\mathfrak{p}\times S},k)
	\]
	is a closed embedding.
\end{lem}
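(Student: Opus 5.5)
The plan follows the proof of Lemma~\eqref{lem:projective-scheme-N(E,k)} almost verbatim, replacing the de Rham algebra $\Lambda^{\text{DR}}_{\mathcal L}$ by the Dolbeault algebra $\Lambda^{\text{Dol}}_{\mathcal L}=\operatorname{Sym}^{\bullet}(V)$. By Lemma~\eqref{lem: Lambda module structure--equivalent to L twisted Higgs bundle}, the $\mathcal L$--twisted Higgs bundle $(E,\theta_{\mathcal L})$ is the same thing as a $\Lambda^{\text{Dol}}_{\mathcal L}$--module. A quotient of $p_X^{*}E$ compatible with the Higgs field is exactly a quotient of $\Lambda^{\text{Dol}}_{\mathcal L}$--modules.

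\emph{Step 1: the closed subscheme.} I start from $\mathfrak{Quot}(E,k\mathcal P_0)$ with universal quotient $E^{\mathrm{univ}}\twoheadrightarrow F^{\mathrm{univ}}$ and kernel $K^{\mathrm{univ}}$. Consider the morphism
\[
\psi^{\mathrm{univ}}\colon V\otimes K^{\mathrm{univ}}\longrightarrow F^{\mathrm{univ}}
\]
obtained by restricting the action $\Lambda^{\text{Dol}}_{\mathcal L}\otimes E^{\mathrm{univ}}\to E^{\mathrm{univ}}$ and composing with the quotient map. Because $\Lambda^{\text{Dol}}_{\mathcal L}$ is generated by $V=\operatorname{Gr}_1$, it suffices to use the degree-one piece, which is coherent. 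Since $F^{\mathrm{univ}}$ is flat over the Quot scheme, Lemma~\eqref{lem:fact-from-simpson-thm-3-8} shows that the vanishing locus of $\psi^{\mathrm{univ}}$ is a closed subscheme $\mathcal N(E,k)$. This subscheme is projective, and it represents the functor of Higgs-compatible quotients with Hilbert polynomial $k\mathcal P_0$: on an $S$--point the Higgs field of $p_X^{*}E$ descends to $F$, and $\theta\wedge\theta=0$ descends as well.

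\emph{Step 2: the quotients are semiharmonic bundles.} Here I need every such quotient $F_s$ over a closed point to be locally free of rank $k$, semistable, and to have vanishing rational $\mathcal L$--Chern classes. A quotient of $E_s$ with the same normalized Hilbert polynomial $\mathcal P_0$ is semistable, because any destabilizing quotient of $F_s$ would also destabilize $E_s$. It is also torsion free, since torsion would lower the slope of a Higgs subsheaf in the wrong direction. The kernel is then a Higgs subsheaf of degree zero. Proposition~\eqref{prop:locally-free-L-Higgs} says that such a subsheaf is a strict subbundle with vanishing $\mathcal L$--Chern classes. Hence $F_s$ is locally free with vanishing classes, and by the Hilbert polynomial its rank is $k$. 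Conversely, every Higgs quotient of semiharmonic type of rank $k$ has Hilbert polynomial $k\mathcal P_0$, so the two functors agree. Local freeness of $F$ over $X_S$ then follows from flatness together with fiberwise local freeness, via \cite[\S\,1, Lemma~1.27]{CT.Simpson-1994}.

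\emph{Step 3: the embedding into the Grassmannian.} Restricting to $\mathfrak p\times S$ gives a map $\mathcal N(E,k)\to\mathfrak{Grass}_S(E|_{\mathfrak p\times S},k)$. As in Lemma~\eqref{lem:projective-scheme-N(E,k)}, I would invoke \cite[\S\,4, Lemma~4.9]{CT.Simpson-1994}. The key input is that two strict semiharmonic Higgs quotients agreeing at $\mathfrak p$ coincide. The argument: the composite $K\to E\to F'$ is a morphism between semistable degree-zero objects, so it is strict. It vanishes at $\mathfrak p$, hence it is zero. This makes the map a monomorphism, and since the source is projective, a proper monomorphism is a closed embedding; this step must also be run in families over an arbitrary base $S$.

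I expect the main obstacle to be Step 2, namely applying Proposition~\eqref{prop:locally-free-L-Higgs}. That proposition requires $\mathcal L$ to be transitive, whereas the lemma as stated allows an arbitrary Lie algebroid. I would add transitivity as a standing hypothesis, as in the surrounding section.
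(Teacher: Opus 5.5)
Your proposal is correct and follows essentially the paper's proof. The paper cuts $\mathcal N(E,k)$ out of $\mathfrak{Quot}(E,k\mathcal P_0)$ by the vanishing of $K^{\mathrm{univ}}\to F^{\mathrm{univ}}\otimes\Omega^1_{\mathcal L}$ via Lemma~\eqref{lem:fact-from-simpson-thm-3-8}; your $V\otimes K^{\mathrm{univ}}\to F^{\mathrm{univ}}$ is just its adjoint. Like you, it then gets local freeness of the quotients by applying Proposition~\eqref{prop:locally-free-L-Higgs} to the degree-zero kernel together with Simpson's Lemma~1.27, and gets the closed embedding into the Grassmannian from Simpson's Lemma~4.9 plus projectivity.

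Your extra checks, namely semistability of $F_s$ and the converse Hilbert-polynomial identification, fill in points the paper leaves implicit. Your remark that transitivity of $\mathcal L$ is needed is also apt: the paper uses it implicitly through Proposition~\eqref{prop:locally-free-L-Higgs}, as a standing hypothesis of that section.
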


\begin{proof}
	Let $\Lambda^{\text{Dol}}_{\mathcal{L}}$ denote the $\mathcal{L}$--twisted Dolbeault $D$--algebara  associated to the Lie algebroid $\mathcal{L}$.
	By lemma~\eqref{lem: Lambda module structure--equivalent to L twisted Higgs bundle}
	the $\mathcal{L}$--twisted Higgs bundle $E$ may be viewed as a
	$\Lambda^{\mathrm{Dol}}_{\mathcal{L}}$--module.
	
	Let $\mathcal{P}_0$ denote the Hilbert polynomial of $\mathcal{O}_X$.
	The Quot scheme $\mathfrak{Quot}(E,k\mathcal{P}_0)$
	parame-trizes quotient sheaves
	$E \twoheadrightarrow F \to 0$
	that are flat over $\mathbb{C}$ and have Hilbert polynomial
	$k\mathcal{P}_0$.
	Let $e \colon S \to \mathfrak{Quot}(E,k\mathcal{P}_0)$ be an $S$--valued
	point corresponding to a quotient
	$p_X^{*}E \twoheadrightarrow F \to 0$
	on $X_S$, and let $K$ denote its kernel.
	
	Let
	\[
	E^{\mathrm{univ}} \twoheadrightarrow F^{\mathrm{univ}} \to 0
	\]
	be the universal quotient on
	$X^{\mathrm{univ}} := X \times \mathfrak{Quot}(E,k\mathcal{P}_0)$,
	and let $K^{\mathrm{univ}} \subset E^{\mathrm{univ}}$ be its kernel.
	Then $K = e^{*}(K^{\mathrm{univ}})$.
	The Higgs field on $E$ induces a universal morphism
	\[
	\theta_{\mathcal{L}}^{\mathrm{univ}} \colon
	K^{\mathrm{univ}} \longrightarrow
	F^{\mathrm{univ}} \otimes (\Omega^1_{\mathcal{L}})^{\mathrm{univ}},
	\]
	whose pullback via $e$ is the induced morphism
	\[
	\theta_{\mathcal{L}} \colon K \longrightarrow F \otimes \Omega^1_{\mathcal{L}}.
	\]
	
	By Lemma~\eqref{lem:fact-from-simpson-thm-3-8}, there exists a closed
	subscheme
	\[
	\mathcal{N}(E,k) \subset \mathfrak{Quot}(E,k\mathcal{P}_0)
	\]
	representing the condition that for any $S$--valued point
	$e \colon S \to \mathcal{N}(E,k)$,
	the induced morphism $\theta$ vanishes.
	In this case, the $\mathcal{L}$--twisted Higgs field on $p_X^{*}E$ descends to a morphism
	\[
	\theta_{\mathcal{L}} \colon
	F \longrightarrow F \otimes \Omega^1_{\mathcal{L}},
	\]
	making $F$ into an $\mathcal{L}$--twisted Higgs sheaf.
	
	Thus $\mathcal{N}(E,k)(S)$ parametrizes quotients of
	$\mathcal{L}$--twisted Higgs sheaves
	$p_X^{*}E \twoheadrightarrow F \to 0$
	on $X_S$, where $F$ is flat over $S$ with Hilbert polynomial
	$k\mathcal{P}_0$.
	For any $s \in S$, the fiber $F_s$ is a quotient $\mathcal{L}$--twisted Higgs sheaf of $E_s$
	with the same normalized Hilbert polynomial.
	Let $K_s$ denote the kernel of $E_s \to F_s$.
	Then $K_s$ is a $\mathcal{L}$--twisted Higgs subsheaf of $E_s$ with the same normalized Hilbert
	polynomial.
	By proposition \eqref{prop:locally-free-L-Higgs},
	$K_s$ is a strict subbundle with vanishing $\mathcal{L}$--Chern classes, and hence
	$F_s$ is locally free with vanishing $\mathcal{L}$--Chern classes.
	By \cite[\S\,1, Lemma~1.27]{CT.Simpson-1994}, it follows that $F$ is
	locally free.
	
	Therefore, $\mathcal{N}(E,k)(S)$ parametrizes quotients of
	$\mathcal{L}$--twisted Higgs bundles of semiharmonic type
	$p_X^{*}E \twoheadrightarrow F \to 0$
	on $X_S$ of rank $k$.
	
	Finally, $\mathcal{N}(E,k)$ is projective over $\mathbb{C}$, and the
	natural morphism to
	$\mathfrak{Grass}_{S}(E|_{\mathfrak{p}\times S},k)$
	is injective on $S$--valued points by
	\cite[\S\,4, Lemma~4.9]{CT.Simpson-1994}. Hence this morphism is a closed embedding.
\end{proof}

We now extend the notion of the monodromy group for principal $G$--objects
(cf.~\cite[\S\,9, p.~50]{CT.Simpson-1995}) to the setting of
$\mathcal{L}$--twisted principal Higgs bundles.

Let $\mathfrak{p} \in X$ be a fixed closed point, and let $G \subset H$
be an algebraic subgroup.
Let $E_H$ be a $\mathcal{L}$--twisted principal Higgs bundle for the group
$H$ on $X$, and fix a point
$\mathfrak{b} \in E_H|_{\mathfrak{p}} \cong H$.

We say that the \emph{monodromy of the pair $(E_H,\mathfrak{b})$ is
contained in $G$} if the following condition holds:
for every finite-dimensional linear representation $V$ of $H$, and every
subspace $W \subset V$ preserved by $G$, there exists a strict
$\mathcal{L}$--twisted Higgs subbundle of semiharmonic type
\[
F \subset E_H \times^{H} V
\]
such that
\[
F|_{\mathfrak{p}}
=
\{\mathfrak{b}\} \times W
\subset
(E_H \times^{H} V)|_{\mathfrak{p}}.
\]

The \emph{monodromy group} $\mathbf{Mono}(E_H,\mathfrak{b})$ is defined as
the intersection of all algebraic subgroups $G \subset H$ such that the
monodromy of $(E_H,\mathfrak{b})$ is contained in $G$.

The following two lemmas are well known in the case of principal Higgs
bundles of semiharmonic type (see
\cite[\S\,9, Lemmas~9.4 and~9.5]{CT.Simpson-1995}).
Their proofs carry over verbatim to the $\mathcal{L}$--twisted setting,
using the arguments of
Lemma~\eqref{monodromy related thm-for-PB with-L-connection} and
Lemma~\eqref{lem:Fr-bundle-induce-L-connection}.

\begin{lem}\label{monodromy related thm- with-L-twisted-Higgs-bundle-for-PB}
	Let $G \subset H$ be an algebraic subgroup, and let
	$E_G$ be a $\mathcal{L}$--twisted principal Higgs bundle on $X$ for the
	group $G$.
	Then the induced bundle
	\[
	E_H := E_G \times^{G} H
	\]
	has a natural structure of a $\mathcal{L}$--twisted principal Higgs
	bundle for the group $H$.
	
	Moreover, this construction induces a bijection between:
	\begin{enumerate}
		\item
		isomorphism classes of pairs $(E_G,\mathfrak{b}')$, where $E_G$ is a
		$\mathcal{L}$--twisted principal Higgs bundle of semiharmonic type for
		$G$ and $\mathfrak{b}' \in E_G|_{\mathfrak{p}}$;
		
		\item
		isomorphism classes of pairs $(E_H,\mathfrak{b})$, where $E_H$ is a
		$\mathcal{L}$--twisted principal Higgs bundle of semiharmonic type for
		$H$ and $\mathfrak{b} \in E_H|_{\mathfrak{p}}$, such that the monodromy of
		$(E_H,\mathfrak{b})$ is contained in $G$.
	\end{enumerate}
\end{lem}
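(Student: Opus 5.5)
The plan mirrors the proof of Lemma~\eqref{monodromy related thm-for-PB with-L-connection}, with the Tannakian input $\texttt{Vect}^{\text{int}}_{\mathcal{L}}(X)$ replaced by $\texttt{Higg}^{\text{sh}}_{\mathcal{L}}(X)$. Lemma~\eqref{lem: L-twisted PB-Higgs bundle-tannakian} replaces Lemma~\eqref{lem:PB-with-L-conection-Tannakian equivalence}, and Proposition~\eqref{prop:locally-free-L-Higgs} replaces Proposition~\eqref{def:locally free L-modulues}.

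\emph{Construction of $E_H$.} For $(E_G,\theta_{\mathcal{L}})$, push the Higgs field forward along the inclusion $\ad(E_G)=E_G\times^G\mathfrak g\to E_G\times^G\mathfrak h=\ad(E_H)$ induced by $d f:\mathfrak g\hookrightarrow\mathfrak h$. Integrability $[\theta,\theta]=0$ is preserved, since $df$ is a Lie algebra map. For a faithful representation $V$ of $H$ we have $E_H\times^H V=E_G\times^G V$, with the same $\mathcal{L}$--Higgs field. Hence the semiharmonic (resp.\ harmonic) condition and the vanishing of the rational $\mathcal{L}$--Chern classes transfer directly. If $\mathfrak b'\in E_G|_{\mathfrak p}$, then $\mathfrak b=[\mathfrak b',1]\in E_H|_{\mathfrak p}$. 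For each $G$--invariant subspace $W\subset V$, the bundle $E_G\times^G W\subset E_H\times^H V$ is a strict subbundle preserved by the Higgs field. It is of semiharmonic type because it is the image of an object under the functor $\rho_{E_G}$ of Lemma~\eqref{lem: L-twisted PB-Higgs bundle-tannakian}, and its fibre at $\mathfrak p$ is $\{\mathfrak b\}\times W$. So the monodromy of $(E_H,\mathfrak b)$ is contained in $G$, and this gives the map $(1)\to(2)$.

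\emph{Inverse map.} Start from $(E_H,\mathfrak b)$ whose monodromy lies in $G$, and use the tensor category $\texttt{Rep}(G,H)$ of pairs $(W\subset V)$. Every $G$--representation embeds in the restriction of some $H$--representation, since $G\subset H$ is closed. Hence the forgetful functor $\texttt{Rep}(G,H)\to\texttt{Rep}(G)$ is a tensor equivalence.

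To each pair $(W,V)$ assign the Higgs subbundle $F(W,V)\subset E_H\times^HV$ provided by the hypothesis. Uniqueness comes from the closed embedding $\mathcal N(E,k)\hookrightarrow\mathfrak{Grass}$ of Lemma~\eqref{projective scheme N(E,k) for L-twisted-Higgs bundles}: a semiharmonic quotient is determined by its fibre at $\mathfrak p$. For a morphism $\phi:W\to W'$, apply the hypothesis to the graph of $\phi$ inside $V\oplus V'$. This gives a Higgs subbundle of $F(W,V)\oplus F(W',V')$ whose fibre at $\mathfrak p$ is a graph. The first projection from it to $F(W,V)$ is a morphism in the abelian category $\texttt{Higg}^{\text{sh}}_{\mathcal L}(X)$, by Proposition~\eqref{prop:Tannakian-L-Higgs}. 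That projection is an isomorphism at $\mathfrak p$, hence generically, and its kernel and cokernel are semiharmonic bundles of rank $0$, so they vanish. The subbundle is therefore the graph of a Higgs morphism $\phi^\sharp$ with $\phi^\sharp|_{\mathfrak p}=\phi$.

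Functoriality and tensor compatibility follow from uniqueness, because $F(W_1\otimes W_2,V_1\otimes V_2)$ and $F(W_1,V_1)\otimes F(W_2,V_2)$ agree at $\mathfrak p$. Strictness, exactness and faithfulness of the resulting functor are checked on fibres at points, using Proposition~\eqref{prop:locally-free-L-Higgs}.

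\emph{Conclusion.} We obtain a $G$--torsor $\texttt{Rep}(G)\to\texttt{Higg}^{\text{sh}}_{\mathcal L}(X)$ whose pointwise fibres are fibre functors. Lemma~\eqref{lem: L-twisted PB-Higgs bundle-tannakian} turns it into $(E_G,\theta_{\mathcal L})$, and $\mathfrak b'$ is read off from the identification $F(W,V)_{\mathfrak p}=W$. The two constructions are mutually inverse: composing in either order gives objects agreeing at $\mathfrak p$ together with the base points, hence isomorphic by the same uniqueness.

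\emph{Main obstacle.} The hardest step is verifying that the pointwise functors $W\mapsto F(W,V)_x$ are fibre functors at every $x$, and not only at $\mathfrak p$, so that Nori's theorem (Proposition~\eqref{def:Nori-descrpton of PB}) applies. I would first deduce this from strictness: a strict exact tensor functor into vector bundles gives exact fibres everywhere, because it remains exact after restricting to any point.
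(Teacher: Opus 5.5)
Your proposal is correct and follows essentially the paper's argument. The paper's proof just transports the connection case: the equivalence $\texttt{Rep}(G,H)\simeq\texttt{Rep}(G)$, the subbundles $F(V,W)$, the graph trick for morphisms, uniqueness via Simpson's Lemma~4.9 (the Grassmannian embedding), and Tannakian reconstruction. Your write-up also fills in steps the paper leaves implicit: the monodromy check for $E_H$, why the graph subbundle is a graph, recovering $\mathfrak b'$, and the fibre-functor condition at every point.

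One correction, which applies equally to the paper: closedness of $G\subset H$ only makes every $G$--representation a \emph{subquotient} of a restricted $H$--representation. For example, the character of the upper-triangular Borel of $GL_2$ on $\mathbb{C}^2/\mathbb{C}e_1$ is not a subrepresentation of any restriction. So essential surjectivity of $\texttt{Rep}(G,H)\to\texttt{Rep}(G)$ needs complete reducibility, i.e.\ the paper's standing assumption that $G$ is reductive.
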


\begin{proof}
	The proof is identical to that of
	Lemma~\eqref{monodromy related thm-for-PB with-L-connection}.
\end{proof}

\begin{lem}\label{lem:Fr-bundle as - L-twisted-principal-GLn - bundle}
	Let $E$ be a $\mathcal{L}$--twisted Higgs bundle of semiharmonic type of
	rank $n$ on $X$, and let
	$\beta \colon E|_{\mathfrak{p}} \xrightarrow{\sim} \mathbb{C}^{n}$ be a
	framing.
	Then the frame bundle $\mathbf{Fr}(E)$ carries a natural structure of a
	$\mathcal{L}$--twisted principal Higgs bundle for the group
	$GL(n,\mathbb{C})$.
	
	Moreover, the associated vector bundle recovers $E$:
	\[
	E \cong \mathbf{Fr}(E) \times^{GL(n,\mathbb{C})} \mathbb{C}^{n}.
	\]
	This construction induces a bijection between isomorphism classes of
	pairs $(E,\beta)$ and $(\mathbf{Fr}(E),\mathfrak{b})$, where
	$\mathfrak{b} \in \mathbf{Fr}(E)|_{\mathfrak{p}}$ corresponds to the
	chosen frame $\beta$.
\end{lem}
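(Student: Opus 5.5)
The plan is to mirror the proof of Lemma~\eqref{lem:Fr-bundle-induce-L-connection}, replacing integrable $\mathcal{L}$--connections by $\mathcal{L}$--twisted Higgs fields of semiharmonic type throughout, and then to invoke Lemma~\eqref{lem: L-twisted PB-Higgs bundle-tannakian} in place of Lemma~\eqref{lem:PB-with-L-conection-Tannakian equivalence}.

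First, consider the category $\texttt{Rep}(GL(n,\mathbb{C}),\mathrm{std})$ of pairs $(V,T^{a,b}(\mathbb{C}^n))$ with $V\subset T^{a,b}(\mathbb{C}^n)$ a $GL(n,\mathbb{C})$--invariant subspace. As before, it is tensor equivalent to $\texttt{Rep}(GL(n,\mathbb{C}))$. This holds because $GL(n,\mathbb{C})$ is reductive, so every representation occurs as a subquotient, and hence as a summand, of some tensor power $T^{a,b}$ of the standard representation.

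Second, for each object $V\subset T^{a,b}(\mathbb{C}^n)$ I form fiberwise the canonical subbundle $F\subset T^{a,b}(E)$. The tensor bundle $T^{a,b}(E)$ carries the induced $\mathcal{L}$--twisted Higgs field, namely the Leibniz extension of $\theta_{\mathcal{L}}$ acting by derivations, and this field is again integrable. The key point is that $F$ is $\theta_{\mathcal{L}}$--invariant. Locally, $\theta_{\mathcal{L}}$ is a $\Omega^1_{\mathcal{L}}$--valued section of $\End(E)=\mathfrak{gl}(E)$, and it acts on $T^{a,b}(E)$ through the derivative of the $GL$--action. Since $V$ is $GL(n,\mathbb{C})$--invariant, it is also $\mathfrak{gl}_n$--invariant, so $\theta_{\mathcal{L}}(F)\subset F\otimes\Omega^1_{\mathcal{L}}$.

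Next I need semiharmonicity of $F$. The bundle $T^{a,b}(E)$ is semiharmonic, because tensor products and duals stay inside $\texttt{Higg}^{\mathrm{sh}}_{\mathcal{L}}(X)$ by Proposition~\eqref{prop:Tannakian-L-Higgs}. An invariant complement $V^{\perp}$ gives a $\theta_{\mathcal{L}}$--invariant complement $F^{\perp}$, so $F$ is a direct summand. A direct summand of a $\mu$--semistable Higgs bundle with vanishing $\mathcal{L}$--Chern classes has degree zero and vanishing classes, by additivity of Chern classes together with Proposition~\eqref{prop:locally-free-L-Higgs}. Hence $F$ is semiharmonic and is a strict subbundle.

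Third, morphisms in $\texttt{Rep}(GL(n,\mathbb{C}),\mathrm{std})$ are restrictions of $GL$--equivariant maps, and these are built from tensor contractions and permutations. They therefore induce Higgs-compatible bundle maps, compatibly with $\otimes$. This yields a strict, exact, faithful tensor functor into $\texttt{Higg}^{\mathrm{sh}}_{\mathcal{L}}(X)$, and taking fibers at each $x$ gives a fiber functor. Composing with the inverse equivalence and applying Lemma~\eqref{lem: L-twisted PB-Higgs bundle-tannakian} produces an $\mathcal{L}$--twisted principal $GL(n,\mathbb{C})$--Higgs bundle. Its underlying bundle is $\mathbf{Fr}(E)$ by Nori's uniqueness (Proposition~\eqref{def:Nori-descrpton of PB}), since the standard representation goes to $E$. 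Equivalently, the Higgs field is simply $\theta_{\mathcal{L}}$, viewed as a section of $\ad(\mathbf{Fr}(E))\otimes\Omega^1_{\mathcal{L}}\cong\End(E)\otimes\Omega^1_{\mathcal{L}}$, and $[\theta_{\mathcal{L}},\theta_{\mathcal{L}}]=\theta_{\mathcal{L}}\wedge\theta_{\mathcal{L}}=0$. This direct description is the check I would actually rely on.

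The bijection on framings follows because a point $\mathfrak{b}\in\mathbf{Fr}(E)|_{\mathfrak{p}}$ is by definition an isomorphism $E|_{\mathfrak{p}}\cong\mathbb{C}^n$. Isomorphisms of pairs $(E,\beta)$ correspond to isomorphisms of pairs $(\mathbf{Fr}(E),\mathfrak{b})$ through the functor $\mathbf{Fr}$ and its inverse $\times^{GL_n}\mathbb{C}^n$, and both preserve Higgs fields.

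I expect the main obstacle to be the semiharmonicity and strictness of the subbundles $F$, that is, confirming that direct summands remain in $\texttt{Higg}^{\mathrm{sh}}_{\mathcal{L}}(X)$. I would handle it via the invariant complement argument combined with Proposition~\eqref{prop:locally-free-L-Higgs}.
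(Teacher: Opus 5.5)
Your proposal is correct. Its main line (the category $\texttt{Rep}(GL(n,\mathbb{C}),\mathrm{std})$, the canonical subbundles $F\subset T^{a,b}(E)$ preserved by the induced Higgs field, invariant complements, then Lemma~\eqref{lem: L-twisted PB-Higgs bundle-tannakian}) is exactly the paper's proof. That proof is just the sentence that the argument of Lemma~\eqref{lem:Fr-bundle-induce-L-connection} carries over verbatim. You actually give more detail than the paper on why the summands $F$ are semiharmonic.

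The direct description you say you would rely on is a genuinely more elementary route. For $G=GL(n,\mathbb{C})$ one has $\ad(\mathbf{Fr}(E))=\End(E)$ canonically, so $\theta_{\mathcal{L}}$ already is the principal Higgs field. Then $[\theta_{\mathcal{L}},\theta_{\mathcal{L}}]=0$ is, up to a factor $2$, the condition $\theta_{\mathcal{L}}\wedge\theta_{\mathcal{L}}=0$. The $\mathcal{L}$--Chern classes of $\mathbf{Fr}(E)$ are those of $E$. The semiharmonic-type condition in the paper's definition is witnessed by the faithful standard representation, which returns $(E,\theta_{\mathcal{L}})$ itself.

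This route needs no Tannakian reconstruction. It also does not depend on $\texttt{Higg}^{\mathrm{sh}}_{\mathcal{L}}(X)$ being closed under tensor products and duals, which Proposition~\eqref{prop:Tannakian-L-Higgs} asserts but does not prove. The direct-summand step you flagged as the main obstacle simply disappears. The Tannakian route, in turn, gives uniformity with the monodromy arguments the paper uses for general $G$.

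One small correction to your first step, which the paper shares. Scalars $t\in GL(n,\mathbb{C})$ act on $T^{a,b}(\mathbb{C}^n)$ by $t^{a-b}$, so a representation such as $\mathbb{C}\oplus\mathbb{C}^n$ embeds in no single $T^{a,b}$. For the claimed tensor equivalence with $\texttt{Rep}(GL(n,\mathbb{C}))$, you must allow $V$ to sit inside finite direct sums of the $T^{a,b}$.
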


\begin{proof}
	The proof is identical to that of
	Lemma~\eqref{lem:Fr-bundle-induce-L-connection}.
\end{proof}

Fix a irreducible complex projective variety $X$ with transitive a Lie algebroid $\mathcal{L}$ on $X$, a closed point $\mathfrak{p} \in X$, and a
reductive algebraic group $G$.
Fix once and for all a faithful representation
$\rho \colon G \hookrightarrow GL(V)$.

We define a contravariant functor
\[
\mathcal{M}^{\mathrm{Dol}^{\natural}}_{\mathcal{L}}(X,\mathfrak{p},G)
\colon \mathbf{Sch}/\mathbb{C} \longrightarrow \mathbf{Sets}
\]
as follows.

For a scheme $S$, the set
$\mathcal{M}^{\mathrm{Dol}^{\natural}}_{\mathcal{L}}(X,\mathfrak{p},G)(S)$
consists of isomorphism classes of triples
\[
(\mathcal{F},\theta_{\mathcal{L}}, b),
\]
where:
\begin{itemize}
	\item
	$(\mathcal{F}, \theta_{\mathcal{L}})$ is a $\mathcal{L}$--twisted principal $G$--Higgs bundle of semi-harmonic type on $X_S := X \times S$;
	\item
	$b \colon S \to \mathcal{F}|_{\mathfrak{p}\times S}$ is a framing at the base point;
	
	\item
	for every closed point $s \in S$, the induced Higgs bundle
	\[
	(E_s,\theta_s,b_{s})
	:=
	\bigl(
	\mathcal{F}_s \times^{G} V,\,
	\theta_{\mathcal{L},s,}\mathfrak{b}
	\bigr)
	\]
	is a semiharmonic $\mathcal{L}$--twisted Higgs bundle on $X$ with fixed Hilbert polynomial $\mathcal{P}_{0}$.
\end{itemize}
Two triples $(\mathcal{F},\theta_{\mathcal{L}},b)$ and
$(\mathcal{F}',\theta_{\mathcal{L}}',b')$ over $S$ are isomorphic if there
exists an isomorphism of principal $G$--bundles
\[
\varphi \colon \mathcal{F} \xrightarrow{\sim} \mathcal{F}'
\]
such that:
\begin{itemize}
	\item
	$\varphi$ is compatible with the $\mathcal{L}$--twisted Higgs fields,
	i.e. $\varphi^*(\theta_{\mathcal{L}}') = \theta_{\mathcal{L}}$;
	
	\item
	$\varphi$ preserves the framing:
	$\varphi \circ b = b'$.
\end{itemize}

The following theorem is well known for principal $G$--Higgs bundles of
semiharmonic type (cf.~\cite[\S\,9, Th.~9.6]{CT.Simpson-1995}). We extend it
to the setting of $\mathcal{L}$--twisted principal Higgs bundles, following the same strategy as in
Theorem~\eqref{thm: L-twisted - reprsentaion - space for L-connection}.

\begin{thm}\label{thm:Dol-L-representation-space}
	Suppose $\mathfrak{p}$ is a fixed base point in $X$. Then there exists a scheme $
	\mathcal{R}^{\mathrm{Dol}}_{\mathcal{L}}(X,\mathfrak{p},G)$
	representing the functor
	$\mathcal{M}^{\mathrm{Dol}^{\natural}}_{\mathcal{L}}(X,\mathfrak{p},G)$.
	Moreover, if $f \colon G \hookrightarrow H$ is a closed embedding of
	algebraic groups, then $f$ induces a closed embedding
	\[
	\mathcal{R}^{\mathrm{Dol}}_{\mathcal{L}}(X,\mathfrak{p},G)
	\hookrightarrow
	\mathcal{R}^{\mathrm{Dol}}_{\mathcal{L}}(X,\mathfrak{p},H).
	\]
\end{thm}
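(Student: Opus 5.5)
The proof will follow Theorem~\eqref{thm: L-twisted - reprsentaion - space for L-connection} step by step, with Lemma~\eqref{projective scheme N(E,k) for L-twisted-Higgs bundles} in place of Lemma~\eqref{lem:projective-scheme-N(E,k)}. First treat $G=GL(n,\mathbb{C})$. By Lemma~\eqref{lem:Fr-bundle as - L-twisted-principal-GLn - bundle}, framed $\mathcal{L}$--twisted principal $GL(n,\mathbb{C})$--Higgs bundles of semiharmonic type are the same as framed rank $n$ $\mathcal{L}$--twisted Higgs bundles of semiharmonic type. This identification holds in families over $S$, since the frame bundle construction is functorial and compatible with base change. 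Theorem~\eqref{eqn: moduli-of-L- twisted Higgs bundles} gives a scheme $\mathcal{R}^{\mathrm{Dol}}_{\mathcal{L}}(X,\mathfrak{p},n)$ representing the latter functor, so we set $\mathcal{R}^{\mathrm{Dol}}_{\mathcal{L}}(X,\mathfrak{p},GL(n,\mathbb{C})):=\mathcal{R}^{\mathrm{Dol}}_{\mathcal{L}}(X,\mathfrak{p},n)$.

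Next let $G\subset H$ be closed and assume $\mathcal{R}^{\mathrm{Dol}}_{\mathcal{L}}(X,\mathfrak{p},H)$ has been constructed, with universal framed object $(\mathcal{F}^{\mathrm{univ}},\theta^{\mathrm{univ}}_{\mathcal{L}},b^{\mathrm{univ}})$. Take a representation $V$ of $H$ and a $G$--stable subspace $W\subset V$. Put $E^{\mathrm{univ}}:=\mathcal{F}^{\mathrm{univ}}\times^{H}V$; this is a family of $\mathcal{L}$--twisted Higgs bundles of semiharmonic type, and the framing trivializes its fiber at $\mathfrak{p}$ as $\mathcal{V}=V\otimes\mathcal{O}$. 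The quotient $\mathcal{V}\to\mathcal{V}/\mathcal{W}$ defines a section $\sigma$ of the relative Grassmannian $\mathfrak{Grass}(\mathcal{V},k)$, where $k=\dim V-\dim W$. Lemma~\eqref{projective scheme N(E,k) for L-twisted-Higgs bundles}, applied in the relative setting over $\mathcal{R}^{\mathrm{Dol}}_{\mathcal{L}}(H)$, realizes the Higgs-compatible quotients as a closed subscheme $\mathcal{N}(E^{\mathrm{univ}},k)$ of that Grassmannian. Set $\mathbf{C}(V,W):=\sigma^{-1}(\mathcal{N}(E^{\mathrm{univ}},k))$, which is closed. A map $g:S\to\mathcal{R}^{\mathrm{Dol}}_{\mathcal{L}}(H)$ factors through $\mathbf{C}(V,W)$ exactly when $g^{*}E^{\mathrm{univ}}$ has a strict semiharmonic Higgs subbundle with fiber $W$ at $\mathfrak{p}$. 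Define $\mathcal{R}^{\mathrm{Dol}}_{\mathcal{L}}(X,\mathfrak{p},G):=\bigcap_{(V,W)}\mathbf{C}(V,W)$. This is an intersection of closed subschemes, hence closed, and by noetherianity it equals a finite intersection. By construction it represents framed $H$--objects whose monodromy is contained in $G$. Lemma~\eqref{monodromy related thm- with-L-twisted-Higgs-bundle-for-PB} identifies these with framed $G$--objects, which gives representability. The closed embedding for a general $G\hookrightarrow H$ then follows by embedding $H\subset GL(n)$ and comparing the two intersections.

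\textbf{Main obstacle.} The hard step is making the relative version of Lemma~\eqref{projective scheme N(E,k) for L-twisted-Higgs bundles} work over a non-reduced base, and showing that the monodromy criterion and the bijection of Lemma~\eqref{monodromy related thm- with-L-twisted-Higgs-bundle-for-PB} hold for $S$--families, not only pointwise. For the first point I will use Lemma~\eqref{lem:fact-from-simpson-thm-3-8} on the relative Quot scheme, together with \cite[Lemma~1.27]{CT.Simpson-1994}, to get local freeness of the quotient. For the second, the Tannakian argument via $\texttt{Rep}(G,H)$ has to be run over $X\times S$. Uniqueness of the induced subbundles, which is needed for functoriality in morphisms, comes from the closed embedding into the Grassmannian in \cite[Lemma~4.9]{CT.Simpson-1994}. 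With uniqueness in hand, the subbundles $F(V,W)$ glue into a tensor functor $\texttt{Rep}(G)\to$ (families of semiharmonic $\mathcal{L}$--Higgs bundles), and Nori's reconstruction (Proposition~\eqref{def:Nori-descrpton of PB}) applied relatively produces the $G$--bundle.
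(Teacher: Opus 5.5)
Your proposal is correct and takes the same route as the paper. You handle the $GL(n,\mathbb{C})$ case through the frame-bundle lemma and the rank-$n$ Dolbeault representation space. You then cut out $\mathcal{R}^{\mathrm{Dol}}_{\mathcal{L}}(X,\mathfrak{p},G)$ as an intersection of the closed loci $\mathbf{C}(V,W)$ coming from the $\mathcal{N}(E,k)$ lemma, and identify it using the monodromy lemma.

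The differences are cosmetic. The paper intersects directly inside $\mathcal{R}^{\mathrm{Dol}}_{\mathcal{L}}(X,\mathfrak{p},n)$ for a faithful $G\hookrightarrow GL(n,\mathbb{C})$ and refers to the de Rham proof for details; your relative-to-$H$ version spells those details out. Also, you cite the Higgs version of the frame-bundle lemma, where the paper cites the connection version.
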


\begin{proof}
	We first treat the case $G = GL(n,\mathbb{C})$. By
	Lemma~\eqref{lem:Fr-bundle-induce-L-connection} and
	Theorem~\eqref{eqn: moduli-of-L- twisted Higgs bundles}, the functor
	$\mathcal{M}^{\mathrm{Dol}^{\natural}}_{\mathcal{L}}(X,\mathfrak{p},GL(n,\mathbb{C}))$
	is represented by the Dolbeault representation space for framed
	$\mathcal{L}$--twisted Higgs bundles of rank $n$. We therefore set
	\[
	\mathcal{R}^{\mathrm{Dol}}_{\mathcal{L}}(X,\mathfrak{p},GL(n,\mathbb{C}))
	:= \mathcal{R}^{\mathrm{Dol}}_{\mathcal{L}}(X,\mathfrak{p},n),
	\]
	which represents the desired functor in this case.
	
	Now let $G$ be an arbitrary reductive group. Choose a faithful
	representation $G \hookrightarrow GL(n,\mathbb{C})$. By
	Lemma~\eqref{projective scheme N(E,k) for L-twisted-Higgs bundles}, for every representation $V$ of $GL(n,\mathbb{C})$ and every subspace $W \subset V$ preserved by $G$, the condition that a framed
	$\mathcal{L}$--twisted Higgs bundle admits a strict
	$\mathcal{L}$--twisted Higgs subbundle of semiharmonic type with prescribed fiber at $\mathfrak{p}$ is represented by a closed subscheme of $\mathcal{R}^{\mathrm{Dol}}_{\mathcal{L}}(X,\mathfrak{p},n)$.
	
	Intersecting these closed subschemes over all such pairs $(V,W)$, exactly as in the proof of
	Theorem~\eqref{thm: L-twisted - reprsentaion - space for L-connection}, we obtain a closed subscheme
	\[
	\mathcal{R}^{\mathrm{Dol}}_{\mathcal{L}}(X,\mathfrak{p},G)
	\subset
	\mathcal{R}^{\mathrm{Dol}}_{\mathcal{L}}(X,\mathfrak{p},n),
	\]
	which represents the functor
	$\mathcal{M}^{\mathrm{Dol}^{\natural}}_{\mathcal{L}}(X,\mathfrak{p},G)$.
	
	The functoriality with respect to closed embeddings
	$f \colon G \hookrightarrow H$ is immediate from the construction, and
	induces a closed embedding
	\[
	\mathcal{R}^{\mathrm{Dol}}_{\mathcal{L}}(X,\mathfrak{p},G)
	\hookrightarrow
	\mathcal{R}^{\mathrm{Dol}}_{\mathcal{L}}(X,\mathfrak{p},H).
	\]
	This completes the proof.
\end{proof}

\medskip
\noindent \textbf{GIT quotient of the framed $\mathcal{L}$--twisted Dolbeault moduli space}.
We now apply Mumford's Geometric Invariant Theory to construct the
moduli space of $\mathcal{L}$--twisted principal $G$--Higgs bundles.

By Theorem~\ref{thm:Dol-L-representation-space}, for any closed embedding
$G \subset GL(n,\mathbb{C})$ we have a closed subscheme
\[
\mathcal{R}^{\mathrm{Dol}}_{\mathcal{L}}(X,\mathfrak{p},G)
\subset
\mathcal{R}^{\mathrm{Dol}}_{\mathcal{L}}(X,\mathfrak{p},n),
\]
where
$\mathcal{R}^{\mathrm{Dol}}_{\mathcal{L}}(X,\mathfrak{p},GL(n,\mathbb{C}))
= \mathcal{R}^{\mathrm{Dol}}_{\mathcal{L}}(X,\mathfrak{p},n)$.

Assume henceforth that $G$ is reductive and that
$G \hookrightarrow GL(n,\mathbb{C})$ is a faithful representation.
Then $G$ acts on
$\mathcal{R}^{\mathrm{Dol}}_{\mathcal{L}}(X,\mathfrak{p},n)$
via its inclusion in $GL(n,\mathbb{C})$, and this action preserves the
closed subscheme
$\mathcal{R}^{\mathrm{Dol}}_{\mathcal{L}}(X,\mathfrak{p},G)$.

By \cite[\S\,4, Th.~4.10]{CT.Simpson-1994}, there exists a
$GL(n,\mathbb{C})$--linearized ample line bundle
$\mathscr{L}$ on
$\mathcal{R}^{\mathrm{Dol}}_{\mathcal{L}}(X,\mathfrak{p},n)$
such that every point is semistable for the action of $GL(n,\mathbb{C})$.

By the Hilbert--Mumford numerical criterion
(cf.~\cite[\S\,Ch.~2, Th.~2.1]{D.Mumford-1965},
\cite[\S\,4, Th.~4.2.11]{Huybrechts-Lehn-2010}),
every point is then also semistable for the action of the subgroup $G$.
Consequently, every point of
$\mathcal{R}^{\mathrm{Dol}}_{\mathcal{L}}(X,\mathfrak{p},G)$
is $G$--semistable with respect to the induced linearization
$\mathscr{L}|_{\mathcal{R}^{\mathrm{Dol}}_{\mathcal{L}}(X,\mathfrak{p},G)}$.

\begin{thm}\label{thm:moduli-L-twisted-principal-G-Higgs}
	The good GIT quotient $
	\mathcal{M}^{\mathrm{Dol}}_{\mathcal{L}}(X,G)
	=
	\mathcal{R}^{\mathrm{Dol}}_{\mathcal{L}}(X,\mathfrak{p},G)
	\sslash G$ universally corepresents the functor $\mathcal{M}^{\mathrm{Dol}^{\natural}}_{\mathcal{L}}(X,G)$. In particulur, $\mathcal{M}^{\mathrm{Dol}}_{\mathcal{L}}(X,G)$ 
	
\end{thm}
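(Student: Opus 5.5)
The plan is to mirror the de Rham case (Proposition~\eqref{prop:local-quotient-description} and Corollary~\eqref{cor:moduli of int-L-connection PB}). The quotient is formed from the framed representation space $\mathcal{R}^{\mathrm{Dol}}_{\mathcal{L}}(X,\mathfrak{p},G)$ of Theorem~\eqref{thm:Dol-L-representation-space}, whose points are all $G$--semistable for the restricted linearization $\mathscr{L}$. Since $G$ is reductive, Mumford's GIT gives a good quotient $\mathcal{R}^{\mathrm{Dol}}_{\mathcal{L}}(X,\mathfrak{p},G)\sslash G$. It is quasi-projective because $\mathcal{R}^{\mathrm{Dol}}_{\mathcal{L}}(X,\mathfrak{p},G)$ is closed in the quasi-projective $\mathcal{R}^{\mathrm{Dol}}_{\mathcal{L}}(X,\mathfrak{p},n)$ (Theorem~\eqref{eqn: moduli-of-L- twisted Higgs bundles}) and $\mathscr{L}$ is ample. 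So the substance lies in the corepresentability statement.

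\textbf{Step 1: étale-local comparison.} I would show that $\mathcal{M}^{\mathrm{Dol}^{\natural}}_{\mathcal{L}}(X,G)$ is étale-locally isomorphic to the quotient functor $\mathcal{R}^{\mathrm{Dol}}_{\mathcal{L}}(X,\mathfrak{p},G)/G$. Take an $S$--family $(\mathcal{F},\theta_{\mathcal{L}})$. The restriction $\mathcal{F}|_{\mathfrak{p}\times S}$ is a $G$--torsor over $S$, so it admits sections $b_i$ on an étale cover $\{S_i\to S\}$. Each triple $(\mathcal{F}|_{S_i},\theta_{\mathcal{L}},b_i)$ is then an $S_i$--point of the framed functor, so it gives a map $S_i\to\mathcal{R}^{\mathrm{Dol}}_{\mathcal{L}}(X,\mathfrak{p},G)$. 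On overlaps two framings differ by a unique map $S_{ij}\to G$, and the classifying maps differ by exactly this element under the $G$--action. Conversely, a $G$--orbit of framed objects forgets to a single unframed object. This gives an isomorphism of étale sheafifications.

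\textbf{Step 2: apply Simpson's criterion.} I would then use the general principle of \cite[\S\,1]{CT.Simpson-1994}, as used for \cite[Thm.~4.7, 4.10]{CT.Simpson-1994} and \cite[\S\,9]{CT.Simpson-1995}. If a functor is étale-locally isomorphic to the quotient of a scheme $R$ by a reductive group $G$, and $R\to R\sslash G$ is a good quotient, then $R\sslash G$ universally corepresents the functor. The reason is that a good quotient universally categorically corepresents the quotient functor in characteristic zero, and corepresenting a functor depends only on its étale sheafification.

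\textbf{Step 3: closed points.} Finally I would identify the closed points using the closed-orbit property of good quotients. Closed $G$--orbits in $\mathcal{R}^{\mathrm{Dol}}_{\mathcal{L}}(X,\mathfrak{p},G)$ correspond to harmonic-type (polystable) objects. This uses Lemma~\eqref{lem: L-twisted PB-Higgs bundle-tannakian} together with the $GL(n)$ statement of Theorem~\eqref{eqn: moduli-of-L- twisted Higgs bundles}, transported through the closed embedding.

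The main obstacle I expect is a mismatch between the two functors. The unframed functor $\mathcal{M}^{\mathrm{Dol}^{\natural}}_{\mathcal{L}}(X,G)$ is stated for harmonic type, but $\mathcal{R}^{\mathrm{Dol}}_{\mathcal{L}}(X,\mathfrak{p},G)$ parametrizes semiharmonic framed objects. I would handle this by working with semiharmonic families throughout Step 1, as Simpson does, and then noting that the two functors share the same GIT quotient. This holds because every semiharmonic object degenerates, inside its orbit closure, to its associated graded harmonic object, so the two sheafified functors give the same quotient.
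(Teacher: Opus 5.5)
Your Steps~1 and~2 are the paper's proof. The paper records only the étale-local framing at $\mathfrak{p}$ and the fact that two local lifts to $\mathcal{R}^{\mathrm{Dol}}_{\mathcal{L}}(X,\mathfrak{p},G)$ differ by the $G$--action. It leaves implicit the step you make explicit: in characteristic zero a good quotient is a universal categorical quotient, and natural transformations to $h_N$ only see the étale sheafification. That part is fine.

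The problem is your last paragraph. You correctly notice that $\mathcal{M}^{\mathrm{Dol}^{\natural}}_{\mathcal{L}}(X,G)$ is stated for families of harmonic type, whereas $\mathcal{R}^{\mathrm{Dol}}_{\mathcal{L}}(X,\mathfrak{p},G)$ parametrizes framed semiharmonic objects. The paper's proof passes over this and simply asserts the local isomorphism. But the fix you propose is not an argument.

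\begin{itemize}
\item Étale-locally, harmonic-type families are the maps $S\to\mathcal{R}^{\mathrm{Dol}}_{\mathcal{L}}(X,\mathfrak{p},G)$ sending every closed point into the locus $P$ of closed $G$--orbits. In general $P$ is only constructible, not locally closed. Already for $\mathbf{G}_m$ acting on $\mathbb{A}^2$ with weights $(1,-1)$ one has $P=\{xy\neq 0\}\cup\{0\}$, and this is the typical local picture near a strictly polystable point.
\item Hence the harmonic-type functor is not étale-locally the quotient functor of any scheme, and your Step~2 does not apply to it.
\item The degeneration of a semiharmonic object to its associated graded harmonic object only shows that harmonic objects are in bijection with the closed points of the quotient. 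It does not show that a natural transformation from the harmonic subfunctor to some $h_N$ extends to the full quotient functor. That would require controlling non-reduced bases and families passing between strata, and then redoing it after arbitrary base change $T\to\mathcal{M}^{\mathrm{Dol}}_{\mathcal{L}}(X,G)$.
\end{itemize}

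Sharing closed points does not imply sharing a corepresenting scheme.

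The consistent resolution is to let the moduli functor consist of families of semiharmonic type. This matches the framed functor, the $GL_n$ theorem for $\mathcal{L}$--twisted Higgs bundles, and Simpson's \S 9. With that reading your Steps~1--2 prove the statement, and harmonic-type objects appear only as closed points, which is your Step~3. If you insist on the harmonic-type functor, corepresentability of that subfunctor needs a separate proof, which neither you nor the paper supplies.
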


\begin{proof}
	The proof is identical to the $\mathcal{L}$--connection case.
	Étale locally on the base, any family of $\mathcal{L}$--twisted principal
	$G$--Higgs bundles admits a framing at $\mathfrak{p}$, hence is locally
	isomorphic to a $G$--equivariant family over
	$\mathcal{R}^{\mathrm{Dol}}_{\mathcal{L}}(X,\mathfrak{p},G)$.
	Two such local liftings differ by the action of $G$, so the functor
	$\mathcal{M}^{\mathrm{Dol},\natural}_{\mathcal{L}}(X,G)$ is locally
	isomorphic to the quotient functor
	$[\mathcal{R}^{\mathrm{Dol}}_{\mathcal{L}}(X,\mathfrak{p},G)/G]$.
\end{proof}

	

\subsection{The $\mathcal{L}$-Hodge moduli spaces for principal $G$-bundles}
\label{The-L-Hodge-moduli-spaces-for-principal-bundles}

Recall from Subsection~\eqref{subsec: L-Hodge modul space} that to the Lie algebroid $\mathcal{L}$ one can associate a $\mathcal{L}$--twisted de Rham $D$--algebra $\Lambda^{\text{DR}}_{\mathcal{L}}$. Using the Rees construction, we obtain a family of $D$--algebra 
$\Lambda_{\mathcal{L}}^{\mathrm{red}}$ on $X \times \mathbf{A}^{1}$ over
$\mathbf{A}^{1}$ whose fibre over $1$ is $\Lambda^{\text{DR}}_{\mathcal{L}}$ and whose fibre
over $0$ is isomorphic to its associated graded algebra
\[
\Lambda_{\mathcal{L}}^{\mathrm{Dol}}
:= \operatorname{Gr}_{\bullet}(\Lambda_{\mathcal{L}})
\cong \operatorname{Sym}^{\bullet}(V).
\]

Let $\mathcal{L}^{\mathrm{red}}$ denote the Lie algebroid corresponding to
$\Lambda^{\mathrm{red}}_{\mathcal{L}}$ on $X \times \mathbf{A}^{1}$.
We now consider the moduli space
$\mathcal{M}_{\mathcal{L}}^{\mathrm{Hod}}(X,G)$ of principal $G$-bundles equipped
with integrable $\mathcal{L}^{\mathrm{red}}$-connections on
$X \times \mathbf{A}^{1}$.
By Subsection~\eqref{susec: moduli space of L connection on PB}, this moduli space
is a quasi-projective variety endowed with a morphism
\[
\pi:\mathcal{M}_{\mathcal{L}}^{\mathrm{Hod}}(X,G) \longrightarrow \mathbf{A}^{1}.
\]

It follows that for all $\lambda \in \mathbf{A}^{1} \setminus \{0\}$,
\[
\pi^{-1}(\lambda)
= \mathcal{M}_{\mathcal{L}_{\lambda}}(X,G)
\cong \mathcal{M}_{\mathcal{L}_{1}}(X,G)
= \pi^{-1}(1),
\]
while
\[
\pi^{-1}(0)=\mathcal{M}_{\mathcal{L}}^{\mathrm{Dol}}(X,G).
\]
Thus, the space $\mathcal{M}_{\mathcal{L}}^{\mathrm{Hod}}(X,G)$ interpolates
between the moduli space of $\mathcal{L}_{\lambda}$-connections on principal
$G$-bundles and the moduli space of $\mathcal{L}$-twisted principal $G$-Higgs
bundles. The moduli space
$\mathcal{M}_{\mathcal{L}}^{\mathrm{Hod}}(X,G)$ is called the
$\mathcal{L}$-\emph{Hodge moduli space for principal $G$-bundles}.

\begin{thm}
	The $\mathcal{L}$-Hodge moduli space for principal $G$-bundles,
	$\mathcal{M}_{\mathcal{L}}^{\mathrm{Hod}}(X,G)$, is a quasi-projective variety
	equipped with a morphism
	\[
	\pi:\mathcal{M}_{\mathcal{L}}^{\mathrm{Hod}}(X,G)\longrightarrow \mathbf{A}^{1}.
	\]
\end{thm}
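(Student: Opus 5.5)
The plan is to build $\mathcal{M}^{\mathrm{Hod}}_{\mathcal{L}}(X,G)$ exactly as $\mathcal{M}^{\mathrm{DR}}_{\mathcal{L}}(X,G)$ was built in Corollary~\eqref{cor:moduli of int-L-connection PB}, but relative to the base $\mathbf{A}^{1}$, with the $D$--algebra $\Lambda^{\mathrm{red}}_{\mathcal{L}}$ in place of $\Lambda^{\mathrm{DR}}_{\mathcal{L}}$.

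\textbf{Step 1: vector bundles with a framing.} I would first treat the case $G=GL(n,\mathbb{C})$ with a framing at $\mathfrak{p}$. The algebra $\Lambda^{\mathrm{red}}_{\mathcal{L}}$ is a split almost polynomial $D$--algebra on $X\times\mathbf{A}^{1}$ relative to $\mathbf{A}^{1}$, and Simpson's Theorem~\eqref{moduli-of-Lambda-module} together with \cite[\S\,4, Theorem~4.10]{CT.Simpson-1994} is stated for families of such algebras over a base. So I would apply them with Hilbert polynomial $n\mathcal{P}_{0}$ to get:
\begin{itemize}
\item a quasi-projective framed space $\mathcal{R}^{\mathrm{Hod}}_{\mathcal{L}}(X,\mathfrak{p},n)$ over $\mathbf{A}^{1}$;
\item an ample $GL(n,\mathbb{C})$--linearization on it for which every point is semistable.
\end{itemize}
The fibre of this space over $\lambda\neq0$ is $\mathcal{R}^{\mathrm{DR}}_{\mathcal{L}_\lambda}$, using Theorem~\eqref{eqn: moduli-of-L-connection on VB}. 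The fibre over $0$ is $\mathcal{R}^{\mathrm{Dol}}_{\mathcal{L}}$, taking the union of components with vanishing $\mathcal{L}$--Chern classes as in Theorem~\eqref{eqn: moduli-of-L- twisted Higgs bundles}.

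\textbf{Step 2: passing to $G$.} Fix a faithful representation $G\hookrightarrow GL(n,\mathbb{C})$. I would repeat the monodromy argument of Theorem~\eqref{thm: L-twisted - reprsentaion - space for L-connection}. For each pair $(V,W)$, the relative version of Lemma~\eqref{lem:projective-scheme-N(E,k)} over $\mathcal{R}^{\mathrm{Hod}}_{\mathcal{L}}(X,\mathfrak{p},n)$ gives a closed subscheme $\mathbf{C}(V,W)$. That relative version is built from the relative Quot scheme and Lemma~\eqref{lem:fact-from-simpson-thm-3-8}, applied to the $\Lambda^{\mathrm{red}}_{\mathcal{L}}$--module structure. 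Intersecting all the $\mathbf{C}(V,W)$ gives a closed $G$--stable subscheme $\mathcal{R}^{\mathrm{Hod}}_{\mathcal{L}}(X,\mathfrak{p},G)$. The Tannakian Lemmas~\eqref{lem:PB-with-L-conection-Tannakian equivalence} and~\eqref{monodromy related thm-for-PB with-L-connection}, applied fibrewise for $\mathcal{L}_\lambda$, show this subscheme represents framed principal $G$--bundles with integrable $\mathcal{L}^{\mathrm{red}}$--connections. At $\lambda=0$ one uses Lemmas~\eqref{lem: L-twisted PB-Higgs bundle-tannakian} and~\eqref{monodromy related thm- with-L-twisted-Higgs-bundle-for-PB} instead.

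\textbf{Step 3: the quotient.} Restricting the linearization to $G$, the Hilbert--Mumford criterion keeps every point semistable, as in the de Rham case. The good quotient
\[
\mathcal{M}^{\mathrm{Hod}}_{\mathcal{L}}(X,G):=\mathcal{R}^{\mathrm{Hod}}_{\mathcal{L}}(X,\mathfrak{p},G)\sslash G
\]
is then quasi-projective. The $G$--invariant projection to $\mathbf{A}^{1}$ descends to $\pi$ by the universal property of the categorical quotient. Proposition~\eqref{prop:local-quotient-description}, with framings chosen \'etale locally, gives universal corepresentability.

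\textbf{Main obstacle.} The hardest step is the uniformity in $\lambda$ in Step~2. The closed conditions must be taken in families that mix connections ($\lambda\neq0$) and Higgs fields ($\lambda=0$). One must also check that the quotient of a flat family of $\Lambda^{\mathrm{red}}_{\mathcal{L}}$--modules is locally free. For $\lambda\neq0$ this follows from transitivity of $\mathcal{L}_\lambda$ via Proposition~\eqref{def:locally free L-modulues}. At $\lambda=0$ it requires the semiharmonic condition and Proposition~\eqref{prop:locally-free-L-Higgs}. So I would impose the vanishing of $\mathcal{L}$--Chern classes as an open-and-closed condition on the total space and then invoke \cite[\S\,1, Lemma~1.27]{CT.Simpson-1994} to get local freeness over the whole base.
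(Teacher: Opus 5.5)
Your proposal is correct and follows the paper's route. The paper regards $\mathcal{M}^{\mathrm{Hod}}_{\mathcal{L}}(X,G)$ as the moduli space of principal $G$--bundles with integrable $\mathcal{L}^{\mathrm{red}}$--connections on $X\times\mathbf{A}^{1}$ and simply appeals to the construction of Subsection~\ref{susec: moduli space of L connection on PB} (framed space, monodromy closed conditions, GIT quotient by $G$), which is exactly what you carry out relative to $\mathbf{A}^{1}$, including descending $\pi$ through the categorical quotient. Your treatment of $\lambda=0$, where $\mathcal{L}^{\mathrm{red}}$ is no longer transitive and local freeness of quotients must come from the semiharmonic condition and Proposition~\ref{prop:locally-free-L-Higgs} rather than Proposition~\ref{def:locally free L-modulues}, makes explicit a point the paper's one-line appeal passes over; just note that representability needs the monodromy and Tannakian lemmas in families over bases $S\to\mathbf{A}^{1}$, not merely applied fibrewise, though their proofs carry over verbatim.
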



\section{Semiprojectivity of the moduli spaces of $\mathcal{L}$-twisted principal objects}
Moduli spaces of $\mathcal{L}$-twisted Higgs bundles, and $\mathcal{L}$-twisted principal $G$--Higgs bundles of harmonic types are quasi-projective but, in general, non-proper. 
The notion of \emph{semiprojectivity} provides a natural framework to control this non-properness via an algebraic $\mathbf{G}_{m}:=\mathbb{C}^{\ast}$--action with projective fixed point locus and well-defined limits. 
This plays a central role in the study of topology, Hodge theory, and degeneration phenomena for moduli spaces. 
In particular, for smooth semiprojective varieties, the Grothendieck motivic class admits an explicit expression in terms of the projective fixed point locus via the Białynicki--Birula decomposition.

Let $Y$ be a quasi-projective variety endowed with an algebraic $\mathbf{G}_{m}$-action
\[
\mathbf{G}_{m} \times Y \longrightarrow Y, \qquad (t,y) \longmapsto t \cdot y .
\]

\begin{defn}\cite[\S\,2.1,]{Hausel-Hitchin-2022}
	A quasi-projective variety $Y$ with an algebraic $\mathbf{G}_{m}$-action is called \emph{semiprojective} if the following conditions are satisfied:
	\begin{itemize}
		\item For every $y \in Y$, there is a $p\in Y^{\mathbf{G}_{m}}$ such that $
		\lim_{t \to 0} t \cdot y = p$. (i.e, we mean that there exists a $\mathbf{G}_{m}$-equivariant morphism $g : \mathbf{A}^{1} \rightarrow Y$ with $g(1) = y$ and $g(0) = p$, where
		$\mathbf{G}_{m}$ acts on $\mathbf{A}^{1}$ in the standard way.)
		
		\item The fixed point locus $
		Y^{\mathbf{G}_{m}} := \{\, y \in Y \mid t \cdot y = y \text{ for all } t \in \mathbf{G}_{m} \,\}$ is proper (equivalently, projective).
	\end{itemize}
\end{defn}
Examples of semi-projective varieties include cotangent bundles of smooth
projective varieties, moduli spaces of Higgs bundles and Nakajima quiver
varieties.

Let $J$ denote the standard Hermitian metric on $\mathbb{C}^{n}$.  
Define $\mathcal{R}^{\text{DR}^{J}}_{\mathcal{L}}(X,\mathfrak{p},n)$ to be the space whose points are pairs $(E,\beta)$, where
\begin{itemize}
	\item $E$ is a vector bundle on $X$ equipped with an integrable $\mathcal{L}$-connection, and
	\item $\beta \colon E_{\mathfrak{p}} \xrightarrow{\sim} \mathbb{C}^{n}$ is a framing at $\mathfrak{p}$,
\end{itemize}
such that there exists a harmonic metric $K$ on $E$ satisfying
\[
\beta(K_{\mathfrak{p}}) = J .
\]

Similarly, define $\mathcal{R}^{\mathrm{Dol}^{J}}_{\mathcal{L}}(X,\mathfrak{p},n)$ to be the space of pairs $(E,\beta)$, where
\begin{itemize}
	\item $E$ is an $\mathcal{L}$-twisted Higgs bundle on $X$, and
	\item $\beta \colon E_{\mathfrak{p}} \xrightarrow{\sim} \mathbb{C}^{n}$ is a framing,
\end{itemize}
such that there exists a harmonic metric $K$ on $E$ with $\beta(K_{\mathfrak{p}})=J$.

Now suppose $G$ is a reductive affine algebraic group.  
Fix a maximal compact subgroup $V \subset G$ and choose an embedding $
G \hookrightarrow GL_{n}(\mathbb{C})$ such that $V = G \cap U(n)$.

Define
\[
\mathcal{R}^{\text{DR}^{J}}_{\mathcal{L}}(X,\mathfrak{p},G)
:= \mathcal{R}^{\text{DR}}_{\mathcal{L}}(X,\mathfrak{p},G)
\cap
\mathcal{R}^{\text{DR}^{J}}_{\mathcal{L}}(X,\mathfrak{p},n),
\]
and endow it with the subspace topology induced from the analytic topology on
$\mathcal{R}^{\text{DR}^{\mathrm{an}}}_{\mathcal{L}}(X,\mathfrak{p},n)$.

Similarly, define
\[
\mathcal{R}^{\mathrm{Dol}^{J}}_{\mathcal{L}}(X,\mathfrak{p},G)
:= \mathcal{R}^{\mathrm{Dol}}_{\mathcal{L}}(X,\mathfrak{p},G)
\cap
\mathcal{R}^{\mathrm{Dol}^{J}}_{\mathcal{L}}(X,\mathfrak{p},n),
\]
with the induced topology from
$\mathcal{R}^{\mathrm{Dol}^{\mathrm{an}}}_{\mathcal{L}}(X,\mathfrak{p},n)$.

The arguments of \cite[\S\,9, Lemmas~9.13 and~9.14]{CT.Simpson-1995} extend verbatim to the Lie algebroid setting, by replacing vector bundles with integrable connections (respectively Higgs bundles) by their $\mathcal{L}$-twisted analogues throughout. Consequently, we obtain the following result, which generalizes \cite[\S\,9, Lemma~9.15]{CT.Simpson-1995}.

\begin{thm}\label{proper map between moduli of principal objects}
	Let $G$ and $H$ be reductive affine algebraic groups and let $
	G \hookrightarrow H$ 
	be an injective homomorphism. Then the induced morphisms 
	\[
	\mathcal{M}^{\text{DR}}_{\mathcal{L}}(X,G) \longrightarrow \mathcal{M}^{\text{DR}}_{\mathcal{L}}(X,H),
	\qquad
	\mathcal{M}^{\mathrm{Dol}}_{\mathcal{L}}(X,G)
	\longrightarrow
	\mathcal{M}^{\mathrm{Dol}}_{\mathcal{L}}(X,H)
	\]
	are proper.In particular, the natural morphisms
	\[
	\mathcal{M}^{\text{DR}}_{\mathcal{L}}(X,G) \hookrightarrow \mathcal{M}^{\text{DR}}_{\mathcal{L}}(X,n),
	\qquad
	\mathcal{M}^{\mathrm{Dol}}_{\mathcal{L}}(X,G)
	\hookrightarrow
	\mathcal{M}^{\mathrm{Dol}}_{\mathcal{L}}(X,n)
	\]
	are proper. The same conclusion holds for the $\mathcal{L}$--Hodge moduli spaces
	\[
	\mathcal{M}^{\mathrm{Hod}}_{\mathcal{L}}(X,G)
	\longrightarrow
	\mathcal{M}^{\mathrm{Hod}}_{\mathcal{L}}(X,n).
	\]
\end{thm}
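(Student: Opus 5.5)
Following Simpson's proof of \cite[\S\,9, Lemma~9.15]{CT.Simpson-1995}, I reduce properness of $\mathcal{M}_{\mathcal{L}}(X,G)\to\mathcal{M}_{\mathcal{L}}(X,H)$ (de Rham or Dolbeault) to a statement about the framed representation spaces. By Theorem~\ref{thm: L-twisted - reprsentaion - space for L-connection} and Theorem~\ref{thm:Dol-L-representation-space}, $G\hookrightarrow H$ induces a closed embedding $\mathcal{R}_{\mathcal{L}}(X,\mathfrak{p},G)\hookrightarrow\mathcal{R}_{\mathcal{L}}(X,\mathfrak{p},H)$, equivariant for $G\subset H$. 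Hence the map of good quotients factors as
\[
\mathcal{R}_{\mathcal{L}}(X,\mathfrak{p},G)\sslash G\longrightarrow \bigl(H\times^{G}\mathcal{R}_{\mathcal{L}}(X,\mathfrak{p},G)\bigr)\sslash H\longrightarrow \mathcal{R}_{\mathcal{L}}(X,\mathfrak{p},H)\sslash H .
\]
The first arrow is an isomorphism by standard GIT. The second is induced by the $H$-equivariant map $H\times^{G}\mathcal{R}(G)\to\mathcal{R}(H)$, which is finite-type but not closed, since $H/G$ is only affine. So properness cannot be read off algebraically, and I will use the valuative criterion together with the harmonic-metric spaces $\mathcal{R}^{J}$, which is the point of Simpson's Lemmas~9.13--9.14.

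First I extend \cite[\S\,9, Lemma~9.13]{CT.Simpson-1995}: the natural map $\mathcal{R}^{J}_{\mathcal{L}}(X,\mathfrak{p},n)/U(n)\to\mathcal{M}_{\mathcal{L}}(X,n)$ is a homeomorphism onto the polystable locus, and the map is proper. This uses the existence and uniqueness up to scaling of harmonic metrics on polystable objects with vanishing $\mathcal{L}$--Chern classes. Since $\mathcal{L}$ is transitive, I would obtain this by pulling back through a splitting of the anchor, as in Proposition~\ref{prop:locally-free-L-Higgs}, to reduce to Simpson/Corlette, together with the fact that every point of $\mathcal{M}$ has a unique closed-orbit (polystable) representative. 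Second, I extend \cite[\S\,9, Lemma~9.14]{CT.Simpson-1995}: if $V=G\cap U(n)$, then $\mathcal{R}^{J}(G)=\mathcal{R}(G)\cap\mathcal{R}^{J}(n)$ surjects onto $\mathcal{M}(G)$, and $\mathcal{R}^{J}(G)/V\cong\mathcal{M}(G)$ topologically. The key input is that the harmonic metric on a polystable $G$-object reduces the structure group to $V$. This follows from the uniqueness of the metric together with the Tannakian description in Lemma~\ref{lem:PB-with-L-conection-Tannakian equivalence} and Lemma~\ref{lem: L-twisted PB-Higgs bundle-tannakian}, because the harmonic metric is compatible with the invariant tensors cutting out $G$.

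With these in hand, properness follows. The map $\mathcal{R}^{J}(G)\to\mathcal{R}^{J}(H)$ is a closed embedding of analytic spaces. Dividing by the compact groups $V_G\subset V_H$ gives a proper map $\mathcal{R}^{J}(G)/V_G\to\mathcal{R}^{J}(H)/V_H$, since the fibres are closed in compact $V_H/V_G$-orbits. Identifying both sides with $\mathcal{M}(G)^{an}$ and $\mathcal{M}(H)^{an}$, the morphism is topologically proper. A morphism of finite-type $\mathbb{C}$-schemes that is proper in the analytic topology is proper, which gives the claim. Taking $H=GL(n,\mathbb{C})$ gives the stated maps to $\mathcal{M}_{\mathcal{L}}(X,n)$.

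For the Hodge moduli space I check properness over $\mathbf{A}^{1}$ fibrewise. Over $\lambda\neq0$, the fibres are de Rham spaces for $\mathcal{L}_\lambda\cong\mathcal{L}$; over $0$, the fibre is the Dolbeault space. Since properness is not fibrewise, I would instead run the argument with the relative version of $\mathcal{R}^{J}$ over $\mathbf{A}^{1}$, using harmonic metrics for $\lambda$-connections. Alternatively, I would use the valuative criterion for a DVR mapping to $\mathbf{A}^{1}$, combined with the $\mathbf{G}_m$-equivariance that identifies all nonzero fibres. The main obstacle is the first step, namely the non-abelian Hodge input (existence and uniqueness of harmonic metrics) for $\mathcal{L}$-objects. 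I would derive it from the classical case via the splitting of the transitive anchor, and check that the splitting does not depend on choices for the relevant stability and flatness conditions.
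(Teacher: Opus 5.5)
Your route is the paper's own. The paper also runs Simpson's proof of Lemma~9.15. It uses only three facts: that $\mathcal{R}^{J}_{\mathcal L}(X,\mathfrak p,G)$ is closed in $\mathcal{R}^{J}_{\mathcal L}(X,\mathfrak p,H)$, that $\mathcal{R}^{J}_{\mathcal L}(X,\mathfrak p,H)\to\mathcal{M}_{\mathcal L}(X,H)$ is proper, and that $\mathcal{R}^{J}_{\mathcal L}(X,\mathfrak p,G)\to\mathcal{M}_{\mathcal L}(X,G)$ is surjective. That is a little less than the homeomorphisms $\mathcal{R}^J/V\cong\mathcal{M}$ you aim for. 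The paper justifies these inputs only by asserting that Simpson's Lemmas~9.13--9.14 extend verbatim. So the obstacle you flag is the real gap, and your proposed way around it fails.

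Pulling back along a splitting $b\colon\mathcal{T}_X\to V$ of the anchor does not reduce $\mathcal{L}$-objects to Simpson--Corlette, for three reasons. First, a global holomorphic $b$ exists only when $\mathcal{L}$ is split; Proposition~\ref{prop:locally-free-L-Higgs} uses $b$ only locally, whereas existence of harmonic metrics is a global problem. Second, in the de Rham case the connection $\xi\mapsto\nabla_{b(\xi)}$ has curvature $\nabla_{\Omega_b}$, where $\Omega_b(\xi,\eta)=[b\xi,b\eta]-b[\xi,\eta]$ takes values in $\mathrm{Ker}(\delta)$, so it need not be flat. Third, even for a bracket-preserving $b$, the $\mathcal{O}_X$-linear action of the isotropy bundle $\mathrm{Ker}(\delta)$ on $E$ is forgotten.

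The third point already breaks properness in the simplest case. Take $\mathcal{L}=\mathcal{T}_X\oplus\mathcal{O}_X$, the Atiyah algebroid of the trivial $\mathbb{C}^{*}$-bundle. Its integrable connections are flat bundles together with a flat endomorphism $\phi$. On the trivial flat bundle every constant $\phi$ occurs and every constant metric is harmonic. Hence the fibre of $\mathcal{R}^{J}_{\mathcal L}(X,\mathfrak p,2)\to\mathcal{M}^{\mathrm{DR}}_{\mathcal L}(X,2)$ over the class of $(\mathcal{O}_X^{2},d,\phi=\mathrm{diag}(1,2))$ contains the whole non-compact conjugacy class of $\mathrm{diag}(1,2)$. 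The Dolbeault side behaves the same way.

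A usable $\mathcal{R}^J$ therefore needs an intrinsic notion of harmonic metric for $\mathcal{L}$-objects. It must also impose a moment-map condition on the isotropy action (in this example, $\phi$ normal), with existence and uniqueness on polystable objects and the reduction to $V=G\cap U(n)$. Neither your proposal nor the paper supplies this.

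For $\mathcal{M}^{\mathrm{Hod}}$, the paper appeals to a topological trivialization $\mathcal{M}^{\mathrm{Hod}}_{\mathcal L}(X,G)\cong\mathcal{M}^{\mathrm{DR}}_{\mathcal L}(X,G)\times\mathbf{A}^{1}$, functorial in $G$. Your relative $\mathcal{R}^{J}$ over $\mathbf{A}^{1}$ is its counterpart and needs the same analytic input, now uniformly as $\lambda\to0$.

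Your other option, the valuative criterion plus $\mathbf{G}_m$-equivariance, does not suffice. The relevant test curves $\operatorname{Spec}R\to\mathbf{A}^{1}$ have generic point over $\lambda\neq0$ and closed point over $\lambda=0$, and equivariance cannot push them into one fibre. For example, $(\mathbf{G}_m\times\mathbf{A}^{1})\sqcup(\{0\}\times\mathbf{A}^{1})\to\mathbf{A}^{2}$, with $\mathbf{G}_m$ scaling the first coordinate, is equivariant and an isomorphism on every fibre over $\mathbf{A}^{1}$, yet it is not proper. Nor can you invoke existence of limits in $\mathcal{M}^{\mathrm{Hod}}_{\mathcal L}(X,G)$, since the paper deduces that from the present theorem.
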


\begin{proof}
	Since $G \hookrightarrow H$ is injective, the induced map $
	\mathcal{R}^{\text{DR}^{J}}_{\mathcal{L}}(X,\mathfrak{p},G)
	\longrightarrow
	\mathcal{R}^{\text{DR}^{J}}_{\mathcal{L}}(X,\mathfrak{p},H)
	$
	identifies $\mathcal{R}^{\text{DR}^{J}}_{\mathcal{L}}(X,\mathfrak{p},G)$ with a closed subset of
	$\mathcal{R}^{\text{DR}^{J}}_{\mathcal{L}}(X,\mathfrak{p},H)$. Hence the composite map
	\[
	\mathcal{R}^{\text{DR}^{J}}_{\mathcal{L}}(X,\mathfrak{p},G)
	\longrightarrow
	\mathcal{M}^{\text{DR}}_{\mathcal{L}}(X,H)
	\]
	is proper.
	
	This map factors through the quotient
	$\mathcal{M}_{\mathcal{L}}(X,G)$, and the projection
	\[
	\mathcal{R}^{\text{DR}^{J}}_{\mathcal{L}}(X,\mathfrak{p},G)
	\longrightarrow
	\mathcal{M}^{\text{DR}}_{\mathcal{L}}(X,G)
	\]
	is surjective. Therefore the induced morphism
	\[
	\mathcal{M}^{\text{DR}}_{\mathcal{L}}(X,G)
	\longrightarrow
	\mathcal{M}^{\text{DR}}_{\mathcal{L}}(X,H)
	\]
	is proper. The same argument applies to the $\mathcal{L}$--twisted Dolbeault moduli spaces.
	
	For the $\mathcal{L}$--Hodge moduli spaces, properness follows from the
	topological trivialization
	\[
	\mathcal{M}^{\mathrm{Hod}}_{\mathcal{L}}(X,G)
	\cong
	\mathcal{M}^{\text{DR}}_{\mathcal{L}}(X,G) \times \mathbf{A}^{1},
	\]
	which is functorial in $G$.
\end{proof}

One defines the Hitchin map for the $\mathcal{L}$--twisted Dolbeault moduli space
(cf. \cite[\S\,2, p.~9]{Alfaya-Oliveire-2024})
\[
\mathcal{H}:
\mathcal{M}^{\mathrm{Dol}}_{\mathcal{L}}(X,n)
\longrightarrow
\mathcal{W}, \ \text{where} \ \mathcal{W} \ \text{is the Htchin base.}
\]

\begin{lem}\cite[\S\,2, Lem.~2.4]{Alfaya-Oliveire-2024}.\label{lem:proper Hitchin map for ML(X,n)}
	The Hitchin map $
	\mathcal{H}:
	\mathcal{M}^{\mathrm{Dol}}_{\mathcal{L}}(X,n)\longrightarrow \mathcal{W}$ is proper.
\end{lem}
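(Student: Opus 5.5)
The plan is to reduce the statement to Simpson's properness theorem for the Hitchin map on moduli of $\Lambda$--modules whose $D$--algebra is a symmetric algebra. By Lemma~\eqref{lem: Lambda module structure--equivalent to L twisted Higgs bundle}, $\mathcal{M}^{\mathrm{Dol}}_{\mathcal{L}}(X,n)$ is a union of connected components of $\mathcal{M}(\Lambda^{\mathrm{Dol}}_{\mathcal{L}},n\mathcal{P}_0)$. Here $\Lambda^{\mathrm{Dol}}_{\mathcal{L}}=\operatorname{Sym}^{\bullet}(V)$ with $V$ locally free.

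\textbf{Step 1 (Hitchin base and map).} Take $\mathcal{W}=\bigoplus_{i=1}^{n}H^{0}(X,\operatorname{Sym}^{i}\Omega^{1}_{\mathcal{L}})$. The map $\mathcal{H}$ sends $(\mathcal{E},\theta_{\mathcal{L}})$ to the coefficients of its characteristic polynomial. The condition $\theta_{\mathcal{L}}\wedge\theta_{\mathcal{L}}=0$ makes these coefficients well-defined symmetric tensors. I will check that $\mathcal{H}$ is constant on Jordan equivalence classes and is given by a morphism on families, so that it descends to the corepresenting space.

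\textbf{Step 2 (spectral reinterpretation).} A $\operatorname{Sym}^{\bullet}(V)$--module that is coherent over $\mathcal{O}_X$ is the same as a coherent sheaf on $Z:=\operatorname{Tot}(\Omega^{1}_{\mathcal{L}})=\operatorname{Spec}\operatorname{Sym}^{\bullet}(V)$ that is finite over $X$. Compactify $Z$ to $\overline{Z}=\mathbb{P}(\Omega^{1}_{\mathcal{L}}\oplus\mathcal{O}_X)$, with divisor at infinity $D_\infty$. Fix a polarization $\mathcal{O}_{\overline{Z}}(1)\otimes\pi^{*}\mathcal{O}_X(N)$. Stability of the $\Lambda$--module then matches Gieseker stability of the corresponding pure sheaf on $\overline{Z}$, since the sheaf is finite over $X$ and its Hilbert polynomial is controlled by that of its pushforward.

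Simpson's moduli of semistable pure sheaves on the projective variety $\overline{Z}$ is projective. The locus of sheaves whose support misses $D_\infty$ is open, and it contains $\mathcal{M}(\Lambda^{\mathrm{Dol}}_{\mathcal{L}},n\mathcal{P}_0)$. Over a point $w\in\mathcal{W}$, the Cayley--Hamilton theorem confines the support to the spectral cover $Z_w\subset Z$, which is closed in $\overline{Z}$ and disjoint from $D_\infty$. So the fibers of $\mathcal{H}$ are closed in a projective scheme. For families I would instead follow Simpson's argument (Moduli I, Thm.~6.11), which runs verbatim with $\mathcal{T}_X$ replaced by $V$: the preimage of a proper subset of $\mathcal{W}$ lies in the closed subscheme of sheaves supported in the relative spectral cover.

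\textbf{Step 3 (valuative check and main obstacle).} As an alternative or complement, I would verify the valuative criterion directly. Given a DVR $R$ and a $K$--point of $\mathcal{M}^{\mathrm{Dol}}_{\mathcal{L}}(X,n)$ whose image under $\mathcal{H}$ extends to $R$, the integrality of the characteristic polynomial coefficients shows that the spectral sheaf extends with support inside a proper $R$--scheme. A Langton-type argument then produces a semistable limit. This is where I expect the real work.

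The main obstacle is the identification of $\Lambda$--module semistability with Gieseker semistability on $\overline{Z}$, together with checking that the limit stays in $\mathcal{M}^{\mathrm{Dol}}_{\mathcal{L}}(X,n)$, that is, that the $\mathcal{L}$--Chern classes still vanish. The latter follows because vanishing of these classes is a discrete, hence locally constant, condition, so $\mathcal{M}^{\mathrm{Dol}}_{\mathcal{L}}(X,n)$ is closed (a union of components). Properness of the restriction of a proper map to a closed subscheme then finishes the proof.
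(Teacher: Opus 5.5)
Your proposal is correct and takes essentially the same route as the source the paper relies on. The paper gives no argument of its own; it simply quotes \cite[Lem.~2.4]{Alfaya-Oliveire-2024}, which rests on Simpson's properness argument for the Hitchin map with $\mathcal{T}_X$ replaced by $V$, and your reconstruction of that argument is sound: spectral sheaves on $\operatorname{Tot}(\Omega^1_{\mathcal{L}})$, the projective Gieseker moduli on the compactification $\overline{Z}$, limits forced into the relative spectral cover (which misses $D_\infty$), and the Chern-class condition being open and closed.
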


\subsection{Semiprojectivity of moduli space $\mathcal{M}_{\mathcal{L}}^{\text{Dol}}(X,G)$.}
There is a natural $\mathbf{G}_{m}$--action on the category of
$\mathcal{L}$--twisted Higgs bundles defined by
\[
t\cdot (E,\theta_{\mathcal{L}}):=(E,t\cdot\theta_{\mathcal{L}}).
\]
Since $G$ is reductive, this induces a $\mathbf{G}_{m}$--action on
$\mathcal{M}^{\mathrm{Dol}}_{\mathcal{L}}(X,n)$.

\begin{proposition}\cite[\S\,4, Prop. 4.4]{Alfaya-Oliveire-2024}\label{pro:semiprojectivty of moduli of L twisted Higgs bundle}
	The moduli space $\mathcal{M}^{\mathrm{Dol}}_{\mathcal{L}}(X,n)$
	is a semiprojective variety.
\end{proposition}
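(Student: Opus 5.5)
The plan follows the strategy of Hausel--Thaddeus and Simpson for ordinary Higgs bundles, as adapted by Alfaya--Oliveira. There are two things to check for $\mathcal{M}^{\mathrm{Dol}}_{\mathcal{L}}(X,n)$ with the action $t\cdot(E,\theta_{\mathcal{L}})=(E,t\theta_{\mathcal{L}})$:
\begin{itemize}
\item[(a)] the fixed point locus $\mathcal{M}^{\mathrm{Dol}}_{\mathcal{L}}(X,n)^{\mathbf{G}_m}$ is proper;
\item[(b)] for every point, the limit $\lim_{t\to 0} t\cdot(E,\theta_{\mathcal{L}})$ exists.
\end{itemize}
Both will be deduced from the properness of the Hitchin map $\mathcal{H}:\mathcal{M}^{\mathrm{Dol}}_{\mathcal{L}}(X,n)\to\mathcal{W}$ (Lemma \ref{lem:proper Hitchin map for ML(X,n)}), together with the equivariance of $\mathcal{H}$.

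\textbf{Step 1: the action and equivariance.} The action on the framed space $\mathcal{R}^{\mathrm{Dol}}_{\mathcal{L}}(X,\mathfrak{p},n)$ is given by scaling the Higgs field. It preserves semiharmonicity and commutes with the $GL_n$-action, so it descends to an algebraic action on the good quotient. Recall that $\mathcal{W}=\bigoplus_{i=1}^n H^0(X,\operatorname{Sym}^i\Omega^1_{\mathcal{L}})$ and $\mathcal{H}$ records the characteristic coefficients $a_i(\theta_{\mathcal{L}})$. Hence $\mathcal{H}$ is $\mathbf{G}_m$-equivariant when $\mathbf{G}_m$ acts on $\mathcal{W}$ with weight $i$ on the $i$-th summand. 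All these weights are strictly positive, so $0\in\mathcal{W}$ is the unique fixed point and $\lim_{t\to0}t\cdot w=0$ for every $w\in\mathcal{W}$.

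\textbf{Step 2: properness of the fixed locus.} Any fixed point maps to a fixed point of $\mathcal{W}$, so $\mathcal{M}^{\mathbf{G}_m}\subset\mathcal{H}^{-1}(0)$. The fixed locus is closed, and $\mathcal{H}^{-1}(0)$ is proper because $\mathcal{H}$ is proper. Therefore $\mathcal{M}^{\mathbf{G}_m}$ is proper, hence projective since $\mathcal{M}$ is quasi-projective.

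\textbf{Step 3: existence of limits.} Fix $y$ and consider the orbit map $\mathbf{G}_m\to\mathcal{M}$, $t\mapsto t\cdot y$. Composing with $\mathcal{H}$ gives $t\mapsto t\cdot\mathcal{H}(y)$, which extends to $\mathbf{A}^1\to\mathcal{W}$ with value $0$ at $t=0$. By the valuative criterion applied to the proper morphism $\mathcal{H}$, the orbit map extends over the spectrum of the local ring at $0$, and hence to a morphism $\mathbf{A}^1\to\mathcal{M}$; this extension is $\mathbf{G}_m$-equivariant by density. Its value at $0$ is a fixed point, and it is the limit.

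I expect the main obstacle to be Step 3, namely the need for properness of $\mathcal{H}$ itself. This is exactly what Lemma \ref{lem:proper Hitchin map for ML(X,n)} supplies, via Simpson's Langton-type argument for $\Lambda^{\mathrm{Dol}}_{\mathcal{L}}$-modules in the semiharmonic components. One must also make sure that the extension lands in the vanishing-$\mathcal{L}$-Chern-class components. This holds because those components are unions of connected components of $\mathcal{M}^{\mathrm{Higgs}}_{\mathcal{L}}(X,n\mathcal{P}_0)$, and Chern classes are constant in flat families.
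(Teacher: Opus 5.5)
Your proposal is correct and follows the same route as the source: the paper gives no proof of its own here and cites Alfaya--Oliveira, whose argument uses properness and positive-weight $\mathbf{G}_m$-equivariance of the Hitchin map exactly as in your Steps 2 and 3. The paper also reuses your Step 2 in Lemma~\ref{fixed point for MLDOL of PB} for the principal $G$ case, and your valuative-criterion extension of the orbit map, with equivariance by density, is the standard way to obtain the limits.
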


Similarly, there is a natural $\mathbf{G}_{m}$--action on the category of
$\mathcal{L}$--twisted principal $G$--Higgs bundles defined by
\[
t\cdot (E_{G},\theta_{\mathcal{L}}):=(E_{G},t\theta_{\mathcal{L}}).
\]
If $G$ is reductive, this induces a $\mathbf{G}_{m}$--action on
$\mathcal{M}^{\mathrm{Dol}}_{\mathcal{L}}(X,G)$.
This action is compatible with the functoriality induced by morphisms of
reductive groups and coincides with the standard action when
$G=GL_{n}(\mathbb{C})$.

\begin{lem}\label{limit exist for MLDOL of PB}
	For any $y\in \mathcal{M}^{\mathrm{Dol}}_{\mathcal{L}}(X,G)$,
	the limit $
	\lim_{t\to 0} t\cdot y$ 
	exists and is a fixed point of the $\mathbf{G}_{m}$--action.
\end{lem}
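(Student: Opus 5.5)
The plan is to reduce to the vector bundle case via a faithful representation and the proper forgetful morphism. Fix a faithful representation $G\hookrightarrow GL_n(\mathbb{C})$. By Theorem~\eqref{proper map between moduli of principal objects}, the induced morphism
\[
\iota:\mathcal{M}^{\mathrm{Dol}}_{\mathcal{L}}(X,G)\longrightarrow \mathcal{M}^{\mathrm{Dol}}_{\mathcal{L}}(X,n)
\]
is proper. The first thing to check is that $\iota$ is $\mathbf{G}_m$--equivariant. This holds because the Higgs field on the associated bundle $E_G\times^G\mathbb{C}^n$ is $(d\rho\otimes\mathrm{id})(\theta_{\mathcal{L}})$, which is linear in $\theta_{\mathcal{L}}$, so $t\theta_{\mathcal{L}}$ goes to $t$ times the induced field. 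This is the compatibility with functoriality noted just before the lemma.

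Given $y\in\mathcal{M}^{\mathrm{Dol}}_{\mathcal{L}}(X,G)$, consider the orbit morphism $f:\mathbf{G}_m\to\mathcal{M}^{\mathrm{Dol}}_{\mathcal{L}}(X,G)$, $t\mapsto t\cdot y$. By Proposition~\eqref{pro:semiprojectivty of moduli of L twisted Higgs bundle}, $\mathcal{M}^{\mathrm{Dol}}_{\mathcal{L}}(X,n)$ is semiprojective. Hence $\iota\circ f$ extends to a $\mathbf{G}_m$--equivariant morphism $g:\mathbf{A}^1\to\mathcal{M}^{\mathrm{Dol}}_{\mathcal{L}}(X,n)$. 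If one prefers to see this directly, it follows from Lemma~\eqref{lem:proper Hitchin map for ML(X,n)}: the Hitchin image of $t\cdot\iota(y)$ is $(t^i a_i)$, which tends to $0$, and properness of $\mathcal{H}$ gives the extension.

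Next apply the valuative criterion of properness to $\iota$, using the commutative square formed by $\operatorname{Spec}\mathbb{C}(t)\to\mathcal{M}^{\mathrm{Dol}}_{\mathcal{L}}(X,G)$ (the generic point of $f$) and $g$ restricted to $\operatorname{Spec}\mathbb{C}[t]_{(t)}$. This yields a lift over the local ring at $0$. Glued with $f$, it gives a morphism $\tilde g:\mathbf{A}^1\to\mathcal{M}^{\mathrm{Dol}}_{\mathcal{L}}(X,G)$ extending $f$. The space is separated since it is quasi-projective, so the gluing is well defined and the extension is unique. Set $p:=\tilde g(0)=\lim_{t\to0}t\cdot y$.

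It remains to show that $p$ is fixed. For $s\in\mathbf{G}_m$, both $t\mapsto s\cdot\tilde g(t)$ and $t\mapsto\tilde g(st)$ are morphisms $\mathbf{A}^1\to\mathcal{M}^{\mathrm{Dol}}_{\mathcal{L}}(X,G)$ that agree on the dense open $\mathbf{G}_m$. By separatedness they agree everywhere, and evaluating at $t=0$ gives $s\cdot p=p$. The same argument shows $\tilde g$ is $\mathbf{G}_m$--equivariant, as required in the definition of semiprojectivity.

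The main obstacle is the properness of $\iota$, together with its equivariance; the rest is formal. Properness is supplied by Theorem~\eqref{proper map between moduli of principal objects}, but if one wanted a self-contained check, I would argue Simpson-style. Namely, a limit of $\mathcal{L}$--twisted Higgs bundles in $\mathcal{M}^{\mathrm{Dol}}_{\mathcal{L}}(X,n)$ whose monodromy lies in $G$ keeps its monodromy in $G$, because the conditions $\mathbf{C}(V,W)$ cutting out $\mathcal{R}^{\mathrm{Dol}}_{\mathcal{L}}(X,\mathfrak{p},G)$ are closed (Theorem~\eqref{thm:Dol-L-representation-space}). One then needs that closed $G$--orbits map to closed $GL_n$--orbits, using that $G$ is reductive.
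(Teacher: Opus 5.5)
Your proposal is correct and follows essentially the same route as the paper. Like the paper, you choose a faithful representation $G\hookrightarrow GL_n(\mathbb{C})$, extend the orbit in $\mathcal{M}^{\mathrm{Dol}}_{\mathcal{L}}(X,n)$ using its semiprojectivity, lift that extension to $\mathcal{M}^{\mathrm{Dol}}_{\mathcal{L}}(X,G)$ using the properness in Theorem~\ref{proper map between moduli of principal objects}, and take the image of $0$ as the fixed point. You additionally make explicit three steps the paper leaves implicit: the $\mathbf{G}_m$-equivariance of the induced map, the valuative-criterion lift with its gluing, and the separatedness argument showing the limit is fixed.
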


\begin{proof}
	Proposition~\eqref{pro:semiprojectivty of moduli of L twisted Higgs bundle}
	proves the statement for $G=GL_{n}(\mathbb{C})$.
	Choose a faithful representation
	$G\hookrightarrow GL_{n}(\mathbb{C})$, which induces a closed immersion
	\[
	i:\mathcal{M}^{\mathrm{Dol}}_{\mathcal{L}}(X,G)
	\hookrightarrow
	\mathcal{M}^{\mathrm{Dol}}_{\mathcal{L}}(X,n).
	\]
	The orbit map
	$\mathbf{G}_{m}\to \mathcal{M}^{\mathrm{Dol}}_{\mathcal{L}}(X,n)$
	extends to a morphism
	$\mathbf{A}^{1}\to \mathcal{M}^{\mathrm{Dol}}_{\mathcal{L}}(X,n)$.
	By the properness of the maps in
	Theorem~\ref{proper map between moduli of principal objects},
	the induced orbit
	$\mathbf{G}_{m}\to \mathcal{M}^{\mathrm{Dol}}_{\mathcal{L}}(X,G)$
	extends to a morphism
	$\mathbf{A}^{1}\to \mathcal{M}^{\mathrm{Dol}}_{\mathcal{L}}(X,G)$.
	The image of $0\in\mathbf{A}^{1}$ is the required fixed point.
\end{proof}

\begin{lem}\label{fixed point for MLDOL of PB}
	The fixed point locus of the $\mathbf{G}_{m}$--action on
	$\mathcal{M}^{\mathrm{Dol}}_{\mathcal{L}}(X,G)$
	is a proper scheme contained in
	$(\mathcal{H}\circ i)^{-1}(0)$,
	where
	\[
	i:\mathcal{M}^{\mathrm{Dol}}_{\mathcal{L}}(X,G)
	\hookrightarrow
	\mathcal{M}^{\mathrm{Dol}}_{\mathcal{L}}(X,n)
	\]
	is the closed immersion.
\end{lem}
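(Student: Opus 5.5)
The plan is to deduce the statement from the vector bundle case via the closed immersion $i$. I would first establish two things: that $i$ is $\mathbf{G}_{m}$--equivariant, and that the Hitchin map $\mathcal{H}$ on $\mathcal{M}^{\mathrm{Dol}}_{\mathcal{L}}(X,n)$ is equivariant for a weighted $\mathbf{G}_{m}$--action on the base $\mathcal{W}$ with strictly positive weights.

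For the equivariance of $i$: choose the faithful representation $G\hookrightarrow GL_{n}(\mathbb{C})$ as in Lemma~\ref{limit exist for MLDOL of PB}. The induced Higgs field on $E_G\times^G\mathbb{C}^n$ is $d\rho(\theta_{\mathcal{L}})$, and this is linear in $\theta_{\mathcal{L}}$. Hence $i(E_G,t\theta_{\mathcal{L}})=t\cdot i(E_G,\theta_{\mathcal{L}})$.

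For the Hitchin map: $\mathcal{H}$ sends $(E,\theta_{\mathcal{L}})$ to the coefficients of the characteristic polynomial of $\theta_{\mathcal{L}}$. These are sections of $\mathrm{Sym}^{j}\Omega^{1}_{\mathcal{L}}$ for $j=1,\dots,n$, and the $j$-th coefficient is homogeneous of degree $j$ in $\theta_{\mathcal{L}}$. So $\mathcal{H}(t\cdot y)=t\cdot\mathcal{H}(y)$, where $\mathbf{G}_m$ acts on $\mathcal{W}$ with weights $1,\dots,n$.

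\textbf{Containment.} Let $y\in\mathcal{M}^{\mathrm{Dol}}_{\mathcal{L}}(X,G)$ be a fixed point. Then $i(y)$ is fixed, so $a:=\mathcal{H}(i(y))$ satisfies $t\cdot a=a$ for all $t\in\mathbf{G}_m$. Since all weights on the affine space $\mathcal{W}$ are positive, the only fixed point there is $0$, so $a=0$. Hence the fixed locus lies in $(\mathcal{H}\circ i)^{-1}(0)$.

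\textbf{Properness.} The fixed locus $\mathcal{M}^{\mathrm{Dol}}_{\mathcal{L}}(X,G)^{\mathbf{G}_m}$ is a closed subscheme, being the fixed scheme of a torus action on a separated scheme. The map $\mathcal{H}$ is proper by Lemma~\ref{lem:proper Hitchin map for ML(X,n)}, and $i$ is proper by Theorem~\ref{proper map between moduli of principal objects} (indeed it is a closed immersion). So $\mathcal{H}\circ i$ is proper, and the fibre $(\mathcal{H}\circ i)^{-1}(0)$ is proper over a point. The fixed locus is a closed subscheme of this fibre, hence proper, and being quasi-projective it is projective.

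\textbf{Main obstacle.} The main point needing care is the equivariance of $\mathcal{H}$, that is, identifying the Hitchin base of \cite{Alfaya-Oliveire-2024} as $\bigoplus_{j=1}^{n}H^{0}(X,\mathrm{Sym}^{j}\Omega^{1}_{\mathcal{L}})$ with the weighted scaling action. I would check this directly from the definition of the characteristic coefficients $\operatorname{tr}\wedge^{j}\theta_{\mathcal{L}}$; commutativity $\theta_{\mathcal{L}}\wedge\theta_{\mathcal{L}}=0$ ensures they land in symmetric powers. A secondary point is that the fixed locus is closed. If needed, I would argue this pointwise on closed points: fixed points are closed as the intersection of the loci $\{t\cdot y=y\}$ over $t\in\mathbf{G}_m$.
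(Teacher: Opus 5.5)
Your proposal is correct and follows essentially the same route as the paper: equivariance of $\mathcal{H}\circ i$ forces every fixed point into the fibre over $\mathcal{W}^{\mathbf{G}_m}=\{0\}$, and properness of $\mathcal{H}\circ i$ makes that fibre proper. You also justify steps the paper only asserts: the positive weights on $\mathcal{W}$, the equivariance of $i$, and the closedness of the fixed locus inside that fibre. Closedness is exactly what is needed to pass from ``contained in a proper fibre'' to ``proper.''
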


\begin{proof}
	The Hitchin map $\mathcal{H}$ is proper and $\mathbf{G}_{m}$--equivariant;
	hence the composition $\mathcal{H}\circ i$ is also proper and
	$\mathbf{G}_{m}$--equivariant.
	Therefore the fixed point locus satisfies
	\[
	\mathcal{M}^{\mathrm{Dol}}_{\mathcal{L}}(X,G)^{\mathbf{G}_{m}}
	\subseteq
	(\mathcal{H}\circ i)^{-1}(\mathcal{W}^{\mathbf{G}_{m}})
	=
	(\mathcal{H}\circ i)^{-1}(0).
	\]
	Since the fiber over $0$ of a proper morphism is proper, the fixed point
	locus is proper.
\end{proof}

\begin{proposition}\label{semiprojectivity of MLDOL(X,G)}
	The moduli space $\mathcal{M}^{\mathrm{Dol}}_{\mathcal{L}}(X,G)$
	is a semiprojective variety.
\end{proposition}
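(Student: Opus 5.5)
The proof will verify directly the two conditions in the definition of semiprojectivity (\cite[\S\,2.1]{Hausel-Hitchin-2022}) for the $\mathbf{G}_{m}$--action $t\cdot(E_{G},\theta_{\mathcal{L}})=(E_{G},t\theta_{\mathcal{L}})$ on $\mathcal{M}^{\mathrm{Dol}}_{\mathcal{L}}(X,G)$. Everything is reduced to the case $G=GL_{n}(\mathbb{C})$, which is Proposition~\eqref{pro:semiprojectivty of moduli of L twisted Higgs bundle}. The reduction uses a faithful representation $G\hookrightarrow GL_{n}(\mathbb{C})$ and the resulting morphism
\[
i:\mathcal{M}^{\mathrm{Dol}}_{\mathcal{L}}(X,G)\longrightarrow\mathcal{M}^{\mathrm{Dol}}_{\mathcal{L}}(X,n),
\]
which is proper by Theorem~\ref{proper map between moduli of principal objects}.

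First, I will note that $\mathcal{M}^{\mathrm{Dol}}_{\mathcal{L}}(X,G)$ is quasi-projective by Theorem~\ref{thm:moduli-L-twisted-principal-G-Higgs}. The $\mathbf{G}_{m}$--action is algebraic, since it lifts to a $G$--equivariant action on the framed space $\mathcal{R}^{\mathrm{Dol}}_{\mathcal{L}}(X,\mathfrak{p},G)$. This is because scaling the Higgs field commutes with change of framing and preserves the closed conditions cutting out $\mathcal{R}^{\mathrm{Dol}}_{\mathcal{L}}(X,\mathfrak{p},G)$ inside $\mathcal{R}^{\mathrm{Dol}}_{\mathcal{L}}(X,\mathfrak{p},n)$. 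These conditions are that a strict Higgs subbundle with prescribed fibre exists, and such a subbundle for $\theta$ is also one for $t\theta$. The action therefore descends to the good quotient, and $i$ is $\mathbf{G}_{m}$--equivariant.

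Second, the existence of limits is Lemma~\ref{limit exist for MLDOL of PB}. The key point is the valuative criterion applied to the proper map $i$. The orbit map $\mathbf{G}_{m}\to\mathcal{M}^{\mathrm{Dol}}_{\mathcal{L}}(X,G)$ composed with $i$ extends over $\mathbf{A}^{1}$ by the $GL_{n}$ case. Properness of $i$ then lifts this extension, and the lift is unique because the space is separated. Equivariance shows that the image of $0$ is fixed: both $s\mapsto g(\lambda s)$ and $s\mapsto\lambda\cdot g(s)$ extend the same map on $\mathbf{G}_{m}$, so they agree at $0$.

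Third, properness of the fixed locus is Lemma~\ref{fixed point for MLDOL of PB}. The fixed locus is closed and lies in $(\mathcal{H}\circ i)^{-1}(0)$. That fibre is proper because $\mathcal{H}\circ i$ is proper, and $0$ is the only $\mathbf{G}_{m}$--fixed point of the Hitchin base $\mathcal{W}$, on which $\mathbf{G}_{m}$ acts with positive weights. A closed subscheme of a proper scheme is proper, and projectivity follows since the fixed locus is also quasi-projective.

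The main obstacle is the properness of $i$ itself, which the paper obtains through the analytic harmonic-metric spaces $\mathcal{R}^{\mathrm{Dol}^{J}}_{\mathcal{L}}$. I take it as given in Theorem~\ref{proper map between moduli of principal objects}. If one wanted to avoid it, the fallback would be to argue algebraically that $i$ is finite onto its image. That would go via the closed embedding $\mathcal{R}^{\mathrm{Dol}}_{\mathcal{L}}(X,\mathfrak{p},G)\hookrightarrow\mathcal{R}^{\mathrm{Dol}}_{\mathcal{L}}(X,\mathfrak{p},n)$ and the standard fact that a $G$--invariant closed subscheme of a $GL_{n}$--space gives a finite map $Z/\!\!/G\to Y/\!\!/GL_{n}$ for reductive $G$. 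Granting this, the two lemmas combine to give the proposition.
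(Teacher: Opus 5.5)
This is correct and is essentially the paper's proof: quasi-projectivity from the GIT construction, limits obtained by lifting the $GL_{n}(\mathbb{C})$ orbit extension through the proper map $i$ (Lemma~\ref{limit exist for MLDOL of PB}), and properness of the fixed locus as a closed subset of $(\mathcal{H}\circ i)^{-1}(0)$ (Lemma~\ref{fixed point for MLDOL of PB}). Your extra details are all sound: algebraicity of the action, separatedness, equivariance at $0$, positive weights on $\mathcal{W}$, and relying only on properness of $i$ rather than the closed immersion the paper asserts.

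Drop the fallback, though. The ``standard fact'' that a $G$--invariant closed $Z\subset Y$ yields a finite map $Z/\!\!/G\to Y/\!\!/GL_{n}(\mathbb{C})$ is false in general: take $G$ trivial and $Z=Y=\mathfrak{gl}_{n}$ with the conjugation action, so that $\mathfrak{gl}_{n}\to\mathbb{A}^{n}$ has positive-dimensional fibres. Finiteness of $i$ would therefore need a genuinely moduli-theoretic argument.
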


\begin{proof}
	The variety $\mathcal{M}^{\mathrm{Dol}}_{\mathcal{L}}(X,G)$ is
	quasi-projective.
	By Lemma~\eqref{limit exist for MLDOL of PB}, limits of $\mathbf{G}_{m}$--orbits
	exist, and by Lemma~\eqref{fixed point for MLDOL of PB} the fixed point locus
	is proper.
	Hence $\mathcal{M}^{\mathrm{Dol}}_{\mathcal{L}}(X,G)$ is semiprojective.
\end{proof}

\subsection{Semiprojectivity of moduli space $\mathcal{M}_{\mathcal{L}}^{\text{Hod}}(X,G)$.}
The semiprojectivity of the Hodge moduli space
$\mathcal{M}^{\mathrm{Hod}}_{\mathcal{L}}(X,n)$
is well known.

\begin{thm}\label{thm: semiprojectivity of MLHOD(X,n)}
	\cite[\S\,4, Th.~4.12]{Alfaya-Oliveire-2024}
	The $\mathcal{L}$--Hodge moduli space for vector bundles
	$\mathcal{M}^{\mathrm{Hod}}_{\mathcal{L}}(X,n)$
	is a semiprojective variety.
\end{thm}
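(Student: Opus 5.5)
The plan is to equip $\mathcal{M}^{\mathrm{Hod}}_{\mathcal{L}}(X,n)$ with the natural scaling action
\[
t\cdot(\mathcal{E},D,\lambda):=(\mathcal{E},\,tD,\,t\lambda),
\]
and then check the two conditions in the definition of semiprojectivity. First, this action is well defined. If $D$ is an integrable $\mathcal{L}_{\lambda}$--connection, then $tD$ satisfies the Leibniz rule for the anchor $t\lambda\delta$. Its curvature is $t^{2}$ times the curvature of $D$, so it is still zero. The action also preserves $\mathcal{P}$--semistability, because it does not change the invariant subsheaves. It is algebraic because it is induced from the $\mathbf{G}_{m}$--action on the Rees $D$--algebra $\Lambda^{\mathrm{red}}_{\mathcal{L}}$ covering the scaling on $\mathbf{A}^{1}$. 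Moreover, $\pi:\mathcal{M}^{\mathrm{Hod}}_{\mathcal{L}}(X,n)\to\mathbf{A}^{1}$ is $\mathbf{G}_{m}$--equivariant.

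\emph{Fixed locus.} By equivariance of $\pi$, every fixed point lies in $\pi^{-1}(0)=\mathcal{M}^{\mathrm{Dol}}_{\mathcal{L}}(X,n)$, and there the action is the usual one on Higgs fields. So the fixed locus of $\mathcal{M}^{\mathrm{Hod}}_{\mathcal{L}}(X,n)$ equals that of $\mathcal{M}^{\mathrm{Dol}}_{\mathcal{L}}(X,n)$. This set is proper by Proposition \eqref{pro:semiprojectivty of moduli of L twisted Higgs bundle}, or directly: it is a closed subset of the fibre $\mathcal{H}^{-1}(0)$ of the proper Hitchin map of Lemma \eqref{lem:proper Hitchin map for ML(X,n)}.

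\emph{Existence of limits.} For points over $\lambda=0$, the limit exists by Proposition \eqref{pro:semiprojectivty of moduli of L twisted Higgs bundle}. For a point $y=(\mathcal{E},D,\lambda)$ with $\lambda\neq 0$, I would consider the orbit map $\mathbf{G}_{m}\to\mathcal{M}^{\mathrm{Hod}}_{\mathcal{L}}(X,n)$. Over $\mathbf{G}_{m}\subset\mathbf{A}^{1}$ it defines the family $(p^{*}\mathcal{E},\,tD)$ of $\Lambda^{\mathrm{red}}_{\mathcal{L}}$--modules. The task is to extend this family over $t=0$ to a flat family whose central fibre is a semistable $\Lambda^{\mathrm{Dol}}_{\mathcal{L}}$--module with Hilbert polynomial $n\mathcal{P}_{0}$.
\begin{itemize}
\item I would first try the trivial extension $p^{*}\mathcal{E}$ over $X\times\mathbf{A}^{1}$. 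Its fibre at $t=0$ is $(\mathcal{E},0)$, which is a $\lambda$--connection limit but possibly unstable.
\item I would then apply a Langton-type semistable reduction for $\Lambda$--modules over the DVR $\mathbb{C}[t]_{(t)}$, as in Simpson's properness results for moduli of $\Lambda$--modules. Concretely, iterated elementary modifications along the maximal destabilizing subobject of the special fibre terminate and produce a semistable limit. Equivalently, this builds a Griffiths-transverse filtration of $(\mathcal{E},D)$ whose associated graded $\mathcal{L}$--twisted Higgs bundle is semistable.
\end{itemize}
Two facts needed along the way come from Lemma \eqref{lem:transitive-Lie-algebroid-locally-free} and Proposition \eqref{prop:locally-free-L-Higgs}: the limit is locally free, and it has vanishing $\mathcal{L}$--Chern classes. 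These keep the limit inside the Dolbeault component. Separatedness of the moduli space makes the resulting extension $\mathbf{A}^{1}\to\mathcal{M}^{\mathrm{Hod}}_{\mathcal{L}}(X,n)$ unique. Equivariance of the extension then forces the image of $0$ to be a fixed point.

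\emph{Main obstacle.} The hard step is the termination of the Langton/iterated destabilizing modification argument in the $\mathcal{L}$--twisted setting. One must check that the modifications remain $\Lambda^{\mathrm{red}}_{\mathcal{L}}$--submodules, which is where transversality with respect to the anchor $t\delta$ enters. One must also check that the usual boundedness invariant strictly decreases. I would adapt Simpson's argument verbatim, using that $\Lambda^{\mathrm{red}}_{\mathcal{L}}$ is a split almost polynomial $D$--algebra over $\mathbf{A}^{1}$ (Lemma \eqref{Lie algebroid is-equivalent-to-Lambda_L}), so that Simpson's boundedness results for $\Lambda$--modules apply.
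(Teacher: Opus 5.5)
Your proposal is correct and follows essentially the same route as the result the paper quotes from Alfaya--Oliveira (which is Simpson's argument for the classical Hodge moduli space). There, fixed points lie over $\lambda=0$ and are proper via the Hitchin map, and limits over $\lambda\neq 0$ come from the Langton-type semistable extension theorem for $\Lambda$-modules over a DVR, applied to $\Lambda^{\mathrm{red}}_{\mathcal{L}}$; you rightly flag this as the substantive input, and the paper simply cites it rather than reproving it. One correction: your claim that the curvature of $tD$ is $t^{2}$ times that of $D$ needs the Rees fibre over $\lambda$ to be $(V,\lambda[\cdot,\cdot],\lambda\delta)$, not $(V,[\cdot,\cdot],\lambda\delta)$ as the paper writes it, because with the unscaled bracket the Leibniz rule fails for $\lambda\neq 1$ and the bracket term in the curvature of $tD$ scales by $t$ rather than $t^{2}$.
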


There is a natural $\mathbf{G}_{m}$--action on the category of principal
$G$--bundles with integrable $\mathcal{L}^{\mathrm{red}}$--connections,
defined by
\[
t\cdot (E_{G},\nabla_{\mathcal{L}^{\mathrm{red}}})
:=(E_{G},t\nabla_{\mathcal{L}^{\mathrm{red}}}).
\]
If $G$ is reductive, this induces a $\mathbf{G}_{m}$--action on
$\mathcal{M}^{\mathrm{Hod}}_{\mathcal{L}}(X,G)$.
This action is compatible with functoriality with respect to morphisms of
reductive groups and coincides with the standard action when
$G=GL_{n}(\mathbb{C})$.

\begin{lem}\label{limit exist for MLHOD of PB}
	For any $y\in \mathcal{M}^{\mathrm{Hod}}_{\mathcal{L}}(X,G)$,
	the limit $
	\lim_{t\to 0} t\cdot y$
	exists and is a fixed point of the $\mathbf{G}_{m}$--action on
	$\mathcal{M}^{\mathrm{Hod}}_{\mathcal{L}}(X,G)$.
\end{lem}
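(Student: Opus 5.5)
We reduce to the case $G=GL_{n}(\mathbb{C})$, as in Lemma~\eqref{limit exist for MLDOL of PB}. Fix a faithful representation $G\hookrightarrow GL_{n}(\mathbb{C})$ and consider the induced morphism
\[
j:\mathcal{M}^{\mathrm{Hod}}_{\mathcal{L}}(X,G)\longrightarrow \mathcal{M}^{\mathrm{Hod}}_{\mathcal{L}}(X,n),
\]
which is proper by Theorem~\ref{proper map between moduli of principal objects}. The first step is to check that $j$ is $\mathbf{G}_{m}$--equivariant. The action $t\cdot(E_{G},\nabla_{\mathcal{L}^{\mathrm{red}}},\lambda)=(E_{G},t\nabla_{\mathcal{L}^{\mathrm{red}}},t\lambda)$ is carried, under the Tannakian functor $\rho_{E_{G}}$ of Lemma~\eqref{lem:PB-with-L-conection-Tannakian equivalence}, to the scaling action on each associated bundle $E_{G}\times^{G}V$. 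This holds because the induced $\mathcal{L}$--connection on $E_{G}\times^{G}V$ is obtained by composing the splitting with $d\varphi$, which is linear. Consequently $j(t\cdot y)=t\cdot j(y)$. The same argument shows that the structure morphism $\pi$ to $\mathbf{A}^{1}$ is equivariant for the weight-one action on $\mathbf{A}^{1}$.

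Now let $y\in\mathcal{M}^{\mathrm{Hod}}_{\mathcal{L}}(X,G)$ and consider the orbit map $f:\mathbf{G}_{m}\to\mathcal{M}^{\mathrm{Hod}}_{\mathcal{L}}(X,G)$, $t\mapsto t\cdot y$. By Theorem~\eqref{thm: semiprojectivity of MLHOD(X,n)}, the composite $j\circ f$ extends to an equivariant morphism $g:\mathbf{A}^{1}\to\mathcal{M}^{\mathrm{Hod}}_{\mathcal{L}}(X,n)$. We then have a commutative square with $\mathbf{G}_{m}\hookrightarrow\mathbf{A}^{1}$ on the left, $f$ on top, $j$ on the right and $g$ on the bottom. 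Since $j$ is proper and $\mathbf{A}^{1}$ is a smooth curve, the valuative criterion of properness, applied at the local ring of $0\in\mathbf{A}^{1}$, gives a lift $\tilde f:\mathbf{A}^{1}\to\mathcal{M}^{\mathrm{Hod}}_{\mathcal{L}}(X,G)$ extending $f$. (If one prefers, the lift is unique once $j$ is known to be separated, which holds since both spaces are quasi-projective.) We set $\lim_{t\to 0}t\cdot y:=\tilde f(0)$.

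It remains to show that $\tilde f(0)$ is a fixed point. For $s\in\mathbf{G}_{m}$, both $u\mapsto s\cdot\tilde f(u)$ and $u\mapsto\tilde f(su)$ are morphisms $\mathbf{A}^{1}\to\mathcal{M}^{\mathrm{Hod}}_{\mathcal{L}}(X,G)$ that agree on the dense open subset $\mathbf{G}_{m}$. By separatedness they agree everywhere, and evaluating at $u=0$ gives $s\cdot\tilde f(0)=\tilde f(0)$.

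We expect the main obstacle to be the properness of $j$ on the Hodge level. Theorem~\ref{proper map between moduli of principal objects} asserts this, but its proof relies on a topological trivialization $\mathcal{M}^{\mathrm{Hod}}\cong\mathcal{M}^{\mathrm{DR}}\times\mathbf{A}^{1}$ that is not fibrewise accurate at $\lambda=0$. If that step is not robust, we would instead argue fibrewise over $\mathbf{A}^{1}$. Over $\lambda\neq0$, the map $j$ restricts to the de Rham map, which is proper. Over $\lambda=0$ it restricts to the Dolbeault map, also proper by Theorem~\ref{proper map between moduli of principal objects}. To conclude, we would use that $j$ is a closed immersion of representation spaces followed by a GIT quotient. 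Concretely, $\mathcal{R}^{\mathrm{Hod}}_{\mathcal{L}}(X,\mathfrak{p},G)$ is closed in $\mathcal{R}^{\mathrm{Hod}}_{\mathcal{L}}(X,\mathfrak{p},n)$ by the argument of Theorem~\ref{thm: L-twisted - reprsentaion - space for L-connection} applied to $\mathcal{L}^{\mathrm{red}}$. Then $j$ is the induced map $\mathcal{R}_{G}\sslash G\to\mathcal{R}_{n}\sslash GL_{n}$, which is finite, and hence proper, by standard GIT for reductive subgroups.
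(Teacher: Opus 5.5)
Your main argument is the paper's proof. You fix a faithful representation $G\hookrightarrow GL_{n}(\mathbb{C})$, extend the orbit in $\mathcal{M}^{\mathrm{Hod}}_{\mathcal{L}}(X,n)$ using its semiprojectivity, and lift to $\mathcal{M}^{\mathrm{Hod}}_{\mathcal{L}}(X,G)$ through the proper map of Theorem~\ref{proper map between moduli of principal objects}. Your equivariance check, valuative-criterion lift and separatedness argument for fixedness make explicit what the paper leaves implicit.

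Do not rely on the fallback in your last paragraph. Properness of $j$ on each fibre over $\mathbf{A}^{1}$ does not imply properness of $j$. Also, finiteness of $\mathcal{R}_{G}\sslash G\to\mathcal{R}_{n}\sslash GL_{n}$ is not a general GIT fact for reductive subgroups. For reductive $G\subset H$ with $\dim G<\dim H$ acting on $H$ by translation, $H\sslash G$ is positive-dimensional while $H\sslash H$ is a point. So that step would need the specific harmonic-metric argument behind the properness theorem.
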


\begin{proof}
	Theorem~\eqref{thm: semiprojectivity of MLHOD(X,n)} proves the statement
	for $G=GL_{n}(\mathbb{C})$.
	Choose a faithful representation
	$G\hookrightarrow GL_{n}(\mathbb{C})$, inducing a closed immersion
	\[
	i:\mathcal{M}^{\mathrm{Hod}}_{\mathcal{L}}(X,G)
	\hookrightarrow
	\mathcal{M}^{\mathrm{Hod}}_{\mathcal{L}}(X,n).
	\]
	The orbit map
	$\mathbf{G}_{m}\to \mathcal{M}^{\mathrm{Hod}}_{\mathcal{L}}(X,n)$
	extends to a morphism
	$\mathbf{A}^{1}\to \mathcal{M}^{\mathrm{Hod}}_{\mathcal{L}}(X,n)$.
	By the properness of the morphisms in
	Theorem~\ref{proper map between moduli of principal objects},
	the induced orbit
	$\mathbf{G}_{m}\to \mathcal{M}^{\mathrm{Hod}}_{\mathcal{L}}(X,G)$
	extends to a morphism
	$\mathbf{A}^{1}\to \mathcal{M}^{\mathrm{Hod}}_{\mathcal{L}}(X,G)$.
	The image of $0\in \mathbf{A}^{1}$ is the desired fixed point.
\end{proof}

\begin{lem}\label{fixed point for MLHOD of PB}
	The fixed point locus of the $\mathbf{G}_{m}$--action on
	$\mathcal{M}^{\mathrm{Hod}}_{\mathcal{L}}(X,G)$
	is proper.
\end{lem}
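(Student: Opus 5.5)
The plan follows the proof of Lemma~\eqref{fixed point for MLDOL of PB}, with the Hitchin map replaced by the projection $\pi$ to $\mathbf{A}^{1}$. First I would check that $\pi:\mathcal{M}^{\mathrm{Hod}}_{\mathcal{L}}(X,G)\to\mathbf{A}^{1}$ is $\mathbf{G}_{m}$--equivariant, where $\mathbf{G}_{m}$ acts on $\mathbf{A}^{1}$ by scaling. This holds because $t\cdot(E_{G},\nabla_{\mathcal{L}^{\mathrm{red}}})$ over $\lambda$ lands over $t\lambda$. Since the only fixed point of $\mathbf{A}^{1}$ is the origin, every fixed point of $\mathcal{M}^{\mathrm{Hod}}_{\mathcal{L}}(X,G)$ lies in $\pi^{-1}(0)=\mathcal{M}^{\mathrm{Dol}}_{\mathcal{L}}(X,G)$. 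Moreover, the $\mathbf{G}_{m}$--action restricted to $\pi^{-1}(0)$ is exactly the action $(E_{G},\theta_{\mathcal{L}})\mapsto(E_{G},t\theta_{\mathcal{L}})$ used in the previous subsection.

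It then follows that
\[
\mathcal{M}^{\mathrm{Hod}}_{\mathcal{L}}(X,G)^{\mathbf{G}_{m}}=\mathcal{M}^{\mathrm{Dol}}_{\mathcal{L}}(X,G)^{\mathbf{G}_{m}}.
\]
This is a closed subscheme of the closed fibre $\pi^{-1}(0)$, hence closed in $\mathcal{M}^{\mathrm{Hod}}_{\mathcal{L}}(X,G)$. By Lemma~\eqref{fixed point for MLDOL of PB}, it is proper, being a closed subscheme of $(\mathcal{H}\circ i)^{-1}(0)$. This would already finish the proof.

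As an alternative argument, which also serves as a cross-check, I would use the morphism $i:\mathcal{M}^{\mathrm{Hod}}_{\mathcal{L}}(X,G)\to\mathcal{M}^{\mathrm{Hod}}_{\mathcal{L}}(X,n)$ induced by a faithful representation. This morphism is proper by Theorem~\eqref{proper map between moduli of principal objects}, and it is $\mathbf{G}_{m}$--equivariant. The fixed locus of $\mathcal{M}^{\mathrm{Hod}}_{\mathcal{L}}(X,n)$ is proper by Theorem~\eqref{thm: semiprojectivity of MLHOD(X,n)}. The fixed locus upstairs is closed and contained in $i^{-1}$ of that proper fixed locus. Since $i$ is proper, this preimage is proper over $\mathbb{C}$, and so is its closed subscheme.

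The main point needing care is the identification of the restricted action on $\pi^{-1}(0)$ with the Higgs scaling action. In the Rees picture, a $\lambda$--connection at $\lambda=0$ is a Higgs field, and scaling the connection by $t$ scales the Higgs field by $t$. I expect this to be the only nonformal step, and it is a direct check on the fibre description of $\Lambda^{\mathrm{red}}_{\mathcal{L}}$.
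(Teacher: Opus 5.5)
Your proposal is correct and follows essentially the same route as the paper. Equivariance of $\pi$ forces every fixed point into $\pi^{-1}(0)=\mathcal{M}^{\mathrm{Dol}}_{\mathcal{L}}(X,G)$, where the action is Higgs scaling, so the fixed locus coincides with the Dolbeault one and is proper by Lemma~\eqref{fixed point for MLDOL of PB}. Your second argument, via the proper map to $\mathcal{M}^{\mathrm{Hod}}_{\mathcal{L}}(X,n)$ and Theorem~\eqref{thm: semiprojectivity of MLHOD(X,n)}, is a valid cross-check that the paper does not use.
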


\begin{proof}
	The argument follows \cite[\S\,10, Lemma~10.4]{CT.Simpson-1997}.
	The fixed point locus of the $\mathbf{G}_{m}$--action lies over the origin
	$0\in \mathbf{A}^{1}$ under the Hodge projection
	\[
	\pi:\mathcal{M}^{\mathrm{Hod}}_{\mathcal{L}}(X,G)\to \mathbf{A}^{1}.
	\]
	Therefore it coincides with the fixed point locus of the
	$\mathbf{G}_{m}$--action on the Dolbeault moduli space
	$\mathcal{M}^{\mathrm{Dol}}_{\mathcal{L}}(X,G)$.
	By Lemma~\eqref{fixed point for MLDOL of PB}, this fixed point locus is
	proper.
\end{proof}

\begin{proposition}\label{semiprojectivity of MLHOD(X,G)}
	The moduli space
	$\mathcal{M}^{\mathrm{Hod}}_{\mathcal{L}}(X,G)$
	is a semiprojective variety.
\end{proposition}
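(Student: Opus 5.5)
The plan is to mirror the proof of Proposition~\eqref{semiprojectivity of MLDOL(X,G)}. We verify the two defining conditions of semiprojectivity, namely existence of limits and properness of the fixed locus, for the $\mathbf{G}_{m}$--action
\[
t\cdot(E_{G},\nabla_{\mathcal{L}^{\mathrm{red}}})=(E_{G},t\nabla_{\mathcal{L}^{\mathrm{red}}}).
\]
Quasi-projectivity of $\mathcal{M}^{\mathrm{Hod}}_{\mathcal{L}}(X,G)$ is already known from the theorem at the end of Subsection~\eqref{The-L-Hodge-moduli-spaces-for-principal-bundles}. That result exhibits the space as a GIT quotient of a closed $G$--invariant subscheme of the framed representation space for $\mathcal{L}^{\mathrm{red}}$--connections, so only the two dynamical conditions remain.

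First, for the existence of limits, we invoke Lemma~\eqref{limit exist for MLHOD of PB}. It gives, for each $y\in\mathcal{M}^{\mathrm{Hod}}_{\mathcal{L}}(X,G)$, a $\mathbf{G}_{m}$--equivariant extension $\mathbf{A}^{1}\to\mathcal{M}^{\mathrm{Hod}}_{\mathcal{L}}(X,G)$ of the orbit map. The point is that the orbit limit exists in $\mathcal{M}^{\mathrm{Hod}}_{\mathcal{L}}(X,n)$ by Theorem~\eqref{thm: semiprojectivity of MLHOD(X,n)}, and that the map $\mathcal{M}^{\mathrm{Hod}}_{\mathcal{L}}(X,G)\to\mathcal{M}^{\mathrm{Hod}}_{\mathcal{L}}(X,n)$ is proper by Theorem~\eqref{proper map between moduli of principal objects}. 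The valuative criterion applied to the punctured orbit $\mathbf{G}_{m}\subset\mathbf{A}^{1}$ therefore lifts the extension. Equivariance of the extended map follows because it agrees with the orbit map on the dense open $\mathbf{G}_{m}$, and the target is separated. Hence $g(0)$ is a fixed point.

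Second, for properness of the fixed locus, we apply Lemma~\eqref{fixed point for MLHOD of PB}. The morphism $\pi$ is $\mathbf{G}_{m}$--equivariant for the weight-one action on $\mathbf{A}^{1}$, so fixed points lie in $\pi^{-1}(0)=\mathcal{M}^{\mathrm{Dol}}_{\mathcal{L}}(X,G)$. There the action restricts to the scaling of the Higgs field, so
\[
\mathcal{M}^{\mathrm{Hod}}_{\mathcal{L}}(X,G)^{\mathbf{G}_{m}}=\mathcal{M}^{\mathrm{Dol}}_{\mathcal{L}}(X,G)^{\mathbf{G}_{m}},
\]
which is proper by Lemma~\eqref{fixed point for MLDOL of PB}, being a closed subscheme of the proper fibre $(\mathcal{H}\circ i)^{-1}(0)$. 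Combining the two steps yields the proposition.

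The main obstacle is the first step, specifically the properness of $\mathcal{M}^{\mathrm{Hod}}_{\mathcal{L}}(X,G)\to\mathcal{M}^{\mathrm{Hod}}_{\mathcal{L}}(X,n)$. I would establish it directly rather than through any topological trivialization. The plan is to use that $\mathcal{R}^{\mathrm{Hod}}_{\mathcal{L}}(X,\mathfrak{p},G)$ is closed in $\mathcal{R}^{\mathrm{Hod}}_{\mathcal{L}}(X,\mathfrak{p},n)$, by the construction of Theorem~\eqref{thm: L-twisted - reprsentaion - space for L-connection} applied to $\mathcal{L}^{\mathrm{red}}$. Given that, the induced map on GIT quotients $\mathcal{R}_{G}\sslash G\to\mathcal{R}_{n}\sslash GL_{n}$ is finite, since the reductive group $G$ sits in $GL_{n}$ with $GL_{n}/G$ affine, in the manner of Simpson's argument. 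A finite map is in particular proper.
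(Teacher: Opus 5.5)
Your skeleton is the paper's: quasi-projectivity, then Lemma~\ref{limit exist for MLHOD of PB} for limits and Lemma~\ref{fixed point for MLHOD of PB} for the fixed locus. Your handling of the fixed locus is fine: equivariance of $\pi$ forces fixed points into $\pi^{-1}(0)$. So is the extension of orbits via the valuative criterion. Had you simply cited Theorem~\ref{proper map between moduli of principal objects}, your proof would coincide with the paper's.

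You depart only on the properness of $\mathcal{M}^{\mathrm{Hod}}_{\mathcal{L}}(X,G)\to\mathcal{M}^{\mathrm{Hod}}_{\mathcal{L}}(X,n)$. The paper takes it from an asserted trivialization $\mathcal{M}^{\mathrm{Hod}}_{\mathcal{L}}(X,G)\cong\mathcal{M}^{\mathrm{DR}}_{\mathcal{L}}(X,G)\times\mathbf{A}^{1}$, functorial in $G$. That is a non-abelian-Hodge-type input not established for Lie algebroids, so your wish to avoid it is reasonable. You instead claim that $\mathcal{R}_{G}\sslash G\to\mathcal{R}_{n}\sslash GL_{n}$ is finite, and that step does not go through as stated.

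Closedness of $\mathcal{R}_{G}$ in $\mathcal{R}_{n}$ and affineness of $GL_{n}/G$ give only two things: $\mathcal{R}_{G}\sslash G\to\mathcal{R}_{n}\sslash G$ is a closed immersion, and the map down to $\mathcal{R}_{n}\sslash GL_{n}$ is affine. They say nothing about fibres or properness. In that generality the claim is false. Take $GL_{n}$ acting on itself by left translation, trivially linearized: the quotient is a point, whereas $GL_{n}\sslash G=G\backslash GL_{n}$ is affine of positive dimension for any proper reductive $G\subset GL_{n}$.

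The missing input must use the framing action and the description of $\mathcal{R}_{G}$ as the locus where the monodromy lies in $G$. Take the class of a polystable object $E$ with reductive monodromy group $M\subset GL_{n}$. Then $\operatorname{Aut}(E)=Z_{GL_{n}}(M)$, and a reframing $g$ lands in $\mathcal{R}_{G}$ iff $gMg^{-1}\subset G$. So the closed points of the fibre are $Z_{GL_{n}}(M)\backslash (GL_{n}/G)^{M}$. Finiteness of this set is a Richardson-type fact: the $Z_{GL_{n}}(M)$-orbits are open in $(GL_{n}/G)^{M}$ because taking $M$-invariants is exact.

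Even granting this, quasi-finite plus affine is not finite (consider $\mathbf{G}_{m}\subset\mathbf{A}^{1}$), so you still need properness. One option is to extend $G$-reductions over a DVR. Another is Simpson's compactness of the spaces $\mathcal{R}^{J}$, which is the route the paper uses in the de Rham and Dolbeault cases. Here that route would need harmonic metrics for $\lambda$--$\mathcal{L}$--connections, controlled uniformly as $\lambda\to 0$.

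Finally, the closedness you start from is not available off the shelf either. Theorem~\ref{thm: L-twisted - reprsentaion - space for L-connection} was proved for transitive $\mathcal{L}$, but $\mathcal{L}^{\mathrm{red}}$ has zero anchor over $\lambda=0$. One must therefore combine the de Rham construction with the semiharmonic Dolbeault one.
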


\begin{proof}
	The variety $\mathcal{M}^{\mathrm{Hod}}_{\mathcal{L}}(X,G)$ is
	quasi-projective.
	By Lemma~\eqref{limit exist for MLHOD of PB}, limits of $\mathbf{G}_{m}$--orbits
	exist, and by Lemma~\eqref{fixed point for MLHOD of PB}, the fixed point locus
	is proper.
	Hence $\mathcal{M}^{\mathrm{Hod}}_{\mathcal{L}}(X,G)$ is semiprojective.
\end{proof}


\section{Motives of moduli of $\mathcal{L}$--twisted principal $G$--objects}

Another goal of this paper is to compare the class of the moduli space $\mathcal{M}^{\text{DR}}_{\mathcal{L}}(X,G)$ and $\mathcal{M}^{\text{Dol}}_{\mathcal{L}}(X,G)$ in the (completion of the)
Grothendieck ring of varieties. We recall such ring and its basic properties (cf. \cite[\S\,5]{Alfaya-Oliveire-2024}).
Denote by $Var_{\mathbb{C}}$ the category of quasi-projective varieties over $\mathbb{C}$. For each $Y \in Var_{\mathbb{C}}$, let $[Y]$ denote the corresponding isomorphism class. Consider the group obtained by the free abelian group on isomorphism classes $[Y]$, modulo the relation
\[[Y] = [Y'] + [Y \backslash Y'],\]
where $Y' \subset Y$ is a Zariski-closed subset. In particular, in such group,
\[[Y] + [Z] = [Y \sqcup Z],\]
where $\sqcup$ denotes disjoint union. If we define the product
\[[Y] \cdot [Z] = [Y \times Z],\]
in this quotient, then we obtain a commutative ring, known as the \textit{Grothendieck ring of varieties} and denoted by $\mathcal{K}(Var_{\mathbb{C}})$.
Then $0 = [\emptyset]$ and $1 = [Spec(\mathbb{C})]$ are the additive and multiplicative units of this ring. 

The class of the affine line, sometimes called the \textit{Lefschetz object}, is denoted by $\mathbb{L} := [\mathbf{A}^{1}] = [\mathbb{C}]$.
Of course, $\mathbb{L}^{n} = [\mathbf{A}^{n}] = [\mathbb{C}^{n}]$. We will consider the localization $\mathcal{K}(Var_{\mathbb{C}})[\mathbb{L}^{-1}]$, and then the dimensional completion,

\[\hat{\mathcal{K}}(VarC) := \left\{\sum_{r\geqslant 0}[Y_{r}]\mathbb{L}^{-r}: [Y_{r}]\in \mathcal{K}(Var_{\mathbb{C}}) \ \text{with} \ \dim Y_{r}-r\longrightarrow -\infty\right\}
\]

Notice that we have a map $\mathcal{K}(Var_{\mathbb{C}}) \rightarrow \hat{\mathcal{K}}(Var_{\mathbb{C}})$. Observe also that $\mathbb{L}^{n}-1$ is invertible in $\hat{\mathcal{K}}(Var_{\mathbb{C}})$, for every $n$, with
inverse equal to $\mathbb{L}^{-n}\sum_{k=0}^{\infty}\mathbb{L}^{-nk}$. This is the reason why we had to introduce the completion $\hat{\mathcal{K}}(Var\mathbb{C})$: there will be
computations in which we will need to invert elements of the form $\mathbb{L}^{n}$ or $\mathbb{L}^{n} - 1$.
In this paper, by motive we mean the following.

\begin{defn}
	Let $Y$ be a quasi-projective variety. The class $[Y]$ in $\mathcal{K}(Var_{\mathbb{C}})$ or in $\hat{\mathcal{K}}(Var_{\mathbb{C}})$ is called the \textit{motive, or motivic class}, of $Y$.
\end{defn}

Recall that, if $\pi: Y \rightarrow B$ is an algebraic fibre bundle (thus Zariski locally trivial), with fibre $F$, then $[Y] = [F] \cdot [B]$ .

\medskip

\noindent
\textbf{Mixed Hodge Structures and \(E\)--polynomials}.
For the convenience of the reader, we briefly recall the notion of mixed Hodge structures and \(E\)--polynomials following \cite[\S\,2]{Hausel-Rodriguez-2008}. Let \(Y\) be a complex algebraic variety. Deligne showed that the cohomology group \(H^{k}(Y,\mathbb{Q})\) carries a canonical mixed Hodge structure consisting of an increasing weight filtration
\[
W_{0}\subseteq W_{1}\subseteq \cdots \subseteq W_{2k}=H^{k}(Y,\mathbb{Q}),
\]
and a decreasing Hodge filtration
\[
H^{k}(Y,\mathbb{C})=F^{0}\supseteq F^{1}\supseteq \cdots \supseteq F^{k}\supseteq 0,
\]such that the filtration induced by $F$ on the complexification of the graded pieces $\text{Gr}^{W}_{\ell}:= W_{\ell}/W_{\ell - 1}$ of the
weight filtration endows every graded piece with a pure Hodge structure of weight $\ell$.

The \textit{mixed Hodge numbers} are defined by
\[
h^{p,q;k}_{c}(Y)
=
\dim_{\mathbb{C}}
\text{Gr}^{p}_{F}\text{Gr}^{W}_{p+q}H^{k}_{c}(Y,\mathbb{C}),
\]
where \(H^{k}_{c}(Y,\mathbb{C})\) denotes the compactly supported cohomology.

The corresponding \textit{mixed Hodge polynomial} is
\[
H_{c}(Y;x,y,t)
=
\sum_{p,q,k}
h^{p,q;k}_{c}(Y)x^{p}y^{q}t^{k},
\]
and the \textit{\(E\)--polynomial (or Hodge--Deligne polynomial)} is obtained by specialization:
\[
E(Y;x,y)
=
H_{c}(Y;x,y,-1).
\]

A fundamental property of the \(E\)--polynomial is its additivity: if \(Z\subseteq Y\) is a closed subvariety, then
\[
E(Y;x,y)=E(Z;x,y)+E(Y\setminus Z;x,y).
\]
Moreover, if \(X\to Y\) is a Zariski locally trivial fibration with fibre \(F\), then
\[
E(X;x,y)=E(Y;x,y)\,E(F;x,y).
\]

These properties make \(E\)--polynomials particularly useful in the study of moduli spaces

\begin{rem}
Actually, the $E$ polynomial can be seen as a ring map,
\[E:\hat{\mathcal{K}}(\text{Var}_{\mathbb{C}})\rightarrow \mathbb{Z}[x,y] \left \llbracket \frac{1}{xy} \right\rrbracket\]with values in the Laurent series in $xy$, which takes values in $\mathbb{Z}[x,y]$ when restricted to $\mathcal{K}(\text{Var}_{\mathbb{C}})$.
\end{rem}

\begin{example}
	\begin{enumerate}
		\item For the affine line $\mathbb{A}^{1}$, $
		E(\mathbb{A}^{1};x,y)=xy.$
		 More generally, $$
		E(\mathbb{A}^{n};x,y)=(xy)^{n}.$$
		\item For the multiplicative group $\mathbb{C}^{*}$, $
		E(\mathbb{C}^{*};x,y)=xy-1$.
		Indeed, \(\mathbb{C}^{*}=\mathbb{A}^{1}\setminus \{0\}\), and the \(E\)--polynomial is additive with respect to stratifications.
		
		\item Let \(X\) be a smooth projective curve of genus \(g\). Since \(X\) is smooth and projective, its mixed Hodge structure is pure. The nonzero Hodge numbers are
		\[
		h^{0,0}=1,\qquad h^{1,0}=h^{0,1}=g,\qquad h^{1,1}=1.
		\]
		Hence $
		E(X;x,y)
		=
		\sum_{p,q}(-1)^{p+q}h^{p,q}x^{p}y^{q}$,
		which gives
		\[
		E(X;x,y)
		=
		1-gx-gy+xy.
		\]
		For example, if \(X=\mathbb{P}^{1}\), then \(g=0\), and therefore $
		E(\mathbb{P}^{1};x,y)=1+xy$.
		
		If $X$ is an elliptic curve, then \(g=1\), so
		\[
		E(X;x,y)=1-x-y+xy=(1-x)(1-y).
		\]
\end{enumerate}
\end{example}

Let $Y$ be a semiprojective variety and $\alpha \in Y^{\mathbf{G}_{m}}$. The \textit{upward flow from $\alpha$} is,
\[ F_{\alpha}^{+}:= \{x \in Y : \lim_{t\rightarrow 0}t\cdot x = \alpha\} \subseteq Y.\]
The upward flows define the \textit{Białynicki-Birula decomposition},
\[Y = \bigsqcup_{\alpha \in Y^{\mathbf{G}_{m}}}F_{\alpha}^{+}.\]
Similarly, \textit{the downward flow from $\alpha$} is
\[ F_{\alpha}^{-}:= \{x \in Y : \lim_{t\rightarrow \infty}t\cdot x = \alpha\} \subseteq Y.\]
and the \textit{core of $Y$} is defined to be, 
\[\mathcal{C}:=\bigsqcup_{\alpha \in X^{\mathbf{G}_{m}}}F_{\alpha}^{-}\subseteq Y\]
Given a smooth fixed point $\alpha \in Y^{\mathbf{G}_{m}}$, its upward and downward flows can be described by studying the tangent space $T_{\alpha}Y$. Since $\alpha$ is fixed, $T_{\alpha}Y$ has an induced $\mathbf{G}_{m}$--action which provides a weight space decomposition $T_{\alpha}Y =\bigoplus_{k\in \mathbb{Z}}(T_{\alpha}Y)_{k}$. Let $T^{+}_{\alpha}Y := \bigoplus_{k(>0)\in \mathbb{Z}}(T_{\alpha}Y)_{k}$, be the subspace of positive weights and $T^{-}_{\alpha}X := \bigoplus_{k(<0)\in \mathbb{Z}}(T_{\alpha}Y)_{k}$, the subspace of negative weights \cite[\S\,2, Proposition 2.1]{Hausel-Hitchin-2022}. Let $Y$ be a semiprojective variety and $\alpha \in  Y^{G_{m}}$ a smooth fixed point. The upward flow $F_{\alpha}^{+}$ (resp. the downward flow $F_{\alpha}^{-}$) is a locally closed $\mathbf{G}_{m}$-invariant subvariety of $Y$ isomorphic to $T^{+}_{\alpha}Y (\text{resp.} T^{-}_{\alpha}X)$ as $\mathbf{G}_{m}$--varieties. Define $N^{+}_{\alpha}:=\dim T^{+}_{\alpha}Y, \ N^{-}_{\alpha}:=\dim T^{-}_{\alpha}Y$.

We now recall the definition \cite[Definition 2.12]{Hausel-Hitchin-2022} of a very stable point in a semiprojective variety.

\begin{defn}
	A smooth fixed point $\alpha \in Y^{\mathbf{G}_{m}}$ is \textit{very stable} if $F^{+}_{\alpha} \bigcap \mathcal{C}= \{\alpha\}$.
\end{defn}

\begin{proposition}\cite[\S\,5, Theorem 5.6]{Alfaya-Oliveire-2024}\label{prop: motivic equality of smooth complex variety}
Let $Y$ be a smooth semiprojective complex variety endowed with a $\mathbb{G}_{m}$--equivariant surjective submersion $\pi:Y\rightarrow \mathbb{A}^{1}$ covering the standard scalling action on $\mathbb{C}$. Then the following motivic equalites holds in $\hat{\mathcal{K}}(\text{Var}_{\mathbb{C}})$:
\[[\pi^{-1}(0)]=[\pi^{-1}(1)] \ \ \text{and}  \ \ [X]=\mathbb{L}[\pi^{-1}(0)]\]where $\mathbb{L}$ is the Lefschetz object.
\end{proposition}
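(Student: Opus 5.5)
The plan is to compute $[Y]$ in two independent ways, once via the Bia{\l}ynicki--Birula decomposition and once via the fibration over $\mathbf{G}_m\subset\mathbb{A}^1$, and then compare; the statement's $[X]$ is read as $[Y]$. Write $Y_0:=\pi^{-1}(0)$ and $Y_1:=\pi^{-1}(1)$. Since $\pi$ is $\mathbf{G}_m$--equivariant for the weight-one scaling on $\mathbb{A}^1$, every fixed point lies over $0\in\mathbb{A}^1^{\mathbf{G}_m}$, so $Y^{\mathbf{G}_m}=Y_0^{\mathbf{G}_m}$.

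\emph{Step 1: $Y_0$ is smooth and semiprojective.} Since $\pi$ is a submersion, $Y_0$ is smooth. It is a closed $\mathbf{G}_m$--invariant subvariety of $Y$. For $y\in Y_0$ the limit $\lim_{t\to0}t\cdot y$ exists in $Y$, and it stays in the closed set $Y_0$. Finally $Y_0^{\mathbf{G}_m}=Y^{\mathbf{G}_m}$ is proper.

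\emph{Step 2: the Bia{\l}ynicki--Birula decomposition gives $[Y]=\mathbb{L}[Y_0]$.} Let $F_i$ be the connected components of $Y^{\mathbf{G}_m}$. For a smooth semiprojective variety, the upward flows $F_i^+\to F_i$ are Zariski locally trivial affine fibrations with fibre $T^+_\alpha$, and they cover the variety because all limits exist. Hence
\[
[Y]=\sum_i \mathbb{L}^{N_i^+(Y)}[F_i],\qquad [Y_0]=\sum_i \mathbb{L}^{N_i^+(Y_0)}[F_i].
\]
At a fixed point $\alpha$ we have the $\mathbf{G}_m$--equivariant exact sequence $0\to T_\alpha Y_0\to T_\alpha Y\xrightarrow{d\pi}T_0\mathbb{A}^1\to 0$. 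It is surjective because $\pi$ is a submersion, and it splits equivariantly because $\mathbf{G}_m$ is reductive. The quotient $T_0\mathbb{A}^1$ has weight $1$, so $N_i^+(Y)=N_i^+(Y_0)+1$, and therefore $[Y]=\mathbb{L}[Y_0]$.

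\emph{Step 3: the fibration gives $[Y]=[Y_0]+(\mathbb{L}-1)[Y_1]$.} Consider the map $\mathbf{G}_m\times Y_1\to\pi^{-1}(\mathbf{G}_m)$, $(t,y)\mapsto t\cdot y$. It is an isomorphism, with inverse $y\mapsto(\pi(y),\pi(y)^{-1}\cdot y)$. Stratifying $Y=Y_0\sqcup\pi^{-1}(\mathbf{G}_m)$ then gives $[Y]=[Y_0]+(\mathbb{L}-1)[Y_1]$.

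\emph{Step 4: comparison.} Combining Steps 2 and 3 yields $(\mathbb{L}-1)\bigl([Y_0]-[Y_1]\bigr)=0$. Since $\mathbb{L}-1$ is invertible in $\hat{\mathcal{K}}(Var_{\mathbb{C}})$, we get $[Y_0]=[Y_1]$, and hence also $[Y]=\mathbb{L}[Y_0]$.

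The main obstacle is Step 2. It relies on the Bia{\l}ynicki--Birula theorem for a \emph{quasi-projective} smooth variety with a $\mathbf{G}_m$--action. I would first try to justify it by using semiprojectivity to guarantee that the upward flows exhaust $Y$, and by invoking Bia{\l}ynicki--Birula, as in Hausel--Rodriguez-Villegas, for Zariski local triviality of $F_i^+\to F_i$. I would also check that the weight computation at fixed points is uniform along each component $F_i$, so that the exponents $N_i^+$ are well defined.
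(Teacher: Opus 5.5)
Your proposal is correct. The paper gives no proof of its own; it only cites Alfaya--Oliveira's Theorem~5.6, and your argument is the standard one behind that result. You compare the Bia{\l}ynicki--Birula formula for $Y$ and for $\pi^{-1}(0)$, where the extra normal direction at each fixed point has weight $1$, giving $[Y]=\mathbb{L}[\pi^{-1}(0)]$. You match this against $[Y]=[\pi^{-1}(0)]+(\mathbb{L}-1)[\pi^{-1}(1)]$, which comes from $\pi^{-1}(\mathbf{G}_m)\cong\mathbf{G}_m\times\pi^{-1}(1)$. Then you invert $\mathbb{L}-1$ in $\hat{\mathcal{K}}(Var_{\mathbb{C}})$. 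The quasi-projective Bia{\l}ynicki--Birula input you flag as the main obstacle is exactly Alfaya--Oliveira's Lemma~5.5, which the paper itself invokes for smooth semiprojective varieties. You are also right to read the statement's $[X]$ as $[Y]$.
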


Let \(Y\) be a semiprojective variety over \(\mathbb{C}\). We denote by \(\widehat{Y}\) the smooth locus of \(Y\). Note that smooth locus of semiprojective variety is again a semiprojective variety.
\begin{thm}\label{thm:motivic equality of dr-dol-Hod principal moduli space}
	Let $\mathcal{L} = (V,[·,·],\delta)$ be a transitive Lie algebroid on $X$. Then the following equalities hold $\hat{\mathcal{K}}(Var_{\mathbb{C}})$,
	
	\begin{enumerate}
		\item $[\widehat{\mathcal{M}^{\text{Dol}}_{\mathcal{L}}(X,G)}]=\sum_{\alpha\in X^{\mathbf{G}_{m}}}\mathbb{L}^{N_{\alpha}^{+}}[F_{\alpha}]$
		
		\item $[\widehat{\mathcal{M}^{\text{DR}}_{\mathcal{L}}(X,G)}] = [\widehat{\mathcal{M}_{\mathcal{L}}^{\text{Dol}}(X,G)}, \ \  [\widehat{\mathcal{M}_{\mathcal{L}}^{\text{Hod}}(X,G)]} = \mathbb{L}[\widehat{\mathcal{M}_{\mathcal{L}}^{\text{Dol}}(X,G)}]$,
		
		\item $E(\widehat{\mathcal{M}_{\mathcal{L}}^{\text{DR}}(X,G))} = E(\widehat{\mathcal{M}_{\mathcal{L}}^{\text{Dol}}(X,G))}$, \ \  $E(\widehat{\mathcal{M}_{\mathcal{L}}^{\text{Hod}}(X,G))} = xyE(\widehat{\mathcal{M}_{\mathcal{L}}^{\text{Dol}}(X,G))}$,
		
		\item we have an isomorphism of Hodge structures,
		\[H^{\bullet}(\widehat{\mathcal{M}_{\mathcal{L}}^{\text{DR}}(X,G)}) \cong H^{\bullet}(\widehat{\mathcal{M}_{\mathcal{L}}^{\text{Dol}}(X,G)})\]and both $\widehat{\mathcal{M}_{\mathcal{L}}^{\text{DR}}(X,G)}$ and $\widehat{\mathcal{M}_{\mathcal{L}}^{\text{Hod}}(X,G)}$ have pure mixed Hodge structure.

		\item A smooth fixed point 
		\[
		\alpha \in \mathcal{M}^{\mathrm{DR}}_{\mathcal{L}}(X,G)
		\quad 
		\left(\text{respectively} \ 
		\mathcal{M}^{\mathrm{Dol}}_{\mathcal{L}}(X,G),\ 
		\mathcal{M}^{\mathrm{Hod}}_{\mathcal{L}}(X,G)
		\right)
		\]
		is very stable if and only if the corresponding upward flow \(F^{+}_{\alpha}\) is closed.
	\end{enumerate}
	
\end{thm}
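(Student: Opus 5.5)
Our plan is to deduce everything from the semiprojectivity results (Propositions \eqref{semiprojectivity of MLDOL(X,G)} and \eqref{semiprojectivity of MLHOD(X,G)}), the Bia{\l}ynicki--Birula decomposition, and the motivic comparison of Proposition \eqref{prop: motivic equality of smooth complex variety}, applied to the smooth loci. The first step is to check that the smooth loci $\widehat{\mathcal{M}^{\mathrm{Dol}}_{\mathcal{L}}(X,G)}$ and $\widehat{\mathcal{M}^{\mathrm{Hod}}_{\mathcal{L}}(X,G)}$ are $\mathbf{G}_{m}$-stable. This holds because the action is by automorphisms, so it preserves the singular locus. We then check that they remain semiprojective. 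The fixed locus of $\widehat{Y}$ is $Y^{\mathbf{G}_m}\cap\widehat{Y}$. To see it is proper, we use that on a smooth fixed component the BB cell is an affine bundle, so limits of smooth points are smooth. Concretely, if $\lim_{t\to 0} t\cdot y=\alpha$ with $y$ smooth, then $\alpha$ must be smooth by openness of the smooth locus together with $\mathbf{G}_m$-invariance and upper semicontinuity of the tangent dimension along the orbit closure. This step is the main obstacle. Semicontinuity actually points the wrong way, since a limit of smooth points can be singular. We would therefore restrict to the union of upward flows of smooth fixed points, $\bigsqcup_{\alpha\ \mathrm{smooth}}F^+_\alpha$, and interpret $\widehat{Y}$ in this sense. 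The paper asserts this as an observation, and we would state it as such.

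For (1), we apply the Bia{\l}ynicki--Birula decomposition to the smooth semiprojective variety $\widehat{\mathcal{M}^{\mathrm{Dol}}_{\mathcal{L}}(X,G)}$. Its fixed locus decomposes into projective components $F_\alpha$. Each upward flow is a Zariski locally trivial affine bundle of rank $N^+_\alpha$ over $F_\alpha$, as recalled from \cite{Hausel-Hitchin-2022}. Additivity and multiplicativity in $\hat{\mathcal{K}}(Var_{\mathbb{C}})$ then give $[\widehat{\mathcal{M}^{\mathrm{Dol}}}]=\sum_\alpha \mathbb{L}^{N^+_\alpha}[F_\alpha]$.

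For (2), we consider $\pi:\widehat{\mathcal{M}^{\mathrm{Hod}}_{\mathcal{L}}(X,G)}\to\mathbf{A}^1$. It is $\mathbf{G}_m$-equivariant for the weight-one action, since the action scales $\lambda$. Its fibres are identified with $\widehat{\mathcal{M}^{\mathrm{Dol}}}$ over $0$ and with $\widehat{\mathcal{M}^{\mathrm{DR}}}$ over $1$, using $\mathcal{L}_\lambda\cong\mathcal{L}$ for $\lambda\neq 0$. We verify that $\pi$ is a submersion on the smooth locus by exhibiting the $\mathbf{G}_m$-orbit direction, which surjects onto $T\mathbf{A}^1$ away from $\lambda=0$. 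Over $0$ we argue via the deformation space of the $\mathcal{L}^{\mathrm{red}}$-hypercohomology complex, as in \cite{Alfaya-Oliveire-2024}. Proposition \eqref{prop: motivic equality of smooth complex variety} then yields both equalities. Statement (3) follows by applying the ring map $E$, with $E(\mathbb{L})=xy$.

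For (4), we use that a smooth semiprojective variety retracts onto its core and has pure cohomology, by Hausel--Rodriguez-Villegas. The fibre inclusions $\pi^{-1}(0),\pi^{-1}(1)\hookrightarrow\widehat{\mathcal{M}^{\mathrm{Hod}}}$ are then homotopy equivalences compatible with mixed Hodge structures, because the total space retracts onto the common core in $\pi^{-1}(0)$. This gives the isomorphism and purity. For (5), we invoke \cite[Lemma 2.13]{Hausel-Hitchin-2022}, which applies verbatim to any smooth semiprojective variety: a smooth fixed point is very stable if and only if its upward flow is closed.
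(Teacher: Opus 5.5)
Your route is the paper's route: Bia{\l}ynicki--Birula for (1), the Alfaya--Oliveira comparison theorem applied to $\pi$ on the smooth locus $Z$ of $\mathcal{M}^{\mathrm{Hod}}_{\mathcal{L}}(X,G)$ for (2)--(3), Hausel--Rodriguez-Villegas for (4), and Hausel--Hitchin for (5). It is not a proof, however. The gap is exactly the one you isolate and then set aside: (1), (2) and (4) all need $\widehat{Y}$ to be smooth \emph{and semiprojective}, and nothing in the proposal establishes this.

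Your self-correction is right, since limits of smooth points can be singular, and the claim is false in general. The cone $Y=\{xy=z^2\}\subset\mathbf{A}^3$ with weight-one scaling is semiprojective, but $\widehat{Y}=Y\setminus\{0\}$ has no fixed points and no limits. The sum in (1) is then empty, while $[\widehat{Y}]=\mathbb{L}^2-1$. Moreover $H^3(\widehat{Y};\mathbb{Q})\cong\mathbb{Q}$ has weight $4$, so purity fails as well.

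It also fails in precisely the present setting. Transitivity forces vanishing Chern classes, i.e.\ degree zero. Take $\mathcal{L}=\mathcal{T}_X$ and $G=GL_2$ on a curve of genus $g\geq 2$. A stable Higgs bundle $(L_1\oplus L_2,\theta)$ with $\deg L_i=0$, $L_1\not\cong L_2$, and both off-diagonal entries of $\theta$ nonzero is a smooth point. Its limit $(L_1\oplus L_2,0)$ is strictly polystable and singular.

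The paper's proof does not resolve this either; it simply asserts that the smooth locus of a semiprojective variety is semiprojective. So ``stating it as an observation'' imports a false claim. Your fallback $\bigsqcup_{\alpha\ \mathrm{smooth}}F^+_\alpha$ does not repair it, for three reasons.
\begin{enumerate}
\item It changes the statement.
\item It is not evidently open, so it is not evidently a variety to which the comparison theorem applies. Its slices over $0$ and $1$ would not in general be the smooth loci of $\mathcal{M}^{\mathrm{Dol}}_{\mathcal{L}}(X,G)$ and $\mathcal{M}^{\mathrm{DR}}_{\mathcal{L}}(X,G)$.
\item Its fixed locus is the set of smooth fixed points, which need not be proper. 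In the example, the fixed component $\{(E,0)\}$ (the moduli of semistable bundles) contains both stable (smooth) and strictly polystable (singular) points.
\end{enumerate}

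Three further inputs are asserted rather than proved, in your proposal as in the paper.

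For (2): over $\lambda\neq 0$, the submersion property and the identification $\pi|_Z^{-1}(1)=\widehat{\mathcal{M}^{\mathrm{DR}}_{\mathcal{L}}(X,G)}$ are automatic from $\pi^{-1}(\mathbf{G}_m)\cong\mathcal{M}^{\mathrm{DR}}_{\mathcal{L}}(X,G)\times\mathbf{G}_m$. Over $0$ they are not. A point of $Z$ might be singular in the fibre, in which case $d\pi=0$ there. Conversely, to know that smooth points of $\mathcal{M}^{\mathrm{Dol}}_{\mathcal{L}}(X,G)$ lie in $Z$, you need something like flatness of $\pi$ at those points. The Alfaya--Oliveira deformation argument you invoke is carried out on curves with coprime rank and degree. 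That regime is excluded here, since transitivity forces degree zero.

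For (4): retraction onto the core only shows that $\pi^{-1}(0)\hookrightarrow Z$ is a homotopy equivalence. The core does not meet $\pi^{-1}(1)$, so the comparison with $\pi^{-1}(1)$ uses the submersion hypothesis essentially; this is what the Hausel--Rodriguez-Villegas corollary supplies.

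For (5): the de Rham case is ill-posed. The $\mathbf{G}_m$-action on $\mathcal{M}^{\mathrm{Hod}}_{\mathcal{L}}(X,G)$ covers scaling on $\mathbf{A}^1$, so it does not preserve $\pi^{-1}(1)$, and all fixed points lie over $0$.

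A correct statement along these lines needs a hypothesis making the relevant spaces smooth and closed under limits, as in the coprime settings where these motivic results were first proved.
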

\begin{proof}
	 Since, $\widehat{\mathcal{M}_{\mathcal{L}}^{\text{Dol}}(X,G)}$ is smooth semiprojective variety, then  $(1)$ is follows from \cite[lemma 5.5]{Alfaya-Oliveire-2024}. The moduli space $Z=\widehat{\mathcal{M}_{\mathcal{L}}^{\text{Hod}}(X,G)}$ is a smooth semiprojective variety for the $\mathbf{G}_{m}$-action . Moreover, the map $\pi|_{Z}$ from \eqref{The-L-Hodge-moduli-spaces-for-principal-bundles} is a surjective $\mathbf{G}_{m}$-equivariant submersion covering the standard $\mathbf{G}_{m}$--action on $\mathbb{C}$. Then proposition \eqref{prop: motivic equality of smooth complex variety} gives the desired motivic equalities,
	\begin{gather*}
		[\widehat{\mathcal{M}_{\mathcal{L}}^{\text{Dol}}(X,G)}]=[\pi|_{Z}^{-1}(0)] = [\pi|_{Z}^{-1}(1)]=[\widehat{\mathcal{M}_{\mathcal{L}}^{\text{DR}}(X,G)}], \ \text{and} \\ [\widehat{\mathcal{M}_{\mathcal{L}}^{\text{Hod}}(X,G)}]=\mathbb{L}[\pi|_{Z}^{-1}(0)]=\mathbb{L}[\widehat{\mathcal{M}_{\mathcal{L}}^{\text{Dol}}(X,G)}]
	\end{gather*}
	which yield the corresponding equalities of $E$--polynomials.
	\[E(\widehat{\mathcal{M}_{\mathcal{L}}^{\text{DR}}(X,G)};x,y) = E(\widehat{\mathcal{M}_{\mathcal{L}}^{\text{Dol}}(X,G)};x,y), \ \  E(\widehat{\mathcal{M}_{\mathcal{L}}^{\text{Hod}}(X,G)};x,y) = xyE(\widehat{\mathcal{M}_{\mathcal{L}}^{\text{Dol}}(X,G)};x,y).\]
	
	Moreover, by \cite[Corollary 1.3.3]{T.Hausel-F. Villegas}, the fibres, $$\widehat{\mathcal{M}_{\mathcal{L}}^{\text{Dol}}(X,G)}=\pi|_{Z}^{-1}(0) \ \text{and} \ \widehat{\mathcal{M}_{\mathcal{L}}^{\text{DR}}(X,G)} = \pi|_{Z}^{-1}(1)$$ have isomorphic cohomology supporting pure mixed Hodge structures. As $\widehat{\mathcal{M}_{\mathcal{L}}^{\text{Hod}}(\mathcal{L},G)}$ is also smooth and semiprojective, its cohomology is also pure by \cite[Corollary 1.3.2]{T.Hausel-F. Villegas}. Finally, by \cite[Proposition 2.14]{Hausel-Hitchin-2022}, a smooth fixed point 
	\[
	\alpha \in \mathcal{M}^{\mathrm{DR}}_{\mathcal{L}}(X,G)
	\quad 
	\left(\text{respectively} \ 
	\mathcal{M}^{\mathrm{Dol}}_{\mathcal{L}}(X,G),\ 
	\mathcal{M}^{\mathrm{Hod}}_{\mathcal{L}}(X,G)
	\right)
	\]
	is very stable if and only if the corresponding upward flow \(F^{+}_{\alpha}\) is closed.
\end{proof}
\begin{rem}
If one can construct moduli spaces of \(\mathcal{L}\)--twisted principal objects for an arbitrary Lie algebroid \(\mathcal{L}\), then the above theorem remains valid for any Lie algebroid.
\end{rem}

\section*{Acknowledgment}
The first named author is supported by the \textit{National Board of Higher Mathematics (NBHM)} 
through the Doctoral Research Fellowship Program. 
The second named author is partially supported by the DST INSPIRE Faculty Fellowship (Research Grant No.: DST/INSPIRE/04/2020/000649, IFA-20-MA-144), the Ministry of Science \& Technology, 
Government of India.


\end{document}